\newtheorem{theorem}{Theorem}
\newtheorem{thm}[subsubsection]{Theorem}
\newtheorem{lem}[subsubsection]{Lemma}
\newtheorem{cor}[subsubsection]{Corollary}
\newtheorem{prop}[subsubsection]{Proposition}
\newtheorem{defn}[subsubsection]{Definition}
\theoremstyle{remark}
\newtheorem{remark}[subsubsection]{Remark}
\newtheorem{rem}[subsubsection]{Remark}
\newtheorem{notn}[subsubsection]{Notation}
\numberwithin{equation}{subsubsection}
\def\nummultline{\addtocounter{subsubsubsection}{1}\begin{multline}}
\def\anumequation{\addtocounter{subsection}{1}\begin{equation}}
\newif\iffinalrun
  \newcommand{\need}[1]{}
  \newcommand{\mar}[1]{}
  \newcommand{\need}[1]{{\tiny *** #1}}
  \newcommand{\mar}[1]{\marginpar{\raggedright\tiny #1}}
\newcommand{\A}{\AA}
\newcommand{\C}{\CC}
\newcommand{\F}{\FF}
\newcommand{\Q}{\QQ}
\newcommand{\R}{\RR}
\newcommand{\Z}{\ZZ}
\newcommand{\D}{\cD}
\newcommand{\G}{\cG}
\renewcommand{\O}{\cO}
\newcommand{\m}{\frakm}
\newcommand{\p}{\frakp}
\newcommand{\X}{\frakX}
\renewcommand{\AA}{{\mathbb A}}
\newcommand{\CC}{{\mathbb C}}
\newcommand{\FF}{{\mathbb F}}
\newcommand{\QQ}{{\mathbb Q}}
\newcommand{\RR}{{\mathbb R}}
\newcommand{\ZZ}{{\mathbb Z}}
\renewcommand{\bf}{\ensuremath{\mathbf{f}}}
\newcommand{\cD}{{\mathcal D}}
\newcommand{\cG}{{\mathcal G}}
\newcommand{\cO}{{\mathcal O}}
\newcommand{\frakm}{\mathfrak{m}}
\newcommand{\frakp}{\mathfrak{p}}
\newcommand{\frakX}{\mathfrak{X}}
\newcommand{\Fp}{\F_p}
\newcommand{\Shs}{\overline{Sh}_U(H)}
\DeclareMathOperator{\Gal}{Gal}
\DeclareMathOperator{\GL}{GL}
\DeclareMathOperator{\PGL}{PGL}
\DeclareMathOperator{\PSL}{PSL}
\DeclareMathOperator{\SL}{SL}
\DeclareMathOperator{\Spec}{Spec}
\DeclareMathOperator{\Spf}{Spf}
\newcommand{\Frob}{\mathrm{Frob}}
\newcommand{\HT}{\mathrm{HT}}
\newcommand{\Id}{\mathrm{Id}}
\newcommand{\et}{\mathrm{\acute{e}t}}
\newcommand{\toisom}{\buildrel\sim\over\to}
\newcommand{\Spd}{\mathrm{Spd}}
\newcommand{\T}{\mathbb{T}}
\newcommand{\Spa}{\mathrm{Spa}}
\begin{document}

\title[On the cohomology of Hilbert modular varieties]{On the \'etale cohomology 
of Hilbert modular varieties with torsion coefficients}

\author{Ana Caraiani}
\address{Department of
  Mathematics, Imperial College London,
  London SW7 2AZ, UK}
\email{a.caraiani@imperial.ac.uk}
\author{Matteo Tamiozzo}
\address{Department of
  Mathematics, Imperial College London,
  London SW7 2AZ, UK}
\email{m.tamiozzo@imperial.ac.uk}

\maketitle

\begin{abstract}
We study the \'etale cohomology of Hilbert modular varieties, 
building on the methods introduced for unitary Shimura varieties in~\cite{cs17, cs19}. 
We obtain the analogous vanishing theorem: in the ``generic'' case, 
the cohomology with torsion coefficients is concentrated in the middle degree. 
We also probe the structure of the cohomology beyond the generic case, 
obtaining bounds on the range of degrees where cohomology with torsion 
coefficients can be non-zero. 
The proof is based on the geometric Jacquet--Langlands functoriality 
established by Tian--Xiao
and avoids trace formula computations for the cohomology of Igusa varieties. 

As an application, we show that, when $p$ splits completely in the totally real field 
and under certain technical assumptions, the $p$-adic local Langlands correspondence 
for $\GL_2(\Q_p)$ occurs in the completed homology of Hilbert modular varieties.  
\end{abstract}

\setcounter{tocdepth}{1}
\tableofcontents

\section{Introduction} 

\subsection{Statement of results}

In this paper, we study the \'etale cohomology of Hilbert modular varieties, 
building on the methods introduced for unitary Shimura varieties in~\cite{cs17, cs19}. 

Let us first discuss a general vanishing conjecture for the 
cohomology of locally symmetric spaces. 
Let $G/\mathbb{Q}$ be a connected reductive group and let $X$ be the symmetric space for $G(\R)$, 
in the sense of~\cite{borel-serre}. For a neat compact open subgroup $K\subset G(\A_f)$, we can 
consider the associated locally symmetric space $X_K(G)$. 
Also define the invariants
\[
l_0:= \mathrm{rk}(G(\R)) - \mathrm{rk}(K_{\infty}) - \mathrm{rk}(A_{\infty})\ \mathrm{and}\ 
q_0:= \frac{1}{2}(\dim_{\R} X - l_0)\footnote{Here, $K_{\infty}\subset G(\R)$ is a maximal 
compact subgroup and $A_{\infty}$ is the group of $\R$-points of the maximal 
$\Q$-split torus in the centre of $G$.}. 
\] 
A folklore conjecture predicts that the cohomology 
of the locally symmetric space $X_K(G)$ with $\Z_{\ell}$-coefficients vanishes, after imposing an appropriate
non-degeneracy condition, outside the range of degrees $[q_0, q_0 +l_0]$ (which is symmetric 
about $\frac{1}{2}\dim_{\R} X$). See, for example,  
the discussion around~\cite[Conjecture 3.3]{emerton-icm} 
and also~\cite[Conjecture B]{calegari-geraghty}. As these references
explain, this conjecture has important consequences for
automorphy lifting theorems and the $p$-adic Langlands programme.

When $F$ is a totally real field and $G:=\mathrm{Res}_{F/\Q}\GL_2$, the corresponding locally 
symmetric spaces are closely related to Hilbert modular varieties, which are 
(non-compact) Shimura varieties of abelian type. 
When working with Shimura varieties
rather than with locally symmetric spaces, it is more natural to consider 
$l_0(G^{\mathrm{ad}})$, which is equal to $0$ by the second axiom 
in the definition of a Shimura datum. In this case,  
the conjecture mentioned above predicts that 
the non-degenerate part of the cohomology is concentrated in the middle degree. 

To make this more precise, let 
$K\subset G(\A_{f})$ be a neat compact open subgroup and let $Sh_K(G)$ be the corresponding 
Hilbert modular variety. This is a smooth, quasi-projective scheme over $\Q$, of dimension $g:=[F:\Q]$. 
Its complex points can be described as
\[
Sh_K(G)(\C) = G(\Q) \backslash (\C\smallsetminus \R)^g\times G(\A_f)/K. 
\]
Let $\ell$ be a prime number; we have a spherical Hecke algebra $\T$ defined in \S~\ref{hecke-alg}, generated by 
the standard Hecke operators $T_v, S_v^{\pm 1}$ for $v$ not belonging to a suitable finite set $S$ of places of $F$. 
The algebra $\T$ acts on the \'etale cohomology groups $H^i_{(c)}(Sh_K(G), \F_{\ell})$\footnote{This denotes the 
\'etale cohomology (with compact support) of the variety base changed to $\bar{\Q}$, but we suppress 
$\bar{\Q}$ to simplify the notation.}. 
Take a maximal ideal $\m \subset \T$ in 
the support of $H^i_{(c)}(Sh_K(G), \F_{\ell})$. It follows from Scholze's work, at least when $\ell>2$, cf. 
Theorem \ref{constr-gal-repn}, that there exists a
unique continuous, semisimple Galois representation 
\[
\bar{\rho}_{\m}: \Gamma_F \to \GL_2(\bar{\F}_{\ell}),
\]
where $\Gamma_F$ denotes the absolute Galois group of $F$, 
which is characterised as follows:  for every $v \not\in S$, $\bar{\rho}_\m$ is 
unramified at $v$ and the characteristic 
polynomial of $\bar{\rho}_{\m}(\mathrm{Frob}_v)$ is equal to $X^2-T_vX+S_vN(v) \pmod {\m}$.  

We say that $\m$ is \emph{non-Eisenstein} if $\bar{\rho}_{\m}$ is absolutely 
irreducible. The most optimistic vanishing conjecture 
predicts that the localisation $H^i_{(c)}(Sh_K(G), \F_{\ell})_{\m}$ should
be concentrated in the middle degree $i=g$ if $\m$ is non-Eisenstein. 
We make significant progress towards this. 

\begin{theorem}\label{thm:generic} (see Theorem~\ref{mainpart2}) 
Let $\ell>2$ be a prime and let $\m \subset \T$ be 
a maximal ideal in the support of $H^i_{c}(Sh_K(G), \F_{\ell})$ or 
$H^i(Sh_K(G), \F_{\ell})$. 
Assume that the image of $\bar{\rho}_{\m}$ is not solvable.
Then $H^i(Sh_K(G), \F_{\ell})_\m=H^i_c(Sh_K(G), \F_{\ell})_\m$ is non-zero only for $i=g$.
\end{theorem}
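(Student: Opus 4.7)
The plan is to adapt the Caraiani-Scholze strategy from \cite{cs17, cs19} to the Hilbert setting, replacing the trace-formula computation of Igusa cohomology (which was central to the unitary case) with the geometric Jacquet-Langlands theorems of Tian-Xiao.

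Setting $p = \ell$ and working at infinite $p$-power level, one has a perfectoid Hilbert modular variety together with a Hodge-Tate period map
\[
\pi_{\mathrm{HT}} : \mathcal{S}_{K^p}^* \longrightarrow \Fl
\]
to the relevant flag variety. A standard spectral-sequence argument, together with the fact that compactly supported and full cohomology agree after localization at a non-Eisenstein $\m$, reduces the problem to showing that $(R\pi_{\mathrm{HT},*} \F_\ell)_\m$ is concentrated in degree $g$ stalkwise on $\Fl$. The stalks are governed by the Newton stratification: above each Newton stratum indexed by an isocrystal $b$, the fiber of $\pi_{\mathrm{HT}}$ is (up to pro-\'etale issues) a perfectoid Igusa variety $\Igb$ fibered over the corresponding central leaf, and the problem becomes one of controlling $R\Gamma(\Igb, \F_\ell)_\m$.

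In place of a trace-formula analysis, I would use Tian-Xiao's result that closed Goren-Oort strata of quaternionic Shimura varieties (to which Hilbert modular varieties are related by Jacquet-Langlands) are iterated $\mathbb{P}^1$-bundles over quaternionic Shimura varieties attached to other inner forms of $\mathrm{Res}_{F/\Q}\GL_2$, of strictly smaller dimension. This allows an induction on dimension, reducing the computation of $\Igb$-cohomology to the modular-curve case and to ordinary-type pieces that are directly understood. The non-solvable hypothesis on the image of $\bar\rho_\m$ plays a decisive role: it rules out the possibility that $\bar\rho_\m$ is an automorphic induction from a character of a quadratic extension of $F$, and thus ensures that when one transfers $\m$ to these auxiliary inner forms via Jacquet-Langlands, the Hecke maximal ideal remains non-Eisenstein and, in particular, does not acquire solvable-type obstructions coming from smaller subgroups.

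The main obstacle is twofold. First, Hilbert modular varieties are non-compact, so one must work with toroidal compactifications and verify both the equality of $H^i$ and $H^i_c$ after localization at $\m$, as well as the compatibility of the perfectoid Hodge-Tate map with boundary contributions; this is more delicate than in the proper PEL case handled in \cite{cs17, cs19}. Second, the Goren-Oort combinatorics depends sensitively on how $p$ decomposes in $F$, and the inductive step must be run uniformly across all splitting behaviors and all Newton strata; keeping track of which inner form of $\mathrm{Res}_{F/\Q}\GL_2$ appears at each stratum, and transferring the non-solvability hypothesis across the resulting chain of Jacquet-Langlands correspondences, is where I expect the bulk of the technical work to concentrate.
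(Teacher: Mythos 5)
There is a genuine gap, and it starts with your very first move: you set $p=\ell$ and propose to run the Hodge--Tate/Newton-stratification analysis at the prime $\ell$ itself. The Caraiani--Scholze method computes $\F_{\ell}$-cohomology via the special fibre at an \emph{auxiliary} prime $p\neq\ell$ (nearby cycles, Artin vanishing and Poincar\'e duality on the mod-$p$ Newton strata with $\F_{\ell}$-coefficients all require $p\neq\ell$), and in the Hilbert case the choice of $p$ is where the non-solvability hypothesis actually enters: by a Chebotarev argument one finds $p\neq\ell$, totally split in $F$ with $p\equiv 1\pmod{\ell}$, such that $\bar{\rho}_{\m}$ is \emph{generic} at every place $v\mid p$ (Frobenius eigenvalue ratios different from $p^{\pm1}$). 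This is not at all the role you assign to non-solvability (ruling out automorphic induction, or preserving non-Eisensteinness under transfer); indeed there are absolutely irreducible dihedral $\bar{\rho}_{\m}$ for which no such generic prime exists, which is exactly why the hypothesis is on the image rather than just irreducibility.

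The second, related gap is that your proposal has no mechanism for actually killing the non-ordinary strata. The Tian--Xiao input used here is not the iterated $\PP^1$-bundle structure of Goren--Oort strata, but (in the totally split case) a quasi-isogeny of $p$-divisible groups that upgrades to a Hecke-equivariant \emph{isomorphism of Igusa varieties} $\mathrm{Ig}^b_T\simeq \mathrm{Ig}^{b'}_{T'}$ attached to different quaternionic/unitary Shimura varieties. The endgame is then: if a stratum with basic components at the places in $T$ contributed after localisation at $\m$, the cohomology of the corresponding $\mu$-ordinary Igusa variety for $H_{T}$ would be non-zero; perversity of $R\pi_{\HT*}\F_{\ell}$, Artin vanishing on the affine lower strata, and the parabolic-induction structure of the ordinary locus force the cohomology of the \emph{compact} quaternionic Shimura variety $Sh(G_T)$ localised at $\m$ to be non-zero in characteristic zero, producing a cuspidal automorphic representation whose local component at each $v\in T$ is a Jacquet--Langlands transfer from the ramified quaternion algebra over $F_v$ --- contradicting genericity of $\bar{\rho}_{\m}$ at $v$. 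Your ``induction on dimension down to the modular-curve case'' never arrives at such a contradiction and so does not prove vanishing. Finally, the non-compactness issues are much milder than you fear: non-solvability forces $\bar{\rho}_{\m}$ to be absolutely irreducible, so boundary cohomology dies after localisation and $H^i_c\simeq H^i$; only the ordinary stratum meets the boundary, and its partial \emph{minimal} compactification being affine suffices --- no toroidal compactifications of Igusa varieties are needed.
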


\begin{remark}\label{dicksonrem}\leavevmode
\begin{enumerate}
\item By Dickson's theorem, cf.~\cite[Theorem 2.47 (b)]{ddt}, the projective image of $\bar{\rho}_{\m}$ 
is either conjugate to a subgroup of the upper triangular matrices, or to $\PGL_2(\F_{\ell^k})$ or $\PSL_2(\F_{\ell^k})$,
for some $k\geq 1$, or it
is isomorphic to one of $D_{2n}$, for some $n\in \Z_{> 1}$ prime to $\ell$, 
$A_4$, $S_4$, or $A_5$. The image of $\bar{\rho}_\m$ is not solvable if and only if the following condition is satisfied:
\begin{enumerate}
\item if $\ell = 3$, the projective image of $\bar{\rho}_{\m}$ 
is isomorphic to $A_5$ or contains a conjugate of $\PSL_2(\F_9)$;
\item if $\ell>3$, the projective image of $\bar{\rho}_{\m}$
is isomorphic to $A_5$ or contains a conjugate of $\PSL_2(\F_{\ell})$.
\end{enumerate}
\item If $\m$ is non-Eisenstein, then $H^i(Sh_K(G), \C)_\m$ 
is concentrated in the middle degree. 
This follows from the explicit description of the 
cohomology with complex coefficients of Hilbert modular varieties (see \cite[Chapter III]{frei13}).
\item Previously, Dimitrov had obtained a vanishing theorem
for the cohomology of Hilbert modular varieties with torsion coefficients 
in~\cite[Theorem 2.3]{dim09} (see also~\cite{dim05}), 
under stronger assumptions. 
More precisely, Dimitrov proves a theorem for cohomology with 
coefficients in certain local systems on $Sh_K(G)$. In addition to 
a large image assumption on $\bar{\rho}_{\m}$, he also requires that
the level is prime to $\ell$ and that $\ell$ is large compared to the 
weight giving rise to the local system. Since we make no assumption 
on the level at $\ell$, a standard argument using the Hochschild--Serre 
spectral sequence allows us to upgrade Theorem~\ref{thm:generic} 
to also apply to cohomology with twisted coefficients.  
\end{enumerate}
\end{remark}

Let $p\not = \ell$ be a prime which splits completely in $F$ and such that 
$\bar{\rho}_{\m}$ is unramified at every place of $F$ above $p$. 
If $v$ is such a place, we say that $\bar{\rho}_{\m}$ is \emph{generic 
at $v$} if the eigenvalues of $\bar{\rho}_{\m}(\Frob_v)$ 
have ratio different from $p^{\pm 1}$. If the projective image of 
$\bar{\rho}_{\m}$ satisfies the condition in Remark \ref{dicksonrem}$(1)$, 
then the Chebotarev density theorem implies, cf. Lemma~\ref{cebot}, 
that there exists a prime $p$ as above such that $\bar{\rho}_{\m}$ is generic
at every $v\mid p$.\footnote{Note, however, that Frank Calegari has produced
an example of an absolutely irreducible mod $\ell$ Galois representation with projective image
$D_4 \simeq (\Z/2\Z)^2$ for which no such prime $p$ exists.}
As in~\cite{cs17, cs19}, Theorem~\ref{thm:generic}
relies on the study of the cohomology of (perfectoid) Igusa varieties and on the geometry of the Hodge--Tate period map at the auxiliary prime $p$.

The key new idea in our situation is to establish a geometric
Jacquet--Langlands transfer comparing the cohomology of Igusa varieties attached to different quaternionic Shimura varieties. This replaces the direct computation of the cohomology of Igusa varieties via the trace formula carried out in \cite{cs17}. We also exploit the relation between Igusa varieties and fibres of the Hodge--Tate period map for compact quaternionic Shimura varieties.
We explain this idea more in \S~\ref{sec:method}.

Furthermore, Theorem~\ref{thm:generic} is obtained as a special case 
of the following more precise result, that probes the structure of the cohomology beyond
the generic case. Given a prime $p$ which splits completely in $F$ 
and such that $\bar{\rho}_{\m}$ is unramified at every place of $F$ above $p$, 
denote by $\delta_p(\m)\in [0,g]$ the number of places above $p$ where 
$\bar{\rho}_{\m}$ is \emph{not} generic.  

\begin{theorem}\label{thm:beyond generic} (see Theorem~\ref{mainpart2-ref}) 
Let $\ell>2$ be a prime. Let 
$p\not = \ell$ be an odd prime which splits completely in $F$ and such that 
$K = K^pK_p$ with $K_p$ hyperspecial.  Let $\m\subset \T$ be a 
non-Eisenstein maximal ideal. Then 
\[
H^i_c(Sh_K(G), \F_{\ell})_{\m} \toisom H^i(Sh_K(G), \F_{\ell})_{\m}
\]
vanishes outside $i\in [g- \delta_p(\m), g+\delta_p(\m)]$. 
\end{theorem}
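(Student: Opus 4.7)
The strategy follows the Caraiani-Scholze blueprint, with Tian-Xiao's geometric Jacquet-Langlands replacing the trace-formula input on Igusa varieties. A preliminary reduction establishes $H^i_c(Sh_K(G), \F_\ell)_\m \isomap H^i(Sh_K(G), \F_\ell)_\m$: boundary cohomology of a Borel-Serre (or toroidal) compactification arises via parabolic induction, producing only reducible Galois (pseudo-)representations, which non-Eisenstein localisation annihilates.

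For the range bound, let $S_{\mathrm{ng}}$ denote the set of places of $F$ above $p$ at which $\bar\rho_\m$ is non-generic, of size $\delta_p(\m)$. I would choose a quaternion algebra $B/F$ split at every finite place of $F$, split at the $\delta_p(\m)$ infinite embeddings corresponding (under the chosen identification induced by $p$ splitting completely) to $S_{\mathrm{ng}}$, and ramified at the remaining $g - \delta_p(\m)$ infinite embeddings (introducing auxiliary finite-place ramification and base change to correct parity if needed). The associated Shimura variety $X_B$ is then proper and smooth of complex dimension $\delta_p(\m)$ and matches $Sh_K(G)$ at finite level for Hecke comparisons.

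The central step is to compare the $\m$-localised cohomologies of $Sh_K(G)$ and $X_B$ at perfectoid level. The Hilbert modular variety carries a Goren-Oort-type stratification whose closed strata are iterated $\PP^1$-bundles over quaternionic Shimura varieties; at infinite level at $p$, this structure is reflected in the fibres of the Hodge-Tate period maps for these quaternionic varieties, which are (perfectoid) Igusa varieties. The key input from Tian-Xiao is a geometric comparison of these Igusa varieties across quaternionic groups. Combined with the fact that genericity of $\bar\rho_\m$ at every place of $S \setminus S_{\mathrm{ng}}$ should concentrate the relevant Igusa cohomologies in a single degree, this should yield an identification, up to the expected degree shift by $g - \delta_p(\m)$, between the $\m$-localised cohomology of $Sh_K(G)$ and that of $X_B$. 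Since $X_B$ is smooth proper of dimension $\delta_p(\m)$, its cohomology is supported in degrees $[0, 2\delta_p(\m)]$, and the shift gives exactly the claimed range $[g - \delta_p(\m), g + \delta_p(\m)]$.

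The main obstacle will be implementing the geometric Jacquet-Langlands transfer at the level of Igusa varieties and proving that $\bar\rho_\m$-genericity at the $g - \delta_p(\m)$ generic places really does concentrate the localised cohomology of the corresponding Igusa varieties in a single degree. This requires a careful analysis of the Newton stratifications of the Hodge-Tate flag varieties for each of the quaternionic Shimura varieties that appears, and of the compatibility of these stratifications under the Tian-Xiao comparison. Secondary technical difficulties include the parity adjustment for the ramification set of $B$ and a uniform handling of the compactification needed to apply perfectoid techniques to the non-proper Hilbert modular variety.
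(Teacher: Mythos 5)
Your preliminary reduction ($H^i_c \simeq H^i$ after non-Eisenstein localisation, via the Borel--Serre boundary being parabolically induced) is exactly the paper's Lemma~\ref{cohom-noneis}, and your general instinct to bring in quaternionic Shimura varieties via Tian--Xiao is the right one. But the central step of your argument has a genuine gap. You assert a degree-shifted identification
\[
H^i(Sh_K(G), \F_{\ell})_{\m} \;\simeq\; H^{i-(g-\delta_p(\m))}(X_B, \F_{\ell})_{\m}
\]
with a \emph{single} quaternionic Shimura variety $X_B$ for a $B$ split at all finite places. No such isomorphism is proved in the paper, and it is far stronger than the theorem: already when $\delta_p(\m)=0$ it would amount to a mod-$\ell$ Jacquet--Langlands isomorphism $H^g(Sh_K(G),\F_\ell)_\m \simeq H^0(X_B,\F_\ell)_\m$ with a totally definite quaternion algebra, including matching multiplicities --- this is not available by these methods and is not needed. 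Moreover, your $B$ is the wrong object: the quaternion algebras that the geometric Jacquet--Langlands of Tian--Xiao actually produces (through the Goren--Oort/Newton strata at $p$) are the $B_T$ ramified at a set $T$ of \emph{finite places above} $p$ together with the matching infinite places. It is precisely the finite ramification at $v\in T$ that, via local--global compatibility and the classification of local transfers from the division algebra, forces $\bar\rho_\m$ to be non-generic at $v$; a $B$ split at all finite places carries no such information and cannot detect $\delta_p(\m)$.

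What the paper does instead is a stratification argument in which the quaternionic varieties serve only as a \emph{detection device}, never as the target of a cohomological identification. One passes to an auxiliary unitary Shimura variety with good integral model and stratifies its special fibre by Newton strata; each stratum indexed by $b$ with $\varepsilon$ basic coordinates is smooth of dimension $g-\varepsilon$ and affine (or has affine partial minimal compactification, for the ordinary one), cf.\ Lemma~\ref{str-aff}. For strata of dimension $\geq g-\delta_p(\m)$, Artin vanishing plus Poincar\'e duality kills $H^i_c$ for $i< g-\delta_p(\m)$ (Corollary~\ref{bistr-die}). For strata of dimension $< g-\delta_p(\m)$ one shows the entire $\m$-localised cohomology vanishes (Proposition~\ref{vanish-smallig}): if not, the isomorphism of Igusa varieties $\mathrm{Ig}^b_{\emptyset}\simeq \mathrm{Ig}^{\mathrm{ord}}_{T}$ (Theorem~\ref{isoigusa}), the computation of fibres of $\pi_{\HT}$ (Proposition~\ref{fibreig}), perversity, and the parabolic-induction structure of the ordinary locus (Theorem~\ref{ord-par-ind}) produce a nonzero class in $H^{g-\varepsilon}(Sh(G_T),\Q_\ell)_\m$, hence a cuspidal automorphic representation of $B_T^\times$ lifting $\bar\rho_\m$, forcing non-genericity at all $\varepsilon$ places of $T$ and contradicting $\varepsilon>\delta_p(\m)$. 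Combining the two cases over all strata, and dualising via $\m^\vee$, gives the two-sided bound. Note also that your claim that genericity ``concentrates the Igusa cohomologies in a single degree'' inverts the actual logic: concentration comes from perversity of $R\pi_{\HT,*}\F_\ell$ together with support on a single stratum, while genericity is used only to rule out the transfer at the end.
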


\noindent Theorem~\ref{thm:beyond generic} is inspired
by Arthur's conjectures~\cite{arthur}, describing the interplay between the Hecke action and the
Lefschetz structure on the cohomology of Shimura 
varieties with $\C$-coefficients. Our result 
is consistent with the existence of such a structure 
on the cohomology with $\F_{\ell}$-coefficients as well. 

In the case of Harris--Taylor unitary Shimura varieties, 
Boyer established the analogue
of Theorem~\ref{thm:beyond generic} in~\cite{boyer} 
(see also the discussion around~\cite[Theorem 1.3]{koshikawa}). 
Our argument is partly inspired by Boyer's, 
but it is different: we rely on the ingredients
mentioned above, 
as well as on the affineness and smoothness of Newton strata.
This allows us to apply 
Artin vanishing at a crucial step in the proof, 
followed by Poincar\'e duality. 

\begin{remark}\leavevmode
\begin{enumerate}
\item If $\delta_p(\m) = 0$, we expect that
$H^i_c(Sh_K(G), \F_{\ell})_{\m} =  0$ 
for $i>g$ (and, dually, $H^i(Sh_K(G), \F_{\ell})_{\m}=0$ for $i<g$) 
even without the 
non-Eisenstein assumption on $\m$. This is
the analogue of~\cite[Theorem 1.1]{cs19} in our context. 
To establish this, one may need the semi-perversity 
result~\cite[Theorem 4.6.1]{cs19} 
in our setting, which relies on a detailed
understanding of toroidal compactifications 
of Igusa varieties. 
 
\item Using the same method based on
geometric Jacquet--Langlands, we
also obtain analogues of Theorems~\ref{thm:generic} 
and~\ref{thm:beyond generic} for compact quaternionic Shimura varieties,
without the non-Eisenstein assumption:
see Theorem~\ref{quat-maithm}.
\end{enumerate}
\end{remark}

As a quick application of Theorem~\ref{thm:generic}, we show that the $p$-adic local
Langlands correspondence occurs in the completed homology of Hilbert modular varieties.
More precisely, fix a prime $p>3$ which splits completely in $F$;
in the following discussion, the prime $p$ will play the role of the prime denoted by $\ell$ above.
Fix a large enough finite extension $L/\Q_p$ with ring of integers $\O$. Under suitable assumptions (e.g. hypothesis $(3)$ in
Theorem~\ref{thm:p-adic Langlands} below) we can attach to the restriction of $\bar{\rho}_{\m}$ to places
$v\mid p$ a universal local deformation 
ring $R_p^{\mathrm{loc}} :=\hat{\otimes}_{v\mid p, \O} R_v^{\mathrm{def}}$ 
and an $R_p^{\mathrm{loc}}$-module 
$P: =\hat{\otimes}_{v \mid p, \O}P_v$. The latter represents a large part of 
the $p$-adic local Langlands correspondence 
in this setting, cf.~\cite{paskunas}. 
(See \S~\ref{sec:p-adic local Langlands} for more details on the notation.)

\begin{theorem}\label{thm:p-adic Langlands}
Let $p>3$ be a prime which splits completely in $F$. 
Assume that the following assertions hold true.
\begin{enumerate}
\item The projective image of the Galois representation $\bar{\rho}_\m$ 
attached to $\m$ contains a conjugate of $\PSL_2(\F_p)$ or is isomorphic to $A_5$.
\item If $\bar{\rho}_\m$ is ramified at some place $v$ 
not lying above $p$, then $v$ is not a vexing prime.
\item For each place $v \mid p$, the restriction of 
$\bar{\rho}_\m$ to $\Gamma_{F_v}$ is absolutely irreducible.
\end{enumerate}
Then the completed homology $\tilde{H}_g(\bar{K}_1(N(\bar{\rho}_\m))^p, \O)_\m$ 
can be described as a module over $\mathbb{T}(\bar{K}_1(N(\bar{\rho}_\m))^p)_\m[\bar{G}(\Q_p)]$ as
\begin{equation*}
\tilde{H}_g(\bar{K}_1(N(\bar{\rho}_\m))^p, \O)_\m \simeq 
\mathbb{T}(\bar{K}_1(N(\bar{\rho}_\m))^p)_\m \hat{\otimes}_{R_p^{\mathrm{loc}}}P^{\oplus m}
\end{equation*}
for some $m \geq 1$.
\end{theorem}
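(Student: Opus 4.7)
The plan is to combine Theorem~\ref{thm:generic} with a Taylor--Wiles--Kisin patching argument in the style of Caraiani--Emerton--Gee--Geraghty--Pa\v{s}k\=unas--Shin, suitably adapted to the Hilbert modular setting (cf.~\cite{paskunas} for a closely related framework). Hypothesis (1) forces the image of $\bar{\rho}_\m$ to be non-solvable, so Theorem~\ref{thm:generic} applied with $\ell = p$ shows that for every neat tame level $K^p$ and every $n \geq 1$, the localised cohomology $H^i(Sh_{K^pK_p}(G), \O/\varpi^n)_\m$ vanishes outside $i = g$. Passing to the inverse limit over $K_p$ and Pontryagin-dualising, the completed homology $\tilde{H}_i(\bar{K}_1(Q)^p, \O)_\m$ is concentrated in degree $g$ for every auxiliary tame level $Q \supset N(\bar{\rho}_\m)$; this concentration is what will allow the subsequent patching to produce an honest module rather than a complex.

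\textbf{Patching and Pa\v{s}k\=unas identification.} We form a patched module $M_\infty$ by running the Taylor--Wiles method on these completed homology groups along a system of auxiliary primes: hypothesis (1) supplies, via Chebotarev, the required Taylor--Wiles primes, while hypothesis (2) allows one to work at minimal level without enlarging at vexing places. The output is a module $M_\infty$ carrying commuting actions of $R_\infty := R_p^{\mathrm{loc}}[[x_1, \ldots, x_d]]$ (a power-series ring over the local deformation ring at $p$) and of $\bar{G}(\Q_p) = \prod_{v \mid p}\GL_2(\Q_p)$. Under hypothesis (3), each factor $\bar{\rho}_\m|_{\Gamma_{F_v}}$ is absolutely irreducible, so $P_v$ is (essentially) the unique projective object in the associated Pa\v{s}k\=unas block with endomorphism ring $R_v^{\mathrm{def}}$~\cite{paskunas}. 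Combined with a Cohen--Macaulayness argument for $M_\infty$ over $R_\infty$ (exploiting that $l_0 = 0$ for Hilbert modular varieties, so Krull dimensions match and $M_\infty$ is in fact flat over $R_\infty$), Pa\v{s}k\=unas' capture-type uniqueness yields an isomorphism $M_\infty \cong R_\infty \hat{\otimes}_{R_p^{\mathrm{loc}}} P^{\oplus m}$ for some $m \geq 1$.

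\textbf{Unpatching and main obstacle.} Finally, quotienting $R_\infty$ by the ideal generated by the auxiliary formal variables $x_1, \ldots, x_d$ returns the completed global deformation ring, which acts on $M_\infty$ through $\T(\bar{K}_1(N(\bar{\rho}_\m))^p)_\m$; by construction the corresponding quotient of $M_\infty$ is $\tilde{H}_g(\bar{K}_1(N(\bar{\rho}_\m))^p, \O)_\m$, and base-changing the displayed isomorphism along $R_\infty \twoheadrightarrow \T(\bar{K}_1(N(\bar{\rho}_\m))^p)_\m$ yields the stated formula. The main obstacle is verifying simultaneously that $M_\infty$ is flat over $R_\infty$ of the correct Krull dimension and that its $\bar{G}(\Q_p)$-structure is governed by the Pa\v{s}k\=unas block at each $v \mid p$; this is exactly where hypothesis (3) is indispensable, since only when $\bar{\rho}_\m|_{\Gamma_{F_v}}$ is absolutely irreducible does the relevant block admit a single projective generator $P_v$ whose deformation theory is cleanly captured by $R_v^{\mathrm{def}}$, bypassing the more delicate block structure present when the local Galois representation is reducible.
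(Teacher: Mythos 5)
Your overall strategy is the same as the paper's: the paper also feeds the middle-degree concentration of Theorem~\ref{thm:generic} (for the prime $p$) into the Taylor--Wiles--Kisin patching formalism and Pa\v{s}k\=unas' theory of blocks for $\GL_2(\Q_p)$ -- concretely, it verifies the axioms of the Gee--Newton framework and quotes \cite[Proposition 5.1.4]{gee-newton}. Your identification of concentration in degree $g$ (uniformly in the level at $p$) as the key new global input is exactly right, and the unpatching step at the end is as in the paper.

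However, your sketch has two genuine gaps. First, you never address local--global compatibility at $p$: to identify the patched module with copies of $P$ one must know that, for each algebraic type $\sigma$, the $R_p^{\mathrm{loc}}$-action on the $\sigma$-part of (patched) homology factors through the crystalline deformation ring $R_p^{\mathrm{loc}}(\sigma)$. This is precisely \cite[Conjecture 5.1.12]{gee-newton}, and verifying it is the substantive local input of the paper's proof; it is deduced from the description of middle-degree cohomology with $\Q_p$-coefficients in terms of Hilbert cusp forms together with Kisin's local--global compatibility theorem at places above $p$. Without this, Pa\v{s}k\=unas' capture argument cannot pin down which projective object occurs, and hypothesis (3) alone does not substitute for it. Relatedly, you do not construct the Galois representation $\rho_\m$ valued in $\mathbb{T}(\bar{K}^p)_\m$, which is needed even to define the map $R_p^{\mathrm{loc}}\rightarrow \mathbb{T}(\bar{K}^p)_\m$ appearing in the statement; the paper obtains it by Scholze's gluing argument, which again uses concentration and torsion-freeness of homology. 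Second, your proposed mechanism -- flatness of $M_\infty$ over $R_\infty$ via a dimension count -- is not how the identification works and is not known in general; what one actually uses is that concentration in a single degree (for all levels at $p$, hence for all Serre weights, which is why the paper needs Theorem~\ref{thm:generic} with twisted coefficients) makes completed homology a projective object in the category of profinite $\O[[\bar{K}_0]]$-modules, and then Pa\v{s}k\=unas' block theory identifies the resulting projective $\bar{G}(\Q_p)$-module with $P^{\oplus m}$ once the support condition above is established. Minor further points handled in the paper but missing from your sketch: the enormous-image condition for Taylor--Wiles primes requires a separate argument when $p=5$, and the condition $\bar{\rho}_\m\not\simeq\bar{\rho}_\m\otimes\bar{\epsilon}$ must be checked (it follows here from $p$ being unramified in $F$).
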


\noindent Theorem~\ref{thm:p-adic Langlands} points towards an extension of 
Emerton's landmark local-global compatibility theorem
for modular curves~\cite{emerton-lgc}
to the case of Hilbert modular varieties. Here, 
we apply the axiomatic approach via patching developed 
in~\cite{ceggps2, gee-newton} in order to obtain an unconditional ``proof of concept''.
We note that, for this application, it is crucial to know that 
completed homology is a projective object in an appropriate 
category of $\prod_{v\mid p}\PGL_2(\Z_p)$-modules. 
This relies on knowing Theorem~\ref{thm:generic} with twisted coefficients 
coming from arbitrary Serre weights, something that is not available
from the earlier results of~\cite{dim09}. In Remark~\ref{rem:further lgc},
we also sketch how to obtain a
version of compatibility with the $p$-adic 
local Langlands correspondence without
any assumptions on the tame level. 

Theorem~\ref{thm:generic} should have numerous other applications
to the $p$-adic Langlands programme for $\GL_2$ over a totally 
real field $F$, including in the case when $p$ ramifies in $F$. 
Once cohomology is concentrated in one degree,  
one can combine the Taylor--Wiles--Kisin patching method
with purely local techniques, as in~\cite{egs, bhs} (for example). 
Traditionally, these methods have been used for Shimura sets
attached to definite unitary groups or for Shimura curves. The 
advantage of Hilbert modular varieties is that their 
(co)homology may be more suitable to studying low-weight forms,
such as Hilbert modular forms of parallel weight $1$. 

\subsection{The method of proof}\label{sec:method}
As we mentioned above, the proof of Theorems~\ref{thm:generic}
and~\ref{thm:beyond generic} uses the geometry of the Hodge--Tate 
period morphism at an auxiliary prime $p \neq \ell$ for quaternionic Shimura varieties, particularly the relation between 
fibres of this morphism and perfectoid Igusa varieties.
The key new idea in the proof, inspired by the work of Tian--Xiao on the Goren--Oort stratification on quaternionic Shimura varieties ~\cite{tixi16}, is to establish instances of geometric Jacquet--Langlands
functoriality for Igusa varieties. This idea, which was not present in either of~\cite{cs17,cs19},
allows us to handle the cohomology of Igusa varieties. We exploit this 
(and further ingredients mentioned below) to transfer systems of 
Hecke eigenvalues in the cohomology of non-ordinary strata from 
Hilbert modular varieties to (perfectoid) compact quaternionic Shimura varieties. 
Using this we show that, in the setting of 
Theorem~\ref{thm:generic}, cohomology localised at $\m$ must be
supported on the ordinary locus; it is then relatively easy to prove that the 
cohomology is concentrated in one degree.

We illustrate a version of the key new idea mentioned above first using the toy model 
of the modular curve, the Shimura variety for $G:=\GL_2/\Q$.
Let $K^p\subset G(\A_f)$ be a sufficiently small 
compact open subgroup. By~\cite[\S 3]{sch15},
we have the Hodge--Tate period morphism 
$\pi_{\HT}: \mathcal{S}h_{K^p}(G) \to \mathbb{P}^{1,\mathrm{ad}}$
and the Hodge--Tate period domain admits the Newton stratification 
\[
\mathbb{P}^{1,\mathrm{ad}} = \mathbb{P}^1(\Q_p)
\bigsqcup \Omega^2.
\]
Here, $\mathbb{P}^1(\Q_p)$ is the closed stratum corresponding 
to the ordinary locus 
and $\Omega^2$, the Drinfeld upper half plane, is the open
stratum corresponding to the supersingular locus. 
For every geometric 
point $x: \Spa (C, \mathcal{O}_C)\to \Omega^2$, the fibre $\pi_{\HT}^{-1}(x)$ 
is a perfectoid version of a supersingular Igusa variety. 
In view of~\cite[Corollary 3.7.4]{howe} (which originates in an observation of Serre~\cite{serre}), 
this can, in turn, be identified with the double coset
$D^\times \backslash D^\times (\A_f)/K^p$, 
where $D/\Q$ is the quaternion algebra that is ramified
precisely at $p$ and $\infty$. 
If $\m\subset \mathbb{T}$ is a non-Eisenstein 
maximal ideal such that $(R\pi_{\HT*}\F_{\ell})_{\m}$ is supported on 
$\Omega^2$, then $\m$ is also in the support of 
$H^0(D^\times \backslash D^\times (\A_f)/K^p, \Q_{\ell})$. 
This means that there exists a cuspidal automorphic representation
of $D^\times$ whose 
associated Galois representation is a characteristic $0$
lift of $\bar{\rho}_{\m}$.  

However, if we choose $p$ such that $\bar{\rho}_{\m}|_{\Gamma_{\Q_p}}$ 
is generic, then any characteristic $0$ lift corresponds, under
the classical local Langlands correspondence, to a generic
principal series representation of $\GL_2(\Q_p)$. Such a representation 
cannot be the local component at $p$ of the
global Jacquet--Langlands transfer of a cuspidal automorphic 
representation of $D^\times$. Therefore, if $\bar{\rho}_{\m}$ 
is generic at $p$, then the complex of sheaves $(R\pi_{\HT*}\F_{\ell})_{\m}$ 
is supported only on the ordinary locus $\mathbb{P}^1(\Q_p)$.  

Morally, the Hilbert case for a totally real field $F$ of degree $g$ 
behaves like a product of $g$ copies of the modular curve case. 
In this setting, Tian--Xiao establish geometric instances of the 
Jacquet--Langlands correspondence in~\cite{tixi16}. 
Inspired by their result in the case when $p$ splits completely 
in $F$, we express the geometric Jacquet--Langlands relation as a Hecke-equivariant isomorphism 
between Igusa varieties attached to different
Shimura varieties, cf. Theorem~\ref{isoigusa}\footnote{To 
make sense of Igusa varieties, we want an integral model
with a nice moduli interpretation. This is not directly available
for quaternionic Shimura varieties. For this reason, in
the body of the paper, we work
with certain auxiliary unitary Shimura varieties that are
closely related to Hilbert and quaternionic Shimura varieties; 
we ignore this point throughout the introduction.}.
In Theorem~\ref{ord-par-ind},
we obtain a clean geometric description 
of the $\mu$-ordinary locus at infinite level on quaternionic Shimura varieties. 
This relies on (a perfectoid version of) the Mantovan product formula \cite{man05}, 
established in \S~\ref{sec:geometryHT}, and on the 
induced structure of the corresponding Rapoport--Zink spaces,
cf.~\cite{han18, gaim16}. Theorems~\ref{isoigusa} and~\ref{ord-par-ind} 
allow us to transfer systems of mod 
$\ell$ Hecke eigenvalues between different 
Shimura varieties, which forms the basis for
an inductive argument to prove 
Theorem~\ref{thm:beyond generic}. 

Some technical difficulties we encounter are that
Hilbert modular varieties are Shimura varieties of abelian type 
and that they are non-compact. However, the non-compactness
does not impose serious difficulties. The only Newton stratum
that intersects the boundary of Hilbert modular varieties is the 
ordinary one. This, together with the geometric 
Jacquet--Langlands relation on the level of interior Igusa varieties, 
makes the analysis of interior Newton strata 
similar to the one in~\cite{cs17}. Because we are localising
at a non-Eisenstein maximal ideal, we avoid employing 
partial minimal and toroidal compactifications of Igusa varieties 
or semi-perversity; in particular, our argument is independent of~\cite{cs19}, and only relies on the (easier) study of the geometry of the Hodge--Tate period map in the compact setting of ~\cite{cs17}.

Recently, Koshikawa~\cite{koshikawa21} 
gave a new strategy for proving the
vanishing theorems of~\cite{cs17,cs19}, removing 
most technical assumptions from these results. 
This strategy relies on~\cite{fargues-scholze}
together with the geometry of the Hodge--Tate period morphism.
The idea is to show that only the cohomology of the ordinary 
locus contributes to the generic part of the cohomology 
of Shimura varieties, by proving a local vanishing 
theorem for the generic part of the cohomology of Rapoport--Zink spaces. 
After reducing to the ordinary locus, ~\cite{koshikawa21}
uses the semi-perversity result mentioned above 
to control the range of degrees of cohomology.  
Koshikawa's arguments could be applied in the Hilbert setting as well,
though this would require some version of our results in 
\S~\ref{sec:geometryHT} and semi-perversity. 
One advantage of our method is that
it gives information about the complexes of sheaves 
$(R\pi_{\HT*}\F_{\ell})_{\m}$ rather 
that just the cohomology groups $H^*(Sh_K(G), \F_{\ell})_{\m}$. 

\subsection{Acknowledgements} 
We are grateful to David Hansen for asking us to consider
the cohomology of Hilbert modular varieties with torsion coefficients.
A.C. would like to thank Xinwen Zhu for the invitation to a 2019 AIM workshop 
on geometric instances of the Jacquet--Langlands correspondences,  
which inspired our method. We are grateful to 
Frank Calegari, Toby Gee, James Newton, Vytautas Pa{\v{s}}k\={u}nas, and Jack Sempliner
for useful conversations
and correspondence, and to Christophe Cornut, Toby Gee, Jo\~ao Louren\c{c}o, Stefano Morra, and Peter Scholze for
comments on a previous draft of this paper.

This project has received funding from the European Research Council (ERC)
under the European Union’s Horizon 2020 research and innovation programme
(grant agreement No. 804176). A.C. was supported by a Royal Society University
Research Fellowship. 

\subsection{Notation}  We use the following notation throughout the paper, unless otherwise stated.

The cardinality of a set $T$ is denoted by $|T|$. 

The symbol $\otimes$ stands for $\otimes_{\Z}$. If $A$ is an abelian 
group, we let $\hat{A} = A\otimes \hat{\Z}$ and 
$\hat{A}^{(p)} = A\otimes \hat{\Z}^{(p)}$.  We also set $\A_f = \hat{\Q}$, the finite adeles of $\Q$, 
and $\A_f^{(p)} = \hat{\Q}^{(p)}$, the finite adeles of $\Q$ away from $p$. 

If $L$ is a perfect field, we denote by $\bar{L}$ 
an algebraic closure of $L$, and by $\Gamma_L$ the absolute Galois 
group $\Gal(\bar{L}/L)$. Assume now that $L$ is a number field. 
If $v$ is a finite place of $L$, we denote by $L_v$ the completion of $L$ at $v$, 
and by $\cO_v$ the ring of integers of $L_v$. We will denote by 
$\varpi_v$ a uniformiser of $\cO_v$, and by $N(v)$ the cardinality 
of the residue field $\cO_v/(\varpi_v)$. We denote by $\Frob_v$ the geometric
Frobenius at $v$. We let $\A_{L,f} = \hat{L}$,
the ring of finite adeles of $L$.

If $A\to B$ is a ring morphism and $S\to \Spec A$ is a scheme, we 
denote by $S_B$ the base change $S\times_{\Spec A} \Spec B$. We use a similar notation for adic spaces. If 
$S$, $T$ are two schemes over $\Q$, we denote by $S\times T$ 
the fibre product $S\times_{\Spec \Q} T$.  

If $L$ is a field and $S$ is a variety or adic space over $L$, the symbols
$H^i(S, \F_{\ell})$, $H^i(S, \Z_{\ell})$ and $H^i(S, \Q_{\ell})$ denote
the \'etale (resp. $\ell$-adic \'etale) cohomology of $S_{\bar{L}}$. 

\section{Hilbert modular varieties and Galois representations}

In this section, we establish some preliminary results about Hilbert modular varieties. In particular, 
we construct Galois representations attached to systems of Hecke eigenvalues 
occurring in their cohomology with $\F_{\ell}$-coefficients, using~\cite{sch15}. 

\subsection{Shimura varieties and locally symmetric spaces for $\mathrm{Res}_{F/\Q}\mathrm{GL}_2$} Fix a totally real number field $F$ of degree $[F: \Q]=: g$, and let $\O_F$ be the ring of integers of $F$. Totally positive elements in $F$ (resp. $\O_F$) will be denoted by $F^+$ (resp. $\O_F^+$). Let $\Sigma_{\infty}: =\{\tau_1, \ldots, \tau_g\}$ be the set of real embeddings of $F$; let $G:=\mathrm{Res}_{F/\Q}\mathrm{GL}_2$ with centre $Z \simeq \mathrm{Res}_{F/\Q}\mathbb{G}_m$.

\subsubsection{} Let $\mathbb{S}:=\mathrm{Res}_{\C/\R}\mathbb{G}_m$ be the Deligne torus, and let $h : \mathbb{S} \rightarrow G_{\R}\simeq \prod_{i=1}^g \GL_{2, \R}$ be the morphism which on $\R$-points is given by 
\[
z=a+ib \in \mathbb{S}(\R) \mapsto \left(\begin{smallmatrix}
a & b\\
-b & a
\end{smallmatrix}\right)^g \in G(\R).
\] 
Let $K^\circ_\infty: =\prod_{i=1}^g\mathrm{SO}_2(\R)\subset G(\R)$ (a maximal compact connected subgroup). The $G(\R)$-conjugacy class of $h$, denoted by $X$, is identified with $G(\R)/Z(\R)K^\circ_\infty\simeq (\C\smallsetminus \R)^g$, and the couple $(G, X)$ is a Shimura datum with reflex field $\Q$. For every compact open subgroup $K \subset G(\A_f)$ the Shimura variety $Sh_K(G)$ is a quasi-projective variety with a canonical model over the reflex field $\Q$, with complex analytic uniformisation
\begin{equation*}
Sh_K(G)(\C)=G(\Q)\backslash X\times G(\A_f)/K.
\end{equation*}
The Shimura varieties $Sh_K(G)$ are called \emph{Hilbert modular varieties}. If $K\subset G(\A_f)$ is neat (which we will always assume in what follows) then $Sh_K(G)$ is smooth; moreover, if $K'\subset K$ is a normal compact open subgroup then the map $Sh_{K'}(G)\rightarrow Sh_{K}(G)$ is a finite étale Galois cover.

We have a map
\begin{equation}\label{conn-cpts}
Sh_K(G)(\C) \rightarrow F^\times \backslash \{\pm 1\}^g \times \A_{F, f}^\times / \det(K)\simeq F^{\times, +} \backslash \A_{F, f}^\times / \det(K)
\end{equation}
induced by the map sending $(x_\infty, x_f) \in G(\R) \times G(\A_f)$ to $(\mathrm{sgn} \det x_\infty, \det x_f)$. An element in $F^\times$ acts on $\{\pm 1\}^g$ by multiplying by the sign of the image via each real embedding. Fibres of the map in \eqref{conn-cpts} are connected components of the source.

\subsubsection{Hecke action}\label{hecke-alg} We have the Hecke algebra $\mathbb{T}_K(G) :=\Z\left[K\backslash G(\A_f)/K\right]$ of compactly supported, $K$-bi-invariant functions on $G(\A_f)$, with multiplication given by convolution. Every element of $\mathbb{T}_K(G)$ gives rise to a correspondence on $Sh_K(G)$ as follows: given $g \in G(\A_f)$, let $K_g:=K \cap g K g^{-1}$; we have a correspondence $[KgK]$ on $Sh_K(G)$ given by the following diagram:

\begin{center}
\begin{tikzcd}
Sh_{K_g}(G) \arrow[r] \arrow[d] & Sh_{K_{g^{-1}}}(G) \arrow[d] \\
Sh_K(G) \arrow[r, dashed, "\lbrack KgK\rbrack"] & Sh_K(G),
\end{tikzcd}
\end{center}
where the vertical maps are the canonical projections and the upper horizontal map on complex points is induced by right multiplication by $g$ on $G(\A_f)$. 
Therefore we obtain an action of $\mathbb{T}_K(G)$ on the cohomology groups $H^i(Sh_K(G), \F_\ell)$ as well as on the cohomology groups with compact support $H^i_{c}(Sh_K(G), \F_\ell)$, where $\ell$ is a prime number. In the rest of the paper we will rather work with a smaller Hecke algebra, defined as follows: fix a finite set $S$ of places of $F$ containing
all the infinite places, all the places $v\mid \ell$, and all the finite places $v$ 
such that $K_v$ is not conjugate to $\GL_2(\cO_v)$, where $\cO_v$ is the ring of 
integers in the completion $F_v$ of $F$ at $v$.  
Let 
\[
\T:= \bigotimes_{v\not \in S} {' \; \Z[\GL_2(\cO_v)\backslash \GL_2(F_v)/\GL_2(\cO_v)]}
\] 
denote the abstract spherical Hecke algebra away from $S$. For 
every $v\not\in S$, choose a uniformiser $\varpi_v$ of $\cO_v$. 
We denote by $T_v$ (resp. $S_v$) the double coset $\GL_2(\cO_v)\left(\begin{smallmatrix}
\varpi_v & 0\\
0 & 1
\end{smallmatrix}\right)\GL_2(\O_v)$ (resp. $\GL_2(\O_v)\left(\begin{smallmatrix}
\varpi_v & 0\\
0 & \varpi_v 
\end{smallmatrix}\right)\GL_2(\O_v)$) seen as an element of $\T$. The algebra $\T$ is commutative and is generated by the operators $T_v$, $S_v^{\pm 1}$ for $v \not \in S$. The inversion map on $G$ induces a map $\iota: \mathbb{T}\rightarrow \mathbb{T}$; for a maximal ideal $\m \subset \mathbb{T}$ we denote by $\m^\vee$ its image via $\iota$. 

\subsubsection{Locally symmetric spaces} We have the locally symmetric space $X_K(G)$ attached to $G$ and $K$, defined as follows:
\begin{equation*}
X_K(G):=G(\Q)\backslash G(\R)\times G(\A_f)/(\R_{>0} K_\infty^\circ) K.
\end{equation*}
Letting $Z_K:=Z(\Q)\cap K$, the quotient $T_K:=Z(\R)/\{\pm 1\}^g\R_{>0}Z_K$ is a torus of dimension $g-1$ by Dirichlet's unit theorem, and the projection $X_K(G) \rightarrow Sh_K(G)$ is a $T_K$-bundle (see \cite[Lemma 3.1.2]{gra16}). Let us define $K_\infty:=\prod_{i=1}^g\mathrm{O}_2(\R)$, and let us consider the space
\begin{equation*}
\bar{X}_K(G):=G(\Q)\backslash G(\R)\times G(\A_f)/(\R_{>0}K_\infty)K.
\end{equation*}
In other words, we are quotienting by a maximal compact subgroup at infinity instead of its connected component of the identity. This is the space used in \cite{sch15} (see the introduction of \emph{loc. cit.}). The matrix $\left(\begin{smallmatrix}-1 & 0\\
0 & 1\end{smallmatrix}\right)$ normalises $\mathrm{SO}_2(\R)$; hence we get a right action of $C:=(\Z/2\Z)^g$ on $X_K(G)$, and we have $\bar{X}_K(G)=X_K(G)/C$. The recipe given above endows the (Betti) cohomology of $X_K(G)$ and $\bar{X}_K(G)$ with an action of $\mathbb{T}$, which commutes with the action of $C$. Furthermore, if we make $C$ act on $F^\times \backslash \{\pm 1\}^g \times \A_{F, f}^\times / \det(K) $ by switching signs at archimedean places, then the map in \eqref{conn-cpts} is $C$-equivariant.

Let $\hat{C}$ be the set of characters of $C$ with values in $\F_\ell^\times$. The following lemma relates the cohomology of the spaces $Sh_K(G), X_K(G)$ and $\bar{X}_K(G)$.

\begin{lem}\label{compar-cohom}
Let $\ell$ be an odd prime.
\begin{enumerate}
\item Assume that $\det(K)=K \cap Z(\A_f)$. Then for every $i \geq 0$ the pullback map $H^i(Sh_K(G), \F_\ell) \rightarrow H^i(X_K(G), \F_\ell)$ is injective.
\item Assume that $\det(K) \cap F^\times$ only consists of totally positive elements. Then there is a Hecke-equivariant decomposition
\begin{equation*}
H^*(X_K(G), \F_\ell)=\oplus_{\chi \in \hat{C}}H^*(X_K(G), \F_\ell)^\chi
\end{equation*}
and pullback induces an isomorphism $H^*(\bar{X}_K(G), \F_\ell)\simeq H^*(X_K(G), \F_\ell)^{\Id}$.
\end{enumerate}
\end{lem}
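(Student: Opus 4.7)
For part $(1)$, the plan is to work connected-component-wise. The map $X_K(G)\to Sh_K(G)$ is surjective with connected fibres (each a copy of the torus $T_K$) and therefore induces a bijection on $\pi_0$, so it suffices to prove injectivity of the pullback on one component. Fix $g_f\in G(\A_f)$ and set $\Gamma'_{g_f}:=G(\Q)^+\cap g_f K g_f^{-1}$, $\Gamma_{g_f}:=\Gamma'_{g_f}/Z_K$. By neatness of $K$ and Dirichlet's unit theorem, $\Gamma'_{g_f}$ acts freely and properly discontinuously on the contractible space $\R^{g-1}_{>0}\times\HH^g$, so the chosen components of $X_K(G)$ and $Sh_K(G)$ are the Eilenberg--MacLane spaces $K(\Gamma'_{g_f},1)$ and $K(\Gamma_{g_f},1)$. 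The pullback on cohomology is then the inflation map, sitting at the bottom row of the Lyndon--Hochschild--Serre spectral sequence of the central extension
\begin{equation*}
1\to Z_K\to\Gamma'_{g_f}\to\Gamma_{g_f}\to 1.
\end{equation*}

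I will show that this spectral sequence degenerates at $E_2$. Because it is multiplicative and $H^\bullet(Z_K,\F_\ell)\simeq\Lambda^\bullet\Hom(Z_K,\F_\ell)$ is generated in degree one, it is enough to check that the transgression $d_2\colon H^1(Z_K,\F_\ell)\to H^2(\Gamma_{g_f},\F_\ell)$ vanishes on a basis; equivalently, that the pushforward of the extension class $\alpha\in H^2(\Gamma_{g_f},Z_K)$ to $H^2(\Gamma_{g_f},Z_K\otimes\F_\ell)$ is zero. Standard homological algebra identifies this vanishing with the existence of a group homomorphism $r\colon\Gamma'_{g_f}\to Z_K\otimes\F_\ell$ whose restriction to $Z_K$ is the canonical reduction map. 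The hypothesis $\det(K)=K\cap Z(\A_f)$ is used exactly here: it forces $\det(\Gamma'_{g_f})\subseteq F^{\times,+}\cap\det(K)=F^{\times,+}\cap K\subseteq Z_K$, so $\det$ induces a genuine homomorphism $\Gamma'_{g_f}\to Z_K$, whose restriction to $Z_K$ is the squaring map $z\mapsto z^2$. Since $\ell$ is odd, multiplication by $2$ is invertible on $Z_K\otimes\F_\ell$, so setting $r:=\tfrac{1}{2}(\det\bmod\ell)$ yields the required retraction. The main obstacle here is the geometric realisation of the abstract cohomological vanishing, and recognising that the hypothesis on $K$ is precisely tailored to provide it.

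For part $(2)$, I will first show that $C=K_\infty/K_\infty^\circ\simeq(\Z/2\Z)^g$ acts freely on $X_K(G)$ under the stronger hypothesis. If $c\ne 1$ fixes some point $[(g_\infty,g_f)]$, unpacking the double-coset description produces an element $\gamma\in G(\Q)\cap g_f K g_f^{-1}$ such that $\det(\gamma)\in F^\times$ has the archimedean sign pattern of $\det(c)\in\{\pm 1\}^g$, hence is not totally positive. But $\det(\gamma)\in F^\times\cap\det(g_f K g_f^{-1})=F^\times\cap\det(K)\subseteq F^{\times,+}$ by hypothesis --- a contradiction. Granted freeness, $X_K(G)\to\bar X_K(G)$ is a Galois cover with group $C$ of order $2^g$ invertible in $\F_\ell$ (since $\ell$ is odd); Maschke's theorem gives the character decomposition $H^*(X_K(G),\F_\ell)=\bigoplus_{\chi\in\hat C}H^*(X_K(G),\F_\ell)^\chi$, while the standard identification $H^*(\bar X_K(G),\F_\ell)\isomap H^*(X_K(G),\F_\ell)^C$ picks out the $\Id$-isotypic piece. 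Hecke-equivariance of everything is automatic, since the $C$-action is concentrated at the archimedean place while the Hecke correspondences act on the adelic factor, so the two actions commute.
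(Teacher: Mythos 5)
Your proof is correct and follows essentially the same route as the paper's: part $(1)$ is Leray--Hirsch for the torus bundle $X_K(G)\to Sh_K(G)$, which you realise component by component as degeneration of the Lyndon--Hochschild--Serre spectral sequence of $1\to Z_K\to\Gamma'_{g_f}\to\Gamma_{g_f}\to 1$, and your retraction $\tfrac{1}{2}(\det\bmod\ell)$ is precisely a self-contained construction of the degree-one classes that the paper imports from \cite[Proposition 3.3.9]{gra16}. For part $(2)$ the paper's ``explicit computation'' establishes that $C$ acts freely on the component set $F^\times\backslash\{\pm1\}^g\times\A_{F,f}^\times/\det(K)$, which yields $H^*(\bar{X}_K(G),\F_\ell)\simeq H^*(X_K(G),\F_\ell)^C$ with no invertibility hypothesis; your pointwise freeness combined with the transfer argument (using that $|C|=2^g$ is invertible since $\ell$ is odd) reaches the same conclusion.
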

\begin{proof}\leavevmode
\begin{enumerate}
\item Under the assumption that $\det(K)=K \cap Z(\A_f)$, \cite[Proposition 3.3.9]{gra16} constructs cohomology classes in $H^*(X_K(G), \F_\ell)$ whose restrictions to each fibre of the map $X_K(G)\rightarrow Sh_K(G)$ give a basis of its cohomology; the statement hence follows from the Leray--Hirsch theorem.
\item Since $\det(K) \cap F^\times$ consists of totally positive elements by assumption, an explicit computation shows that $C$ acts freely on $F^\times \backslash \{\pm 1\}^g \times \A_{F, f}^\times / \det(K)$. In other words, the group $C$ acts on $X_K(G)$ freely permuting connected components. This implies that $H^*(\bar{X}_K(G), \F_\ell)\simeq H^*(X_K(G), \F_\ell)^{\Id}$. The Hecke-equivariance of the direct sum decomposition in the statement follows from the fact that the action of $C$ commutes with the Hecke action.
\end{enumerate}
\end{proof}

\subsection{Construction of Galois representations}

The aim of this section is to prove the following result.

\begin{thm}\label{constr-gal-repn} Let $\ell$ be an odd prime and $K\subset G(\A_f)$ a neat compact open subgroup. Let 
$\m \subset \mathbb{T}$ be a maximal ideal in the support of $H^i(Sh_K(G), \F_{\ell})$ or $H^i_{c}(Sh_K(G), \F_{\ell})$ for some $i \geq 0$.
There is a unique continuous, semisimple, totally odd Galois representation 
\[
\bar{\rho}_{\m}: \Gamma_F\to \GL_2(\overline{\F}_{\ell})
\]
such that, for all but finitely many places $v$ of $F$, 
$\bar{\rho}_\m$ is unramified at $v$ and the characteristic polynomial of $\bar{\rho}_{\m}(\mathrm{Frob}_v)$ 
is equal to $X^2-T_vX+S_vN(v) \pmod {\m}$.
\end{thm}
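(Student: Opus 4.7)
The plan is to reduce to Scholze's construction of Galois representations attached to mod $\ell$ systems of Hecke eigenvalues in the cohomology of (the Borel--Serre compactification of) an arithmetic locally symmetric space~\cite[\S V]{sch15}. Uniqueness of $\bar{\rho}_\m$ then follows from the Chebotarev density theorem applied to the characteristic polynomials of Frobenius. For existence, I will transport the eigensystem $\m$ from $H^i(Sh_K(G), \F_\ell)$ (resp.\ $H^i_c$) to the cohomology of $\bar{X}_{K'}(G)$ for a suitable neat level $K'\subset K$.

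First I replace $K$ by a neat normal open subgroup $K'\subset K$ of index prime to $\ell$ which satisfies both hypotheses of Lemma \ref{compar-cohom}. Since $|K/K'|$ is invertible in $\F_\ell$, the Hochschild--Serre spectral sequence for the finite \'etale cover $Sh_{K'}(G)\to Sh_K(G)$ degenerates and yields a Hecke-equivariant inclusion $H^*(Sh_K(G),\F_\ell)\hookrightarrow H^*(Sh_{K'}(G),\F_\ell)$ (and similarly for compact support), so $\m$ persists on $Sh_{K'}(G)$. Lemma \ref{compar-cohom}(1) then transports $\m$ to $H^*(X_{K'}(G),\F_\ell)$. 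Under the totally positive determinant assumption, $C\cong(\Z/2\Z)^g$ acts freely on all of $X_{K'}(G)$ (not merely on components), and each $C$-orbit is the complete preimage of a single connected component of $\bar{X}_{K'}(G)$. Thus the $C$-torsor $\pi\colon X_{K'}(G)\to\bar{X}_{K'}(G)$ is componentwise trivial, $\pi_*\F_\ell\cong\F_\ell[C]$ is a constant sheaf, and
\[
H^*(X_{K'}(G),\F_\ell)\cong H^*(\bar{X}_{K'}(G),\F_\ell)\otimes_{\F_\ell}\F_\ell[C].
\]
Since the Hecke correspondences on $X_{K'}(G)$ are $C$-equivariant and descend to Hecke correspondences on $\bar{X}_{K'}(G)$, the $\T$-action is trivial on the $\F_\ell[C]$ factor. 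Each $\chi$-isotypic piece of Lemma \ref{compar-cohom}(2) is therefore $\T$-equivariantly isomorphic to $H^*(\bar{X}_{K'}(G),\F_\ell)$, and I conclude that $\m$ occurs in the support of $H^*(\bar{X}_{K'}(G),\F_\ell)$.

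Now I invoke Scholze. Since the cited theorem is stated for $\GL_n$ over CM fields, I first base-change to a CM quadratic extension $F'/F$: the eigensystem $\m$ transports to a system of eigenvalues in the cohomology of the locally symmetric space for $\mathrm{Res}_{F'/\Q}\GL_2$, and \cite[Corollary V.4.2]{sch15} produces a continuous semisimple $\bar{\rho}'_\m\colon\Gamma_{F'}\to\GL_2(\bar{\F}_\ell)$ with the correct Frobenius data at places of $F'$ above unramified places of $F$. This $\bar{\rho}'_\m$ is conjugate self-dual, and Chebotarev pins down a unique semisimple descent $\bar{\rho}_\m\colon\Gamma_F\to\GL_2(\bar{\F}_\ell)$ with the required characteristic polynomials at all $v\not\in S$. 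For the compactly supported case, Poincar\'e duality on the smooth variety $Sh_K(G)$ of dimension $g$ identifies $H^i_c(Sh_K(G),\F_\ell)_{\m}$ with the dual of $H^{2g-i}(Sh_K(G),\F_\ell)_{\m^\vee}$ up to a cyclotomic twist, reducing to the case already handled; the representation for $\m$ is recovered by dualising and twisting $\bar{\rho}_{\m^\vee}$. Total oddness of $\bar{\rho}_\m$ is obtained from a congruence argument: after possibly enlarging the coefficient field, $\bar{\rho}_\m$ can be realised as the reduction of a classical $\ell$-adic Galois representation attached to a Hilbert modular form, for which total oddness is known and is preserved by reduction modulo $\ell$. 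The main obstacle in this outline is this final step---the descent from $\Gamma_{F'}$ to $\Gamma_F$ together with the verification of total oddness at all archimedean places---while the reductions in the second paragraph are essentially formal once the componentwise triviality of the $C$-torsor $X_{K'}\to\bar{X}_{K'}$ is understood.
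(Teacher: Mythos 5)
Your overall skeleton (shrink the level, pass from $Sh_K(G)$ to $X_{K'}(G)$ via Leray--Hirsch, then to $\bar{X}_{K'}(G)$, then invoke Scholze) matches the paper's proof, but the step where you pass from $X_{K'}(G)$ to $\bar{X}_{K'}(G)$ contains a genuine error. It is true that $C$ acts freely on the set of connected components and that $H^*(X_{K'}(G),\F_\ell)\cong H^*(\bar{X}_{K'}(G),\F_\ell)\otimes_{\F_\ell}\F_\ell[C]$ as $C$-modules; it is \emph{not} true that the Hecke action is trivial on the $\F_\ell[C]$ factor. The correspondence $[K'gK']$ permutes the connected components through multiplication by $\det(g)$ on $F^{\times,+}\backslash\A_{F,f}^\times/\det(K')$, and this permutation does not respect any chosen splitting of the $C$-orbits. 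Already on $H^0$ one sees that the eigensystems occurring in the $\chi$-isotypic part are those of the trivial part twisted by $\psi(\det(\cdot))^{-1}$, where $\psi$ extends $\chi$ to the component group. Hence you may only conclude that a twist $\m(\psi)=f_\psi(\m)$ lies in the support of $H^*(\bar{X}_{K'}(G),\F_\ell)$, and $\bar{\rho}_{\m}$ must then be recovered as the twist of $\bar{\rho}_{\m(\psi)}$ by the Galois character attached to $\psi$ by class field theory. This twisting argument (cup product with the class $c_\psi\in H^0$, which intertwines the Hecke action with its $f_\psi$-twist) is the main content of the paper's proof of the existence statement and is missing from yours.

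Two further points. First, transporting a mod $\ell$ eigensystem under a quadratic base change $F'/F$ is not a formal operation for torsion classes (there is no mod $\ell$ base change theorem to invoke), and it is unnecessary: \cite{sch15} is stated for totally real fields directly, and the paper cites it in that form together with \cite{clh16} for total oddness, while flagging that the totally real case is conditional on Arthur's work. Second, your argument for total oddness presumes that $\bar{\rho}_{\m}$ is the reduction of the Galois representation of a classical Hilbert modular form; this is precisely what may fail for torsion classes, which is why the paper appeals to \cite{clh16} instead. (Finally, requiring $[K:K']$ to be prime to $\ell$ is neither achievable for $K'=K(N)$ nor needed: the Hochschild--Serre spectral sequence transfers the support of $\m$ to the smaller level without having to degenerate.)
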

\begin{rem}
One could prove the above result by adapting the arguments in Chapter IV of \cite{sch15}. The main technical point one needs to deal with is the construction of ad hoc compactifications of Hilbert modular varieties\footnote{This is slightly subtle because Hilbert modular varieties are Shimura varieties of abelian type and do not directly embed into Siegel modular varieties. However, one can handle this issue by carefully choosing the tame level.}. For the sake of brevity, we will instead explain below how to deduce the theorem from (a special case of) the main result of \cite{sch15}; however, at the time of writing, the totally real case of the latter is conditional on Arthur's work \cite{ar13}. 
\end{rem}
\begin{proof}
Uniqueness of $\bar{\rho}_\m$ follows from the Chebotarev density theorem and the Brauer--Nesbitt theorem; to prove existence of $\bar{\rho}_{\m}$ it suffices to consider the cohomology groups $H^*(Sh_K(G), \F_{\ell})$ (by Poincaré duality and the discussion in \cite[p. 35]{cs19}). Furthermore, if $K' \subset K$ is a normal compact open subgroup, then there is a Hecke-equivariant Hochschild--Serre spectral sequence relating cohomology of $Sh_{K'}(G)$ and $Sh_K(G)$. Therefore, at the cost of possibly enlarging the set $S$ in \S~\ref{hecke-alg}, we may replace $K$ by a normal compact open subgroup. In particular we may take $K=K(N)$ for $N$ large enough, where $K(N):=\{M \in GL_2(\hat{\mathcal{O}}_F)\mid M \equiv \left(\begin{smallmatrix} 1 & 0\\
0 & 1
\end{smallmatrix}\right) \pmod N\}$. This ensures that the assumption in the first point of Lemma \ref{compar-cohom} is satisfied. Furthermore Chevalley's theorem on units guarantees that we may choose $N$ in such a way that the hypothesis in the second point of Lemma \ref{compar-cohom} is satisfied as well. We fix such an integer $N$ from now on. By assumption there is $i \geq 0$ such that $H^i(Sh_K(G), \F_\ell)_\m\neq 0$. By Lemma \ref{compar-cohom} we deduce that $H^i(X_K(G), \F_\ell)_\m \neq 0$, hence $H^i(X_K(G), \F_\ell)_\m^\chi\neq 0$ for some $\chi \in \hat{C}$. If $\chi$ is the trivial character then  by Lemma \ref{compar-cohom} we have $H^i(\bar{X}_K(G), \F_\ell)_\m\neq 0$, and by \cite[Theorem 1.3]{sch15} and \cite[Theorem 1.2]{clh16} we can attach to $\m$ a Galois representation as in the statement of the theorem.

Let us now suppose that $\chi$ is not the trivial character. The map $\{\pm 1\}^g \rightarrow F^\times \backslash \{\pm 1\}^g \times \A_{F, f}^\times / \det(K)$ sending $\varepsilon$ to the equivalence class of $(\varepsilon, 1)$ is injective. We may extend $\chi: \{\pm 1\}^g\rightarrow \F_\ell^\times$ to a character $\psi: F^\times \backslash \{\pm 1\}^g \times \A_{F, f}^\times / \det(K)\rightarrow k^\times$ for a large enough finite extension $k$ of $\F_\ell$. The function $\psi$ gives rise to a cohomology class $c_\psi \in H^0(X_K(G), k)$. Let $f_\psi: \mathbb{T} \rightarrow \mathbb{T}$ the map sending the operator $T_g$ attached to the double coset of $g \in G(\A_f)$ to $\psi(\det(g)^{-1})T_g$. Cup product with $c_\psi$ induces a map $H^i(X_K(G), k)\rightarrow H^i(X_K(G), k)$ which is Hecke-equivariant if we endow the source (resp. target) with the usual Hecke action (resp. the composite of the usual Hecke action and $f_\psi$). This is proved in great generality in \cite[Proposition 2.2.14]{cal18}, and it can be checked in our situation by a direct computation.

Furthermore the above map sends $H^i(X_K(G), k)^\chi$ to $H^i(X_K(G), k)^{\Id}$. Indeed, for $\varepsilon \in C$ we have $\varepsilon^*(c_\psi)=\chi(\varepsilon)c_\psi$ hence, for every $c \in H^i(X_K(G), k)^\chi$:
\begin{equation*}
\varepsilon^*(c_\psi \cup c)=\varepsilon^*c_\psi \cup \varepsilon^*c=\chi(\varepsilon) c_\psi \cup \chi(\varepsilon)c=c_\psi \cup c.
\end{equation*}
The outcome of our discussion is that, letting $\m(\psi):=f_\psi(\m)$, we have the equivalence
\begin{equation*}
H^i(X_K(G), k)^\chi_\m \neq 0 \Leftrightarrow H^i(X_K(G), k)_{\m(\psi)}^{\Id}\neq 0.
\end{equation*}
We deduce that $H^i(\bar{X}_K(G), k)_{\m(\psi)}\neq 0$, hence we have a Galois representation $\bar{\rho}_{\m(\psi)}$ attached to $\m(\psi)$ by \cite[Theorem 1.3]{sch15}. We can finally take $\bar{\rho}_\m$ to be the twist of $\bar{\rho}_{\m(\psi)}$ by the character of $\Gamma_F$ corresponding to $\psi$ via global class field theory.
\end{proof}

\subsection{Cohomology of the boundary} Recall that a maximal ideal $\m\subset \mathbb{T}$ in the support of $H^*(Sh_K(G), \F_\ell)$ is said to be \emph{non-Eisenstein} if the associated Galois representation $\bar{\rho}_\m$ is absolutely irreducible.

\begin{lem}\label{cohom-noneis}
Let $\ell>2$ be a prime and assume that $\m$ is non-Eisenstein. Then:
\begin{enumerate}
\item for every integer $i$, the natural map from compactly supported cohomology to cohomology induces an isomorphism $H^i_c(Sh_K(G), \F_{\ell})_\m\simeq H^i(Sh_K(G), \F_{\ell})_\m$;
\item take $\delta \geq 0$ and assume that $H^i_c(Sh_K(G), \F_{\ell})_\m=H^i_c(Sh_K(G), \F_{\ell})_{\m^\vee}=0$ 
for each $i < g-\delta$. Then $H^i(Sh_K(G), \F_{\ell})_\m=0$ if $i$ is outside the interval $[g-\delta, g+\delta]$.
\end{enumerate}
\end{lem}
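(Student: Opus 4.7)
I would prove the two parts in sequence: part (1) via a boundary-cohomology calculation at the cusps, and part (2) by combining (1) with Poincar\'e duality.

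For part (1), the plan is to produce a Hecke-equivariant long exact sequence comparing $H^*_c$ and $H^*$ and to show that the boundary contribution is Eisenstein. Let $j\colon Sh_K(G)\hookrightarrow \bar{Sh}_K(G)$ denote the Baily--Borel minimal compactification, with closed complement $i\colon Z\hookrightarrow \bar{Sh}_K(G)$ consisting of finitely many cusps. The distinguished triangle $j_!\F_\ell \to Rj_*\F_\ell \to i_*i^*Rj_*\F_\ell \to$ produces a Hecke-equivariant long exact sequence
\[
\cdots \to H^i_c(Sh_K(G),\F_\ell) \to H^i(Sh_K(G),\F_\ell) \to H^i(Z, i^*Rj_*\F_\ell) \to \cdots.
\]
The stalks of $i^*Rj_*\F_\ell$ at a cusp are computed from a punctured neighbourhood; analytically this is a nilmanifold bundle over a torus associated to the Levi $\mathrm{Res}_{F/\Q}\GL_1\times \mathrm{Res}_{F/\Q}\GL_1$ of the unique proper $\Q$-rational parabolic of $G$ up to conjugation. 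Consequently, the systems of Hecke eigenvalues appearing in $H^*(Z,i^*Rj_*\F_\ell)$ arise via parabolic induction from characters of this torus, and the associated Galois representations are direct sums of two characters of $\Gamma_F$, hence reducible. Since $\bar{\rho}_\m$ is absolutely irreducible, $\m$ is not in the support of the boundary cohomology, so the long exact sequence gives the isomorphism in (1).

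For part (2), I would use Poincar\'e duality. As $Sh_K(G)$ is smooth and pure of dimension $g$, we have a perfect Hecke-equivariant pairing (after twisting the Hecke action by the involution $\iota$ on $\mathbb{T}$), which gives
\[
H^i(Sh_K(G),\F_\ell)_\m \;\simeq\; H^{2g-i}_c(Sh_K(G),\F_\ell)_{\m^\vee}^\vee.
\]
For $i<g-\delta$, part (1) combined with the hypothesis at $\m$ gives $H^i(Sh_K(G),\F_\ell)_\m = H^i_c(Sh_K(G),\F_\ell)_\m = 0$. For $i>g+\delta$ we have $2g-i<g-\delta$, so the hypothesis at $\m^\vee$ together with the duality above yields $H^i(Sh_K(G),\F_\ell)_\m=0$.

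The main obstacle is justifying the claim in part (1) that only Eisenstein maximal ideals occur in the boundary cohomology with $\F_\ell$-coefficients. A clean way to do this is to pass to a smooth toroidal compactification, where the boundary becomes a normal-crossings divisor with strata that are explicitly toric over abelian arithmetic quotients, and analyse the Hecke action on the associated nearby-cycle strata directly; alternatively one can invoke the existing theory of Eisenstein cohomology for Hilbert modular varieties and the construction of Galois representations attached to boundary classes, which are built from Hecke characters of $F$ and hence reducible. All other steps in the argument are standard once this ingredient is in place.
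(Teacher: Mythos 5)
Your overall strategy coincides with the paper's: a Hecke-equivariant long exact sequence whose third term is boundary cohomology, the claim that this boundary term vanishes after localisation at a non-Eisenstein $\m$, and then Poincar\'e duality (which interchanges $\m$ and $\m^\vee$) for part (2). Part (2) of your argument is exactly the paper's, and is correct as written. The difference is in the compactification used for part (1): you work with the Baily--Borel (or a toroidal) compactification and the complex $i^*Rj_*\F_\ell$ on the cusps, whereas the paper uses the Borel--Serre compactification, identifies $H^*(\partial,\F_\ell)$ with $H^*(P(\Q)\backslash(\C\smallsetminus\R)^g\times G(\A_f)/K,\F_\ell)$ for $P$ the standard parabolic, relates this by a Leray--Hirsch argument (via a $T_K$-torus bundle) to the Borel--Serre boundary of the locally symmetric space $X_K(G)$, and then cites the argument of \cite[Section 4]{newto16} to conclude that any $\m$ in the support of that boundary cohomology is Eisenstein.

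The genuine gap is precisely the step you flag as ``the main obstacle''. Invoking ``the existing theory of Eisenstein cohomology for Hilbert modular varieties and the construction of Galois representations attached to boundary classes'' does not close it: Harder's Eisenstein cohomology is a characteristic-zero theory, and what is needed here is a statement about systems of Hecke eigenvalues occurring in the \emph{torsion} cohomology $H^*(Z,i^*Rj_*\F_\ell)$ (or of the Borel--Serre boundary) with $\F_\ell$-coefficients, where there is no a priori decomposition into cuspidal and Eisenstein parts. One must actually compute the Hecke action on the mod-$\ell$ cohomology of the boundary strata --- equivalently on the group cohomology of the cusp stabilisers, which are extensions of unit groups by $\O_F$-lattices --- and verify directly that $T_v \bmod \m$ is a sum of two unramified characters at $\Frob_v$, so that $\bar{\rho}_\m$ would be reducible. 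This degenerate-eigenvalue computation is exactly what the paper imports from \cite{newto16} after reducing to the locally symmetric space; your toroidal alternative would also work, but only once this computation is carried out with torsion coefficients, and as written your proposal leaves it as an appeal to a characteristic-zero theory that does not directly apply.
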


\begin{proof}
Let $\partial$ be the boundary of the Borel--Serre compactification $Sh_K(G)^{BS}$ of $Sh_K(G)$, constructed in \cite{har87}. 
Recall that the cohomology of $Sh_K(G)^{BS}$ agrees with that of $Sh_K(G)$, 
as the two spaces are homotopy equivalent, and the 
compactly supported cohomology of $Sh_K(G)$ coincides with the 
cohomology of $Sh_K(G)^{BS}$ relative to the boundary. Hence we have a $\mathbb{T}$-equivariant 
long exact sequence
\begin{equation*}
\ldots H^i_c(Sh_K(G), \F_{\ell}) \rightarrow H^i(Sh_K(G), \F_{\ell}) 
\rightarrow H^{i}(\partial, \F_{\ell}) \rightarrow H^{i+1}_c(Sh_K(G), \F_{\ell}) \ldots.
\end{equation*}
We claim that if $\m$ is non-Eisenstein then cohomology of the boundary vanishes after localising at $\m$; this imples the first point. The second follows using Poincaré duality (which interchanges $\m$ and $\m^\vee$, as in \cite[p. 35]{cs19}). It remains to justify our claim. By \cite[p. 46]{har87} we have $H^*(\partial, \F_\ell)\simeq H^*(P(\Q)\backslash (\C \smallsetminus \R)^g \times G(\A_f)/K, \F_\ell)$, where $P \subset G$ is the standard upper parabolic. To prove our claim we may replace $K$ by a normal compact open subgroup, so that the condition $\det(K)=K \cap Z(\A_f)$ is satisfied. We will assume that this is the case in the rest of the proof. The space $P(\Q)\backslash (\C \smallsetminus \R)^g \times G(\A_f)/K$ is a disjoint union of quotients $P(\Q)\cap g_iKg_i^{-1}\backslash (\C\smallsetminus \R)^g$, where $g_i$ runs over a set of representatives of $P(\Q)\backslash G(\A_f)/K$.

On the other hand we have the locally symmetric space $X_K(G)$ and its Borel--Serre compactification, whose boundary is homotopy equivalent to the double quotient $P(\Q) \backslash G(\R)\times G(\A_f)/(\R_{>0}K^\circ_\infty) K$ (see \cite[p. 94]{gra16}). The latter is a disjoint union of quotients $P(\Q)\cap g_iKg_i^{-1}\backslash G(\R)/(\R_{>0}K^\circ_\infty)$ with $g_i$ as above. The action of $Z(\R)$ on $G(\R)/(\R_{>0}K^\circ_\infty)$ induces an action on $P(\Q) \backslash G(\R)\times G(\A_f)/(\R_{>0}K^\circ_\infty) K$. This action preserves the fibres of the projection map
\begin{equation}\label{boundary-bundle}
P(\Q) \backslash G(\R)\times G(\A_f)/(\R_{>0}K^\circ_\infty) K\rightarrow P(\Q)\backslash (\C \smallsetminus \R)^g \times G(\A_f)/K,
\end{equation} 
and factors through an action of the torus $T_K=Z(\R)/\{\pm 1\}^g\R_{>0}Z_K$. The argument in the proof of \cite[Lemma 3.1.2]{gra16} shows that each projection map $P(\Q)\cap g_iKg_i^{-1}\backslash G(\R)/(\R_{>0}K^\circ_\infty) \rightarrow P(\Q)\cap g_iKg_i^{-1}\backslash (\C\smallsetminus \R)^g$ is a $T_K$-bundle. Hence the same is true for the map in \eqref{boundary-bundle}. Furthermore we have a commutative diagram
\begin{center}
\begin{tikzcd}
P(\Q) \backslash G(\R)\times G(\A_f)/(\R_{>0}K^\circ_\infty) K \arrow[r] \arrow[d]
& G(\Q)\backslash G(\R) \times G(\A_f)/(\R_{>0}K^\circ_\infty) K \arrow[d] \\
P(\Q) \backslash (\C \smallsetminus \R)^g\times G(\A_f)/K\arrow[r]
& G(\Q)\backslash (\C \smallsetminus \R)^g \times G(\A_f)/K.
\end{tikzcd}
\end{center}
As we already mentioned above, by \cite[Lemma 3.1.2]{gra16} the right vertical map is a $T_K$-bundle, and \cite[Proposition 3.3.9]{gra16} constructs cohomology classes in $H^*(X_K(G), \F_\ell)$ whose restrictions to each fibre of the right vertical map give a basis of its cohomology. Pulling back via the upper horizontal map we obtain classes in $H^*(P(\Q) \backslash G(\R)\times G(\A_f)/(\R_{>0}K^\circ_\infty) K, \F_\ell)$ enjoying the same property. In particular by Leray--Hirsch the pullback map in cohomology induced by the left vertical map is injective. Now assume that $H^*(\partial, \F_\ell)_\m=H^*(P(\Q) \backslash (\C \smallsetminus \R)^g\times G(\A_f)/K, \F_\ell)_\m\neq 0$. Then we deduce that the cohomology $H^*(P(\Q) \backslash G(\R)\times G(\A_f)/(\R_{>0}K^\circ_\infty) K, \F_\ell)_\m$ of the boundary of $X_K(G)$ is non-zero. Finally the argument in \cite[Section 4]{newto16} shows that $\m$ must be Eisenstein.
\end{proof}

\section{Quaternionic and unitary Shimura varieties}

In this section, we introduce quaternionic Shimura varieties
as well as certain closely related unitary Shimura varieties that 
admit nice integral models. 

\subsection{Quaternionic Shimura data and the associated unitary Shimura data}

\subsubsection{Quaternionic Shimura data}\label{quat-datum}

As in the previous section we denote $G:=\mathrm{Res}_{F/\Q}\mathrm{GL}_2$; we also let $T_F:=\mathrm{Res}_{F/\Q}\mathbb{G}_m$. Let $K\subset G(\A_f)$ be a neat compact open subgroup. Fix a prime $p>2$ which is totally split in $F$ and such that $K=K^{p}K_p$ with $K_p=\GL_2(\O_F\otimes\Z_p)$.

Recall that $\Sigma_\infty$ denotes the set of real places of $F$. We will denote by $\Sigma_p$ the set of embeddings of $F$ into $\bar{\Q}_p$, which we identify with the set of prime ideals $\p \subset \O_F$ lying above $p$. We fix an isomorphism $\iota_p : \bar{\Q}_p \toisom \C$, inducing a bijection $\iota_{p, \infty}: \Sigma_p \toisom \Sigma_\infty$. For every subset $T \subset \Sigma_p$ we set $T_\infty:=\iota_{p, \infty}(T)$ and we denote by $B_T$ the quaternion algebra over $F$ ramified precisely at $T \coprod T_\infty$. We let $G_T:=\mathrm{Res}_{F/\Q}B_T^\times$, and we fix an isomorphism $G_T(\A_f^{(p)})\simeq G(\A_f^{(p)})$. The group $G_{T, \R}$ is isomorphic to $\prod_{\tau \in T_\infty} \mathbb{H}^\times \times \prod_{\tau \in \Sigma_\infty \smallsetminus T_\infty} \GL_{2, \R}$ where $\mathbb{H}$ is the algebra of Hamilton quaternions. Let $X_T$ be the $G_T(\R)$-conjugacy class of the morphism $h_T: \mathbb{S} \rightarrow G_{T, \R}$ sending an $\R$-point $z=a+ib \in \mathbb{S}(\R)$ to $(z^\tau)_{\tau \in \Sigma_\infty}$, where $z^\tau=1$ for $\tau \in T_\infty$ and $z^\tau=\left(\begin{smallmatrix}
a & b\\
-b & a
\end{smallmatrix}\right)$ if $\tau \in \Sigma_\infty \smallsetminus T_\infty$. The couple $(G_T, X_T)$ is a \emph{weak Shimura datum} (in the sense of \cite[Section 2.2]{tixi16}) whose reflex field $F_T$ can be described as follows: the group $\mathrm{Aut}(\C/\Q)$ acts on $\Sigma_\infty$ by post-composition. Let $\Gamma_T \subset \mathrm{Aut}(\C/\Q)$ be the subgroup preserving $T_\infty$; then $F_T=\C^{\Gamma_T}\subset \C$.

Let $K_T=K_{T, p}K^{p} \subset G_T(\A_f)$ be the compact open subgroup such that $K^p\subset G_T(\A_f^{(p)})\simeq G(\A_f^{(p)})$ is the subgroup chosen above and $K_{T, p}=\prod_{\p \mid p}K_{T, \p}$ is of the following type: if $\p \in \Sigma_p \smallsetminus T$ then we fix an isomorphism $\rho_\p: B_T(F_\p)^\times\rightarrow \GL_2(F_\p)$ and we take $K_{T, \p}:=\rho_\p^{-1}(\GL_2(\mathcal{O}_\p))$. If $\p \in T$ then we take $K_{T, \p}$ to be the group of units in the unique maximal order in the division quaternion algebra $B_T \otimes_F F_\p$.

\subsubsection{The auxiliary $CM$ extension}\label{auxil-CM}

Choose a $CM$ extension $E/F$ such that every place $\p \in \Sigma_p$ is \emph{inert} in $E$; in particular $B_T \otimes_F E$ is isomorphic to the matrix algebra $M_2(E)$. Let $c \in \mathrm{Gal}(E/F)$ be the non trivial element and let $\Sigma_{E, \infty}$ be the set of complex embeddings of $E$: it comes with a restriction map $\Sigma_{E, \infty}\rightarrow \Sigma_\infty$ whose fibres are the orbits for the action of $\mathrm{Gal}(E/F)$ sending an embedding $\tilde{\tau} \in \Sigma_{E, \infty}$ to $\tilde{\tau}^c:=\tilde{\tau}\circ c$. Choose a set $\tilde{T} \subset \Sigma_{E, \infty}$ containing exactly one lift of each $\tau \in T_\infty$. For every $\tilde{\tau} \in \Sigma_{E, \infty}$, define an integer $s_{\tilde{\tau}} \in \{0, 1, 2\}$ as follows:
\begin{itemize}
\item if $\tilde{\tau}_{|F} \not \in T_\infty$ then $s_{\tilde{\tau}}=1$;
\item if $\tilde{\tau} \in \tilde{T}$ then $s_{\tilde{\tau}}=0$;
\item if $\tilde{\tau}^c \in \tilde{T}$ then $s_{\tilde{\tau}}=2$.
\end{itemize}

For each $\tau \in T_\infty$ (resp. $\tau \in \Sigma_\infty\smallsetminus T_\infty$) we choose the isomorphism $E\otimes_{F, \tau}\R\simeq \C$ induced by the embedding $\tilde{\tau} \in \Sigma_{E, \infty}\smallsetminus \tilde{T}$ lifting $\tau$ (resp. an arbitrary lift of $\tau$). Let $T_E:=\mathrm{Res}_{E/\Q}\mathbb{G}_m$; via the previous choices we obtain an isomorphism $T_{E}(\R)= \prod_{\tau \in \Sigma_\infty}(E\otimes_{F, \tau}\R)^\times\simeq \prod_{\tau \in \Sigma_\infty}\C^\times$. Let $h_{T, E}: \mathbb{S} \rightarrow T_{E, \R}$ be the morphism which on $\R$-points sends $z=a+ib \in \mathbb{S}(\R)$ to $(z^\tau)_{\tau \in \Sigma_\infty}$, where $z^\tau=z$ (resp. $z^\tau=1$) if $\tau \in T_\infty$ (resp. $\tau \in \Sigma_\infty \smallsetminus T_\infty$). The reflex field $E_T\supset F_T$ of the $T_E(\R)-$conjugacy class of $h_{T, E}$ is the subfield of $\C$ fixed by the stabiliser in $\mathrm{Aut}(\C/\Q)$ of $\tilde{T}$.

\subsubsection{Unitary Shimura data}\label{unit-datum} We denote by $H_T$ the algebraic group fitting in the exact sequence
\begin{equation*}
0 \rightarrow T_F \rightarrow G_T \times T_E \rightarrow H_T \rightarrow 0,
\end{equation*}
where the map $T_F \rightarrow G_T \times T_E$ is given by $a \mapsto (a, a^{-1})$. Notice that, because of Hilbert's theorem 90, the map $G_T \times T_E \rightarrow H_T$ induces surjections on $\Q$-points and $\A_f$-points. The $(G_T \times T_E)(\R)-$conjugacy class of the map $h_T\times h_{T, E}: \mathbb{S}\rightarrow (G_T \times T_E)_\R$ and the $H_T(\R)$-conjugacy class of the induced map $h_{H_T}: \mathbb{S}\rightarrow H_{T, \R}$ can both be identified with $X_T \simeq \prod_{\tau \in \Sigma_\infty \smallsetminus T_\infty}(\C \smallsetminus \R)$, and they give rise to weak Shimura data $(G_T \times T_E, X_T)$ and $(H_T, X_T)$ with reflex field $E_T$. We let $X_{T}^+:=\prod_{\tau \in \Sigma_\infty \smallsetminus T_\infty}(\C \smallsetminus \R)^+$, where $(\C \smallsetminus \R)^+ \subset (\C \smallsetminus \R)$ is the upper half-plane.

Let us denote by $D_T$ the tensor product $B_T\otimes_F E$ and by $\overline{(\cdot)}: D_T \rightarrow D_T$ the tensor product of the main involution on $B_T$ and complex conjugation on $E$. For every $\Q-$algebra $R$ we have a canonical isomorphism
\begin{equation*}
(B_T\otimes_\Q R)\otimes_{F \otimes_\Q R}(E\otimes_\Q R)\simeq (B_T\otimes_F E)\otimes_{\Q}R
\end{equation*}
hence we get a map $G_T(R) \times T_E(R) \rightarrow (D_T \otimes_\Q R)^\times$. This yields an identification 
\begin{equation*}
H_T(R)=\{g \in D_T\otimes_\Q R \mid g \bar{g} \in (F \otimes_\Q R)^\times\}.
\end{equation*}
The latter description allows to see $H_T$ as a unitary group as follows: given an element $\sigma \in D_T$ such that $\bar{\sigma}=-\sigma$ the map sending $g \in D_T$ to $g^*=\sigma^{-1}\bar{g}\sigma$ is an involution of $D_T$. Let us denote by $V_T$ the $\Q$-vector space underlying $D_T$, together with its natural structure of left $D_T$-module, so that $\mathrm{End}_{D_T}(V_T)=D_T^{op}$. Consider the skew-Hermitian pairing
\begin{align*}
\psi_T: V_T \times V_T & \rightarrow E\\
(v, w) & \mapsto \mathrm{Tr}_{D_T/E}(v\sigma w^*);
\end{align*}
for evey $g \in D_T$ and $v, w \in V_T$ we have
\begin{equation*}
\psi_T(vg, wg)=\mathrm{Tr}_{D_T/E}(vg \sigma (wg)^*)=\mathrm{Tr}_{D_T/E}(vg \bar{g}\sigma w^*)=g\bar{g}\psi_T(v, w)
\end{equation*}
hence, for every $\Q$-algebra $R$,
\begin{equation*}
H_T(R)=\{(g, c(g)) \in \mathrm{End}_{D_T \otimes_\Q R}(V_T \otimes_\Q R)\times (F \otimes_\Q R)^\times \mid \psi_T(vg, wg)=c(g)\psi(v, w)\}.
\end{equation*}

\subsection{Relation between quaternionic and unitary Shimura varieties}\label{quatvsun}
\subsubsection{} Fix a subset $T\subset \Sigma_p$; the reduced norm on $B_T$ gives rise to a map $\mathrm{Nm}: G_T \rightarrow T_F$. Let $T_{H_T}:=(T_F \times T_E)/T_F$, where the embedding of $T_F$ in $T_F \times T_E$ is given by $a \mapsto (a^2, a^{-1})$. Letting $N_{H_T}: H_T \rightarrow T_{H_T}$ be the map induced by $\mathrm{Nm} \times \mathrm{Id}: G_T \times T_E \rightarrow T_F \times T_E$, we obtain a commutative diagram 
\begin{center}
\begin{tikzcd}
G_T \times T_E \arrow[r] \arrow[d, "\mathrm{Nm}\times \mathrm{Id}"] & H_T \arrow[d, "N_{H_T}"] \\
T_F \times T_E \arrow[r] & T_{H_T}
\end{tikzcd}
\end{center}
where the top arrow is compatible with the Deligne homomorphisms. Recall that we have fixed a compact open subgroup $K_T \subset G_T(\A_f)$. Take a compact open subgroup $K_E=K_{E, p} K_E^{p} \subset T_E(\A_f)$ and a compact open subgroup $U_T \subset H_T(\A_f)$ containing the image of $K_T \times K_E$.
Let
\begin{align*}
C_{K_T}&:=F^{\times, +}\backslash \A_{F, f}^\times/\mathrm{Nm}(K_T), \; \; C_{K_E}:=E^{\times}\backslash \A_{E, f}^\times/K_E\\
C_{K_T \times K_E}&:=C_{K_T}\times C_{K_E}, \; \; C_{U_T}:=T_{H_T}(\Q)^+\backslash T_{H_T}(\A_f)/N_{H_T}(U_T),
\end{align*}
where $T_{H_T}(\Q)^+:=(F^{\times, +}\times E^\times)/F^\times$. We have maps $C_{K_T}\rightarrow C_{K_T}\times C_{K_E} \rightarrow C_{U_T}$, where the first map sends $C_{K_T}$ to (the equivalence class of) 1 on the second component. Letting $G_T(\Q)^+\subset G_T(\Q)$ be the subgroup of elements with totally positive norm, and $H_T(\Q)^+:=(G_T(\Q)^+\times T_E(\Q))/T_F(\Q)$, we obtain the following commutative diagram: 
\begin{equation}\label{quat-to-un}
\begin{tikzcd}[sep=small]
G_T(\Q)^+\backslash X_{T}^+\times G_T(\A_f)/K_T \arrow[r]\arrow[d] & G_T(\Q)^+\backslash X_{T}^+\times G_T(\A_f)/K_T \times C_{K_E} \arrow[r]\arrow[d] & H_T(\Q)^+\backslash X_{T}^+ \times H_T(\A_f)/U_T \arrow[d]\\
C_{K_T} \arrow[r]& C_{K_T}\times C_{K_E} \arrow[r]& C_{U_T}
\end{tikzcd}.
\end{equation}
If $T \neq \Sigma_p$ (so that the spaces in the top row are positive dimensional), the fibres of the vertical maps are connected components of the complex analytic spaces in the first row. The upper left (resp. upper right) space is identified with the complex points of $Sh_{K_T}(G_T)$ (resp. $Sh_{U_T}(H_T)$). If $T=\Sigma_p$ then we take this as a definition of (the complex points of) $Sh_{K_T}(G_T)$ and $Sh_{U_T}(H_T)$.

Following \cite{dks20}, we will study the relation between the top left and the top right space of the above diagram. To do so, we introduce the following notion, borrowed from \cite[\S~2.3.1]{dks20}.
\begin{defn}\label{defn:suff small}
We say that $K_E$ is \emph{sufficiently small with respect to} $K_T$ if the following conditions are satisfied:
\begin{enumerate}
\item $E^\times \cap\{\frac{y}{y^c} \mid y \in K_E\}=\{1\}$;
\item $K_E\cap T_F(\A_f)\subset K_T$;
\item $N_{E/F}(K_E)\subset \mathrm{Nm}(K_T)$.
\end{enumerate}
\end{defn}

\subsubsection{} Given $K_T$, it is always possible to choose $K_E$ sufficiently small with respect to $K_T$, cf. \cite[\S~2.3.1]{dks20}. The main point is that the norm map $\O_E^\times \rightarrow \O_F^\times$ has finite kernel, hence we can choose $K_E$ satisfying the first condition. Notice that this can be done independently of $K_T$; given $K_T$, we can then shrink $K_E$ so that it satisfies the second and third conditions. Furthermowe we may choose $K_E$ of the form  $(\mathcal{O}_E\otimes \Z_p)^\times K_E^{p}$ (if $K_T$ is as in \S~\ref{quat-datum}).

\subsubsection{Hecke algebras}
Let $u: G_T \rightarrow H_T$ be the composite of the inclusion $G_T \rightarrow G_T \times T_E$ (whose second component is the composition of the structure map and the identity section) and the projection $G_T \times T_E \rightarrow H_T$. If $U_T \subset H_T(\A_f)$ contains the image of $K_T \times K_E$ then $u$ induces a map $K_T\backslash G_T(\A_f)/K_T \rightarrow U_T\backslash H_T(\A_f)/U_T$, which in turn induces a (set theoretic) pullback map $r: \mathbb{T}_{U_T}(H_T)\rightarrow \mathbb{T}_{K_T}(G_T)$. On the other hand the inclusion $G_T \rightarrow G_T \times T_E$ gives rise to a morphism of Hecke algebras $i: \mathbb{T}_{K_T}(G_T) \rightarrow \mathbb{T}_{U_T}(H_T)$ sending the characteristic function of a double coset $K_TgK_T$ to that of $U_T(g, 1)U_T$.

\begin{lem}
Assume that $K_E$ is sufficiently small with respect to $K_T$ and that $U_T$ is the image of $K_T \times K_E$. Then the composite $r \circ i: \mathbb{T}_{K_T}(G_T)\rightarrow \mathbb{T}_{K_T}(G_T)$ is the identity.
\end{lem}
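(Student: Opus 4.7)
The plan is to reduce the identity $r\circ i=\mathrm{Id}$ to a set-theoretic calculation on double cosets. Since $\mathbb{T}_{K_T}(G_T)=\Z[K_T\backslash G_T(\A_f)/K_T]$ is free as a $\Z$-module on the characteristic functions $\mathbf{1}_{K_Tg K_T}$, and both $i$ and $r$ are $\Z$-linear, it suffices to prove that $r(i(\mathbf{1}_{K_T g K_T}))=\mathbf{1}_{K_T g K_T}$ for each $g\in G_T(\A_f)$. By the definitions of $i$ and $r$ this is equivalent to the set-theoretic identity
\[
u^{-1}\bigl(U_T\cdot u(g)\cdot U_T\bigr)=K_T\,g\,K_T
\]
inside $G_T(\A_f)$, where $u(g)=[(g,1)]\in H_T(\A_f)$.

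The inclusion $\supseteq$ is immediate: for $k_1,k_2\in K_T$ we have $u(k_1gk_2)=[(k_1,1)]\cdot u(g)\cdot [(k_2,1)]$, and each $[(k_i,1)]$ lies in the image $u_2(K_T\times K_E)=U_T$. For the reverse inclusion, suppose $g'\in G_T(\A_f)$ satisfies $u(g')\in U_Tu(g)U_T$. Writing the two outer elements of $U_T$ as images of pairs in $K_T\times K_E$, we obtain an equality $[(g',1)]=[(k_1gk_2,e_1e_2)]$ in $H_T(\A_f)=(G_T\times T_E)(\A_f)/T_F(\A_f)$, for some $k_1,k_2\in K_T$ and $e_1,e_2\in K_E$. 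Recalling that $T_F$ is embedded in $G_T\times T_E$ via $a\mapsto (a,a^{-1})$, this yields an element $a\in T_F(\A_f)$ with $g'=ak_1gk_2$ and $a^{-1}e_1e_2=1$, so that $a=e_1e_2\in K_E$.

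The main step is then to exploit the sufficiently small hypothesis on $K_E$: the element $a$ lies in $T_F(\A_f)\cap K_E$, and condition (ii) of Definition~\ref{defn:suff small} states precisely that $T_F(\A_f)\cap K_E\subset K_T$. Hence $a\in K_T$, and $g'=(ak_1)gk_2\in K_Tg K_T$, completing the verification. The only real obstacle is bookkeeping for the quotient $H_T=(G_T\times T_E)/T_F$ and matching the definitions of $i$ and $r$; once these are unwound the computation is routine, and among the three conditions in Definition~\ref{defn:suff small} only (ii) enters here, the others being used elsewhere in the paper (e.g.\ in connection with the diagram \eqref{quat-to-un}).
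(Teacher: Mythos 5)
Your proof is correct and follows essentially the same route as the paper's: both reduce to the set-theoretic identity $u^{-1}(U_T(g,1)U_T)\cap G_T(\A_f)=K_TgK_T$, and both hinge on extracting the element $a=e_1e_2\in T_F(\A_f)\cap K_E\subset K_T$ from condition (2) of Definition~\ref{defn:suff small}. No gaps.
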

\begin{proof}
Let us see $G_T$ as a subgroup of $G_T \times T_E$ and denote by $q:(G_T \times T_E)(\mathbb{A}_f)\rightarrow H_T(\mathbb{A}_f)$ the quotient map. Given $g \in G_T(\mathbb{A}_f)$, we need to prove that
\begin{equation*}
q^{-1}(U_T(g, 1)U_T)\cap G_T(\mathbb{A}_f)=K_TgK_T.
\end{equation*}
Take $x$ belonging the the set on the left hand side above. Then there exist $(k_1, e_1), (k_2, e_2) \in K_T \times K_E$ and $a \in T_F(\mathbb{A}_f)$ such that
\begin{equation*}
x=(a, a^{-1})(k_1, e_1)(g, 1)(k_2, e_2).
\end{equation*}
As $x \in G_T(\mathbb{A}_f)$ we must have $a^{-1}e_1e_2=1$ hence $e_1e_2=a \in T_F(\mathbb{A}_f)\cap K_E\subset K_T$. It follows that
\begin{equation*}
x=(ak_1gk_2, 1) \in K_TgK_T.
\end{equation*}
\end{proof}

\subsubsection{}\label{def-hecke} The above lemma implies in particular that the map $i: \mathbb{T}_{K_T}(G_T) \rightarrow \mathbb{T}_{U_T}(H_T)$ is injective. In what follows we will use this map to identify $\mathbb{T}_{K_T}(G_T)$ with a sub-algebra of $\mathbb{T}_{U_T}(H_T)$. As in $\S~$\ref{hecke-alg} we will work with the Hecke algebra $\mathbb{T}=\otimes_{v}'\Z[\GL_2(\O_v)\backslash \GL_2(F_v)/\GL_2(\O_v)]$, where the product runs over the set of places of $F$ lying above a rational prime different from $p$ and such that the component of $K_T$ at $v$ is hyperspecial.

The statements in the next lemma are established, in a slightly different setting, in the proof of \cite[Lemma 2.3.1]{dks20}.

\begin{lem}
Assume that $K_E$ is sufficiently small with respect to $K_T$ and that $U_T$ is the image of $K_T \times K_E$. Then:
\begin{enumerate}
\item The map $C_{K_T}\rightarrow C_{U_T}$ obtained composing the bottom arrows in \eqref{quat-to-un} is injective.
\item The map $j: Sh_{K_T}(G_T)(\C)\rightarrow Sh_{U_T}(H_T)(\C)$ obtained composing the top arrows in \eqref{quat-to-un} restricts to an isomorphism between each connected component of $Sh_{K_T}(G_T)(\C)$ and a connected component of the target.
\end{enumerate}
\end{lem}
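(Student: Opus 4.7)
For part (1), the composed map $C_{K_T}\to C_{U_T}$ is a homomorphism of abelian groups, so it suffices to show its kernel is trivial. Suppose $a \in \A_{F,f}^\times$ represents a class in the kernel. Since $U_T$ is the image of $K_T\times K_E$, the subgroup $N_{H_T}(U_T)\subset T_{H_T}(\A_f)$ equals the image of $\Nm(K_T)\times K_E$. Unwinding the definitions thus produces $f\in F^{\times,+}$, $e\in E^\times$, $k_1\in \Nm(K_T)$, $k_2\in K_E$, and $\alpha\in T_F(\A_f) = \A_{F,f}^\times$ satisfying $a = f k_1 \alpha^2$ and $1 = e k_2 \alpha^{-1}$ (the powers of $\alpha$ coming from the embedding $T_F\hookrightarrow T_F\times T_E$, $a\mapsto (a^2,a^{-1})$, used to form $T_{H_T}$). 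The second equation gives $\alpha = ek_2$, and because $\alpha$ is fixed by the nontrivial element of $\Gal(E/F)$ one obtains $\alpha^2 = \alpha\bar\alpha = N_{E/F}(e)\cdot N_{E/F}(k_2)$. The factor $N_{E/F}(e)$ lies in $F^{\times,+}$ because $E/F$ is CM, and $N_{E/F}(k_2)\in \Nm(K_T)$ by condition (3) of Definition~\ref{defn:suff small}. Therefore $a \in F^{\times,+}\cdot \Nm(K_T)$, as required; note that only condition (3) enters.

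For part (2), the plan is to combine part (1) with an analysis of identity components. By part (1) and the fact that the vertical arrows in~\eqref{quat-to-un} identify sets of connected components, $j$ sends distinct connected components of $Sh_{K_T}(G_T)(\C)$ to distinct connected components of $Sh_{U_T}(H_T)(\C)$. Applying the same argument with the level groups conjugated by a representative $g\in G_T(\A_f)$ reduces the assertion to showing that $j$ restricts to an isomorphism on the identity components. These have the form $\Gamma_{G_T}\backslash X_T^+$ and $\Gamma_{H_T}\backslash X_T^+$, where $\Gamma_{G_T} := K_T\cap G_T(\Q)^+$ and $\Gamma_{H_T} := U_T\cap H_T(\Q)^+$. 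Since the center $F^\times\subset G_T(\Q)$ acts trivially on $X_T^+$ and the image of $T_E$ in $H_T$ is central, both groups act on $X_T^+$ through their images in $G_T(\Q)^+/F^\times$ (via $[(g,e)]\mapsto gF^\times$), so the goal is to show that these two images coincide.

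The inclusion of the image of $\Gamma_{G_T}$ into that of $\Gamma_{H_T}$ is immediate, sending $\gamma\in \Gamma_{G_T}$ to $[(\gamma,1)]\in \Gamma_{H_T}$. For the reverse inclusion, lift $h\in \Gamma_{H_T}$ to $(g,e)\in G_T(\Q)^+\times T_E(\Q)$; the condition $h\in U_T$ gives a decomposition $(g,e)=(k_1\alpha,k_2\alpha^{-1})$ with $(k_1,k_2)\in K_T\times K_E$ and $\alpha\in T_F(\A_f)$. The second component yields $\alpha = e^{-1}k_2\in \A_{F,f}^\times$, and the $c$-invariance of $\alpha$ gives $k_2/\bar{k}_2 = e/\bar{e}$. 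This is the crucial step: condition (1) of Definition~\ref{defn:suff small} forces $e\in F^\times$, and then $k_2 = e\alpha\in K_E\cap T_F(\A_f)\subset K_T$ by condition (2). Replacing the lift $(g,e)$ by the equivalent lift $(ge,1)$ (using the action of $e\in T_F(\Q)=F^\times$ via $a\mapsto (a,a^{-1})$), one obtains $ge=k_1k_2\in K_T\cap G_T(\Q)^+=\Gamma_{G_T}$, with image in $G_T(\Q)^+/F^\times$ agreeing with that of $h$. The main subtlety is precisely this use of condition (1): without it, the CM torus factor could enlarge the arithmetic subgroup and break the bijection on connected components.
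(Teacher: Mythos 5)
Your proof is correct and follows essentially the same route as the paper's: part (1) is the unwound form of the paper's splitting $(a,b)\mapsto aN_{E/F}(b)$ of $T_F\hookrightarrow T_{H_T}$ (using condition (3)), and part (2) is the same computation the paper performs to show $G_T(\Q)^+\cap gK_Tg^{-1}=H_T(\Q)^+\cap gU_Tg^{-1}$, using condition (1) to force $e\in F^\times$ and condition (2) to absorb $k_2$ into $K_T$. The only cosmetic difference is that you reduce to the identity component by conjugating the level, whereas the paper states the group equality for all $g$ at once.
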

\begin{proof}
For the reader's convenience, let us briefly explain how the main points of the argument in the proof of \cite[Lemma 2.3.1]{dks20} adapt to our situation.
\begin{enumerate}
\item This follows from the fact that the inclusion $T_F\rightarrow T_{H_T}$ sending $a$ to $(a, 1)$ is split by the map $(a, b) \mapsto aN_{E/F}(b)$, which induces splittings
\begin{align*}
T_{H_T}(\Q)^+=(F^{\times, +}\times E^\times)/F^\times & \rightarrow F^{\times, +}, \;  & \mathrm{N}_{H_T}(U_T)& \rightarrow \mathrm{Nm}(K_T)\\
(a, b) & \mapsto aN_{E/F}(b) & (\mathrm{Nm}(k), l) & \mapsto \mathrm{Nm}(k)N_{E/F}(l)
\end{align*}
(notice that the last formula gives a well defined map $\mathrm{N}_{H_T}(U_T) \rightarrow \mathrm{Nm}(K_T)$ because of the assumptions that $U_T$ is the image of $K_T \times K_E$ and $N_{E/F}(K_E)\subset \mathrm{Nm}(K_T)$) of the inclusions
\begin{align*}
F^{\times, +} & \rightarrow T_{H_T}(\Q)^+=(F^{\times, +}\times E^\times)/F^\times, \; & \mathrm{Nm}(K_T)& \rightarrow \mathrm{N}_{H_T}(U_T)\\
a & \mapsto (a, 1) & \mathrm{Nm}(k) & \mapsto (\mathrm{Nm}(k), 1).
\end{align*}
\item If $T=\Sigma_p$ there is nothing to prove. Assume that $T \neq \Sigma_p$. Then the claim follows from the equality, valid for every $g \in G_T(\A_f)$:
\begin{equation*}
G_T(\Q)^+\cap g K_T g^{-1}=H_T(\Q)^+\cap g U_T g^{-1}.
\end{equation*}
Clearly the group on the left hand side is contained in the one on the right hand side. Conversely, let $h \in H_T(\Q)^+\cap g U_T g^{-1}$. On the one hand we can write (the equivalence class of) $h$ as $h=a \cdot e$ for some $a \in G_T(\Q)^+, e \in E^\times$; on the other hand $h=g(k \cdot y)g^{-1}$ for some $k \in K_T, y \in K_E$. It follows that $ae=g(ky)g^{-1}$, which implies that $e\cdot c(e)^{-1}=y\cdot c(y)^{-1} \in E^\times \cap \{\frac{y}{y^c} \mid y \in K_E\}=\{1\}$. Therefore $y \in K_E \cap T_F(\A_f)\subset K_T$. This yields $h \in G_T(\Q)^+\cap g K_T g^{-1}$.
\end{enumerate}
\end{proof}

\subsubsection{} 
Keeping the notations and assumptions of the previous lemma, we set $I:=C_{U_T}/C_{K_T}$. For each $\alpha \in I$, let $Sh_{U_T}^{\alpha}(H_T)(\C) \subset Sh_{U_T}(H_T)(\C)$ be the subspace consisting of connected components mapping to the $C_{K_T}$-coset inside $C_{U_T}$ given by $\alpha$. We obtain a decomposition
\begin{equation*}
Sh_{U_T}(H_T)(\C)=\coprod_{\alpha \in I} Sh_{U_T}^{\alpha}(H_T)(\C)
\end{equation*}
into open and closed subspaces; the subspace corresponding to the coset containing the identity is identified with $Sh_{K_{T}}(G_T)(\C)$ (if $\Sigma_p=T$, cf. the argument in the second part of the proof of \cite[Lemma 2.3.1]{dks20}).

\begin{cor}\label{Hilbtounitary}
Assume that $K_E$ is sufficiently small with respect to $K_T$ and that $U_T$ is the image of $K_T \times K_E$. Let $\m\subset \mathbb{T}$ be a maximal ideal and $i \geq 0$ an integer. Then:
\begin{enumerate}
\item $H^i(Sh_{K_T}(G_{T}), \F_{\ell})_{\m}\neq 0$ if and only if $H^i(Sh_{U_T}(H_T), \F_{\ell})_{\m}\neq 0$, and the same assertion is true for the cohomology with $\Q_{\ell}$-coefficients.
\item The natural map $H^i_c(Sh_{U_T}(H_T), \F_{\ell})_\m\rightarrow H^i(Sh_{U_T}(H_T), \F_{\ell})_\m$ is an isomorphism if and only if the same is true for the map $H^i_c(Sh_{K_T}(G_{T}), \F_{\ell})_{\m}\rightarrow H^i(Sh_{K_T}(G_{T}), \F_{\ell})_{\m}$.
\end{enumerate}
\end{cor}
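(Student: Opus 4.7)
The strategy is to upgrade the previous lemma to a $\mathbb{T}$-equivariant decomposition
\begin{equation*}
Sh_{U_T}(H_T) = \bigsqcup_{\alpha \in I} Sh^{\alpha}_{U_T}(H_T),
\end{equation*}
in which each summand is $\mathbb{T}$-equivariantly isomorphic to $Sh_{K_T}(G_T)$. On the complex points, the previous lemma already identifies the image of $j$ with the summand corresponding to the identity coset $\alpha_0 \in I$; the task is to produce the remaining summands by Hecke-equivariant translations.

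For each $\alpha \in I$, I would fix a representative $\tilde{\alpha} \in T_E(\A_f)$. Since $H_T = (G_T \times T_E)/T_F$ with $T_F$ embedded anti-diagonally, the image of $T_E(\A_f)$ inside $H_T(\A_f)$ is central; in particular, $\tilde{\alpha}$ normalises $U_T$ and induces a right-translation automorphism $\tau_\alpha$ of $Sh_{U_T}(H_T)(\C)$. Inspecting the bottom row of \eqref{quat-to-un} together with the definition of $Sh^{\alpha}_{U_T}(H_T)(\C)$, the map $\tau_\alpha$ sends the image of $j$ onto $Sh^{\alpha}_{U_T}(H_T)(\C)$; composing with $j$ yields an isomorphism $Sh_{K_T}(G_T)(\C) \simeq Sh^{\alpha}_{U_T}(H_T)(\C)$. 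Because $\tilde{\alpha}$ is central, $\tau_\alpha$ commutes with every Hecke correspondence $[U_T(g,1)U_T]$ with $g \in G_T(\A_f)$; combined with the tautological compatibility $j \circ [K_T g K_T] = [U_T(g,1)U_T] \circ j$ built into the definition of $i \colon \mathbb{T}_{K_T}(G_T) \hookrightarrow \mathbb{T}_{U_T}(H_T)$, this shows that every summand $Sh^{\alpha}_{U_T}(H_T)$ is $\mathbb{T}$-equivariantly isomorphic to $Sh_{K_T}(G_T)$.

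Passing through the Artin comparison between \'etale and Betti cohomology, I obtain $\mathbb{T}$-equivariant isomorphisms
\begin{equation*}
H^i_{(c)}(Sh_{U_T}(H_T), \F_\ell) \cong \bigoplus_{\alpha \in I} H^i_{(c)}(Sh_{K_T}(G_T), \F_\ell),
\end{equation*}
and likewise with $\Q_\ell$-coefficients, compatibly with the natural maps from compactly supported to ordinary cohomology. Localising at $\m$ yields both conclusions of the corollary at once. The only substantive verification is the Hecke-equivariance of the translations $\tau_\alpha$, which reduces cleanly to the centrality of the image of $T_E(\A_f)$ in $H_T(\A_f)$; the rest is bookkeeping.
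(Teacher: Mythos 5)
Your proposal is correct and follows essentially the same route as the paper: decompose $Sh_{U_T}(H_T)$ into the pieces $Sh^{\alpha}_{U_T}(H_T)$, use the (transitive, central) $T_E(\A_f)$-action to identify each piece Hecke-equivariantly with $Sh_{K_T}(G_T)$, and sum up, treating ordinary and compactly supported cohomology in parallel. The only cosmetic difference is that the paper checks Hecke-stability of each summand directly from $N_{H_T}((g,1))\in \A_{F,f}^\times$, while you deduce it from centrality of the image of $T_E(\A_f)$ together with stability of the identity component; both are fine.
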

\begin{proof}
We will work over the complex numbers throughout this proof, and omit this from the notation for simplicity. The decomposition $Sh_{U_T}(H_T)=\coprod_{\alpha \in I} Sh_{U_T}^{\alpha}(H_T)$ introduced above yields a direct sum decomposition
\begin{equation}\label{cohom-decompos}
H^i(Sh_{U_T}(H_{T}), \F_{\ell})=\oplus_{\alpha \in I} H^i(Sh_{U_T}^{\alpha}(H_T), \F_{\ell})
\end{equation}
and each summand on the right hand side is preserved by the action of $\mathbb{T}$. Indeed, the Hecke algebra $\mathbb{T}$ is spanned by characteristic functions of double cosets $K_TgK_T$, whose component at a place $v$ is of the form $\GL_2(\mathcal{O}_v)\left(\begin{smallmatrix}
\varpi_v^{a_v} & 0\\
0 & \varpi_v^{b_v}
\end{smallmatrix}\right)\GL_2(\mathcal{O}_v)$, for some uniformiser $\varpi_v$ of $\mathcal{O}_v$ and $a_v, b_v \in \Z$. The group $K_E$ is sufficiently small with respect to $K_{T, g}:=K_T \cap gK_Tg^{-1}$; letting $U_{T, g}\subset H_T(\mathbb{A}_f)$ be the image of $K_{T, g} \times K_E$ (which coincides with the group $ U_{T} \cap gU_Tg^{-1}$), the Hecke correspondence

\begin{center}
\begin{tikzcd}
& Sh_{U_{T, g}}(H_T) \arrow[dr] \arrow[dl] & \\
Sh_{U_T}(H_T) & & Sh_{U_T}(H_T)
\end{tikzcd}
\end{center}
restricts to a correspondence on each $Sh_{U_T}^{\alpha}(H_T)$. Indeed the map $Sh_{U_{T, g}}(H_T)\rightarrow Sh_{U_{T, g^{-1}}}(H_T)$ induced by right multiplication by $(g, 1) \in H_T(\A_f)$ preserves each $Sh_{U_{T, g}}^\alpha(H_T)$, as $N_{H_T}((g, 1)) \in \A_{F, f}^\times\subset T_{H_T}(\A_f)$. Hence each summand in \eqref{cohom-decompos} is preserved by the action of $\mathbb{T}$.

On the other hand we have an action of $T_E(\mathbb{A}_f)$ on $Sh_{U_T}(H_{T})$: the action of $e \in T_E(\mathbb{A}_f)$ sends each $Sh_{U_T}^{\alpha}(H_T)$ to $Sh_{U_T}^{e\alpha}(H_T)$. In particular $T_E(\mathbb{A}_f)$ acts transitively on the set of subvarieties $Sh_{U_T}^{\alpha}(H_T)$, inducing isomorphisms, for every $\alpha \in I$:
\begin{equation*}
H^i(Sh_{U_T}^{\alpha}(H_T), \F_{\ell})\simeq H^i(Sh_{U_T}^{1}(H_T), \F_{\ell})=H^i(Sh_{K_{T}}(G_T), \F_{\ell}).
\end{equation*}
Finally, the action on cohomology induced by the $T_E(\mathbb{A}_f)$-action on $Sh_{U_T}(H_{T})$ commutes with the action of $\mathbb{T}$. It follows that the above isomorphism induces an isomorphism
\begin{equation}\label{cohom-decompos-bis}
H^i(Sh_{U_T}(H_T), \F_{\ell})_\mathfrak{m}\simeq \oplus_{\alpha \in I} H^i(Sh_{K_{T}}(G_T), \F_{\ell})_\mathfrak{m}.
\end{equation}
The isomorphism \eqref{cohom-decompos-bis} (which also holds with $\Q_\ell$-coefficients) implies $(1)$. Furthermore the previous argument also applies to compactly supported cohomology, yielding a direct sum decomposition of $H^i_c(Sh_{U_T}(H_T), \F_{\ell})_\mathfrak{m}$ as in \eqref{cohom-decompos-bis}. Hence we obtain a similar decomposition for the kernel and cokernel of the map $H^i_c(Sh_{U_T}(H_T), \F_{\ell})_\m\rightarrow H^i(Sh_{U_T}(H_T), \F_{\ell})_\m$, from which $(2)$ follows.
\end{proof}

\subsection{Integral models of unitary Shimura varieties}\label{mod-unit}

\subsubsection{} Fix a totally negative element $\mathfrak{d} \in \mathcal{O}_F$ coprime to $p$; choose isomorphisms $\theta_T: M_2(E)=D_{\emptyset} \toisom D_{T}$ for every non-empty subset $T \subset \Sigma_p$. For every such $T$ choose an element $\delta_T \in D_T$ as in \cite[Lemma 3.8]{tixi16}, and let $\sigma_T=\sqrt{\mathfrak{d}}\delta_T$. Via the construction in \S~\ref{unit-datum}, such a choice gives an involution $*_T$ on each $D_T$. By \cite[Lemma 5.4]{tixi16} we may, and will, choose the elements $\delta_T$ in such a way that these involutions are respected by the isomorphisms $\theta_T$.
Let $\mathcal{O}_{D_\emptyset}=M_2(\mathcal{O}_E)\subset D_{\emptyset}$ and $\mathcal{O}_{D_T}=\theta_T(\mathcal{O}_{D_\emptyset})$ for every non-empty subset $T \subset \Sigma_p$.

Take $K_T\subset G_T(\mathbb{A}_f)$ as in \S~\ref{quat-datum}, $K_E=(\mathcal{O}_E\otimes \Z_p)^\times K_E^{p} \subset T_E(\A_f)$ sufficiently small with respect to $K_T$ and let $U_T\subset H_T(\mathbb{A}_f)$ be the image of $K_T \times K_E$. The inverse of the chosen isomorphism  $\iota_p: \bar{\Q}_p \toisom \C$ determines a distinguished $p$-adic place $\wp$ of the reflex field $E_T \subset \C$. Let $E_\wp \subset \bar{\Q}_p$ be the completion of $E_T$ at $\wp$, and $\O_\wp$ its ring of integers. Following \cite[Section 2]{dks20} and \cite[Section 3]{tixi16}, an integral model of $Sh_{U_T}(H_T)$ over $\O_\wp$ can be constructed as follows: consider the functor sending an $\O_\wp$-scheme $S$ to the set of isomorphism classes of tuples $(A, \iota, \lambda, \eta)$ where:

\begin{enumerate}
\item $A/S$ is an abelian scheme of dimension $4g$.
\item $\iota: \mathcal{O}_{D_T}\rightarrow \mathrm{End}_S(A)$ is an embedding.
\item $\lambda: A \rightarrow A^\vee$ is a $\Z_{(p)}^\times$-polarisation whose attached Rosati involution coincides with $*_T$ on $\mathcal{O}_{D_T}$.
\item $\eta$ is a $U_T$-level structure, in the sense of \cite[Section 2.2.2]{dks20}. 
\end{enumerate}

Furthermore the above data are required to satisfy the following conditions: 

\begin{enumerate}[label=(\alph*)]
\item For every $b \in \mathcal{O}_E$, the characteristic polynomial of $\iota(b)$ acting on $\mathrm{Lie}(A/S)$ equals
\begin{equation*}
\prod_{\tilde{\tau} \in \Sigma_{E, \infty}}(X-\tilde{\tau}(b))^{2s_{\tilde{\tau}}},
\end{equation*}
where the integers $s_{\tilde{\tau}}$ were defined in \S~\ref{auxil-CM}.
\item $\ker(\lambda[p^\infty]): A[p^\infty]\rightarrow A^\vee[p^\infty]$ is a finite flat subgroup scheme contained in $\prod_{\p \in T}A[\p]$ and such that for each $\p \in T$ the rank of $\ker(\lambda[p^\infty])\cap A[\p]$ is $p^4$.
\item The cokernel of $\lambda_*: H_1^{dR}(A/S)\rightarrow H_1^{dR}(A^\vee/S)$ is locally free of rank two over $\oplus_{\p \in T} \mathcal{O}_S\otimes_{\Z_p} (\mathcal{O}_E\otimes_{\mathcal{O}_F}\mathcal{O}_F/\p)$ (here $H_1^{dR}(A/S)$ denotes the de Rham homology sheaf of $A/S$).
\end{enumerate}

This functor is represented by a scheme $Y_{U_T}(H_T)$ which is an infinite disjoint union of smooth, quasi-projective (resp. projective if $T$ is non-empty) $\O_\wp$-schemes. The group $\mathcal{O}_{F, (p)}^{\times, +}$ acts on this scheme as follows: an element $u \in \mathcal{O}_{F, (p)}^{\times, +}$ sends $(A, \iota, \lambda, \eta)$ to $(A, \iota, u\lambda, \eta)$. This action factors through the group $\mathcal{O}_{F, (p)}^{\times, +}/N_{E/F}(U_T \cap E^\times)$, and the resulting quotient is a smooth quasi-projective scheme giving the desired integral model of $Sh_{U_T}(H_T)$.

\begin{rem}\label{rem-models}\leavevmode
\begin{enumerate}
\item More precisely, the above description of the moduli problem, using the level structure as in \cite[Section 2.2.2]{dks20}, is valid for locally noetherian schemes $S$. We will need later on to work with non locally noetherian schemes as well; in this case one can use a different definition of the level structure, adapting \cite[Definition 1.3.7.6]{la18}.
\item For any field extension $L$ of $E_T$, one can define schemes $Y_{U_T}(H_T)_L$ over $\mathrm{Spec} \; L$ for arbitrary $U_T=U_T^pU_{T, p}$ representing a moduli problem as above, including $U_{T, p}$-level structure.
\end{enumerate}
\end{rem}

\begin{rem}\label{cond-pdiv}
Notice that, if $S$ is the spectrum of a ring where $p$ is nilpotent, then conditions $(a), (b), (c)$ in the definition of the above moduli problem can be stated in terms of the $p$-divisible group (with $\mathcal{O}_E$-action) of the abelian scheme $A$. Indeed, by \cite[p. 164]{mes72} the Lie algebra of $A/S$ (resp. of the universal vector extension of $A/S$) is isomorphic to the Lie algebra of $A[p^\infty]$ (resp. of the universal vector extension of $A[p^\infty]$). Hence the first condition is the Kottwitz condition on $A[p^\infty]$; the second condition manifestly only depends on $A[p^\infty]$, and the same is true for the third, as the first de Rham cohomology of $A/S$ is identified with the Lie algebra of the universal vector extension of the dual of $A$ by \cite[Chapter 1, \S~4]{mame06}.
\end{rem}

\section{Comparison of unitary Igusa varieties}

In this section, we establish an isomorphism between Igusa varieties that are a priori 
attached to different unitary Shimura varieties. This relies on a reinterpretation 
of (some of) the arguments in~\cite{tixi16} in the case of a totally split prime. 

\subsection{Kottwitz sets}\label{kotsets} We keep the notation introduced in the previous section. In particular we fixed in \S~\ref{quat-datum} a prime $p$ which splits completely in $F$; hence we can write $D_T\otimes_{\Q}\Q_p=\prod_{\p \in \Sigma_p}D_{T, \p}$ and $H_{T, \Q_p}:=H_T \times_\Q \Q_p=\prod_{\p \in \Sigma_p}H_{T, \p}$, where $D_{T, \p}:=D_T\otimes_F F_\p$ and, for every $F_\p-$algebra $R$,

\begin{equation*}
H_{T, \p}(R)=\{g \in D_{T, \p}\otimes_{F_\p}R \mid g \bar{g} \in R^\times\}.
\end{equation*}

\noindent According to \cite[Lemma 3.8]{tixi16}, the subgroup defined by
\[
H_{T, \p}^1(R):=\{g\in H_{T, \p}(R)\mid g \bar{g}=1\}
\]
is an unramified (resp. non quasi-split) group over $F_\p$ if $\p \in \Sigma_p \smallsetminus T$ (resp. $\p \in T$). Furthermore the natural maps $G_T \leftarrow G_T \times T_E \rightarrow H_T$ are compatible with the Deligne homomorphisms and induce isomorphisms of derived and adjoint groups. Denoting by $\mu_T$ the cocharacter of $H_{T, \bar{\Q}_p}$ induced by $(H_T, X_T)$ and by the isomorphism $\iota_p$, we have the Kottwitz set $B(H_{T, \Q_p}, \mu_T)$. We can write $B(H_{T, \Q_p}, \mu_T)=\prod_{\p \in \Sigma_p} B(H_{T, \p}, \mu_{T, \p})$, where $\mu_{T, \p}$ is the $\p$-component of $\mu_T$, and it follows from \cite[(6.5.1), (6.5.2)]{kot97} that $B(H_{T, \p}, \mu_{T, \p})$ can be described as follows:
\begin{enumerate}
\item it contains two elements - the basic one and the $\mu$-ordinary one - if $\p \in \Sigma_p \smallsetminus T$;
\item it is a singleton if $\p \in T$. 
\end{enumerate}

Let $b=(b_\p)_{\p \in \Sigma_p} \in B(H_{T, \Q_p}, \mu_T)$ and let $B \subset \Sigma_p \smallsetminus T$ be the set of places such that $b_\p$ is basic. Let $T':=T \coprod B$ and $b' \in B(H_{T', \Q_p}, \mu_{T'})$ be the element which is $\mu$-ordinary at every place $\p \in \Sigma_p \smallsetminus T'$. We call $b'$ the element in $B(H_{T', \Q_p}, \mu_{T'})$ \emph{associated with} $b$.

\subsection{Igusa varieties}\label{igvar}
Every $b \in B(H_{T, \Q_p}, \mu_T)$ corresponds to a Newton stratum in the special fibre of $Y_{U_T}(H_T)$, hence to a Newton stratum in the special fibre of $Sh_{U_T}(H_T)$. Let $(\mathbb{X}^b_T, \iota^b_T, \lambda^b_T)$ be a $p$-divisible group with extra structure attached to an $\bar{\mathbb{F}}_p$-point in such a stratum. Notice that, as $p$ splits completely in $F$, this datum is equivalent to the datum of a collection of $p$-divisible groups with extra structure $(\mathbb{X}^{b_\p}_T, \iota^{b_\p}_T, \lambda^{b_\p}_T)$, for each $\p \in \Sigma_p$. Similarly, if $(\G, \iota_\G, \lambda_\G)$ is a $p$-divisible group with extra structure then an isomorphism (or, more generally, a quasi-isogeny) $\phi: \mathbb{X}^b_T \rightarrow \mathcal{G}$ commuting with the $\O_{D_T}$-action splits as a sum of isomorphisms (or quasi-isogenies) $\phi_\p$ indexed by places $\p \in \Sigma_p$. We will write $\phi=(\phi_\p)_{\p \in \Sigma_p}$.

\subsubsection{}\label{igvar-def} Let $\overline{\mathrm{Ig}}_T^b$ be the scheme representing the functor that sends a ring $R$ of characteristic $p$ to the set of isomorphism classes of data $(r, A, \iota, \lambda, \eta, \phi)$ defined as follows:
\begin{enumerate}
\item $r: \bar{\mathbb{F}}_p\rightarrow R$ is a ring morphism.
\item $(A, \iota, \lambda, \eta)/\mathrm{Spec} \; R$ is a datum as in the definition of the integral model of $Sh_{U_T}(H_T)$, satisfying conditions $(a), (b), (c)$ in \S~\ref{mod-unit}.
\item $\phi=(\phi_\p)_{\p \in \Sigma_p}: A[p^\infty] \toisom \mathbb{X}^b_T\times_{\bar{\mathbb{F}}_p, r}R$ is an isomorphism commuting with the $\mathcal{O}_{D_T}$-action and such that each $\phi_\p$ respects the polarisation up to a $\Z_p^\times$-factor.
\end{enumerate}
Two tuples $(r, A, \iota, \lambda, \eta, \phi), (r', A', \iota', \lambda', \eta', \phi')$ are said to be isomorphic if $r=r'$ and there is an isomorphism of abelian schemes over $\mathrm{Spec}\; R$ between $A$ and $A'$ commuting with all the additional data.

Forgetting everything but the structure map $r$ we see that $\overline{\mathrm{Ig}}_T^b$ is fibred over $\bar{\mathbb{F}}_p$; in what follows, if there is no danger of confusion, we will abusively denote its points with values in an $\bar{\mathbb{F}}_p$-algebra $(R, r)$ just by $(A, \iota, \lambda, \eta, \phi) \in \overline{\mathrm{Ig}}_T^b(R)$. The functor defining $\overline{\mathrm{Ig}}_T^b$ is representable: indeed, $\overline{\mathrm{Ig}}_T^b\rightarrow Y_{U_T}(H_T)_{\bar{\mathbb{F}}_p}$ relatively represents the moduli problem parametrising isomorphisms $\phi: A[p^\infty]\toisom \mathbb{X}^b_T\times_{\bar{\mathbb{F}}_p, r}R$ as above. By definition, such an isomorphisms is a compatible sequence of isomorphisms $\phi_k: A[p^k]\toisom \mathbb{X}^b_T[p^k]\times_{\bar{\mathbb{F}}_p, r}R$ for $k \geq 0$. Hence $\overline{\mathrm{Ig}}_T^b$ is the inverse limit of the schemes $\overline{\mathrm{Ig}}_{T, k}^b\rightarrow Y_{U_T}(H_T)_{\bar{\mathbb{F}}_p}$ parametrising isomorphisms $\phi_k$ as above; each of these moduli problem is relatively representable (cf. \cite[Proposition 4.3.3]{cs17}) and the transition maps are finite.

Recall that we have an action of $\O_{F, (p)}^{\times, +}$ on $Y_{U_T}(H_T)_{\bar{\mathbb{F}}_p}$. For each $k \geq 0$ let $\Delta_k:=\mathcal{O}_{F, (p)}^\times/\{N_{E/F}(u), u \in U_T \cap \mathcal{O}_E^\times, u \equiv 1 \pmod {p^k}\}$; set $\Delta:=\varprojlim_k \Delta_k$. The action of $\mathcal{O}_{F, (p)}^{\times, +}$ on $\overline{\mathrm{Ig}}_{T, k}^b$ lifting the action on $Y_{U_T}(H_T)_{\bar{\mathbb{F}}_p}$ factors through $\Delta_k$, hence we get an action of $\Delta$ on $\overline{\mathrm{Ig}}_T^b$. We define $\mathrm{Ig}_T^b:=\overline{\mathrm{Ig}}_T^b/\Delta$.

For each $\p \in \Sigma_p$ fix an element $x_\p \in \O_F^+$ with $\p$-adic valuation one, and with $\p'$-adic valuation zero for every $\p' \in \Sigma_p \smallsetminus \{\p\}$.
The following lemma gives a description of the functor of points of $\overline{\mathrm{Ig}}_T^b$ in terms of abelian schemes up to $p$-quasi-isogeny; see also \cite[Lemma 4.3.4]{cs17}.
\begin{lem}\label{ig-isog}
Let $R$ be a ring of characteristic $p$. There is a bijection, functorial in $R$, between $\overline{\mathrm{Ig}}_T^b(R)$ and the set of isomorphism classes of data $(r, A, \iota, \lambda, \eta, \rho)$ defined as follows:
\begin{enumerate}
\item $r: \bar{\mathbb{F}}_p\rightarrow R$ is a ring morphism.
\item $A$ is an abelian scheme of dimension $4g$ over $\mathrm{Spec} \; R$.
\item $\iota: \mathcal{O}_{D_T}\rightarrow \mathrm{End}_R(A)$ is an embedding.
\item $\lambda: A \rightarrow A^\vee$ is a $\Z_{(p)}^\times$-polarisation such that the attached Rosati involution coincides with $*_T$ on $\mathcal{O}_{D_T}$.
\item $\eta$ is a level structure, defined as in \S~\ref{mod-unit}.
\item $\rho=(\rho_\p)_{\p \in \Sigma_p}: A[p^\infty] \rightarrow \mathbb{X}^b_T\times_{\bar{\mathbb{F}}_p, r}R$ is a quasi-isogeny commuting with $\mathcal{O}_{D_T}$-action and such that each $\rho_\p$ respects the polarisation up to a $\Q_p^\times$-factor.
\end{enumerate}
Two tuples $(r, A, \iota, \lambda, \eta, \rho), (r', A', \iota', \lambda', \eta', \rho')$ are regarded as isomorphic if $r=r'$ and there is a $p$-quasi-isogeny from $A$ to $A'$ commuting with $\mathcal{O}_{D_T}$-action, level structure and the quasi-isogenies $\rho, \rho'$, and respecting polarisations up to a product of integral powers of the elements $x_\p$.
\end{lem}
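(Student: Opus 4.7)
My plan is to establish the bijection by writing down explicit maps in both directions and verifying they are mutually inverse. The forward map is immediate: given an object $(r, A, \iota, \lambda, \eta, \phi)$ of $\overline{\mathrm{Ig}}_T^b(R)$ as in \S~\ref{igvar-def}, I set $\rho := \phi$, viewing the isomorphism as a quasi-isogeny. Any isomorphism of abelian schemes respecting the data in \S~\ref{igvar-def} is a priori a $p$-quasi-isogeny compatible with the data in the statement (and matches polarisations up to $1 \in \Z_{(p)}^\times$, which lies in the group generated by the $x_\p^{a_\p}$), so the map passes to equivalence classes.

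The essential step is the inverse construction. Starting from $(r, A, \iota, \lambda, \eta, \rho)$, I would exploit the fact that $p$ is totally split in $F$: the $\O_F \otimes \Z_p$-action decomposes $A[p^\infty] = \bigoplus_{\p \in \Sigma_p} A[\p^\infty]$, compatibly with $\rho = (\rho_\p)_\p$. Since each $x_\p$ is a uniformiser at $\p$ and a unit away from $\p$, one can pick integers $N_\p \geq 0$ such that $x_\p^{N_\p} \rho_\p$ is an honest isogeny of $p$-divisible groups; set $K_\p := \ker(x_\p^{N_\p} \rho_\p)$, a finite flat $\O_{D_T}$-stable subgroup scheme of $A[\p^\infty]$. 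Putting $K := \bigoplus_\p K_\p$ and $B := A/K$, the quotient map $\pi : A \to B$ is an isogeny of $p$-power order, so the $\O_{D_T}$-structure $\iota$ and the $U_T$-level structure $\eta$ descend to $(\iota_B, \eta_B)$ on $B$. For each $\p$ the isogeny $x_\p^{N_\p} \rho_\p$ factors through a unique isomorphism $\phi_{B, \p} : B[\p^\infty] \toisom \mathbb{X}_T^{b_\p} \times_{\bar{\F}_p, r} R$, which we assemble into $\phi_B$. For the polarisation, I would define $\lambda_B$ as the unique $\Z_{(p)}^\times$-polarisation on $B$ whose pullback along $\pi$ differs from $\lambda$ by a factor of the form $\prod_\p x_\p^{a_\p}$ prescribed by the $\Q_p^\times$-scaling constants witnessing compatibility of each $\rho_\p$ with $\lambda$; the Rosati involution condition is preserved by this scaling. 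Conditions (a), (b), (c) of \S~\ref{mod-unit} for $(B, \iota_B, \lambda_B)$ can then be checked on $p$-divisible groups via Remark~\ref{cond-pdiv}, where $\phi_B$ transports them from the standard $(\mathbb{X}_T^b, \iota_T^b, \lambda_T^b)$.

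The two constructions are mutually inverse. One direction is tautological (take $N_\p = 0$ for all $\p$); in the other, the quasi-isogeny $\pi \cdot \prod_\p x_\p^{-N_\p} : A \to B$ intertwines $\rho$ with $\phi_B$ on $p$-divisible groups and matches the two polarisations up to $\prod_\p x_\p^{a_\p}$, witnessing the required equivalence on the quasi-isogeny side. Injectivity of the forward map follows because any $p$-quasi-isogeny $\psi : B \to B'$ between two isomorphism-side representatives that intertwines $\phi_B$ and $\phi_{B'}$ restricts to an isomorphism on $p$-divisible groups, and the polarisation correction $\prod_\p x_\p^{a_\p}$ can then be absorbed (after adjusting $\psi$ by the appropriate product of the $x_\p$'s acting as isogenies of $B$) into a genuine isomorphism of abelian schemes. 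The main obstacle I anticipate is the polarisation bookkeeping: keeping careful track of the $\Z_p^\times$, $\Q_p^\times$, and $x_\p^{a_\p}$ factors and verifying that the resulting $\lambda_B$ is indeed a $\Z_{(p)}^\times$-polarisation whose Rosati involution restricts to $*_T$ on $\O_{D_T}$. Beyond this, the proof is a standard application of the principle that quasi-isogenies of $p$-divisible groups lift to $p$-power isogenies of abelian schemes after clearing denominators.
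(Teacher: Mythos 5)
Your construction is correct and essentially identical to the paper's: both clear denominators in $\rho$ by multiplying by powers of the $x_\p$, pass to $B=A/\ker$, transport the extra structure along the quotient isogeny, rescale the induced quasi-polarisation by $\prod_\p x_\p^{-v_\p}$ using the valuations of the $\Q_p^\times$-scaling constants, and verify conditions (a), (b), (c) via Remark~\ref{cond-pdiv}. The only cosmetic difference is that the paper verifies that $\iota_B(o)$ is a genuine morphism by a diagram chase against $\iota_T^b(o)$ rather than by invoking $\mathcal{O}_{D_T}$-stability of the kernel, which amounts to the same thing.
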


\begin{rem}
Before proving the lemma, let us comment on its content. The first point is that the datum $(A, \iota, \lambda, \eta)$ is \emph{not} a priori required to satisfy the conditions $(a), (b), (c)$ in \S~\ref{mod-unit}; the second point is that the isomorphism $\phi$ in the definition of the moduli problem represented by $\overline{\mathrm{Ig}}_T^b$ is replaced by a quasi-isogeny $\rho$ (at the price of changing the notion of isomorphism of the data we are parametrising). Both points will be crucial in the next theorem.
\end{rem}

\begin{proof}
Let $R$ be an $\bar{\mathbb{F}}_p$-algebra. An isomorphism class of data $(A, \iota, \lambda, \eta, \phi)$ corresponding to an $R$-point of $\overline{\mathrm{Ig}}_T^b$ gives rise to a tuple as in the statement of the lemma. Conversely, let $(A, \iota, \lambda, \eta, \rho)$ be a datum as in the statement of the lemma; multiplying $\rho$ by suitable powers of the elements $x_\p$ we obtain an isogeny, abusively still denoted by the same symbol, $\rho: A[p^\infty]\rightarrow \mathbb{X}^b_T\times_{\bar{\mathbb{F}}_p}R$, commuting with $\mathcal{O}_{D_T}$-action and polarisation (the latter up to a $\Q_p^\times$-factor on each component). Letting $B=A/\ker(\rho)$, the map $\rho$ induces an isomorphism $\bar{\rho}: B[p^\infty]\xrightarrow{\sim} \mathbb{X}^b_T\times_{\bar{\mathbb{F}}_p}R$, and this property characterises uniquely $B$ among abelian schemes in the $p$-power isogeny class of $A$. In order to complete the proof we need to endow $B$ with extra structures $\iota_B, \lambda_B, \eta_B, \phi$ in such a way that $(B, \iota_B, \lambda_B, \eta_B, \phi)$ is a point of $\overline{\mathrm{Ig}}^b_T$, and the quotient map $q: A\rightarrow B$ respects the extra structures as in the statement of the lemma.

Let us start by defining the $\mathcal{O}_{D_T}$-action on $B$. Given $o \in \mathcal{O}_{D_T}$ we consider the self-quasi-isogeny $\iota_B(o):=q \circ \iota(o) \circ q^{-1}$ of $B$. Let us look at the diagram
\begin{center}
\begin{tikzcd}
A[p^\infty] \arrow[r, "q"]\arrow[d, "\iota(o)"] & B[p^\infty] \arrow[r, "\bar{\rho}"] \arrow[d, "\iota_B(o)"] & \mathbb{X}^b_T\times_{\bar{\mathbb{F}}_p}R \arrow[d, "\iota_T^b(o)"]\\
A[p^\infty] \arrow[r, "q"] & B[p^\infty] \arrow[r, "\bar{\rho}"] & \mathbb{X}^b_T\times_{\bar{\mathbb{F}}_p}R.
\end{tikzcd}
\end{center}
By assumption the largest square and the left square in the diagram commute; it follows that the right square also commutes. We deduce that $\iota_B(o): B \rightarrow B$ is a morphism (and not just a quasi-isogeny) as the same is true for $\iota_T^b(o): \mathbb{X}^b_T\times_{\bar{\mathbb{F}}_p}R \rightarrow \mathbb{X}^b_T\times_{\bar{\mathbb{F}}_p}R$. 
Hence the map sending $o$ to $\iota_B(o)$ endows $B$ with an $\mathcal{O}_{D_T}$-action, which commutes with the quotient map $A \rightarrow B$ by construction.

Let us now define the polarisation $\lambda_B$. We consider the quasi-isogeny $\lambda'_B:=(q^\vee)^{-1}\circ \lambda \circ q^{-1}: B \rightarrow B^\vee$. It fits in the diagram
\begin{center}
\begin{tikzcd}
A[p^\infty] \arrow[r, "q"]\arrow[d, "\lambda"] & B[p^\infty] \arrow[r, "\bar{\rho}"] \arrow[d, "\lambda'_B"] & \mathbb{X}^b_T\times_{\bar{\mathbb{F}}_p}R \arrow[d, "\lambda_{\mathbb{X}^b_T}"]\\
A^\vee[p^\infty] & B^\vee[p^\infty] \arrow[l, "q^\vee"] & \mathbb{X}^{b, \vee}_T\times_{\bar{\mathbb{F}}_p}R \arrow[l, "\bar{\rho}^\vee"].
\end{tikzcd}
\end{center}
By assumption the map $\rho=\bar{\rho}\circ q$ can be written as $\rho=(\rho_\p)_{\p \in \Sigma_p}$ where each $\rho_\p$ respects polarisations on the $\p$-component of the relevant $p$-divisible groups up to a factor $c_\p \in \Q_p^\times$, i. e. we have $(\rho_\p^\vee)^{-1} \circ \lambda_\p \circ (\rho_\p)^{-1}=c_\p\lambda_{\mathbb{X}^{b_\p}_T}$. For each $\p$ let $v_\p$ be the valuation of $c_\p$, and set $\lambda_B:=(\prod_{\p \in \Sigma_p} x_\p^{-v_\p})\lambda'_B$; then the isomorphism $\bar{\rho}$ commutes with the polarisations $\lambda_B$ and $\lambda_{\mathbb{X}^b_T}$ up to a $\Z_p^\times$-factor on each component. 

Let $\eta_B$ be the (prime to $p$) level structure on $B$ induced by $\eta$ and $q$. Let the isomorphism $\phi$ be given by $\bar{\rho}$. Notice that changing the polarisation on $A$ by a product of powers of the elements $x_\p$ does not affect the polarisation $\lambda_B$, hence the isomorphism class of $(B, \iota_B, \lambda_B, \eta_B, \phi)$ only depends on the isomorphism class of $(A, \iota, \lambda, \eta, \rho)$ (in the sense of the lemma).

Furthermore by construction the map $q$ commutes with polarisations up to a product of powers of the elements $x_\p$, and respects all the other additional structures; therefore the data $(A, \iota, \lambda, \eta, \rho)$ and $(B, \iota_B, \lambda_B, \eta_B, \phi)$ are isomorphic in the sense of the lemma.

Finally, the datum $(B, \iota_B, \lambda_B, \eta_B)$ satisfies conditions $(a), (b), (c)$ in \S~\ref{mod-unit}, hence is a point of $\overline{\mathrm{Ig}}_T^b$. This follows from Remark \ref{cond-pdiv} and from the fact that the $p$-divisible group $\mathbb{X}^b_T$ comes from an abelian variety with extra structure satisfying the same conditions.
\end{proof}

\begin{thm}\label{isoigusa}
Let $b \in B(H_{T, \Q_p}, \mu_T)$ and let $B\subset \Sigma_p \smallsetminus T$ be the subset of places $\p$ such that $b_\p$ is basic. Let $T'=T \coprod B$ and let $b' \in B(H_{T', \Q_p}, \mu_{T'})$ be the element associated with $b$. Then there is an isomorphism
\begin{equation*}
\mathrm{Ig}^b_T \simeq \mathrm{Ig}^{b'}_{T'}
\end{equation*}
whose induced map in cohomology is equivariant with respect to the action of the Hecke operators outside $p$.
\end{thm}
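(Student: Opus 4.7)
My plan is to apply Lemma \ref{ig-isog}, which describes the functor of points of both Igusa varieties in terms of PEL data equipped with a $p$-quasi-isogeny (not an isomorphism) to the fixed model $p$-divisible group. In this flexible description the integral unitary structure is essentially recovered from the quasi-isogeny, so modifying the target by a quasi-isogeny amounts to modifying the PEL-type conditions on the abelian scheme up to $p$-isogeny. The strategy is therefore to exhibit a fixed quasi-isogeny $\psi : \mathbb{X}^b_T \to \mathbb{X}^{b'}_{T'}$ intertwining all relevant structures and to twist $\rho$ by $\psi$.

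Decomposing along the places $\p \in \Sigma_p$, I observe that at every $\p \notin B$ one has $b_\p = b'_\p$, $D_{T,\p} = D_{T',\p}$ with the same maximal order, and the same polarisation condition; so the work concentrates at $\p \in B$, where $\p$ is split in $B_T$ (and $b_\p$ is basic) but ramified in $B_{T'}$ (and $b'_\p$ is the unique element of the Kottwitz set). The first key step is to construct a quasi-isogeny $\psi_\p : \mathbb{X}^{b_\p}_T \to \mathbb{X}^{b'_\p}_{T'}$ which is equivariant for the two $\mathcal{O}$-actions after the identification $\theta_T^{-1}\theta_{T'} : D_{T',\p}\otimes_{F_\p}E_\p \simeq D_{T,\p}\otimes_{F_\p}E_\p \simeq M_2(E_\p)$, respects the polarisations up to a $\Q_p^\times$-scalar, and is compatible with the Rosati involutions $*_T, *_{T'}$. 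Its existence is the local content of the geometric Jacquet--Langlands correspondence of \cite{tixi16}: both isocrystals are isoclinic $M_2(E_\p)$-modules of slope $1/2$, and any two such are canonically isogenous up to the action of the automorphism groups.

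Setting $\psi := (\psi_\p)_{\p \in \Sigma_p}$ (identity outside $B$), I would define a natural transformation $\overline{\mathrm{Ig}}^b_T \to \overline{\mathrm{Ig}}^{b'}_{T'}$ sending $(r, A, \iota, \lambda, \eta, \rho)$ to $(r, A, \iota', \lambda, \eta, \psi \circ \rho)$, where $\iota'$ is the $\mathcal{O}_{D_{T'}}$-action deduced from $\iota$ through the global identification of $D_T \otimes_\Q \Q$ with $D_{T'} \otimes_\Q \Q$ via $\theta_T$ and $\theta_{T'}$; integrality of $\iota'$ is enforced by replacing $A$ with a $p$-isogenous abelian scheme, which is harmless under the equivalence relation of Lemma \ref{ig-isog}. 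The same recipe with $\psi^{-1}$ produces the inverse map, so this yields a natural isomorphism of functors, hence an isomorphism of schemes $\overline{\mathrm{Ig}}^b_T \simeq \overline{\mathrm{Ig}}^{b'}_{T'}$. The action of $\mathcal{O}_{F,(p)}^{\times,+}$ (and therefore of $\Delta$) is preserved because $\psi$ is fixed and acts trivially on the polarisation-rescaling component, so the isomorphism descends to $\mathrm{Ig}^b_T \simeq \mathrm{Ig}^{b'}_{T'}$. Hecke equivariance away from $p$ is immediate since the prime-to-$p$ level structure $\eta$ is never altered.

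The principal obstacle I anticipate is the careful construction of the local quasi-isogeny $\psi_\p$ at places $\p \in B$, together with the verification that it respects polarisations and Rosati involutions exactly as required. This is precisely the point at which the Jacquet--Langlands input of \cite{tixi16} becomes essential; bookkeeping of the different maximal orders and polarisation types on the two sides is delicate, but once $\psi_\p$ is in place, the rest of the argument is a formal manipulation of moduli problems up to $p$-quasi-isogeny.
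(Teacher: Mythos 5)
Your proposal follows essentially the same route as the paper: reduce via Lemma \ref{ig-isog} to producing a single quasi-isogeny $\mathbb{X}^b_T \to \mathbb{X}^{b'}_{T'}$ respecting the extra structures, obtain it from the construction in the proof of \cite[Lemma 5.18]{tixi16} (where the essential point, worth stating explicitly, is that the Frobenius-twisted lattice changes the signature and the polarisation type), and then twist $\rho$ by this quasi-isogeny, checking $\O_{F,(p)}^{\times,+}$-equivariance and Hecke equivariance away from $p$ exactly as you describe.
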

\begin{proof}
It suffices to produce an isomorphism $\overline{\mathrm{Ig}}^b_T \simeq \overline{\mathrm{Ig}}^{b'}_{T'}$ which satisfies the requirements in the theorem and is in addition $\O_{F, (p)}^{\times, +}$-equivariant. 
Recall that in \S~\ref{mod-unit} we have chosen isomorphisms
\begin{equation*}
(D_\emptyset, *_\emptyset, \mathcal{O}_{D_\emptyset}) \simeq (D_T, *_T, \mathcal{O}_{D_T}),
\end{equation*}
i.e. isomorphisms $\theta_T: D_\emptyset\toisom D_T$ respecting orders and involutions; we use these isomorphisms to identify the above data, and we denote them by $(D, *, \mathcal{O}_D)$ in this proof. With this notation,  Lemma \ref{ig-isog} tells us that, for every $T \subset \Sigma_p$ and every $\bar{\mathbb{F}}_p$-algebra $R$, the set $\overline{\mathrm{Ig}}_T^b(R)$ is the set of isomorphism classes of data $(A, \iota, \lambda, \eta, \rho)$ where
\begin{enumerate}
\item $A/\mathrm{Spec} \; R$ is an abelian scheme of dimension $4g$.
\item $\iota: \mathcal{O}_{D}\rightarrow \mathrm{End}_R(A)$ is an embedding.
\item $\lambda: A \rightarrow A^\vee$ is a $\Z_{(p)}^\times$ polarisation whose attached Rosati involution coincides with $*$ on $\mathcal{O}_{D}$.
\item $\eta$ is a level structure, defined as in \S~\ref{mod-unit}.
\item $\rho: A[p^\infty]\rightarrow \mathbb{X}^b_T\times_{\bar{\mathbb{F}}_p}R$ is a quasi-isogeny commuting with $\mathcal{O}_{D}$-action and polarisation (the latter up to a constant).
\end{enumerate}
In particular, the only datum in the description of the functor of points of $\overline{\mathrm{Ig}}_T^b$ which depends on $T$ is the quasi-isogeny $\rho: A[p^\infty]\rightarrow \mathbb{X}^b_T\times_{\bar{\mathbb{F}}_p}R$. Hence, in order to complete the proof it suffices to show that, if $b$ is associated with $b'$, then there is a quasi-isogeny $\rho_{T, T'}: \mathbb{X}^b_T\rightarrow \mathbb{X}^{b'}_{T'}$ commuting with the extra structure. The functor sending $(A, \iota, \lambda, \eta, \rho)$ to $(A, \iota, \lambda, \eta, \rho_{T, T'}\circ \rho)$ will then give the desired isomorphism $\overline{\mathrm{Ig}}^b_T \simeq \overline{\mathrm{Ig}}^{b'}_{T'}$; as $\O_{F, (p)}^{\times, +}$ only acts modifying polarisations by prime to $p$ quasi-isogenies, this isomorphism is $\O_{F, (p)}^{\times, +}$-equivariant.
The existence of a quasi-isogeny $\rho_{T, T'}$ as above follows from the construction in the proof of \cite[Lemma 5.18]{tixi16}. Indeed, letting $\D_T$ be the Dieudonné module of $\mathbb{X}^b_T$, the argument in \cite[proof of Lemma 5.18, p. 2185]{tixi16} produces a Dieudonné module $p\D_T \subset N \subset \D_T$, giving rise to a closed finite subgroup scheme $Z\subset \mathbb{X}^b_T[p]$; moreover $\mathbb{X}^b_T/Z$ is isomorphic to $\mathbb{X}^{b'}_{T'}$, and the construction in {\it loc. cit.} endows the quotient $\mathbb{X}^b_T/Z$ with extra structures respected by the isogeny $\mathbb{X}^b_T/Z\rightarrow \mathbb{X}^b_T$ whose composite with the quotient map is multiplication by $p$.
\end{proof}

\begin{rem}\leavevmode
\begin{enumerate}
\item The argument in \cite[p. 47]{tixi16} mentioned above constructs $N$ starting from the unitary Dieudonné module of $\mathbb{X}^b_T$ and using the Frobenius operator; a key point is that this results in a change of signature. The reader may find it helpful to check directly this phenomenon in the explicit examples given in \cite[(3.2)]{buwe06}.
\item With the notation as in the above theorem, the main result of \cite{tixi16} implies that the Newton strata indexed by $b$ and $b'$ in the special fibres of the Shimura varieties attached to $H_T$ and $H_{T'}$ are isomorphic. The above result shows that the same is true for the corresponding Igusa varieties. This is natural to expect, in view of Lemma~\ref{ig-isog} and of the fact that the isomorphism in~\cite{tixi16} is obtained from a quasi-isogeny on the level of $p$-divisible groups. The existence of such isomorphisms between Igusa varieties is established much more systematically in the function field setting in Sempliner's PhD thesis.
\end{enumerate}
\end{rem}

\section{The geometry of the Hodge--Tate period morphism}\label{sec:geometryHT}

The goal of this section is to establish the infinite-level Mantovan product formula for our
abelian-type unitary Shimura varieties. As a consequence, we can compute the fibers of
the Hodge--Tate period morphism in terms of Igusa varieties. 

\subsection{The product formula}\label{almprod} Take $p$ as in the previous section and choose a subset $T \subset \Sigma_p$; in order to simplify the notation, unless otherwise stated we will keep $T$ fixed throughout this section, and omit it from the notation. Fix an element $b=(b_\p)_{\p \in \Sigma_p} \in B(H_{\Q_p}, \mu)$, and let $(\mathbb{X}^b, \iota^b, \lambda^b)$ be a $p$-divisible group with extra structure attached to $b$, giving rise to a collection of $p$-divisible groups with extra structure $(\mathbb{X}^{b_\p}, \iota^{b_\p}, \lambda^{b_\p})$, for $\p \in \Sigma_p$, as in the previous section.

\subsubsection{} Let $\breve{\O}_\wp$ be the ring of integers of the completion of the maximal unramified extension of $E_\wp$, and let $\mathrm{Nilp}_{\breve{\O}_\wp}$ be the category of $\breve{\O}_\wp$-algebras in which $p$ is nilpotent. Following \cite[Definition 4.3.11]{cs17} we consider the functor $\X^b: \mathrm{Nilp}_{\breve{\O}_\wp} \rightarrow \mathrm{Sets}$ sending $R$ to the set of isomorphism classes of data $(A, \iota, \lambda, \eta, \varphi)$, where
\begin{enumerate}
\item $(A, \iota, \lambda, \eta)/\mathrm{Spec} \; R$ is a datum as in \S~\ref{mod-unit}.
\item
\begin{equation*}
\varphi=\oplus_{\p \in \Sigma_p}\varphi_\p: A[p^\infty]\times_{R} R/p=\oplus_{\p \in \Sigma_p}A[\p^\infty]\times_{R} R/p\rightarrow \oplus_{\p \in \Sigma_p} \mathbb{X}^{b_\p}\times _{\bar{\mathbb{F}}_p}R/p
\end{equation*}
is a quasi-isogeny commuting with the $\O_D$-action, and such that each $\varphi_\p$ respects the polarisation up to a $\Q_p^\times$-factor.
\end{enumerate}

We will now decompose $\X^b$ as a product of Rapoport--Zink spaces and a formal lift of an Igusa variety we introduced before, as in \cite[Lemma 4.3.12]{cs17}. The existence of such lifts is a consequence of the next lemma, which is analogous to \cite[Corollary 4.3.5]{cs17}.

\begin{lem}
The scheme $\overline{\mathrm{Ig}}^b$ is perfect.
\end{lem}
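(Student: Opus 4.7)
The plan is to prove that the absolute Frobenius morphism $F: \overline{\mathrm{Ig}}^b \to \overline{\mathrm{Ig}}^b$ is an isomorphism. Since $\overline{\mathrm{Ig}}^b$ is a scheme over the perfect field $\bar{\mathbb{F}}_p$, this equivalent to perfectness in the usual sense. The key tool is the quasi-isogeny description of the functor of points provided by Lemma~\ref{ig-isog}, in which an abelian scheme is only determined up to $p$-power isogeny, and the trivialisation $\rho$ is allowed to be merely a quasi-isogeny. I would parallel the argument of~\cite[Corollary 4.3.5]{cs17}, carried out there in the compact unitary case.

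First I would describe the effect of the absolute Frobenius on an $R$-point $(A,\iota,\lambda,\eta,\rho)$: it is sent to $(A^{(p)}, \iota^{(p)}, \lambda^{(p)}, \eta^{(p)}, \rho^{(p)})$. The level structure, being prime to $p$, causes no difficulty, and the base map $r\colon \bar{\mathbb{F}}_p\to R$ is harmlessly composed with the Frobenius of $\bar{\mathbb{F}}_p$, which is a bijection. Next I would exploit the relative Frobenius $F_A\colon A\to A^{(p)}$, a $p$-power isogeny and therefore an allowed identification in the sense of Lemma~\ref{ig-isog}. Using $F_A$, the Frobenius pullback becomes equivalent to a tuple of the shape $(A,\iota,\lambda,\eta,\rho^{(p)}\circ F_A)$. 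The composition $\rho^{(p)}\circ F_A$ can then be compared to $\rho$ by transporting $F_A$ through $\rho$: the result is precisely post-composition of $\rho$ by the Frobenius quasi-isogeny $F_{\mathbb{X}^b}$ of $\mathbb{X}^b$. Since $F_{\mathbb{X}^b}$ is a quasi-isogeny of $p$-divisible groups it is invertible as such, and the explicit inverse of $F$ on $\overline{\mathrm{Ig}}^b$ is given, in the isogeny description, by $(A,\iota,\lambda,\eta,\rho)\mapsto (A,\iota,\lambda,\eta, F_{\mathbb{X}^b}^{-1}\circ\rho)$.

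The main obstacle will be the bookkeeping of the extra structure: one must verify that the prescription $\rho\mapsto F_{\mathbb{X}^b}^{-1}\circ\rho$ is compatible with the $\O_D$-action and with the polarisation condition, up to the $\Q_p^\times$-ambiguity built into the definition, and that the Kottwitz-type signature conditions $(a)$, $(b)$, $(c)$ of \S~\ref{mod-unit} are preserved under the replacement of $(A,\rho)$ by $(A^{(p)},\rho^{(p)})$ (they are, by Remark~\ref{cond-pdiv}, since they depend only on the $p$-divisible group up to isomorphism). One must also check that the quotient by $\Delta$ and the structure map $r\colon \bar{\mathbb{F}}_p\to R$ interact correctly with the above substitution, using perfectness of $\bar{\mathbb{F}}_p$ to trivialise the twist at the level of the base. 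Once these compatibilities are in place, $F$ and the explicit inverse just described are mutually inverse isomorphisms of the representing scheme, so $\overline{\mathrm{Ig}}^b$ is perfect.
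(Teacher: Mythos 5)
Your proposal follows essentially the same route as the paper: using the quasi-isogeny description of Lemma~\ref{ig-isog}, you identify the Frobenius pullback of a point with the original tuple via the relative Frobenius $F_A$ (an allowed $p$-power isogeny), observe that the trivialisation becomes $F_{\mathbb{X}^b}\circ\rho$, and invert by composing with $\tfrac{1}{p}V_{\mathbb{X}^b}=F_{\mathbb{X}^b}^{-1}$. The only bookkeeping point worth making explicit, which the paper does, is that $(F_A)^\vee\circ\lambda^{(p)}\circ F_A=p\lambda$, so the polarisation must be rescaled by the unit $t$ with $\prod_\p x_\p=pt$ to stay within the allowed ambiguity (integral powers of the $x_\p$) in the isomorphism relation of Lemma~\ref{ig-isog}.
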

\begin{proof}
We have to show that, for every ring $R$, the absolute Frobenius $\mathrm{Fr}: \overline{\mathrm{Ig}}^b \rightarrow \overline{\mathrm{Ig}}^b$ induces a bijection on $R$-points. We will use the description of the functor of points of $\overline{\mathrm{Ig}}^b$ given in Lemma \ref{ig-isog}. As the map $\mathrm{Fr}$ is not a morphism of $\bar{\mathbb{F}}_p$-schemes, in this proof we need to take the $\bar{\mathbb{F}}_p$-algebra structure on $R$ into account. Let $(r, A, \iota, \lambda, \eta, \rho)$ be an $R$-point of $\overline{\mathrm{Ig}}^b$; its image via Frobenius is the $R$-point $(r^{(p)}, A^{(p)}, \iota^{(p)}, \lambda^{(p)}, \eta^{(p)}, \rho^{(p)})$ where:
\begin{enumerate}
\item $r^{(p)}$ is the composite of $r$ and the Frobenius on $\bar{\mathbb{F}}_p$.
\item $A^{(p)}:=A \times_{R, \mathrm{Fr}_R}R$.
\item ($\iota^{(p)}, \lambda^{(p)}, \eta^{(p)}$) are induced from $(\iota, \lambda, \eta)$ by functoriality.
\item $\rho^{(p)}:=\rho \times \mathrm{Id}: A[p^\infty] \times_{R, \mathrm{Fr}_R}R \rightarrow (\mathbb{X}^b\times_{\overline{\mathbb{F}}_p, r} R) \times_{R, \mathrm{Fr}_R} R = \mathbb{X}^b\times_{\overline{\mathbb{F}}_p, r^{(p)}} R$.
\end{enumerate}
Let $F_A: A \rightarrow A^{(p)}$ be the relative Frobenius morphism. Then $F_A$ commutes with $\iota, \iota^{(p)}$ (as well as with the level structures) by functoriality. Furthermore we have
\begin{equation*}
(F_A)^\vee \circ \lambda^{(p)} \circ F_A= (F_A)^\vee \circ F_{A^\vee} \circ \lambda=p \lambda
\end{equation*}
where the first equality holds true by functoriality of relative Frobenius and the second follows from the fact that $(F_A)^\vee$ is the Verschiebung on $A^\vee$, and the composite of Verschiebung and Frobenius is multiplication by $p$. Finally, we have
\begin{equation*}
\rho^{(p)}\circ F_A=F_{\mathbb{X}^b}\circ \rho
\end{equation*}
where $F_{\mathbb{X}^b}: \mathbb{X}^b \times_{\overline{\mathbb{F}}_p, r} R \rightarrow \mathbb{X}^b \times_{\overline{\mathbb{F}}_p, r^{(p)}} R$ is the relative Frobenius. Writing $\prod_{\p \in \Sigma_p}x_\p=pt$ with $t \in \O_{F, (p)}^{\times, +}$, we deduce that the $R$-point $(r^{(p)}, A^{(p)}, \iota^{(p)}, \lambda^{(p)}, \eta^{(p)}, \rho^{(p)})$ coincides with the $R$-point
\begin{equation*}
\left(r^{(p)}, A, \iota, \frac{1}{t}\lambda, \eta, F_{\mathbb{X}^b}\circ \rho\right).
\end{equation*}
Let us denote by $r^{(-p)}$ the composition of $r$ and the inverse of Frobenius on $\overline{\mathbb{F}}_p$; we have the Verschiebung map $V_{\mathbb{X}^b}: \mathbb{X}^b \times_{\bar{\mathbb{F}}_p, r} R \rightarrow \mathbb{X}^b \times_{\bar{\mathbb{F}}_p, r^{(-p)}} R$, and the map $\overline{\mathrm{Ig}}^b \rightarrow \overline{\mathrm{Ig}}^b$ which on $R$-points is given by
\begin{equation*}
(r, A, \iota, \lambda, \eta, \rho) \mapsto \left(r^{(-p)}, A, \iota, t\lambda, \eta, \frac{1}{p}V_{\mathbb{X}^b} \circ \rho\right)
\end{equation*}
is the inverse of the map induced by $\mathrm{Fr}: \overline{\mathrm{Ig}}^b \rightarrow \overline{\mathrm{Ig}}^b$.
\end{proof}

\subsubsection{The formal Igusa variety}\label{form-igvar} Since the Igusa variety $\overline{\mathrm{Ig}}^b$ is perfect, it lifts canonically to a flat $p$-adic formal scheme $\overline{\mathfrak{Ig}}^b: \mathrm{Nilp}_{\breve{\O}_\wp} \rightarrow \mathrm{Sets}$ described as follows (see \cite[p. 719]{cs17}): fix a lift (up to quasi-isogeny) $\hat{\mathbb{X}}^{b}/\breve{\O}_\wp$ of $\mathbb{X}^b$ with extra structure. 
For $R \in \mathrm{Nilp}_{\breve{\O}_\wp}$, the $R$-points of $\overline{\mathfrak{Ig}}^b$ are isomorphism classes of data $(A, \iota, \lambda, \eta, \phi)$ as in \S~\ref{igvar-def}, except that the target of $\phi$ is $\hat{\mathbb{X}}^{b}_R:=\hat{\mathbb{X}}^{b}\times_{\breve{\O}_\wp}R$.

The group $\Delta$ defined in \S~\ref{igvar} acts on $\overline{\mathfrak{Ig}}^b$, and we set $\mathfrak{Ig}^b:=\overline{\mathfrak{Ig}}^b/\Delta$.

\subsubsection{Rapoport--Zink spaces}\label{razin-fin} For every $\p \in \Sigma_p$ we have the Rapoport--Zink space $\mathfrak{M}^{b_\p}$ which is the formal scheme $\mathrm{Nilp}_{\breve{\O}_\wp} \rightarrow \mathrm{Sets}$ sending $R$ to the set of isomorphism classes of data $(\G, \iota, \lambda, \rho)$, where:
\begin{enumerate}
\item $\G$ is a $p$-divisible group over $R$.
\item $\iota: \O_{D, \p}\rightarrow \mathrm{End}_R(\G)$ is an action of $\O_{D, \p}:=\O_D\otimes_{\O_F}\O_{F_\p}$.
\item $\lambda$ is a polarisation compatible with the involution on $\O_{D, \p}$ induced by $*$ and such that the Kottwitz condition is satisfied.
\item $\rho: \G \times_{R}R/p \rightarrow \mathbb{X}^{b_\p}\times_{\bar{\mathbb{F}}_p}R/p$ is a quasi-isogeny commuting with the $\O_{D, \p}$-action, and respecting polarisations up to a $\Q_p^\times$-factor.
\end{enumerate}
Furthermore we require $\lambda$ to be principal if $\p \not \in T$. If $\p \in T$, we ask the cokernel of the map induced by $\lambda$ on Lie algebras of the universal vector extensions of the relevant $p$-divisible groups to be locally free of rank two over $R \otimes_{\Z_p}\mathcal{O}_E/\p\mathcal{O}_E$. 

Two tuples $(\G, \iota, \lambda, \rho)$ and $(\G', \iota', \lambda', \rho')$ are regarded as isomorphic if there exists an isomorphism $\G \rightarrow \G'$ commuting with $\iota$, $\iota'$, $\rho$ and $\rho'$, and respecting polarisations up to a $\Z_p^\times$-factor.
Let $\mathfrak{M}^b:=\prod_{\p \in \Sigma_p} \mathfrak{M}^{b_\p}$.

\subsubsection{} We will now define a map $\alpha: \overline{\mathfrak{Ig}}^b \times_{\breve{\O}_\wp} \mathfrak{M}^b \rightarrow \X^b$. We will denote the object to which some extra structure is attached by a subscript.

Fix $R \in \mathrm{Nilp}_{\breve{\O}_\wp}$ and take
\begin{equation*}
(A, \iota_A, \lambda_A, \eta_A, \phi_A) \in \overline{\mathfrak{Ig}}^b(R) \text{ and } (\G_\p, \iota_{\G_\p}, \lambda_{\G_\p}, \rho_{\G_\p})_{\p \in \Sigma_p} \in \mathfrak{M}^b(R).
\end{equation*}
Let $\G=\oplus_{\p \in \Sigma_p}\G_\p$ and let $\rho_	\G: \G \rightarrow \hat{\mathbb{X}}^b_R$ be the quasi-isogeny lifting $\oplus_{\p \in \Sigma_p}\rho_{\G_\p}$; we get a quasi-isogeny $\rho_\G^{-1}\circ \phi_A: A[p^\infty]\rightarrow \G$ which commutes with the action of $\O_D$ and such that each component respects polarisations up to a $\Q_p^\times$-factor. We need to construct a point $(B, \iota_{B}, \lambda_{B}, \eta_{B}, \varphi_{B}) \in \X^b(R)$.

Let $B$ be the unique abelian scheme in the $p$-isogeny class of $A$ such that $\rho_\G^{-1}\circ \phi_A$ induces an isomorphism $\gamma=(\gamma_\p)_{\p \in \Sigma_p}: B[p^\infty] \toisom \G$. We endow $B$ with an action $\iota_B:\O_D \rightarrow \mathrm{End}_R(B)$ and with a level structure $\eta_{B}$ defined as in the proof of Lemma \ref{ig-isog}. To define the polarisation $\lambda_{B}$, look at the diagram

\begin{center}
\begin{tikzcd}
A[p^\infty] \arrow[r, "q"]\arrow[d, "\lambda_A"] & B[p^\infty] \arrow[r, "\gamma"] & \G \arrow[d, "\lambda_{\G}"]\\
A^\vee[p^\infty] & B^\vee[p^\infty] \arrow[l, "q^\vee"] & \G^\vee \arrow[l, "{\gamma}^\vee"].
\end{tikzcd}
\end{center}

\noindent As in the proof of Lemma \ref{ig-isog} we may rescale the quasi-isogeny $(q^\vee)^{-1}\circ \lambda_A\circ q^{-1}$ and get a polarisation $\lambda_B$ on $B$ such that $\gamma$ interchanges it with $\lambda_\G$ up to a $\Z_p^\times$-factor on each component. Finally, let $\bar{\gamma}$ be the reduction of $\gamma$ modulo $p$ and
\begin{equation*}
\varphi_B:=(\oplus_{\p \in \Sigma_p}\rho_{\G_\p})\circ \bar{\gamma}: B[p^\infty]\times_R R/p\rightarrow \mathbb{X}^b\times_{\bar{\mathbb{F}}_p}R/p.
\end{equation*}
Notice that by construction the polarisation $\lambda_B$ is unchanged if $\G$ is replaced by a $p$-divisible group $\G'$ endowed with an isomorphism to $\G$ respecting the extra structures, the polarisation being respected up to a $\Z_p^\times$-factor on each component. Therefore, the resulting map $\alpha: \overline{\mathfrak{Ig}}^b \times_{\breve{\O}_\wp} \mathfrak{M}^b \rightarrow \X^b$ is well-defined.

\begin{lem}\label{prod-fin}
The map $\alpha: \overline{\mathfrak{Ig}}^b \times_{\breve{\O}_\wp} \mathfrak{M}^b \rightarrow \X^b$ is an isomorphism.
\end{lem}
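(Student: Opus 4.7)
The plan is to construct an inverse to $\alpha$ and to check that the two compositions are the identity. The main tool is the rigidity of quasi-isogenies of $p$-divisible groups (Drinfeld's lemma, see~\cite[Lemma 2.4.4]{man05} or~\cite[Proposition 3.3.14]{cs17}), which guarantees that any quasi-isogeny defined over $R/p$ lifts uniquely to a quasi-isogeny over $R$ when $R \in \mathrm{Nilp}_{\breve{\O}_\wp}$.

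Concretely, given a point $(B, \iota_B, \lambda_B, \eta_B, \varphi_B) \in \X^b(R)$, I would proceed as follows. First, let $\hat{\mathbb{X}}^b_R = \hat{\mathbb{X}}^b \times_{\breve{\O}_\wp} R$ and apply rigidity to lift $\varphi_B = \oplus_\p \varphi_{B,\p}$ to a quasi-isogeny $\tilde{\varphi}_B : B[p^\infty] \to \hat{\mathbb{X}}^b_R$ over $R$, splitting as a sum $\oplus_\p \tilde{\varphi}_{B,\p}$ indexed by $\p \in \Sigma_p$. Next, for each $\p$, let $\G_\p$ be the unique $p$-divisible group in the quasi-isogeny class of $\hat{\mathbb{X}}^{b_\p}_R$ such that the composition $B[\p^\infty] \to \hat{\mathbb{X}}^{b_\p}_R \dashrightarrow \G_\p$ is an isomorphism; equip $\G_\p$ with the induced $\O_{D,\p}$-action, the polarisation transported from $B[\p^\infty]$, and the quasi-isogeny $\rho_{\G_\p} : \G_\p \dashrightarrow \hat{\mathbb{X}}^{b_\p}_R$ opposite to the map constructed above. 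This yields a point of $\mathfrak{M}^b(R)$; the Kottwitz condition, the polarisation hypotheses and the sign/rank conditions at $\p \in T$ follow from the corresponding conditions on $B$ listed in \S\ref{mod-unit} together with the fact that $\tilde{\varphi}_B$ respects all the relevant structures. Finally, following the proof of Lemma~\ref{ig-isog}, use $\tilde{\varphi}_B$ to modify $B$ by a quasi-isogeny and obtain an abelian scheme $A$ in the same $p$-isogeny class, together with an isomorphism $\phi_A : A[p^\infty] \toisom \hat{\mathbb{X}}^b_R$ and with transported structures $(\iota_A, \lambda_A, \eta_A)$; this produces the desired point of $\overline{\mathfrak{Ig}}^b(R)$.

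Checking that the resulting map $\beta : \X^b \to \overline{\mathfrak{Ig}}^b \times_{\breve{\O}_\wp} \mathfrak{M}^b$ is inverse to $\alpha$ is essentially bookkeeping once one observes that the construction of $\alpha$ in the paragraph preceding the lemma recovers $B$ from the pair $((A,\phi_A), (\G_\p, \rho_{\G_\p}))$ as the unique abelian scheme in the $p$-isogeny class of $A$ with $B[p^\infty] \simeq \G$, and symmetrically. The rigidity of quasi-isogenies ensures that the assignments $\varphi_B \mapsto \tilde{\varphi}_B$ and its reduction mod $p$ are mutually inverse, which is the one point that is not formal.

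The main obstacle, if any, is verifying that the normalisations of polarisations match up on the nose. On both sides polarisations are only well-defined up to a $\Z_p^\times$-factor on each $\p$-component (and up to the action of $\O^{\times,+}_{F,(p)}$ that enters in the definition of $\X^b$); the construction in \S\ref{almprod} was designed precisely so that $\alpha$ is insensitive to these ambiguities, and one has to check that the inverse $\beta$ enjoys the same property. This reduces to the observation, already used in the proof of Lemma~\ref{ig-isog}, that multiplying $\lambda_A$ by a power of $x_\p$ corresponds, under the modification by $\tilde{\varphi}_B$, to rescaling $\lambda_{\G_\p}$ by the same power, and hence lands in the same isomorphism class of points of $\mathfrak{M}^{b_\p}$.
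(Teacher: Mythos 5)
Your proposal is correct and follows essentially the same route as the paper: one writes down an explicit inverse $\beta$ by taking the $p$-divisible group of $B$ with its induced structures and $\varphi_B$ as the $\mathfrak{M}^b$-component, and by modifying $B$ along the (rigidity-lifted) quasi-isogeny $\varphi_B$ to produce the Igusa point, then checks both composites are the identity up to the allowed rescalings of polarisations. The only cosmetic difference is that the paper takes $\G = B[p^\infty]$ directly rather than realising it as a modification of $\hat{\mathbb{X}}^b_R$; the two descriptions give canonically isomorphic points of $\mathfrak{M}^b$.
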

\begin{proof}
We will construct an inverse $\beta: \X^b \rightarrow \overline{\mathfrak{Ig}}^b \times_{\breve{\O}_\wp} \mathfrak{M}^b$ of the map $\alpha$. Take $(B, \iota_{B}, \lambda_{B}, \eta_{B}, \varphi_{B}) \in \X^b(R)$ and let $\G:=B[p^\infty]$, endowed with the additional structures coming from $B$, and with the trivialisation $\rho_\G:=\varphi_B$. This gives an $R$-point of  $\mathfrak{M}^b$. Finally, define $(A, \iota_A, \lambda_A, \eta_A, \phi_A)$ as follows: the abelian scheme $A$ is the unique one in the $p$-isogeny class of $B$ such that the lift of $\varphi_{B}$ induces an isomorphism $\phi_A: A[p^\infty] \rightarrow \hat{\mathbb{X}}^b_R$, and the additional structures are obtained from those on $B$ as in the discussion before the statement of the lemma.

With the notations introduced in the construction of the map $\alpha$, the composite $\beta \circ \alpha$ sends
\begin{equation*}
(\G, \iota_{\G}, \lambda_{\G}, \oplus_{\p \in \Sigma_p}\rho_{\G_\p}), (A, \iota_A, \lambda_A, \eta_A, \phi_A)
\end{equation*}
to
\begin{equation*}
(B[p^\infty], \iota_B[p^\infty], \lambda_B[p^\infty], \oplus_{\p \in \Sigma_p}\rho_{\G_\p} \circ \bar{\gamma}), (A, \iota_A, \lambda_A, \eta_A, \phi_A).
\end{equation*}
The map $\gamma$ gives an isomorphism, respecting polarisations up to $\Z_p^\times$ on each factor, between $(B[p^\infty], \iota_B[p^\infty], \lambda_B[p^\infty], \oplus_{\p \in \Sigma_p}\rho_{\G_\p} \circ \bar{\gamma})$ and $(\G, \iota_{\G}, \lambda_{\G}, \oplus_{\p \in \Sigma_p}\rho_{\G_\p})$, hence $\beta \circ \alpha$ is the identity map. Similarly one checks that $\alpha \circ \beta$ is the identity.
\end{proof}

\subsection{The product formula at infinite level and the Hodge--Tate period map} We now wish to establish an analogue in our situation of \cite[Lemma 4.3.20]{cs17}.
\subsubsection{Generic fibres of formal schemes}\label{genfib-for} We will need to work with the adic generic fibres of the formal schemes introduced in the previous section. Recall that to a formal scheme $\mathfrak{M}$ over $\breve{\O}_\wp$ (locally admitting a finitely generated ideal of definition) one can attach an adic space $\mathfrak{M}^{ad}$ as in \cite[Proposition 2.2.1]{sw13} over $\Spa(\breve{\O}_\wp, \breve{\O}_\wp)$; one can then take the adic generic fibre
\begin{equation*}
\mathcal{M}:=\mathfrak{M}^{ad}\times_{\Spa(\breve{\O}_\wp, \breve{\O}_\wp)}\Spa(\breve{E}_\wp, \breve{\O}_\wp)
\end{equation*}
where $\breve{E}_\wp$ is the fraction field of $\breve{\O}_\wp$. With this notation, the product formula in Lemma \ref{prod-fin} becomes, on the generic fibre:
\begin{equation}\label{prod-gen}
\overline{\mathcal{I}g}^b \times_{\Spa(\breve{E}_\wp, \breve{\O}_\wp)} \mathcal{M}^b \xrightarrow{\sim} \mathcal{X}^b.
\end{equation}
The functors of points on complete affinoid $(\breve{E}_\wp, \breve{\O}_\wp)$-algebras of the adic spaces in the previous formula can be described using \cite[Proposition 2.2.2]{sw13}. It is easier, and sufficient for our purposes, to give such a description restricted to the category $\mathrm{Perf}_{\breve{E}_\wp}$ of perfectoid Huber pairs $(R, R^+)$ over $(\breve{E}_\wp, \breve{\O}_\wp)$. For example, the functor of points of $\mathcal{X}^b$ is the sheafification in the analytic topology of the functor
\begin{align*}
\mathrm{Perf}_{\breve{E}_\wp} & \rightarrow \mathrm{Sets}\\
(R, R^+) & \mapsto \varprojlim_n \mathfrak{X}^b(R^+/p^n).
\end{align*}
In other words, up to sheafification, an $(R, R^+)$-point of $\mathcal{X}^b$ is a compatible collection of data $(A_n, \iota_n, \lambda_n, \eta_n, \varphi_n)$ over $R^+/p^n$ as defined in \S~\ref{almprod}. Notice that giving a compatible sequence of quasi-isogenies $\varphi_n$ just amounts to giving $\varphi_1$; on the other hand we may see the compatible sequence $(A_n, \iota_n, \lambda_n, \eta_n)$ as a formal scheme with extra structure over $\Spf R^+$, which as above can be regarded as an adic space; we will denote it by $(\mathcal{A}, \iota, \lambda, \eta)$. Hence the compatible sequence of data $(A_n, \iota_n, \lambda_n, \eta_n, \varphi_n)$ uniquely corresponds to a datum of the form $(\mathcal{A}, \iota, \lambda, \eta, \varphi)$.

A similar description can be given of the functor of points of the other objects appearing in \eqref{prod-gen}, as well as of the good reduction locus $\mathcal{Y}_{U}(H)^\circ$ inside the analytification $\mathcal{Y}_{U}(H)$ of the space $Y_{U}(H)_{E_\wp}$; the space $\mathcal{Y}_{U}(H)^\circ$ is defined as the generic fibre of the completion along the special fibre of $Y_{U, \O_\wp}(H)$. In particular \cite[Lemma 2.2.2]{sw13} applies and yields a description of the functor of points of $\mathcal{Y}_{U}(H)^\circ$ similar to the one discussed above for $\mathcal{X}^b$. Notice that the inclusion $\mathcal{Y}_{U}(H)^\circ \subset \mathcal{Y}_{U}(H)$ is an equality if $T$ is non-empty, but it is strict if $T$ is empty.

We will now introduce versions with infinite level at $p$ of the spaces considered so far.

\subsubsection{The space $\mathcal{M}^b_{\infty}$}\label{rz-infin} We will first define the Rapoport--Zink space at infinite level $\mathcal{M}^b_{\infty}:=\prod_{\p \in \Sigma_p}\mathcal{M}^{b_\p}_{\infty}$. Each $\mathcal{M}^{b_\p}_{\infty}$ is a pro-étale cover of the generic fibre $\mathcal{M}^{b_\p}$ of the adic space attached to the formal scheme $\mathfrak{M}^{b_\p}$. 

Fix $\p \in \Sigma_p$; as recalled above, the functor of points $\mathrm{Perf}_{\breve{E}_\wp} \rightarrow \mathrm{Sets}$ of $\mathcal{M}^{b_\p}$ is the sheafification of the functor sending $(R, R^+)$ to $\varprojlim_n \mathfrak{M}^{b_\p}(R^+/p^n)$. Giving such a compatible system of $(R^+/p^n)$-points amounts to giving a compatible collection of $p$-divisible groups with extra structure $(\mathcal{G}_n, \iota_n, \lambda_n)$ as in \S~\ref{razin-fin} and a quasi-isogeny (respecting extra structures) $\rho: \mathcal{G}_1 \rightarrow \mathbb{X}^{b_\p} \times_{\bar{\mathbb{F}}_p} R^+/p$. 
Now, by~\cite[Lemma (4.16)]{mes72}, giving a compatible sequence of $p$-divisible groups with extra-structure $(\mathcal{G}_n, \iota_n, \lambda_n)$ is equivalent to giving a $p$-divisible group with extra structure $(\mathcal{G}, \iota, \lambda)$ over $R^+$. For every $n \geq 1$, the Tate module functor $T(\mathcal{G}_n)$ from $R^+/p^n$-algebras to sets sending $S$ to $\varprojlim_{k}\mathcal{G}_n[p^k](S)$ is represented by an affine scheme, flat over $\Spec(R^+/p^n)$, by \cite[Proposition 3.3.1]{sw13}. Taking the limit over $n$ and passing to the adic generic fibre we obtain an adic space which we denote by $\mathcal{T}(\mathcal{G})$.

The space $\mathcal{M}^{b_\p}_{\infty}$ parametrises trivialisations of the local system on $\mathcal{M}^{b_\p}$ given by $\mathcal{T}(\mathcal{G})$. More precisely, let $\Lambda_\p=\left(\begin{smallmatrix}
\mathcal{O}_{E_\p} & \mathcal{O}_{E_\p}\\
\mathcal{O}_{E_\p} & \mathcal{O}_{E_\p}
\end{smallmatrix}\right)$ if $\p \in \Sigma_p \smallsetminus T$ and $\Lambda_\p=\left(\begin{smallmatrix}
\p\mathcal{O}_{E_\p} & \mathcal{O}_{E_\p}\\
\p\mathcal{O}_{E_\p} & \mathcal{O}_{E_\p}
\end{smallmatrix}\right)$ if $\p \in T$ (as in \cite[p. 2154]{tixi16}). Following \cite[Definition 6.5.3]{sw13} we define 
\[
\mathcal{M}^{b_\p}_{\infty}: \mathrm{Perf}_{\breve{E}_\wp}\rightarrow \mathrm{Sets}
\] 
as the sheafification of the functor sending $(R, R^+)$ to the set of isomorphism classes of data
$(\mathcal{G}, \iota, \lambda, \rho, \alpha)$,
where $(\mathcal{G}, \iota, \lambda, \rho)$ is a datum as above, and
\begin{equation*}
\alpha: \Lambda_\p \rightarrow \mathcal{T}(\mathcal{G})(R, R^+)
\end{equation*}
is an $\O_{D, \p}$-linear map. We require that there is a compatible choice of $p$-power roots of unity in $R^+$, yielding a map $\varepsilon: \Z_p \rightarrow \mathcal{T}(\mu_{p^\infty})(R, R^+)$, such that the pairing $\mathrm{Tr} (\psi):\Lambda_\p \times \Lambda_\p \rightarrow \Z_p$ and the pairing $\mathcal{T}\mathcal{G}(R, R^+) \times \mathcal{T}\mathcal{G}(R, R^+) \rightarrow \mathcal{T}(\mu_{p^\infty})(R, R^+)$ induced by the polarisation are identified via the maps $\alpha$ and $\varepsilon$. Furthermore we ask that for every map $x: \Spa(K, K^+) \rightarrow \Spa(R, R^+)$ with $K$ a perfectoid field, the induced map $\Lambda_\p \rightarrow \mathcal{T}(\mathcal{G})(K, K^+)$ is an isomorphism. %

\subsubsection{The space $\mathcal{Y}_{U^p}(H)^{\circ}$}
Let $\mathcal{Y}_{U^p}(H)/\breve{E}_\wp$ be the inverse limit (as a diamond) of the analytifications of the varieties $Y_{U^pU_p}(H)_{\breve{E}_\wp}$, as the compact open subgroup $U_p\subset H(\Q_p)$ varies (cf. Remark~\ref{rem-models}). We denote by $\mathcal{Y}_{U^p}(H)^\circ\subset \mathcal{Y}_{U^p}(H)$ the preimage of $\mathcal{Y}_{U}(H)^\circ_{\breve{E}_\wp}$ via the natural projection map. The same argument used in \S~\ref{genfib-for} shows that $\mathcal{Y}_{U}(H)^\circ: \mathrm{Perf}_{\breve{E}_\p} \rightarrow \mathrm{Sets}$ is the sheafification of the functor sending $(R, R^+)$ to the set of isomorphism classes of data $(\mathcal{A}, \iota, \lambda, \eta)/\Spf R^+$. On the other hand $\mathcal{Y}_{U^p}(H)$ is the inverse limit of the analytifications of schemes over $Y_{U}(H)_{\breve{E}_\wp}$ relatively representing trivialisations (respecting extra structure) of the $p^n$-torsion in the universal abelian scheme. It follows that $\mathcal{Y}_{U^p}(H)^\circ: \mathrm{Perf}_{\breve{E}_\wp} \rightarrow \mathrm{Sets}$ 
is the sheafification of the functor sending $(R, R^+)$ to the set of isomorphism classes of data $(\mathcal{A}, \iota, \lambda, \eta, \alpha)$, where $\alpha: \Lambda_p=\oplus_{\p \in \Sigma_p}\Lambda_\p \rightarrow \mathcal{T}(\mathcal{A}[p^\infty])(R, R^+)$ is a trivialisation in the sense of \S~\ref{rz-infin}.

There is a continuous specialisation map $\mathcal{Y}_{U}(H)^\circ \rightarrow Y_{U}(H)\times_{\breve{\O}_\wp} \bar{\F}_p$; define $\mathcal{Y}_{U^p}(H)^{b} \subset \mathcal{Y}_{U^p}(H)^{\circ}$ to be the preimage of the Newton stratum in $Y_{U}(H)\times_{\breve{\O}_\wp} \bar{\F}_p$ corresponding to $b \in B(H_{\Q_p}, \mu)$ via the composition of the specialisation map and the projection map $\mathcal{Y}_{U^p}(H)^{\circ} \rightarrow \mathcal{Y}_{U}(H)^\circ_{\breve{E}_\wp}$. Hence $\mathcal{Y}_{U^p}(H)^{b}$ is a locally closed subspace of (the topological space underlying) $\mathcal{Y}_{U^p}(H)^{\circ}$.

\subsubsection{The space $\mathcal{X}^b_{\infty}$} Finally, we will need the infinite level version of the space $\mathcal{X}^b$. This is the sheafification of the functor sending $(R, R^+) \in \mathrm{Perf}_{\breve{E}_\wp}$ to the set of isomorphism classes of data $(\mathcal{A}, \iota, \lambda, \eta, \varphi, \alpha)$ where $(\mathcal{A}, \iota, \lambda, \eta, \varphi)$ is as in \S~\ref{genfib-for}, and $\alpha$ is a trivialisation of $\mathcal{T}(\mathcal{A}[p^\infty])(R, R^+)$ in the sense defined above.

\subsubsection{} From now on, unless otherwise stated we will regard all our objects as diamonds over $\Spd(\breve{E}_\wp, \breve{\O}_\wp)$; in particular, to define maps between them it suffices to give natural transformations between the functors on $\mathrm{Perf}_{\breve{E}_\wp}$ described above. We have a map $\mathcal{X}^b_{\infty}\rightarrow \mathcal{M}^b_\infty$, obtained sending an abelian scheme with extra structure to the associated $p$-divisible group with extra structure. This fits into a cartesian diagram

\begin{center}
\begin{tikzcd}
\mathcal{X}^b_{\infty} \arrow[r] \arrow[d] & \mathcal{M}^b_\infty \arrow[d] \\
\mathcal{X}^b \arrow[r] & \mathcal{M}^b.
\end{tikzcd}
\end{center}

\noindent Indeed, the above diagram is clearly cartesian on the non-sheafified functors of points, and sheafification commutes with finite limits. It follows that $\mathcal{X}^b_{\infty}$ is representable by an adic space. Furthermore, the product formula \eqref{prod-gen} is still true at infinite level, as
\begin{equation}\label{eq:infinite level product}
\mathcal{X}^b_\infty \simeq 
 \overline{\mathcal{I}g^b}\times_{\Spd(\breve{E}_\wp, \breve{\O}_\wp)} \mathcal{M}^b_\infty.
\end{equation}

The diamond $\mathcal{X}^b_\infty$ maps to $\mathcal{Y}_{U^p}(H)^{\circ}$ forgetting the quasi-isogeny $\varphi$; the underlying map of topological spaces factors through a map 
\begin{equation*}
q: \mathcal{X}^b_\infty \rightarrow \mathcal{Y}_{U^p}(H)^{b}.
\end{equation*}
We warn the reader that the subset $\mathcal{Y}_{U^p}(H)^{b}\subset \mathcal{Y}_{U^p}(H)$ is not necessarily generalising, hence it may not be the underlying topological space of a subdiamond of $\mathcal{Y}_{U^p}(H)$. For $(R, R^+) \in \mathrm{Perf}_{\breve{E}_\wp}$, we will denote by $\mathcal{Y}_{U^p}(H)^{b}(R, R^+)$ the set of maps from $\mathrm{Spa}(R, R^+)$ to $\mathcal{Y}_{U^p}(H)$ whose image is contained in $\mathcal{Y}_{U^p}(H)^{b}$.

Finally, we have global and local Hodge--Tate period maps
\begin{align*}
\pi_{\HT}^\circ: \mathcal{Y}_{U^p}(H)^{\circ} & \rightarrow \mathscr{F}\ell_{H_{\Q_p}, \mu}\\
\pi_{\HT}^b: \mathcal{M}^b_\infty & \rightarrow \mathscr{F}\ell_{H_{\Q_p}, \mu}.
\end{align*}

We can now state and prove an analogue of \cite[Lemma 4.3.20]{cs17}.
\begin{prop}\label{locglob-ht}
The following diagram (of functors on $\mathrm{Perf}_{\breve{E}_\wp}$) is cartesian.
\begin{center}
\begin{tikzcd}
\mathcal{X}^b_{\infty} \arrow[r] \arrow[d, "q"] & \mathcal{M}^b_\infty \arrow[d, "\pi_{\HT}^b"] \\
\mathcal{Y}_{U^p}(H)^{b} \arrow[r, "\pi^\circ_{\HT}"] & \mathscr{F}\ell_{H_{\Q_p}, \mu}.
\end{tikzcd}
\end{center}
\end{prop}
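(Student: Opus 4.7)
The plan is to verify the statement directly on the pre-sheafified functors of points over $\mathrm{Perf}_{\breve{E}_\wp}$, and then invoke the fact that sheafification commutes with finite limits to conclude the diagram is cartesian as diamonds.

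A section of the fiber product over $(R, R^+) \in \mathrm{Perf}_{\breve{E}_\wp}$ consists of the following three data: first, a tuple $(\mathcal{A}, \iota, \lambda, \eta, \alpha)$ giving a point of $\mathcal{Y}_{U^p}(H)^b(R, R^+)$; second, a tuple $(\mathcal{G}, \iota_{\mathcal{G}}, \lambda_{\mathcal{G}}, \rho, \alpha_{\mathcal{G}})$ giving a point of $\mathcal{M}^b_\infty(R, R^+)$; and third, the equality in $\mathscr{F}\ell_{H_{\Q_p},\mu}(R,R^+)$ of the two Hodge--Tate filtrations attached respectively to $(\mathcal{A}[p^\infty], \alpha)$ and to $(\mathcal{G}, \alpha_{\mathcal{G}})$. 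A section of $\mathcal{X}^b_\infty(R, R^+)$ is a tuple $(\mathcal{A}, \iota, \lambda, \eta, \varphi, \alpha)$ where, in addition, $\varphi$ is a quasi-isogeny from $\mathcal{A}[p^\infty]_{R^+/p}$ to $\mathbb{X}^b_{R^+/p}$ respecting the extra structures. There is an obvious map from $\mathcal{X}^b_\infty$ to the fiber product, sending the tuple $(\mathcal{A}, \iota, \lambda, \eta, \varphi, \alpha)$ to $(\mathcal{A}, \iota, \lambda, \eta, \alpha)$ together with $(\mathcal{A}[p^\infty], \iota, \lambda, \varphi, \alpha)$; the compatibility with Hodge--Tate filtrations is automatic, as both filtrations are computed from the same $p$-divisible group $\mathcal{A}[p^\infty]$ with the same trivialisation $\alpha$.

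To produce an inverse, the key point is to construct, from a datum in the fiber product, a canonical isomorphism $\mathcal{A}[p^\infty] \toisom \mathcal{G}$ of $p$-divisible groups over $R^+$, respecting all additional structures and intertwining $\alpha$ with $\alpha_{\mathcal{G}}$; the required quasi-isogeny $\varphi$ is then obtained by composing this isomorphism with $\rho$. The existence and uniqueness of such an isomorphism follows from the classification theorem of Scholze--Weinstein (\cite[Theorem B]{sw13}, together with the reformulation via the Hodge--Tate period morphism in \cite[\S 3]{sch15}): over a perfectoid Huber pair, a $p$-divisible group with a trivialisation of its Tate module is determined, functorially, by the resulting Hodge--Tate filtration on the trivialised module. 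The equality of Hodge--Tate filtrations in $\mathscr{F}\ell_{H_{\Q_p},\mu}$, coupled with the identifications $\alpha$ and $\alpha_{\mathcal{G}}$ of both Tate modules with $\Lambda_p$, thus yields the desired canonical isomorphism.

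The main obstacle is verifying that this canonical isomorphism automatically respects the $\mathcal{O}_D$-action and the polarisation (up to a $\Z_p^\times$-factor on each $\p$-component). Both compatibilities follow by the functoriality/uniqueness in the Scholze--Weinstein equivalence, once one notes that the trivialisations $\alpha$, $\alpha_{\mathcal{G}}$ are by definition $\mathcal{O}_D$-linear and identify the Weil pairings with $\mathrm{Tr}(\psi)$ via a common choice of $\varepsilon\colon \Z_p\to \mathcal{T}(\mu_{p^\infty})(R,R^+)$. The two resulting functors between the fiber product and $\mathcal{X}^b_\infty$ are then visibly inverse to one another, yielding the cartesian square.
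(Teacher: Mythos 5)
Your overall strategy (work with the non-sheafified functors of points, use that sheafification commutes with finite limits, and produce the inverse functor by matching the two $p$-divisible groups via their trivialised Tate modules and Hodge--Tate filtrations) is the same as the paper's. But there is a genuine gap at the crucial step, namely the construction of the isomorphism $\mathcal{A}[p^\infty]\toisom\mathcal{G}$ \emph{over $R^+$}. The classification you invoke --- ``over a perfectoid Huber pair, a $p$-divisible group with a trivialisation of its Tate module is determined functorially by the resulting Hodge--Tate filtration'' --- is not what \cite[Theorem B]{sw13} says: that equivalence holds over $\mathcal{O}_C$ for $C$ an algebraically closed perfectoid field, i.e.\ at geometric rank one points. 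Over a general $(R,R^+)$ the matching of Tate modules and Hodge--Tate filtrations only yields an isomorphism $\mathcal{A}[p^\infty]_R\toisom\mathcal{G}_R$ over the \emph{generic fibre}, and two $p$-divisible groups over $R^+$ can be isomorphic over $R$ without being isomorphic over $R^+$ (their special fibres need not even have the same Newton polygons pointwise). Without an integral isomorphism you cannot produce the quasi-isogeny $\varphi$ from $\mathcal{A}[p^\infty]\times_{R^+}R^+/p$ to $\mathbb{X}^b\times_{\bar\F_p}R^+/p$ that a point of $\mathcal{X}^b_\infty$ requires.

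This is precisely where the hypothesis that the point of the lower-left corner factors through the Newton stratum $\mathcal{Y}_{U^p}(H)^{b}$ --- which your argument never uses --- must enter: it guarantees that the Newton polygon of $\mathcal{A}[p^\infty]\times_{R^+}R^+/p$ is constant, and then \cite[Lemma 4.2.15]{cs17} (combined with \cite[Theorem B]{sw13}) lets one extend the generic-fibre isomorphism to an isomorphism over all of $R^+$. Indeed, the square would fail to be cartesian if one replaced $\mathcal{Y}_{U^p}(H)^{b}$ by the full good reduction locus $\mathcal{Y}_{U^p}(H)^{\circ}$, so any correct proof has to make essential use of this condition. A smaller point: the commutativity of the square is not quite ``automatic'', since the global period map is defined via the Hodge--Tate filtration of the abelian variety and the local one via that of its $p$-divisible group; one reduces to rank one geometric points (using that the flag variety is separated) and invokes the compatibility statement \cite[Proposition 4.15]{sch12}.
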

\begin{proof}
As sheafification commutes with finite limits, it is enough to prove the statement at the level of non-sheafified functors of points.
\begin{enumerate}
\item First we show that the diagram commutes. It is enough to prove this for $\Spa(K, K^+)$-points with $K$ a complete algebraically closed field and $K^+ \subset K$ an open bounded valuation subring, and replacing $\mathcal{Y}_{U^p}(H)^{b}$ with $\mathcal{Y}_{U^p}(H)$. Because $\mathscr{F}\ell_{H_{\Q_p}, \mu}$ is separated (it is even proper), we are reduced to prove commutativity on $\Spa(K, \cO_K)$-points. In this case it follows from the compatibility of the Hodge--Tate filtration for an abelian variety and the associated $p$-divisible group \cite[Proposition 4.15]{sch12}.
\item It remains to prove that the diagram is cartesian. Let $(R, R^+) \in \mathrm{Perf}_{\breve{E}_\wp}$ and take two points
\begin{equation*}
x=(\mathcal{G}, \iota_\G, \lambda_\G, \rho_\G, \alpha_\G) \in \mathcal{M}^b_\infty(R, R^+),
\end{equation*}
\begin{equation*}
y=(\mathcal{A}, \iota_\mathcal{A}, \lambda_\mathcal{A}, \eta_\mathcal{A}, \alpha_\mathcal{A}) \in \mathcal{Y}_{U^p}(H)^{b}(R, R^+)
\end{equation*}
mapping to the same $(R, R^+)$-point of the flag variety.
The composite $\alpha_\G \circ \alpha_A^{-1}: \mathcal{T}_p(\mathcal{A})(R, R^+)\rightarrow \mathcal{T}_p(\mathcal{G})(R, R^+)$ becomes an isomorphism when pulled back to any geometric rank one point $\Spa(K, \cO_K)$, and it is induced by an isomorphism $\phi: \mathcal{A}[p^\infty]_R\toisom \mathcal{G}_R$ respecting the extra structures. Furthermore, as $x$ and $y$ map to the same point in the flag variety, the map $\alpha_\G \circ \alpha_A^{-1}$ respects the Hodge--Tate filtrations (on Tate modules tensored by $K$). Finally, since the image of $y$ is contained in $\mathcal{Y}_{U^p}(H)^{b}$, the Newton polygon of $\mathcal{A}[p^\infty]\times_{R^+}R^+/p$ is constant. Hence, by \cite[Theorem B]{sw13} and \cite[Lemma 4.2.15]{cs17}, the isomorphism $\phi$ extends to an isomorphism, abusively denoted by the same symbol, $\phi: \mathcal{A}[p^\infty]\toisom \mathcal{G}$. We get a (unique) $(R, R^+)$-point $(\mathcal{A}, \iota_\mathcal{A}, \lambda_\mathcal{A}, \eta_\mathcal{A}, \alpha_\mathcal{A}, \rho_\G \circ \phi)$ of $\mathcal{X}^b_{\infty} $ mapping to $x$ and $y$.
\end{enumerate}
\end{proof}

\subsubsection{} The space $\mathcal{Y}_{U^p}(H)=\varprojlim_{U_p}\mathcal{Y}_{U^pU_p}$ maps to the inverse limit $\mathcal{S}h_{U^p}(H)$ of the analytifications $\mathcal{S}h_{U^pU_p}(H)$ of the Shimura varieties with level $U^pU_p$; this map is a torsor for the group $\Delta=\varprojlim_k \Delta_k$ defined in \S~\ref{igvar}. We let $\mathcal{S}h_{U^pU_{p}}(H)^\circ=\mathcal{Y}_{U^pU_p}^\circ/\Delta$ and $\mathcal{S}h_{U^p}(H)^\circ=\mathcal{Y}^\circ_{U^p}(H)/\Delta$, so that $\mathcal{S}h_{U^p}(H)^\circ/U_p\simeq \mathcal{S}h_{U^pU_p}(H)^\circ$. Similarly we set $\mathcal{S}h_{U^p}(H)^b=\mathcal{Y}_{U^p}(H)^b/\Delta$ for $b \in B(H_{\Q_p}, \mu)$.

\subsection{Fibres of the Hodge--Tate period map} Recall that we have fixed a subset $T \subset \Sigma_p$; we have a Hodge--Tate period map
\begin{equation*}
\pi^\circ_{\HT}: \mathcal{S}h_{U^p}(H)^\circ \rightarrow \mathscr{F}\ell_{H_{\Q_p}, \mu}
\end{equation*}
induced by the map (abusively denoted with the same symbol) $\pi_{\HT}^\circ: \mathcal{Y}_{U^p}(H)^{\circ} \rightarrow \mathscr{F}\ell_{H_{\Q_p}, \mu}$ introduced above, which is equivariant with respect to the $\Delta$-action (trivial on the target). 
Take an element $b \in B(H_{\Q_\p}, \mu)$, and fix a $p$-divisible group $\mathbb{X}^b$ (with extra structure) corresponding to it.

\begin{prop}\label{fibreig} 
For every geometric point $\tilde{x}$ of $\mathscr{F}\ell^b_{H_{\Q_p}, \mu}$ and every $i \geq 0$ there is a Hecke-equivariant isomorphism
\begin{equation*}
(R^i\pi^\circ_{\HT, *}\F_{\ell})_{\tilde{x}}\simeq H^i(\mathrm{Ig}^b, \F_{\ell}).
\end{equation*}
\end{prop}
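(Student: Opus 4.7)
The strategy is to combine Proposition~\ref{locglob-ht} with the product formula \eqref{eq:infinite level product} in order to identify the geometric fibre of $\pi^\circ_{\HT}$ over $\tilde{x}$ with an Igusa variety, and then to invoke a stalk-to-fibre comparison for the Hodge--Tate period morphism.

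More precisely, the two descriptions of $\mathcal{X}^b_\infty$ combine into an isomorphism of diamonds
\[
\overline{\mathcal{I}g}^b \times_{\Spd(\breve{E}_\wp,\breve{\mathcal{O}}_\wp)} \mathcal{M}^b_\infty \;\isomap\; \mathcal{Y}_{U^p}(H)^b \times_{\mathscr{F}\ell_{H_{\Q_p},\mu}} \mathcal{M}^b_\infty.
\]
Since the local Hodge--Tate period map $\pi^b_{\HT}$ is surjective onto $\mathscr{F}\ell^b_{H_{\Q_p},\mu}$ on geometric points, one can choose a lift $\tilde{x}'\colon \Spa(C,\mathcal{O}_C)\to \mathcal{M}^b_\infty$ of $\tilde{x}$ and pull back the displayed isomorphism along $\tilde{x}'\to \mathcal{M}^b_\infty$. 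Both sides trivialise in the right factor, producing a Hecke-equivariant isomorphism $\mathcal{Y}_{U^p}(H)^b_{\tilde{x}}\isomap \overline{\mathcal{I}g}^b_{\tilde{x}}$. Passing to $\Delta$-quotients then identifies the geometric fibre of $\pi^\circ_{\HT}\colon \mathcal{S}h_{U^p}(H)^\circ\to \mathscr{F}\ell_{H_{\Q_p},\mu}$ over $\tilde{x}$ with $\mathcal{I}g^b_{\tilde{x}}$.

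Next, I would invoke the analogue in our setting of the stalk-to-fibre statement from~\cite[\S\S 4.3--4.4]{cs17}: $\pi^\circ_{\HT}$ is a qcqs, spatial map of diamonds, and a proper-base-change type argument with $\F_\ell$-coefficients yields
\[
(R^i\pi^\circ_{\HT,*}\F_\ell)_{\tilde{x}} \;\simeq\; H^i(\mathcal{I}g^b_{\tilde{x}},\F_\ell).
\]
Finally, since $\mathfrak{Ig}^b$ (cf.~\S\ref{form-igvar}) is a flat $p$-adic formal lift of the perfect $\bar{\F}_p$-scheme $\mathrm{Ig}^b$, nearby cycles with $\F_\ell$-coefficients are constant; the specialisation map then identifies the \'etale cohomology of the adic generic fibre $\mathcal{I}g^b_{\tilde{x}}$ with that of the special fibre, which in turn agrees with $H^i(\mathrm{Ig}^b,\F_\ell)$ by invariance of \'etale cohomology under extensions of algebraically closed base fields. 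Hecke-equivariance away from $p$ is preserved throughout, since all the maps in sight are equivariant and the flag variety carries a trivial prime-to-$p$ Hecke action. The main obstacle is the middle step: adapting the stalk-vs-fibre formalism of~\cite{cs17} to the auxiliary unitary Shimura varieties of \S\ref{mod-unit}, and in particular verifying the spatiality and quasi-compactness of $\pi^\circ_{\HT}$, which are not formally part of the setup but should follow in parallel to the Hodge-type case by means of the moduli interpretation established above.
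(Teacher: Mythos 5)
Your proposal is correct and follows essentially the same route as the paper: Proposition~\ref{locglob-ht} combined with the product formula \eqref{eq:infinite level product} (plus the surjectivity of $\pi^b_{\HT}$ on geometric points, i.e.\ \cite[Lemma 4.2.18]{cs17}) identifies the geometric fibre with $\mathcal{I}g^b_{(C,\mathcal{O}_C)}$, and the stalk computation together with the generic/special fibre comparison is then carried out exactly as in \cite[\S 4.4]{cs17}. Two minor points where the paper is more careful: it first reduces to rank-one points and notes that every rank-one point of $(\pi^\circ_{\HT})^{-1}(\tilde{x})$ already lies in $\mathcal{Y}_{U^p}(H)^{b}$, which is what justifies replacing $\mathcal{S}h_{U^p}(H)^\circ$ by the $b$-stratum in your fibre computation; and the ``stalk-to-fibre'' step is not proper base change (the map is not proper on open strata) but the cofiltered-limit argument for qcqs spatial diamonds via \cite[Proposition 14.9]{scholze-diamonds}, as in the pages of \cite{cs17} you cite.
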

\begin{proof}
This follows from Proposition \ref{locglob-ht} with the same argument as in \cite[Section 4.4]{cs17}. More precisely, we may assume that $\tilde{x}= \mathrm{Spa}(C, \mathcal{O}_C)$ is a rank one point; any rank one point in the preimage of $\tilde{x}$ in $\mathcal{Y}_{U^p}(H)^{\circ}$ is contained in $\mathcal{Y}_{U^p}(H)^{b}$. The cartesian diagram in Proposition \ref{locglob-ht}, \cite[Lemma 4.2.18]{cs17} and the product formula for $\mathcal{X}^{b}_\infty$ \eqref{eq:infinite level product} imply that the the fibre of $\pi_{\HT}^\circ: \mathcal{Y}_{U^p}(H)^{b} \rightarrow \mathscr{F}\ell^b_{H_{\Q_p}, \mu}$ at $\tilde{x}$ is isomorphic to $\overline{\mathcal{I}g^b}_{(C, \mathcal{O}_C)}$, hence the fibre of $\pi_{\HT}^\circ: \mathcal{S}h_{U^p}(H)^{b} \rightarrow \mathscr{F}\ell^b_{H_{\Q_p}, \mu}$ is isomorphic to $\mathcal{I}g^b_{(C, \mathcal{O}_C)}$. The result now follows as in \cite[pp. 728, 729]{cs17}.
\end{proof}

\section{The structure of the $\mu$-ordinary stratum at infinite level}

In this section, we show that the $\mu$-ordinary locus at infinite level is parabolically induced from the corresponding perfectoid Igusa variety. 

\subsection{Setup} Fix a prime $p$ and a subset $T \subsetneq \Sigma_p$ as in the previous section; we will mostly omit $T$ from our notations: for example, we  denote $H_T$ by $H$. As explained in \S~\ref{kotsets} we have $B(H_{\Q_p}, \mu)=\prod_{\p \in \Sigma_p} B(H_{\p}, \mu_{\p})$, where $B(H_{\p}, \mu_{\p})$ is a singleton if $\p \in T$ and has two elements - the basic one and the $\mu_\p$-ordinary one - if $\p \in \Sigma_p \smallsetminus T$. Consider the $\mu$-ordinary element $b^{\mathrm{ord}}=(b_\p)_{\p \in \Sigma_p}$ where $b_\p$ is the non-basic element for every $\p \in \Sigma_p \smallsetminus T$. Our aim is to study the structure of the stratum $\mathcal{S}h_{U^p}(H)^{b^\mathrm{ord}}$, proving that it is parabolically induced from the corresponding perfectoid Igusa variety. This rests on the product formula
\begin{equation*}
\mathcal{X}_\infty^{b^\mathrm{ord}}\simeq \overline{\mathcal{I}g^{b^\mathrm{ord}}}\times \mathcal{M}_\infty^{b^{\mathrm{ord}}}
\end{equation*}
we proved in the previous section and on the fact that the relevant Rapoport--Zink spaces are parabolically induced.
\subsubsection{The group $\mathcal{J}^{b^\mathrm{ord}}$} Let $\mathcal{J}^{b^\mathrm{ord}}=\prod_{\p \in \Sigma_p}\mathcal{J}^{b_\p}$ be the group of self-quasi-isogenies (respecting extra structure) of $\mathbb{X}^{b^\mathrm{ord}}$, seen as a functor $\mathrm{Perf}_{\breve{E}_\wp} \to \mathrm{Sets}$ sending $(R, R^+)$ to the set of self-quasi-isogenies of $\mathbb{X}^{b^\mathrm{ord}}\times_{\bar{\F}_p}R^+/p$. If $\p \in T$ then $\mathcal{J}^{b_\p}$ is just the constant group diamond attached to $H_\p(\Q_p)$, which will be denoted by $\underline{H_\p(\Q_p)}$. If $\p \in \Sigma_p \smallsetminus T$ then we can write $\mathcal{J}^{b_\p}=\underline{L_{\p}(\Q_p)} \ltimes \mathcal{J}_\p^{U}$ where $L_{\p}\subset H_\p$ is a Levi and $\mathcal{J}_\p^{U}$ is a (positive dimensional) group diamond (cf. \cite[Proposition 4.2.11]{cs17}). On the other hand we can attach to $\mathbb{X}^{b^\mathrm{ord}}$ an $H_{\Q_p}$-bundle $\mathcal{E}^{b^\mathrm{ord}}_{(R, R^+)}$ on the Fargues--Fontaine curve $X_{(R^\flat, R^{\flat,+})}$ for each $(R, R^+) \in \mathrm{Perf}_{\breve{E}_\wp}$, and look at the corresponding automorphism group functor $\mathrm{Aut}(\mathcal{E}^{b^\mathrm{ord}}): \mathrm{Perf}_{\breve{E}_\wp} \to \mathrm{Sets}$. 

We claim that $\mathrm{Aut}(\mathcal{E}^{b^\mathrm{ord}})\simeq \mathcal{J}^{b^\mathrm{ord}}$. Indeed, for $\p \in \Sigma_p \smallsetminus T$, \cite[Proposition III.5.1]{fargues-scholze} describes $\mathrm{Aut}(\mathcal{E}^{b_\p})$ in terms of $\underline{L_{\p}(\Q_p)}$ and of Banach--Colmez spaces. On the other hand, the computation in the proof of \cite[Proposition 4.2.11]{cs17} gives a similar descripion of $\mathcal{J}^{b_\p}$, with the universal cover of suitable $p$-divisible groups in place of Banach--Colmez spaces. The desired isomorphism follows from the identification between these two objects, cf. \cite[\S~15.2]{sw20}.

\subsubsection{} The description of the Igusa variety in Lemma \ref{ig-isog} implies that $\mathcal{J}^{b^\mathrm{ord}}$ acts on the left on $\overline{\mathcal{I}g}^{b^{\mathrm{ord}}}$, changing the trivialisation $\rho$. For the same reason, $\mathcal{J}^{b^\mathrm{ord}}$ acts on the left on $\mathcal{M}_\infty^{b^{\mathrm{ord}}}$ and $\mathcal{X}_\infty^{b^\mathrm{ord}}$. The stratum $\mathcal{Y}_{U^p}(H)^{b^\mathrm{ord}}\subset \mathcal{Y}_{U^p}(H)^\circ$ is open and contained in the preimage via $\pi_{HT}^\circ$ of the $\mu$-ordinary stratum in the flag variety. As the latter is a diamond, the same is true for $\mathcal{Y}_{U^p}(H)^{b^\mathrm{ord}}$. The map $\mathcal{X}^{b^\mathrm{ord}}_{\infty} \rightarrow \mathcal{Y}_{U^p}(H)^{b^\mathrm{ord}}$ is invariant with respect to the action of $\mathcal{J}^{b^\mathrm{ord}}$ on the source.
\begin{lem}\label{torsor-ord}\leavevmode
\begin{enumerate}
\item The map $\mathcal{J}^{b^\mathrm{ord}} \backslash \mathcal{X}^{b^\mathrm{ord}}_{\infty} \rightarrow \mathcal{Y}_{U^p}(H)^{b^\mathrm{ord}}$ is an isomorphism.
\item The induced map $\mathcal{J}^{b^\mathrm{ord}} \backslash (\mathcal{I}g^{b^{\mathrm{ord}}} \times \mathcal{M}_\infty^{b^{\mathrm{ord}}}) \rightarrow \mathcal{S}h_{U^p}(H)^{b^\mathrm{ord}}$ is an isomorphism.
\end{enumerate}
\end{lem}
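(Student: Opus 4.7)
The plan is to deduce \emph{(1)} from the fact that $q$ is a $\mathcal{J}^{b^{\mathrm{ord}}}$-torsor, obtained by base change of the ``local'' torsor structure on the infinite-level Hodge--Tate period map $\pi_{\HT}^{b^{\mathrm{ord}}}\colon \mathcal{M}_\infty^{b^{\mathrm{ord}}}\to \mathscr{F}\ell^{b^{\mathrm{ord}}}_{H_{\Q_p},\mu}$, and then to obtain \emph{(2)} by passing to the $\Delta$-quotient on both sides.

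For the first step, I would exploit the cartesian square of Proposition~\ref{locglob-ht},
\[
\begin{tikzcd}
\mathcal{X}^{b^{\mathrm{ord}}}_\infty \arrow[r]\arrow[d,"q"] & \mathcal{M}_\infty^{b^{\mathrm{ord}}}\arrow[d,"\pi_{\HT}^{b^{\mathrm{ord}}}"]\\
\mathcal{Y}_{U^p}(H)^{b^{\mathrm{ord}}}\arrow[r,"\pi^\circ_{\HT}"] & \mathscr{F}\ell_{H_{\Q_p},\mu},
\end{tikzcd}
\]
noting that both horizontal arrows factor through the $\mu$-ordinary stratum $\mathscr{F}\ell^{b^{\mathrm{ord}}}_{H_{\Q_p},\mu}$. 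The key input I would invoke is that $\pi_{\HT}^{b^{\mathrm{ord}}}$ exhibits $\mathcal{M}_\infty^{b^{\mathrm{ord}}}$ as a $\mathcal{J}^{b^{\mathrm{ord}}}$-torsor over $\mathscr{F}\ell^{b^{\mathrm{ord}}}_{H_{\Q_p},\mu}$. Factorwise over $\Sigma_p$ this is either tautological (for $\p\in T$, where the Newton stratum is a singleton) or the classical torsor statement for the infinite-level Rapoport--Zink space over its image in the flag variety (for $\p\in\Sigma_p\smallsetminus T$, with $b_\p$ the $\mu$-ordinary element). Base-changing this torsor along $\pi^\circ_{\HT}$ shows that $q$ is a $\mathcal{J}^{b^{\mathrm{ord}}}$-torsor, yielding \emph{(1)}.

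To deduce \emph{(2)}, I would rewrite the isomorphism in \emph{(1)} using the infinite-level product formula~\eqref{eq:infinite level product} as
\[
\mathcal{J}^{b^{\mathrm{ord}}}\backslash\bigl(\overline{\mathcal{I}g}^{b^{\mathrm{ord}}}\times \mathcal{M}_\infty^{b^{\mathrm{ord}}}\bigr)\toisom \mathcal{Y}_{U^p}(H)^{b^{\mathrm{ord}}}.
\]
The $\Delta$-action on $\mathcal{X}^{b^{\mathrm{ord}}}_\infty$ factors through the first factor of the product (it rescales polarizations by elements of $\mathcal{O}_{F,(p)}^{\times,+}$, which does not affect $\rho$ at $p$), so it commutes with the $\mathcal{J}^{b^{\mathrm{ord}}}$-action and is intertwined by $q$ with the $\Delta$-action on $\mathcal{Y}_{U^p}(H)^{b^{\mathrm{ord}}}$ whose quotient is $\mathcal{S}h_{U^p}(H)^{b^{\mathrm{ord}}}$. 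Taking $\Delta$-quotients on both sides, together with the identification $\mathcal{I}g^{b^{\mathrm{ord}}}=\overline{\mathcal{I}g}^{b^{\mathrm{ord}}}/\Delta$, then delivers \emph{(2)}.

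The main technical obstacle I anticipate is justifying the torsor structure on $\pi_{\HT}^{b^{\mathrm{ord}}}$ at the $\mu$-ordinary places $\p\in\Sigma_p\smallsetminus T$. Its surjectivity, after v-descent, amounts to producing a quasi-isogeny of $p$-divisible groups to $\mathbb{X}^{b_\p}$ over any perfectoid $(R,R^+)$ whose fibre has constant Newton polygon $b_\p$; this should be arranged via the rigidity of quasi-isogenies \cite[Theorem B]{sw13} combined with \cite[Lemma 4.2.15]{cs17}, in the same manner as in the proof of Proposition~\ref{locglob-ht}. Once surjectivity is in hand, the simple transitivity of the $\mathcal{J}^{b^{\mathrm{ord}}}$-action on fibres is formal, since two trivializations differ by a self-quasi-isogeny of $\mathbb{X}^{b^{\mathrm{ord}}}$ respecting the extra structure, i.e., an element of $\mathcal{J}^{b^{\mathrm{ord}}}$.
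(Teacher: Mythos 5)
Your architecture is sound and in substance coincides with the paper's: part (2) is deduced from part (1) exactly as you say, via the $\mathcal{J}^{b^{\mathrm{ord}}}$-equivariance of the product formula \eqref{eq:infinite level product} and passage to the $\Delta$-quotient. For part (1) you reorganize the argument: you first upgrade the local Hodge--Tate period map $\mathcal{M}^{b^{\mathrm{ord}}}_\infty\to\mathscr{F}\ell^{b^{\mathrm{ord}}}_{H_{\Q_p},\mu}$ to a $\mathcal{J}^{b^{\mathrm{ord}}}$-torsor and then base change along $\pi^\circ_{\HT}$ (using that $\mathcal{Y}_{U^p}(H)^{b^{\mathrm{ord}}}$ maps into the $\mu$-ordinary stratum of the flag variety, so the cartesian square of Proposition~\ref{locglob-ht} restricts there); the paper instead works directly with the global map, for which simple transitivity of $\mathcal{J}^{b^{\mathrm{ord}}}$ on fibres is formal from the moduli description and the only real content is pro-\'etale surjectivity of $\mathcal{X}^{b^{\mathrm{ord}}}_\infty\to\mathcal{Y}_{U^p}(H)^{b^{\mathrm{ord}}}$. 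The two routes are equivalent, but be careful with how you propose to discharge the surjectivity step: rigidity of quasi-isogenies (\cite[Theorem B]{sw13} together with \cite[Lemma 4.2.15]{cs17}) serves to \emph{extend} a quasi-isogeny that already exists on the special or generic fibre; it does not \emph{produce} one from the mere fibrewise constancy of the Newton polygon. The input that actually does the job is that two $H$-bundles on the relative Fargues--Fontaine curve which are geometrically fibrewise isomorphic are pro-\'etale locally isomorphic, i.e.\ \cite[Proposition III.5.3]{fargues-scholze}; the paper applies this to the bundle $\mathcal{E}_{\mathcal{G}}$ attached to the universal $p$-divisible group over $\mathcal{Y}_{U^p}(H)^{b^{\mathrm{ord}}}$, and in your version it is precisely what underlies the torsor property of $\mathcal{M}^{b^{\mathrm{ord}}}_\infty$ over the Newton stratum of the flag variety at the non-basic places. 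With that substitution your proof goes through.
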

\begin{proof}
The second point follows from the first and the product formula: indeed, as the map in \eqref{prod-gen} (as well as its counterpart at infinite level \eqref{eq:infinite level product}) is $\mathcal{J}^{b^\mathrm{ord}}$-equivariant, we get an isomorphism
\begin{equation*}
\mathcal{J}^{b^\mathrm{ord}} \backslash (\overline{\mathcal{I}g^{b^{\mathrm{ord}}}} \times \mathcal{M}_\infty^{b^{\mathrm{ord}}}) \rightarrow \mathcal{Y}_{U^p}(H)^{b^\mathrm{ord}}.
\end{equation*}
The above isomorphism is equivariant with respect to the $\Delta$-action on $\overline{\mathcal{I}g^{b^{\mathrm{ord}}}}$ and on $\mathcal{Y}_{U^p}(H)^{b^\mathrm{ord}}$; hence quotienting by this action we obtain the desired isomorphism.

To prove the first point, we need to show that the map (of pro-étale sheaves) $\mathcal{X}^{b^\mathrm{ord}}_{\infty} \rightarrow \mathcal{Y}_{U^p}(H)^{b^\mathrm{ord}}$ is surjective. Let $(R, R^+) \in \mathrm{Perf}_{\breve{E}_\wp}$ and let $\mathcal{G}/R^+$ be the $p$-divisible group with extra structure attached to an $(R, R^{+})$-point of the target. Then, after base change to any geometric point of $\mathrm{Spa}(R, R^+)$, we have a quasi-isogeny respecting extra structures between $\mathcal{G}\times_{R^+}R^+/p$ and $\mathbb{X}^{b^\mathrm{ord}}\times_{\bar{\mathbb{F}}_p}R^+/p$. Therefore the isocrystal attached to the $p$-divisible group with extra structure $\mathcal{G}\times_{R^+}R^+/p$ gives rise to an $H$-bundle $\mathcal{E}_{\mathcal{G}}$ on the Fargues--Fontaine curve $X_{(R^\flat, R^{\flat,+})}$ which is geometrically fibrewise isomorphic to the $H$-bundle $\mathcal{E}^{b^{\mathrm{ord}}}$ corresponding to $b^{\mathrm{ord}}$. By \cite[Proposition III.5.3]{fargues-scholze} there is, pro-étale locally on $\mathrm{Spa}(R^\flat, R^{+, \flat})$, an isomorphism $\mathcal{E}_{\mathcal{G}}\simeq \mathcal{E}^{b^{\mathrm{ord}}}$, yielding, pro-étale locally, a quasi-isogeny respecting extra-structures between $\mathcal{G}\times_{R^{+}}R^{+}/p$ and $\mathbb{X}^{b^\mathrm{ord}}\times_{\bar{\mathbb{F}}_p}R^{+}/p$.
\end{proof}

\subsection{The structure of $\mu$-ordinary Rapoport--Zink spaces}

\subsubsection{Moduli spaces of shtukas} Given $\p \in \Sigma_p$, we can attach to $(H_\p, b_\p, \mu_\p)$ a diamond $Sht_\infty(H_\p, b_\p, \mu_\p)$ over $\Spd(\breve{E}_\wp, \breve{\mathcal{O}}_{\wp})$ parametrising $H_\p$-shtukas with infinite level structure, as in~\cite[\S 23]{sw20}. More precisely, 
the functor $Sht_\infty(H_\p, b_\p, \mu_\p): \mathrm{Perf}_{\bar{\F}_p}\rightarrow \mathrm{Sets}$ sends $(R, R^+)$ to the set of isomorphism classes of data of the form $(S^\sharp, \mathcal{E}, \mathcal{E}', \phi, \rho, \rho_{\infty})$, where
\begin{enumerate}
\item $S^\sharp/\Spa(\breve{E}_\wp, \breve{\mathcal{O}}_\wp)$ is an untilt of $S:=\mathrm{Spa}(R, R^+)$.
\item $\mathcal{E}, \mathcal{E}'$ are $H_\p$-bundles over the relative Fargues-Fontaine curve $X_S$.
\item $\phi$ is an isomorphism between $\mathcal{E}$ and $\mathcal{E}'$ outside $S^\sharp$, which is a modification of type $\mu_\p$ at $S^\sharp$.
\item $\rho: \mathcal{E}' \rightarrow \mathcal{E}^{b_\p}$ is an isomorphism, where $\mathcal{E}^{b_\p}$ is the $H_\p$-bundle on $X_S$ coming from (the isocrystal attached to) $\mathbb{X}^{b_\p}$.
\item $\rho_\infty: \mathcal{E}^1 \rightarrow \mathcal{E}$ is an isomorphism, where $\mathcal{E}^1$ is the trivial $H_\p$-bundle on $X_S$. 
\end{enumerate}
By \cite[Corollary 24.3.5]{sw20}, the diamond $Sht_\infty(H_\p, b_\p, \mu_\p)$ is isomorphic to $\mathcal{M}^{b_\p}_\infty$.

We have a left (resp. right) action of $\mathcal{J}^{b_\p}=\mathcal{A}ut(\mathcal{E}^{b_\p})$ (resp. $\underline{H_\p(\Q_p)}=\mathcal{A}ut(\mathcal{E}^1)$) on $Sht_\infty(H_\p, b_\p, \mu_\p)$, changing the trivialisation $\rho$ (resp. $\rho_\infty$). The former corresponds to the action of $\mathcal{J}^{b_\p}$ on $\mathcal{M}^{b_\p}_\infty$ recalled above.

\subsubsection{The space $\mathcal{M}^{b_\p}_\infty$ for $\p \in T$} If $\p \in T$ then the cocharacter $\mu_\p$ is central, hence the flag variety $\mathscr{F}\ell_{H_{\p}, \mu_\p}$ is a point. By \cite[Proposition 19.4.2]{sw20} and \cite[proof of Proposition 23.3.3]{sw20} the diamond $\mathcal{M}^{b_\p}_\infty$ is isomorphic to the diamond $\underline{H_\p(\Q_p)}$ attached to the locally profinite set $H_\p(\Q_p)$. Hence the product $\mathcal{M}_{T, \infty}:=\prod_{\p \in T} \mathcal{M}^{b_\p}_\infty$ is isomorphic to $\prod_{\p \in T}\underline{H_\p(\Q_p)}$.

\subsubsection{The space $\mathcal{M}^{b_\p}_\infty$ for $\p \in \Sigma_p \smallsetminus T$}
Now fix $\p \in \Sigma_p \smallsetminus T$ and let $b_\p \in B(H_\p, \mu_\p)$ be the $\mu_\p$-ordinary element. The structure of $Sht_\infty(H_\p, b_\p, \mu_\p)$ is studied in \cite{gaim16} (generalising \cite{han18}), whose main result we now recall in the situation of interest to us - we warn the reader that our notation differs from the one in \cite{gaim16}. We have a forgetful map from $Sht_\infty(H_\p, b_\p, \mu_\p)$ to the Hecke stack $\mathrm{Hecke}(H_\p, b_\p, \mu_\p)$ which parametrises data $(S^\sharp, \mathcal{E}, \mathcal{E}', \phi)$ as above, such that $\mathcal{E}'$ (resp. $\mathcal{E}$) is fibrewise isomorphic to $\mathcal{E}^{b_\p}$ (resp. $\mathcal{E}^1$). The Grassmannian $Gr(H_\p, b_\p, \mu_\p)$ is the $\underline{H_\p(\Q_p)}$-torsor over $\mathrm{Hecke}(H_\p, b_\p, \mu_\p)$ parametrising trivialisations $\rho_\infty: \mathcal{E}^1 \rightarrow \mathcal{E}$. Consider the subfunctor $\mathcal{C}(H_\p, b_\p, \mu_\p)\subset Gr(H_\p, b_\p, \mu_\p)$ obtained imposing the condition that $\rho_\infty$ and $\phi$ are compatible with the filtrations, in the sense of \cite[p. 9]{gaim16}. The right action of $\underline{H_\p(\Q_p)}$ on $Gr(H_\p, b_\p, \mu_\p)$ induces a right action of a parabolic $\underline{P_\p(\Q_p)}\supset \underline{L_\p(\Q_p)}$ on the subspace $\mathcal{C}(H_\p, b_\p, \mu_\p)\subset Gr(H_\p, b_\p, \mu_\p)$. The natural map $\mathcal{C}(H_\p, b_\p, \mu_\p)\times \underline{H_\p(\Q_p)}\rightarrow Gr(H_\p, b_\p, \mu_\p)$ is invariant with respect to the left action of $\underline{P_\p(\Q_p)}$ on the source given by the product of the inverse of the right action on $\mathcal{C}(H_\p, b_\p, \mu_\p)$ and the left action by multiplication on $\underline{H_\p(\Q_p)}$. By \cite[Proposition 4.13]{gaim16} the induced map
\begin{equation}\label{grassm-induced}
\underline{P_\p(\Q_p)} \backslash \left(\mathcal{C}(H_\p, b_\p, \mu_\p)\times_{\Spd(\breve{E}_\wp, \breve{\mathcal{O}}_{\wp})} \underline{H_\p(\Q_p)}\right)\rightarrow Gr(H_\p, b_\p, \mu_\p)
\end{equation}
is an isomorphism.

Let us now consider the moduli space of shtukas with ``parabolic level structure" $Sht_{P_\p}(H_\p, b_\p, \mu_\p)$, defined as the fibre product
\begin{center}
\begin{tikzcd}
Sht_{P_\p}(H_\p, b_\p, \mu_\p) \arrow[r] \arrow[d] & Sht_\infty(H_\p, b_\p, \mu_\p) \arrow[d] \\
\mathcal{C}(H_\p, b_\p, \mu_\p) \arrow[r] & Gr(H_\p, b_\p, \mu_\p).
\end{tikzcd}
\end{center}
The isomorphism \eqref{grassm-induced} yields an isomorphism
\begin{equation}\label{parab-sht}
\underline{P_\p(\Q_p)} \backslash \left(Sht_{P_\p}(H_\p, b_\p, \mu_\p)\times_{\Spd(\breve{E}_\wp, \breve{\mathcal{O}}_{\wp})} \underline{H_\p(\Q_p)}\right)\rightarrow Sht_{\infty}(H_\p, b_\p, \mu_\p).
\end{equation}
\subsubsection{The structure of $Sht_{P_\p}(H_\p, b_\p, \mu_\p)$} The data $b_\p, \mu_\p$ are induced by analogous data for the Levi $L_\p$, which we will abusively denote by the same symbol. In other words, our data are HN-reducible in the sense of \cite[Definition 4.5]{gaim16} (notice that only one of the elements $b, b'$ considered in \emph{loc. cit.} is non-trivial in our case). We can consider the corresponding moduli space of shtukas $Sht_\infty(L_\p, b_\p, \mu_\p)$; in fact, as $L_\p$ is abelian, the same argument used above shows that $Sht_\infty(L_\p, b_\p, \mu_\p)$ is just the diamond attached to the locally profinite set $L_\p(\Q_p)$. Inducing bundles from $L_\p$ to $H_\p$ we obtain a map $Sht_\infty(L_\p, b_\p, \mu_\p)\rightarrow Sht_{P_\p}(H_\p, b_\p, \mu_\p)$. Finally, the group object $\mathcal{J}_\p^{U}$ acts on the left on $Sht_{P_\p}(H_\p, b_\p, \mu_\p)$, hence we obtain a map
\begin{equation*}
Sht_\infty(L_\p, b_\p, \mu_\p)\times_{\Spd(\breve{E}_\wp, \breve{\mathcal{O}}_{\wp})} \mathcal{J}_\p^{U} \rightarrow Sht_{P_\p}(H_\p, b_\p, \mu_\p).
\end{equation*}
By \cite[Proposition 4.21]{gaim16} the above map is an isomorphism. Joining this with \eqref{parab-sht} we obtain the following isomorphism, where fibre products are taken over $\Spd(\breve{E}_\wp, \breve{\mathcal{O}}_{\wp})$, which is omitted from the notation:
\begin{equation*}
Sht_{\infty}(H_\p, b_\p, \mu_\p)\simeq \underline{P_\p(\Q_p)} \backslash \left(\left(Sht_\infty(L_\p, b_\p, \mu_\p)\times \mathcal{J}_\p^{U}\right) \times \underline{H_\p(\Q_p)}\right).
\end{equation*}

Define $Sht_\infty(L^T, b^T, \mu^T):=\prod_{\p \in \Sigma_p \smallsetminus T}Sht_\infty(L_\p, b_\p, \mu_\p)$; similarly, we denote by $P^{T}(\Q_p), \mathcal{J}^{U, T}, H^T(\Q_p)$ the product of the objects $P_\p(\Q_p), \mathcal{J}^U_\p, H_\p(\Q_p)$ for $\p \in \Sigma_p \smallsetminus T$. With this notation, the space $\mathcal{M}^{b_\mathrm{ord}}_\infty$ can be written as
\begin{equation*}
\mathcal{M}^{b_\mathrm{ord}}_\infty=\mathcal{M}_{T, \infty} \times \underline{P^T(\Q_p)} \backslash \left(\left(Sht_\infty(L^T, b^T, \mu^T)\times \mathcal{J}^{U, T}\right) \times \underline{H^T(\Q_p)}\right).
\end{equation*}

\subsection{The almost product formula on the $\mu$-ordinary stratum}

\subsubsection{} We can now join Lemma \ref{torsor-ord} and the above description of $\mathcal{M}^{b_\mathrm{ord}}_\infty$. Let $\mathcal{J}^T:=\prod_{\p \in \Sigma_p \smallsetminus T}\mathcal{J}^{b_\p}$.  Since for $\p \in T$ we have $\mathcal{M}^{b_\p}_\infty \simeq \mathcal{J}^{b_\p}\simeq \underline{H_\p(\Q_p)}$ we obtain the following description of the ordinary stratum $\mathcal{S}h_{U^p}(H)^{b^\mathrm{ord}}$:
\begin{equation*}
\mathcal{S}h_{U^p}(H)^{b^\mathrm{ord}}=\mathcal{J}^T \backslash \left(\mathcal{I}g^{b^\mathrm{ord}}\times \underline{P^T(\Q_p)} \backslash \left(\left(Sht_\infty(L^T, b^T, \mu^T)\times \mathcal{J}^{U, T}\right) \times \underline{H^T(\Q_p)}\right) \right).
\end{equation*}
We will now show that the space on the right hand side is isomorphic to
\begin{equation}\label{prodfor-simp}
\underline{P^{T}(\Q_p)}\backslash \left(\mathcal{I}g^{b^\mathrm{ord}} \times \underline{H^T(\Q_p)} \right),
\end{equation}
where $\underline{P^{T}(\Q_p)}$ acts on the left on $	\mathcal{I}g^{b^\mathrm{ord}}$ via the natural inclusion in $\mathcal{J}^T$ and on $\underline{H^T(\Q_p)}$ via left multiplication.

Sending (the equivalence class of) a point $(i, g)$ to (the equivalence class of) $(i, ((1, 1), g))$ gives a well-defined map from the space in \eqref{prodfor-simp} to $\mathcal{S}h_{U^p}(H)^{b^\mathrm{ord}}$. This map is surjective because $\mathcal{J}^{T}$ acts transitively on $Sht_\infty(L^T, b^T, \mu^T)\times \mathcal{J}^{U, T}$; injectivity can be checked directly. Hence we obtain an isomorphism
\begin{equation*}
\mathcal{S}h_{U^p}(H)^{b^\mathrm{ord}}\simeq \underline{P^T(\Q_p)} \backslash \left(\mathcal{I}g^{b^\mathrm{ord}}\times \underline{H^T(\Q_p)} \right)
\end{equation*}
which is equivariant with respect to the right action of $\underline{H(\Q_p)}$ on the source and the target (here $\underline{H^T(\Q_p)}$ acts on the target via right multiplication on itself).
\begin{notn}
In the next theorem and its proof we will use the following notation. We set $P(\Q_p):=\prod_{\p \in T}H_\p(\Q_p)\times \prod_{\p \in \Sigma_p \smallsetminus T}P_\p(\Q_p) \subset H(\Q_p)$, and $P(\Z_p):=\prod_{\p \in T}H_\p(\Z_p)\times \prod_{\p \in \Sigma_p \smallsetminus T}P_\p(\Z_p) \subset H(\Z_p)$. We will denote by $\mathrm{Ind}$ smooth induction. 
\end{notn}

\begin{thm}\label{ord-par-ind}\leavevmode
\begin{enumerate}
\item There is an $\underline{H(\Q_p)}$-equivariant isomorphism
\begin{equation*}
\mathcal{S}h_{U^p}(H)^{b^\mathrm{ord}}\simeq \underline{P^T(\Q_p)} \backslash \left(\mathcal{I}g^{b^\mathrm{ord}}\times \underline{H^T(\Q_p)} \right).
\end{equation*}
\item  Let us turn the left action of $\underline{P(\Q_p)}$ on $\mathcal{I}g^{b^\mathrm{ord}}$ into a right action taking the inverse. Then the cohomology of $\mathcal{I}g^{b^\mathrm{ord}}$ (resp. $\mathcal{S}h_{U^p}(H)^{b^\mathrm{ord}}$) carries a left action of $P(\Q_p)$ (resp. $H(\Q_p)$).
For each $i \geq 0$, we have an isomorphism of
smooth $H(\Q_p)$-representations
\[
H^i(\mathcal{S}h_{U^p}(H)^{b^\mathrm{ord}}, \F_{\ell})\simeq 
\mathrm{Ind}_{P(\Q_p)}^{H(\Q_p)}\left( H^i(\mathcal{I}g^{b^\mathrm{ord}}, \F_{\ell})\right).
\]
\end{enumerate}
\end{thm}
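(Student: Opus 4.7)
Part (1) is essentially assembled in the discussion preceding the statement: combining Lemma \ref{torsor-ord}(2), the parabolically induced description of $\mathcal{M}^{b^\mathrm{ord}}_\infty$ (via the Gaisin--Imai structure theory recalled above), and the fact that $\mathcal{J}^T$ acts transitively with trivial stabilizers on $Sht_\infty(L^T, b^T, \mu^T) \times \mathcal{J}^{U, T}$ (allowing collapse to the base point $(1,1)$), one obtains the stated isomorphism. Equivariance under the right $\underline{H(\Q_p)}$-action is inherited from $\mathcal{S}h_{U^p}(H)$.

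For Part (2), the first step is to recast Part (1) in the more symmetric form
\[
\mathcal{S}h_{U^p}(H)^{b^\mathrm{ord}} \simeq \underline{P(\Q_p)} \backslash \bigl( \mathcal{I}g^{b^\mathrm{ord}} \times \underline{H(\Q_p)} \bigr),
\]
where $P(\Q_p) = \prod_{\p \in T} H_\p(\Q_p) \times P^T(\Q_p)$ acts on $\mathcal{I}g^{b^\mathrm{ord}}$ via its inclusion into $\mathcal{J}^{b^\mathrm{ord}}$ and on $\underline{H(\Q_p)}$ by left translation, while $H(\Q_p)$ acts on the right factor by right translation. The identification with the form of Part (1) is direct via $(i, (g_T, g_{T^c})) \mapsto (g_T \cdot i, g_{T^c})$: the factor $\prod_{\p \in T} H_\p(\Q_p)$ of $P(\Q_p)$ gets absorbed into the action on $\mathcal{I}g^{b^\mathrm{ord}}$.

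Next, I would pass to finite level by a Mackey-style decomposition. Fix a pro-$p$ compact open subgroup $U_p \subset H(\Q_p)$, pick representatives $\{g_\alpha\}$ for the double cosets $P(\Q_p) \backslash H(\Q_p) / U_p$, and set $U_\alpha := g_\alpha U_p g_\alpha^{-1} \cap P(\Q_p)$. Quotienting the display above by $U_p$ yields
\[
\mathcal{S}h_{U^p U_p}(H)^{b^\mathrm{ord}} \simeq \bigsqcup_\alpha \underline{U_\alpha} \backslash \mathcal{I}g^{b^\mathrm{ord}}.
\]
Taking $\F_\ell$-cohomology, then the colimit over $U_p$, gives
\[
H^i(\mathcal{S}h_{U^p}(H)^{b^\mathrm{ord}}, \F_\ell) \simeq \varinjlim_{U_p} \prod_\alpha H^i(\mathcal{I}g^{b^\mathrm{ord}}, \F_\ell)^{U_\alpha},
\]
which matches precisely the standard formula $(\mathrm{Ind}_{P(\Q_p)}^{H(\Q_p)} V)^{U_p} = \prod_\alpha V^{U_\alpha}$ for the fixed vectors in smooth induction applied to $V = H^i(\mathcal{I}g^{b^\mathrm{ord}}, \F_\ell)$, with the $H(\Q_p)$-equivariance built into the construction.

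The main technical step is the cohomological identification $H^i(\underline{U_\alpha} \backslash \mathcal{I}g^{b^\mathrm{ord}}, \F_\ell) \simeq H^i(\mathcal{I}g^{b^\mathrm{ord}}, \F_\ell)^{U_\alpha}$. Since $U_p$ (hence $U_\alpha$) was chosen pro-$p$ and $\ell \neq p$, higher continuous cohomology of $U_\alpha$ with smooth $\F_\ell$-coefficients vanishes, so the Hochschild--Serre spectral sequence for the $U_\alpha$-cover $\mathcal{I}g^{b^\mathrm{ord}} \to \underline{U_\alpha} \backslash \mathcal{I}g^{b^\mathrm{ord}}$ collapses. The one point to verify carefully is that this quotient is well-behaved as a diamond, which follows from the fact that $\mathcal{I}g^{b^\mathrm{ord}}$ is pro-\'etale over a (perfection of a) scheme of finite type in characteristic $p$, so that the $U_\alpha$-action factors through a finite quotient at each finite level of $\mathcal{I}g^{b^\mathrm{ord}}$.
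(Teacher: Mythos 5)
Your proposal is correct and follows essentially the same route as the paper: part (1) is indeed assembled from Lemma \ref{torsor-ord} and the parabolic-induction structure of the Rapoport--Zink spaces, and part (2) is proved in the paper by exactly your Mackey-style decomposition $I_K \simeq \coprod_j \underline{\Gamma_j}\backslash \mathcal{I}g^{b^\mathrm{ord}}$ with $\Gamma_j = P(\Q_p)\cap h_j K h_j^{-1}$ for a pro-$p$ level $K$, matched against the fixed vectors $\bigoplus_j V^{\Gamma_j}$ of the smooth induction, with the identification $H^i(\underline{\Gamma_j}\backslash \mathcal{I}g^{b^\mathrm{ord}},\F_\ell)\simeq V^{\Gamma_j}$ justified by $p\neq\ell$ (the paper additionally notes that representatives can be taken in $H(\Z_p)$ by Iwasawa decomposition and that the $P(\Z_p)$-action on $\mathcal{I}g^{b^\mathrm{ord}}$ is free only modulo the unit group $C$, which is the precise form of your ``well-behaved quotient'' caveat).
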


\begin{proof}
The first point was established before the statement of the theorem; let us prove the second point.
Fix $i \geq 0$ and let $I:=\mathcal{I}g^{b^\mathrm{ord}}$ and $V:=H^i(I, \F_\ell)$.
The subgroup $P(\Z_p)\subset P(\Q_p)$ acts freely on $\overline{\mathcal{I}g^{b^\mathrm{ord}}}$. On the other hand, for $u \in \mathcal{O}_{E}^{\times}\cap U^p$, the action of $N_{E/F}(u)$ (changing polarisation) on $\overline{\mathcal{I}g^{b^\mathrm{ord}}}$ agrees with the action of $u^{-1} \in P(\Z_p)$. The induced action of $P(\Z_p)$ on $I$ hence factors through a free action of $P(\Z_p)/C$, where $C:=\varprojlim_k(\mathcal{O}_E^{\times}\cap U^p/\{u \in \mathcal{O}_E^{\times}\cap U^p \mid u \equiv 1 \pmod{p^k}\})$.

The action of $P(\Q_p)$ on $I$ induces a map $\rho: P(\Q_p) \rightarrow \mathrm{Aut}(V)$. The map $\iota: I \rightarrow \underline{P(\Q_p)}\backslash I \times \underline{H(\Q_p)}$ sending $x$ to $(x, 1)$ is equivariant with respect to the right $\underline{P(\Q_p)}$-action on source and target, hence it induces a $P(\Q_p)$-equivariant map
\begin{equation*}
\iota^*: H^i(\underline{P(\Q_p)}\backslash I \times \underline{H(\Q_p)}, \F_\ell)\rightarrow V.
\end{equation*}
We therefore have an $H(\Q_p)$-equivariant map 
\[
a_{\iota^*}: H^i(\underline{P(\Q_p)}\backslash I \times \underline{H(\Q_p)}, \F_\ell)\rightarrow \mathrm{Ind}_{P(\Q_p)}^{H(\Q_p)}(V)
\] 
induced by $\iota^*$ via Frobenius reciprocity, which sends a cohomology class $c$ to the function $H(\Q_p)\rightarrow V$ sending $h$ to $\iota^*\circ h^{*}(c)$.
For every compact open subgroup $K \subset H(\Q_p)$ we have the space $I_{K}:=\underline{P(\Q_p)}\backslash \left(I \times \underline{H(\Q_p)}/\underline{K}\right)$ and the natural map $q_K: \underline{P(\Q_p)}\backslash \left(I \times \underline{H(\Q_p)}\right) \rightarrow I_K$. On the other hand let
\begin{equation*}
S_{K}:=\{f: H(\Q_p) \rightarrow V \mid \forall p \in P(\Q_p), h \in H(\Q_p), k \in K, f(ph)=\rho(p)f(h), f(hk)=f(h)\},
\end{equation*}
so that $\mathrm{Ind}_{P(\Q_p)}^{H(\Q_p)}(\rho)=\varinjlim_{K} S_{K}$. We may, and will, restrict to pro-$p$ compact open subgroups $K\subset H(\Z_p)$; to complete the proof, we will show that the composite $a_{\iota^*}\circ q_K^*$ induces an isomorphism $H^i(I_{K}, \F_\ell) \simeq S_{K}$.

Choose a set of representatives $R=\{h_1, \ldots, h_r\}$ of the (finite) double coset $P(\Q_p) \backslash H(\Q_p)/K$ such that each $h_j$ belongs to $H(\Z_p)$ (this is possible because of the Iwasawa decomposition of $H(\Q_p)$). Evaluation at elements of $R$ yields an injection $S_K \hookrightarrow V^R$; an explicit computation shows that the image is $\oplus_{R}V^{\Gamma_j}$, where $\Gamma_j:=P(\Q_p) \cap h_jKh_j^{-1}$. Consider the composite of the map $a_{\iota^*}\circ q_K^*$ and of the injection $S_K \hookrightarrow V^R$, and let $\phi_j$ be its $h_j$-th component. Then $\phi_j$ sends $c\in H^i(I_K, \F_\ell)$ to $\iota^*\circ h_j^* \circ q_K^*(c)$.

We can write
\begin{equation}\label{split-igusa}
I_{K}=\coprod_R \underline{P(\Q_p)}\backslash (I \times \underline{P(\Q_p)h_jK}/\underline{K})\simeq \coprod_{R}\underline{\Gamma_j} \backslash I.
\end{equation}
The isomorphism is induced on the $h_j$-th component by the composite of the map 
\begin{equation*}
\underline{P(\Q_p)}\backslash (I \times \underline{P(\Q_p)h_jKh_{j}^{-1}}/\underline{h_jKh_j^{-1}}) \rightarrow \underline{P(\Q_p)}\backslash (I \times \underline{P(\Q_p)h_jK}/\underline{K})
\end{equation*}
given by right multiplication by $h_j$ and the map
\begin{equation*}
\underline{\Gamma_j} \backslash I \rightarrow \underline{P(\Q_p)}\backslash (I \times \underline{P(\Q_p)h_jKh_{j}^{-1}}/\underline{h_jKh_j^{-1}})
\end{equation*}
sending everything to the identity on the second component. Therefore, via the isomorphism in \eqref{split-igusa}, the map $\phi_j$ is identified with the pullback $H^i(\underline{\Gamma_j}\backslash I, \F_\ell)\rightarrow H^i(I, \F_\ell)$.

It remains to show that each of the pullback maps above induces an isomorphism $H^i(\underline{\Gamma_j}\backslash I, \F_\ell)\xrightarrow{\sim} V^{\Gamma_j}$. Each group $\Gamma_j$ is contained in $P(\Z_p)$, hence the action of $\Gamma_j/(\Gamma_j\cap C)$ on $I$ is free. Therefore, as $K$ is a pro-$p$ group and $p \neq \ell$, the desired isomorphism follows from \cite[Proposition 4.3.2]{wei17} and \cite[Theorem 2.2.7]{cgh20}. 
\end{proof}

\section{The structure of the cohomology in the non-Eisenstein case}

The aim of this section is to prove our main results
on the cohomology of Hilbert modular varieties after localisation at 
a non-Eisenstein maximal ideal. These are Theorems~\ref{mainpart2} 
and~\ref{mainpart2-ref} stated below. 
We also establish analogous results for quaternionic
Shimura varieties: see Theorem~\ref{quat-maithm}. 

\subsection{Main results: statements}
\begin{thm}\label{mainpart2}
Let $\ell>2$ be a prime, $K\subset G(\A_f)$ a neat compact open and $\m \subset \mathbb{T}$ 
a maximal ideal in the support of $H^i(Sh_K(G), \F_{\ell})$. Let $\bar{\rho}_{\m}$ be 
the Galois representation associated to $\m$ by Theorem~\ref{constr-gal-repn}. 
Assume that the image of $\bar{\rho}_{\m}$ is not solvable.
Then $H^i(Sh_K(G), \F_{\ell})_\m=H^i_c(Sh_K(G), \F_{\ell})_\m$ is non-zero only for $i=g$.
\end{thm}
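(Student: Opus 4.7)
The plan is to combine the non-Eisenstein reductions of Lemma~\ref{cohom-noneis} with an analysis of the Hodge--Tate period map. Since $\bar\rho_\m$ has non-solvable image it is absolutely irreducible, so $\m$ is non-Eisenstein; Lemma~\ref{cohom-noneis}(1) then identifies $H^i_c$ with $H^i$ at $\m$, and Lemma~\ref{cohom-noneis}(2) reduces the claim to proving $H^i(Sh_K(G), \F_\ell)_\m = 0$ for $i < g$ (the range $i > g$ follows by the same argument applied to $\m^\vee$, which also has non-solvable image). Using Corollary~\ref{Hilbtounitary} I transport the vanishing to the auxiliary unitary Shimura variety $Sh_{U_\emptyset}(H_\emptyset)$ attached to a CM extension $E/F$ with $K_E$ sufficiently small with respect to $K_T$. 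The Chebotarev argument of Lemma~\ref{cebot}, which is precisely where the non-solvable image hypothesis is used, then supplies a prime $p \neq \ell$ splitting completely in $F$ at which $\bar\rho_\m$ is unramified and generic at every place above $p$; I choose $K = K^pK_p$ with $K_p$ hyperspecial and work with the Hodge--Tate period map $\pi^\circ_{\HT}: \mathcal{S}h_{U^p}(H_\emptyset)^\circ \to \mathscr{F}\ell_{H_\emptyset, \mu_\emptyset}$ at infinite level at $p$.

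The crux is to show that $(R\pi^\circ_{\HT*}\F_\ell)_\m$ is supported on the $\mu$-ordinary stratum of the flag variety, which by Proposition~\ref{fibreig} is equivalent to the vanishing $H^*(\mathrm{Ig}^b, \F_\ell)_\m = 0$ for every non-$\mu$-ordinary $b \in B(H_{\emptyset,\Q_p}, \mu_\emptyset)$. For such a $b$, let $T' \subset \Sigma_p$ be the non-empty subset of places at which $b$ is basic and $b'$ the associated element of $B(H_{T',\Q_p}, \mu_{T'})$. Theorem~\ref{isoigusa} yields a Hecke-equivariant isomorphism $\mathrm{Ig}^b \simeq \mathrm{Ig}^{b'}_{T'}$, transferring the question to an Igusa variety inside a Shimura datum $(H_{T'}, X_{T'})$ that is compact at the places of $T'$. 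Theorem~\ref{ord-par-ind} then describes the cohomology of the $\mu$-ordinary Newton stratum of the Shimura variety attached to $H_{T'}$ as $\mathrm{Ind}_{P(\Q_p)}^{H_{T'}(\Q_p)} H^*(\mathcal{I}g^{b'}, \F_\ell)$. Running an induction on $|T'|$ together with Proposition~\ref{fibreig} applied to this auxiliary Shimura variety, any system of Hecke eigenvalues corresponding to $\m$ in $H^*(\mathrm{Ig}^{b'}_{T'}, \F_\ell)$ propagates to the cohomology of the compact quaternionic Shimura variety for $H_{T'}$. Taking a minimal prime of the support, lifting to characteristic zero and applying Jacquet--Langlands together with local-global compatibility, the resulting automorphic representation of $B_{T'}^\times$ has discrete series local component at each $\p \in T'$, forcing the Frobenius eigenvalues of $\bar\rho_\m$ at $\p$ to be in ratio $p^{\pm 1}$ -- contradicting the genericity of $\bar\rho_\m$ at $\p$ provided by the choice of $p$.

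With this support statement established, the cohomology of the Shimura variety at $\m$ is, by a stratification argument on $\mathscr{F}\ell$, computed by the cohomology of the open $\mu$-ordinary Newton stratum $\mathcal{S}h^{b^{\mathrm{ord}}}$ in the special fibre, which is a smooth affine variety of dimension $g$. Artin vanishing for affine schemes then gives $H^i(\mathcal{S}h^{b^{\mathrm{ord}}}, \F_\ell) = 0$ for $i > g$; combining with Poincar\'e duality (which swaps $\m$ and $\m^\vee$, both of which satisfy the hypotheses) and Lemma~\ref{cohom-noneis}(1) yields the desired vanishing $H^i(Sh_K(G), \F_\ell)_\m = 0$ for $i \neq g$. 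The main obstacle I anticipate is the induction in the second paragraph: controlling the characteristic zero lift of eigensystems in the cohomology of compact Igusa varieties attached to the non-split forms $H_{T'}$ and translating local-global compatibility into a genuine contradiction with genericity, without spurious eigensystem contributions slipping through the parabolic induction mechanism. A secondary delicate point is the inversion of open and closed between the Newton stratifications of $\mathscr{F}\ell$ and of the Shimura variety, which must be handled carefully when converting the support statement into a concrete cohomological computation at finite level.
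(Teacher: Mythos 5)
Your proposal follows essentially the same route as the paper: reduce via Lemma~\ref{cohom-noneis} and Corollary~\ref{Hilbtounitary} to the auxiliary unitary variety, choose a generic auxiliary prime via Lemma~\ref{cebot}, kill the non-ordinary Igusa varieties at $\m$ using Theorem~\ref{isoigusa}, Proposition~\ref{fibreig}, Theorem~\ref{ord-par-ind} and a Jacquet--Langlands/genericity contradiction, and conclude by Artin vanishing plus Poincar\'e duality on the remaining ordinary stratum. The only point to correct is that the $\mu$-ordinary stratum itself is not affine in the non-compact case --- the paper instead applies Artin vanishing to its partial minimal compactification and uses the non-Eisenstein hypothesis to discard the boundary contribution (Lemma~\ref{str-aff} and Corollary~\ref{bistr-die}).
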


\begin{cor}\label{torsfree-Zl}
With the same notations and assumptions as in Theorem \ref{mainpart2}, $H^i(Sh_K(G), \Z_{\ell})_\m\neq 0$ if and only if $i=g$, 
and $H^g(Sh_K(G), \Z_{\ell})_\m$ is torsion-free. The same is true for compactly supported cohomology.
\end{cor}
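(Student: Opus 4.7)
The approach is to deduce Corollary~\ref{torsfree-Zl} from Theorem~\ref{mainpart2} via the Bockstein short exact sequences. The coefficient sequence $0 \to \Z_\ell \xrightarrow{\ell} \Z_\ell \to \F_\ell \to 0$ yields, after localising at $\m$ (which is exact), short exact sequences
\begin{equation*}
0 \to H^i(Sh_K(G), \Z_\ell)_\m/\ell \to H^i(Sh_K(G), \F_\ell)_\m \to H^{i+1}(Sh_K(G), \Z_\ell)_\m[\ell] \to 0
\end{equation*}
for every $i \geq 0$, and an analogous family with compact supports.

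Before applying Nakayama, I would check that $H^i(Sh_K(G), \Z_\ell)_\m$ is a finitely generated $\Z_\ell$-module. Each $H^i(Sh_K(G), \F_\ell)$ is finite-dimensional by Artin's finiteness theorem, so each $H^i(Sh_K(G), \Z/\ell^n)$ is finite; the inverse limit $H^i(Sh_K(G), \Z_\ell) = \invlim_n H^i(Sh_K(G), \Z/\ell^n)$ is then an $\ell$-adically complete $\Z_\ell$-module with finite reduction mod $\ell$, hence finitely generated by topological Nakayama. The Hecke algebra $\T$ then acts through a finite (in particular semi-local) $\Z_\ell$-algebra, so $H^i(Sh_K(G), \Z_\ell)_\m$ is a direct summand and is itself finitely generated.

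Theorem~\ref{mainpart2} now gives $H^i(Sh_K(G), \F_\ell)_\m = 0$ for $i \neq g$. Plugging this into the Bockstein sequence yields $H^i(Sh_K(G), \Z_\ell)_\m/\ell = 0$ for $i \neq g$, and ordinary Nakayama forces $H^i(Sh_K(G), \Z_\ell)_\m = 0$ in these degrees. Reading the sequence at index $g-1$ gives $H^g(Sh_K(G), \Z_\ell)_\m[\ell] = 0$, i.e., torsion-freeness; and reading the sequence at index $g$, together with $H^{g+1}(Sh_K(G), \Z_\ell)_\m = 0$ from the preceding step, produces an isomorphism $H^g(Sh_K(G), \Z_\ell)_\m/\ell \isomap H^g(Sh_K(G), \F_\ell)_\m$, whose target is non-zero by the hypothesis that $\m$ is in its support; hence $H^g(Sh_K(G), \Z_\ell)_\m \neq 0$. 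The compactly supported statement follows by the same argument with $H^i_c$ in place of $H^i$.

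The only point requiring genuine input beyond formal manipulation is the finite generation of $H^i(Sh_K(G), \Z_\ell)_\m$ over $\Z_\ell$ — a subtlety because $Sh_K(G)$ is non-proper — but this is handled by the $\ell$-adic completeness argument sketched above. Beyond this, the corollary is a routine consequence of Theorem~\ref{mainpart2}.
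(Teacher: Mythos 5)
Your proof is correct and follows essentially the same route as the paper: the paper also runs the Bockstein long exact sequence for $0 \to \Z_\ell \to \Z_\ell \to \F_\ell \to 0$, deduces vanishing outside degree $g$ from surjectivity of multiplication by $\ell$, and reads off torsion-freeness and non-vanishing in degree $g$ from the adjacent terms. The only difference is that you explicitly justify the finite generation of $H^i(Sh_K(G), \Z_\ell)_\m$ needed for Nakayama (the paper leaves this implicit, working with Betti cohomology of a space homotopy equivalent to a finite CW complex), which is a reasonable point to make explicit but not a change of method.
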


\begin{proof}
Looking at the long exact sequence in Betti cohomology coming from the short exact sequence $0 \rightarrow \Z_{\ell} 
\rightarrow \Z_{\ell} \rightarrow \F_{\ell}\rightarrow 0$ we see that multiplication by $\ell$ is surjective on $H^i(Sh_K(G), \Z_{\ell})_\m$ 
if $i \neq g$, hence $H^i(Sh_K(G), \Z_{\ell})_\m=0$ 
for $i \neq g$. On the other hand, we have
\begin{equation*}
\ldots H^{g-1}(Sh_K(G), \F_{\ell})_\m \rightarrow H^g(Sh_K(G), \Z_{\ell})_\m \rightarrow H^g(Sh_K(G), \Z_{\ell})_\m \ldots;
\end{equation*}
as the first term vanishes, we deduce that $H^g(Sh_K(G), \Z_{\ell})_\m$ is torsion-free, hence free. 
Finally, the cokernel of the map $H^g(Sh_K(G), \Z_{\ell})_\m \rightarrow H^g(Sh_K(G), \F_{\ell})_\m$ 
injects into $H^{g+1}(Sh_K(G), \Z_{\ell})_\m=0$, hence $H^g(Sh_K(G), \Z_{\ell})_\m \neq 0$. 
The result for $H^*_{c}(Sh_K(G), \Z_{\ell})_\m$ follows as in Lemma \ref{cohom-noneis}.
\end{proof}

To proceed, we make the following definition. 

\begin{defn}\label{defn:genericity}\leavevmode
\begin{enumerate}
\item  Let $v$ be a place of $F$ above a prime $p\not=\ell$ which splits completely in $F$. 
We say that $\bar{\rho}_\m$ is \emph{generic at $v$} if it is unramified at $v$ and
the eigenvalues of Frobenius at $v$ have ratio different from $p^{\pm 1}$.
\item Let $p \neq \ell$ be a prime. We say that $p$ is a \emph{decomposed generic} prime for $\bar{\rho}_\mathfrak{m}$ if $p$ splits completely in $F$ and $\bar{\rho}_{\m}$ is generic at $v$ for every place $v$ of $F$ above $p$.  
\item We say that $\bar{\rho}_{\m}$ is \emph{decomposed generic} if there exists a prime $p \neq \ell$ 
which is decomposed generic for $\bar{\rho}_{\m}$. 
\end{enumerate}
\end{defn}

\subsubsection{}\label{notransfer} Assume that $\bar{\rho}_{\m}$ is generic at some place $v$ 
of $F$. Then any characteristic zero lift of the restriction of $\bar{\rho}_{\m}$ to $\Gamma_{F_v}$ 
cannot be associated via the local Langlands correspondence to an irreducible smooth  
representation of $\GL_2(F_v)$ which transfers to the non-split quaternion algebra over $F_v$.

\begin{notn} Let $p \neq \ell$ be a prime which splits completely in $F$ 
and such that $\bar{\rho}_\m$ is unramified at every place of $F$ above $p$. 
We will denote by $\delta_p(\m)$ the number of places 
above $p$ at which $\bar{\rho}_\m$ is \emph{not} generic.
\end{notn}

\begin{thm}\label{mainpart2-ref}
Let $\ell>2$ be a prime. Let $p \neq \ell$ be an odd prime which splits completely in $F$ and such that $K=K^pK_p$ with $K_p$ hyperspecial. Let $\m \subset \mathbb{T}$ be a non-Eisenstein maximal ideal. Then
$H^*_c(Sh_K(G), \F_{\ell})_\m=H^*(Sh_K(G), \F_{\ell})_\m$ vanishes outside the interval $[g-\delta_p(\m), g+\delta_p(\m)]$.
\end{thm}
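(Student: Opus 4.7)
The plan is to combine the Newton stratification at $p$ with Artin vanishing on affine smooth strata, and to control the individual strata via the geometric Jacquet--Langlands isomorphism of Theorem~\ref{isoigusa} together with the parabolic induction structure of Theorem~\ref{ord-par-ind}. Set $\delta := \delta_p(\m)$. Via Corollary~\ref{Hilbtounitary}, I replace $Sh_K(G)$ by the auxiliary unitary Shimura variety $Sh_{U_\emptyset}(H_\emptyset)$; Lemma~\ref{cohom-noneis} transferred through Corollary~\ref{Hilbtounitary}(2) gives $H^*_c = H^*$ after localization at $\m$, and Poincar\'e duality interchanges the non-Eisenstein ideals $\m$ and $\m^\vee$ (which have the same $\delta$), reducing the theorem to showing $H^i_c(Sh_{U_\emptyset}(H_\emptyset), \F_\ell)_\m = 0$ for $i < g-\delta$.

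Since $p$ splits completely, the Newton stratification of the integral model at $p$ has strata indexed by subsets $B \subseteq \Sigma_p$ recording where the Newton element is basic; the stratum $Sh^B$ is smooth of dimension $g - |B|$ and is affine by the Goren--Oort type results underpinning~\cite{tixi16}. Iterated excision on the Newton filtration reduces the desired vanishing to $H^i_c(Sh^B, \F_\ell)_\m = 0$ for each $B$, while Artin vanishing on smooth affine varieties yields $H^i_c(Sh^B, \F_\ell) = 0$ for $i < g - |B|$. It therefore suffices to prove the key claim: if $B$ contains a place $\p_0$ at which $\bar\rho_\m$ is generic, then $H^*(Sh^B, \F_\ell)_\m = 0$, since the surviving $B$ then satisfy $|B| \leq \delta$.

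For the key claim, Proposition~\ref{fibreig} identifies $H^*(Sh^B, \F_\ell)_\m$ with $H^*(\mathrm{Ig}^b_\emptyset, \F_\ell)_\m$ for $b$ the Newton element with basic set $B$; Theorem~\ref{isoigusa} rewrites this as $H^*(\mathrm{Ig}^{b'}_B, \F_\ell)_\m$ for $b'$ the $\mu$-ordinary element on $H_B$; and Theorem~\ref{ord-par-ind} identifies the cohomology of the $\mu$-ordinary stratum of the compact Shimura variety $Sh_{U_B}(H_B)$ with the smooth parabolic induction of $H^*(\mathrm{Ig}^{b'}_B, \F_\ell)$ from a parabolic $P(\Q_p) \subset H_B(\Q_p)$. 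Since the spherical Hecke action away from $p$ commutes with the $H_B(\Q_p)$-action and smooth induction is faithful, the $\m$-localizations on either side vanish simultaneously. A downward induction on $|T'|$, combining this chain with the excision sequence for the $\mu$-ordinary locus of each $Sh_{U_{T'}}(H_{T'})$ (for $T' \supseteq B$) and applying Theorems~\ref{isoigusa} and~\ref{ord-par-ind} to its non-$\mu$-ordinary strata (which are controlled by $\mu$-ordinary Igusa varieties on $H_{T' \cup B''}$ for nonempty $B'' \subseteq \Sigma_p \setminus T'$), reduces the key claim to the following statement: for every nonempty $T' \subseteq \Sigma_p$ containing $\p_0$, $H^*(Sh_{U_{T'}}(H_{T'}), \F_\ell)_\m = 0$. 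For such $T'$, $Sh_{U_{T'}}(H_{T'})$ is compact; any mod-$\ell$ Hecke eigensystem in its cohomology lifts to characteristic zero, giving a cuspidal automorphic representation $\pi$ of $H_{T'}(\A_f)$. The level $K_{T', \p_0} = \O_{D_{T', \p_0}}^\times$ is the full unit group of the maximal order in the local division quaternion algebra, so $\pi_{\p_0}$ has nonzero $K_{T', \p_0}$-fixed vectors and is therefore one-dimensional (factoring through the reduced norm); its Jacquet--Langlands transfer to $\GL_2(F_{\p_0})$ is a twist of the Steinberg representation, whose attached local Galois representation has Frobenius eigenvalue ratio $p^{\pm 1}$. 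Reduction modulo $\ell$ contradicts the genericity of $\bar\rho_\m$ at $\p_0$ (Definition~\ref{defn:genericity}, cf.\ \S\ref{notransfer}).

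The hardest part is this last local-global step: constructing the Galois representation attached to a mod-$\ell$ Hecke eigensystem on the compact Shimura variety $Sh_{U_{T'}}(H_{T'})$, lifting it to characteristic zero, and tracking the forced Steinberg local component at $\p_0$ through the Jacquet--Langlands transfer. A secondary complication is the bookkeeping in the downward induction on $|T'|$, which must thread vanishing from the full Shimura variety $Sh_{U_{T'}}(H_{T'})$ through the excision sequence for its $\mu$-ordinary stratum and back into the Igusa cohomology at the next level via Theorems~\ref{isoigusa} and~\ref{ord-par-ind}.
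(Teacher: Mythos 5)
Your overall architecture matches the paper's: reduce to the auxiliary unitary variety via Corollary~\ref{Hilbtounitary}, use the Newton stratification together with Artin vanishing and Poincar\'e duality on the strata of large dimension, and kill the small strata by transporting their Igusa varieties to $\mu$-ordinary Igusa varieties on $H_B$ via Theorem~\ref{isoigusa}, feeding them into Theorem~\ref{ord-par-ind}, and deriving a contradiction with genericity at $\p_0$ from the Jacquet--Langlands transfer. The difference is organizational: the paper runs a minimality argument (pick $b$ with $H^*(\mathrm{Ig}^b)_\m\neq 0$ and $\dim\Shs^b$ minimal, so that $(R\pi^T_{\HT,*}\F_\ell)_\m$ is supported on the $\mu_T$-ordinary stratum), whereas you run a downward induction on $T'$.

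The genuine gap is the sentence ``$Sh_{U_{T'}}(H_{T'})$ is compact; any mod-$\ell$ Hecke eigensystem in its cohomology lifts to characteristic zero.'' Compactness alone does not give this: compact Shimura varieties of positive dimension can have torsion classes in cohomology whose eigensystems do not lift. Lifting requires first knowing that $H^*(Sh_{U_{T'}}(H_{T'}),\F_\ell)_\m$ is concentrated in a single degree, so that the Bockstein argument of Corollary~\ref{torsfree-Zl} produces a nonzero torsion-free $\Z_\ell$-module and hence nonzero $\Q_\ell$-cohomology. The paper obtains this concentration from perversity of $R\pi^T_{\HT,*}\F_\ell[g-\varepsilon]$ combined with Lemma~\ref{lem-ord-max} (support on the single $\mu_T$-ordinary stratum of the flag variety). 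In your inductive framework the same concentration is available --- once the inductive hypothesis kills the non-ordinary strata of $Sh_{U_{T'}}(H_{T'})$ (for $\m$ and $\m^\vee$), Artin vanishing on the open affine $\mu$-ordinary stratum plus Poincar\'e duality on the compact $Sh_{U_{T'}}(H_{T'})$ forces concentration in degree $g-|T'|$ --- but you must say this; as written, the lifting step is unjustified and is exactly the point where torsion coefficients differ from $\C$-coefficients. Two smaller inaccuracies: for $B=\emptyset$ the ordinary stratum of $\Shs$ is \emph{not} affine (only its partial minimal compactification is, cf.\ Lemma~\ref{str-aff}(2)), so your Artin-vanishing claim for that stratum needs the boundary analysis of Corollary~\ref{bistr-die}, which uses the non-Eisenstein hypothesis; and the passage from $H^*(\Shs^b)_\m$ to $H^*(\mathrm{Ig}^b)_\m$ is not Proposition~\ref{fibreig} but the pro-\'etale torsor $\mathrm{Ig}^b\to\Shs^{b,\mathrm{perf}}$ and Hochschild--Serre, which gives the implication you need only in the direction ``Igusa vanishing implies stratum vanishing.''
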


\begin{rem}
For a maximal ideal $\m \subset \mathbb{T}$ in the support of $H^i(Sh_K(G), \F_{\ell})$, the projective image $I$ of the Galois representation $\bar{\rho}_\m$ is a finite subgroup of $\PGL_2(\bar{\F}_\ell)$.  By Dickson's theorem, cf.~\cite[Theorem 2.47 (b)]{ddt}, the group $I$ is either conjugate to a subgroup of the upper triangular matrices, or to $\PGL_2(\F_{\ell^k})$ or $\PSL_2(\F_{\ell^k})$,
for some $k\geq 1$, or it is isomorphic to one of $D_{2n}$, for some $n\in \Z_{> 1}$ prime to $\ell$, 
$A_4$, $S_4$, or $A_5$. It follows that the image of $\bar{\rho}_\m$ is not solvable if and only if the following holds:
\begin{enumerate}
\item if $\ell=3$ then $I$ is isomorphic to $A_5$ or it contains a conjugate of $\PSL_2(\F_9)$;
\item if $\ell>3$ then $I$ is isomorphic to $A_5$ or it contains a conjugate of $\PSL_2(\F_{\ell})$.
\end{enumerate}
Theorem \ref{mainpart2} follows from Theorem \ref{mainpart2-ref} in view of the following lemma (more precisely, observe that the proof of the lemma allows to produce $p$ as in Definition~\ref{defn:genericity}$(3)$ satisfying the conditions of Theorem \ref{mainpart2-ref}).
\end{rem}

\begin{lem}\label{cebot}
Assume that $\ell>2$ and that the image of $\bar{\rho}_{\m}$ is not solvable.
Then $\bar{\rho}_{\m}$ is decomposed generic in the sense of Definition~\ref{defn:genericity}. 
\end{lem}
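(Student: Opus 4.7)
We apply the Chebotarev density theorem to an auxiliary Galois extension of $\Q$ chosen to encode both the Frobenius data and the mod-$\ell$ cyclotomic character. Let $L/F$ be the Galois extension cut out by $\bar\rho_\m$ with $\Gal(L/F) = I$, and let $F'/\Q$ be the Galois closure of $F/\Q$; since $F$ is totally real, so is $F'$, and therefore $\Q(\zeta_\ell)\not\subset F'$ as $\ell>2$. Put $M := L\cdot F'\cdot \Q(\zeta_\ell)$, $G := \Gal(M/\Q)$, $G_{F'} := \Gal(M/F')\trianglelefteq G$, and $J := \bar\rho_\m(\Gamma_{F'}) \trianglelefteq I$. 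A prime $p$ unramified in $M$ splits completely in $F$ if and only if it splits completely in $F'$, which happens precisely when $\Frob_p\in G_{F'}$. In that case the Frobenii $\bar\rho_\m(\Frob_v)\in J$ for $v\mid p$ form a single orbit under the outer action of $\Gal(F'/\Q)$ on $J$, and the genericity condition is that no element of this orbit has eigenvalue ratio equal to $(p\bmod\ell)^{\pm 1}$.

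\textbf{Structure of the search.} For any $G$-conjugacy class $C\subset G_{F'}$, Chebotarev produces infinitely many primes with $\Frob_p\in C$, and the residue $\bar a := p \bmod \ell$ is the image of $C$ in $\F_\ell^\times$. The elements of the $\Gal(F'/\Q)$-orbit of $\bar\rho_\m(C)$ in $J$ share a common projective order $m$, so their eigenvalue ratios are primitive $m$-th roots of unity in $\bar\F_\ell^\times$; these lie in $\F_\ell^\times$ exactly when $m\mid \ell - 1$, and so the genericity condition is automatic whenever $m\nmid\ell-1$. Varying $C$ within the fibre of $G_{F'}\to J$ over a fixed $\bar\sigma\in J$ moves $\bar a$ in a coset of the subgroup $\Gal(\Q(\zeta_\ell)/\Q(\zeta_\ell)\cap LF')\subset\F_\ell^\times$; when $\Q(\zeta_\ell)\not\subset LF'$ this coset is nontrivial, giving room to avoid the finitely many bad residues. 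When $\Q(\zeta_\ell)\subset LF'$, the cyclotomic character descends to a homomorphism $\tilde\omega:J\to\F_\ell^\times$, which is necessarily nontrivial, because $\tilde\omega \equiv 1$ would force $\Q(\zeta_\ell)\subset F'$, contradicting the total reality of $F'$; we then instead vary $\bar\sigma$ within the fibres of $\tilde\omega$ to separate $\bar a$ from the orbit of eigenvalue ratios.

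\textbf{Existence via Dickson.} By Dickson's classification, the non-solvability of the projective image $P$ implies that $P$ either contains $\PSL_2(\F_{\ell^k})$ for some $k\geq 1$ (with $k\geq 2$ if $\ell=3$) or is isomorphic to $A_5$. In the $\PSL_2(\F_{\ell^k})$ case one picks $\bar\sigma\in J$ projecting to an element of order $\ell$ in $P$, which is necessarily unipotent and hence has eigenvalue ratio $1\in\F_\ell^\times$ (since $\bar\F_\ell$ has no nontrivial $\ell$-th roots of unity); the only bad residue is then $\bar a=1$, which is avoided by the flexibility mechanisms above. In the $A_5$ case one chooses $\bar\sigma$ of projective order $3$ or $5$, so that the bad residue set is bounded by four primitive cube or fifth roots of unity and is again avoidable. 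The main obstacle is ruling out the degenerate situation where $J$ fails to meet the non-abelian simple layer of $P$: there $I/J$, being a subquotient of $\Gal(F'/F)\subset S_{[F:\Q]-1}$, must absorb the simple composition factor, whence $\bar\rho_\m|_{\Gamma_{F'}}$ is solvable, even contained in the scalars, and one concludes by combining the two flexibility mechanisms of the previous paragraph with the nontriviality of $\tilde\omega$ forced by the total reality of $F'$.
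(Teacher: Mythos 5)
Your overall framework -- apply Chebotarev in a compositum of the field cut out by $\bar\rho_\m$, the Galois closure $F'$ of $F$, and $\Q(\zeta_\ell)$, so as to control simultaneously the Frobenius images at all $v\mid p$ and the residue $p\bmod\ell$ -- is the same as the paper's. But your choice of target conjugacy class in the main case is inverted, and this is a genuine error rather than a presentational one. In the $\PSL_2(\F_{\ell^k})$ case you insist on $\bar\sigma$ of projective order $\ell$, i.e.\ a scalar times a nontrivial unipotent, so the eigenvalue ratio is $1$ and you must produce $p\not\equiv 1\pmod\ell$. Your only tools for steering $\bar a=p\bmod\ell$ are: (i) moving within the fibre over $\bar\sigma$, which translates $\bar a$ by $\Gal(\Q(\zeta_\ell)/\Q(\zeta_\ell)\cap LF')$ and hence gives \emph{nothing} when $\Q(\zeta_\ell)\subset LF'$; and (ii) moving $\bar\sigma$ within a fibre of $\tilde\omega$, which by construction does not change $\bar a$ at all. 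In the case $\Q(\zeta_\ell)\subset LF'$ one has $\bar a=\tilde\omega(\bar\sigma)$, and $\tilde\omega$, being valued in the group $\F_\ell^\times$ of order $\ell-1$, kills every element of order $\ell$; so every pure unipotent $\bar\sigma$ forces $p\equiv 1\pmod\ell$, which is exactly the bad residue. This is not a corner case: take $F=\Q$ and $\bar\rho_\m$ the mod-$\ell$ representation of an elliptic curve (or any weight-$2$ form with trivial nebentypus), so that $\det\bar\rho_\m=\bar\epsilon^{\pm1}$ and $\Q(\zeta_\ell)\subset L$ by the Weil pairing; then $\bar\rho_\m(\Frob_p)$ unipotent implies $p\equiv 1\pmod\ell$ and the ratio equals $p$, so \emph{no} prime with unipotent Frobenius is generic. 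The correct move, which is what the paper does, is the opposite one: adjoin $\zeta_\ell$ to the base field so that every split $p$ satisfies $p\equiv 1\pmod\ell$, and then aim Frobenius at a projectively \emph{semisimple non-central} element (an order-$2$ element in the $A_5$ case, which is semisimple since $\ell>2$), whose eigenvalue ratio is $\neq 1=p^{\pm1}$. Your $A_5$ choice (projective order $3$ or $5$) has the same defect: when $\Q(\zeta_\ell)\subset LF'$ the residue $\bar a=\tilde\omega(\bar\sigma)$ is pinned and nothing you wrote rules out $\bar a$ coinciding with the eigenvalue ratio.

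Two further points. First, you never use the total oddness of $\bar\rho_\m$, which is how the paper rules out the ``degenerate'' possibility that $\bar\rho_\m|_{\Gamma_{F'}}$ becomes (projectively) trivial: the image of any complex conjugation in $\Gamma_{F'}$ is $\mathrm{diag}(1,-1)$ up to conjugacy, hence projectively nontrivial since $\ell>2$, and normality plus simplicity of $A_5$ and $\PSL_2(\F_{\ell^k})$ then forces the restricted projective image to still contain the simple layer (for $\PGL_2$ one takes ratios of distinct conjugates of the image of $c$ to land inside $\PSL_2$). Your final paragraph merely describes this degenerate case and gestures at ``combining the two flexibility mechanisms''; the scalar case is in fact repairable (one only needs $p\not\equiv 1\pmod\ell$, available because $F'$ is totally real), but as written it is not a proof, and in any event the case does not arise once oddness is invoked. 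Second, a small but necessary bookkeeping step you omit: after restricting to $F'$ the image need not be exactly the simple group, and the paper passes to a further abelian extension and invokes Goursat's lemma to identify $\Gal(L/F')$ with $\Gamma^t$ before choosing the class $(g,\dots,g)$; your ``single orbit under the outer action'' remark covers the uniformity over $v\mid p$ but not the existence of a class in $\Gal(M/F')$ realising the chosen element at every factor.
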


\begin{proof}
This is a variation of \cite[Lemma 2.3]{na20}; following \emph{loc. cit.}, 
we prove that there are infinitely many primes $p\equiv 1 \pmod{\ell}$, 
totally split in $F$, and such that $\bar{\rho}_\m$ is generic at every place above $p$.
This is deduced from our large image assumption using the Chebotarev density theorem. 

We first make some preliminary reductions. Let $\tilde{F}$ be the normal closure of $F$ in $\C$; 
notice that $\tilde{F}$ is also a totally real field.  
Let $\mathrm{pr}: \GL_2(\bar{\F}_{\ell})\rightarrow \PGL_2(\bar{\F}_{\ell})$ be the projection map. 
As observed before the statement of the theorem, the image $I$ of 
\[
\mathrm{pr} \circ \bar{\rho}_\m: \Gamma_F \rightarrow \PGL_2(\bar{\F}_{\ell})
\]
is isomorphic to $A_5$, or conjugate to one of $\PGL_2(\F_{\ell^k})$ or $\PSL_2(\F_{\ell^k})$, 
where $k\geq 2$ if $\ell = 3$ and $k\geq 1$ if $\ell>3$.
We claim that the same is true for the image 
$\tilde{I}$ of $\mathrm{pr} \circ \bar{\rho}_{\m|\Gamma_{\tilde{F}}}: \Gamma_{\tilde{F}} \rightarrow \PGL_2(\bar{\F}_\ell)$. 
Since $\tilde{F}/F$ is Galois, the group $\tilde{I}$ is a normal subgroup of $I$. 

\begin{enumerate}
\item Assume first that the group $I$ is isomorphic to $A_5$. Since $A_5$ is simple and 
$\tilde{I}$ is a normal subgroup of $I$, it is enough to show that $\tilde{I}$ is non-trivial. This is true because 
$\tilde{I}$ contains the image under $\mathrm{pr}\circ \bar{\rho}_{\m|\Gamma_{\tilde{F}}}$ 
of any complex conjugation in $\Gamma_{\tilde{F}}$, which is conjugate
to the matrix $\left(\begin{smallmatrix}
1 & 0\\
0 & -1
\end{smallmatrix}\right)$ because $\bar{\rho}_{\m}$ is totally odd.  

\item Assume now that the group $I$ is conjugate to 
$\PGL_2(\F_{\ell^k})$ or $\PSL_2(\F_{\ell^k})$. We 
conjugate everything so that $I$ is identified 
with $\PGL_2(\F_{\ell^k})$ or $\PSL_2(\F_{\ell^k})$ 
inside $\PGL_2(\bar{\F}_\ell)$. It is enough to prove that
$\tilde{I}\supseteq  \PSL_2(\F_{\ell^k})$. As the representation 
$\bar{\rho}_{\m}$ is totally odd, the image 
via $\bar{\rho}_{\m|\Gamma_{\tilde{F}}}$ of any complex conjugation 
$c\in \Gamma_{\tilde{F}}$ 
is a non-scalar semisimple element. 
As a normal subgroup of $I$, the group
$\tilde{I}$ contains the projective image of 
every $\SL_2(\F_{\ell^k})$-conjugate of $\bar{\rho}_{\m}(c)$. 
In particular, $\tilde{I}\cap \PSL_2(\F_{\ell^k})$ 
contains the ratio of any two distinct such conjugates and is, 
therefore, a non-trivial normal subgroup of $\PSL_2(\F_{\ell^k})$. 
Finally, the groups $\PSL_2(\F_{\ell^k})$, with $k\geq 2$ if $\ell =3$ and 
$k\geq 1$ if $\ell>3$, are simple. 
Therefore, $\tilde{I}\supseteq  \PSL_2(\F_{\ell^k})$. 
\end{enumerate}

\noindent This proves the claim. 
Up to replacing $\tilde{F}$ by a finite abelian extension $F'$ 
and conjugating $\bar{\rho}_{\m}$, 
we may ensure that the image $I'$ of $\mathrm{pr} \circ \bar{\rho}_{\m|\Gamma_{F'}}$ equals either $A_5$ or 
$\PSL_2(\F_{\ell^k})$, with $k\geq 2$ if $\ell =3$ and 
$k\geq 1$ if $\ell>3$. 
As $A_5$ and $\PSL_2(\F_{\ell^k})$ are both simple, $I'$ is unchanged if we replace $F'$ by its normal closure. 
For the same reason we may adjoin $\zeta_{\ell}$ to $F'$ without changing $I'$. 

Let $\Gamma$ denote either one of the finite simple groups $A_5$ or 
$\PSL_2(\F_{\ell^k})$. Let $L$ be the normal closure of the extension of $F'$ 
cut out by $I'$. Since $\Gamma$ is simple, Goursat's lemma implies 
that $\Gal(L/F')\simeq \Gamma^t$ for some $t\geq 1$. We claim that 
we can choose an element $1\neq g\in \Gamma$ which is semisimple
when viewed as an element of $\PGL_2(\bar{\F}_{\ell})$. In the case
$\Gamma = \PSL_2(\F_{\ell^k})$, this is clear. In the case $\Gamma = A_5$,
choose any element of order $2$; since $\ell\not=2$, 
such an element must be semisimple. The Chebotarev density theorem 
ensures the existence of a place $\p$ of $F'$ with residue 
field $\Fp$ with $p \neq \ell$ unramified in $F$ and 
such that $\Frob_\p^{L/F'}$ is conjugate to $(g, g, \ldots, g)$. Since
$F'/\Q$ is Galois, $p$ is totally split in $F'$.
We claim that, 
for every place $\p'$ of $F'$ above $p$, 
the element $\bar{\rho}_\m(\Frob_{\p'})$ is semisimple and 
different from the identity in $\PGL_2(\bar{\F}_{\ell})$.
In the case $\Gamma = \PSL_2(\F_{\ell^k})$, this follows 
as in~\cite[Lemma 2.3]{na20}. In the case $\Gamma = A_5$,
the argument in \emph{loc. cit.} shows that
each such element has order $2$ in $\PGL_2(\bar{\F}_{\ell})$, which implies that it 
must also be semisimple and different from the identity. 
As $\zeta_{\ell} \in F'$ we have $p \equiv 1 \pmod {\ell}$, 
hence the eigenvalues of each $\bar{\rho}_\m(\Frob_{\p'})$ cannot have ratio $p^{\pm 1}$.
\end{proof}

\subsection{Proof of Theorem \ref{mainpart2-ref}}\label{main2-ref-proof}

\subsubsection{Step 1: setup and choice of the auxiliary data}\label{main-setup}

Notice that the quantity $\delta_p(\m) = \delta_p(\m^\vee)$. By Lemma \ref{cohom-noneis} it suffices to show the following implication:
\begin{equation}\label{vanish-lowcsupp}
i < g-\delta_p(\m) \Rightarrow H^i_c(Sh_K(G), \F_{\ell})_\m=0.
\end{equation}
Choose an auxiliary $CM$ extension $E$ of $F$ such that every $\p \in \Sigma_p$ is inert in $E$, and choose $K_E=(\mathcal{O}_E\otimes \Z_p)^\times K_E^{p}\subset T_E(\A_f)$ sufficiently small with respect to $K$. With the notation of \S~\ref{unit-datum}, consider the unitary group $H:=H_{\emptyset}$ and let $U \subset H(\A_f)$ be the image of $K \times K_E$. Recall that the Hecke algebra $\mathbb{T}$ (defined in \S~\ref{def-hecke}) acts on the cohomology of $Sh_{U}(H)$, and Corollary \ref{Hilbtounitary} ensures that, for $i \geq 0$,
\begin{equation*}
H^i(Sh_{K}(G), \F_{\ell})_\m \neq 0 \Leftrightarrow H^i(Sh_{U}(H), \F_{\ell})_\m \neq 0.
\end{equation*}
Therefore, for the sake of proving Theorem \ref{mainpart2-ref}, we may (and will) work with $Sh_U(H)$. Let $\Shs$ be the base change to $\bar{\F}_p$ of its integral model. We have isomorphisms, for $i \geq 0$,
\begin{equation*}
H^i_c(Sh_U(H), \F_{\ell})\simeq H^i_c(\Shs, R\Psi \F_{\ell})\simeq H^i_c(\Shs, \F_{\ell}).
\end{equation*}
The first is obtained 
as in \cite[Corollary 5.20]{ls18}; the second follows from the fact that the complex of nearby cycles $R\Psi\F_{\ell}$ is quasi-isomorphic to the constant sheaf $\F_{\ell}$ as the integral model is smooth.
Hence it suffices to show the implication
\begin{equation}\label{toshow}
i<g-\delta_p(\m) \Rightarrow H^i_c(\Shs, \F_{\ell})_\m=0.
\end{equation}

\subsubsection{Step 2: the Newton stratification}

We have a Newton stratification on $\Shs$ indexed by elements $b=(b_1, \ldots, b_g)$ with $b_j \in B(H_{\p_j}, \mu_{\p_j})$. Recall that each $B(H_{\p_j}, \mu_{\p_j})$ consists of one $\mu$-ordinary and one basic element. We denote by $\Shs^b \subset \Shs$ the stratum corresponding to $b$. More generally, for each $0 \leq k \leq g$ and every $c \in \prod_{j \leq k}B(H_{\p_j}, \mu_{\p_j})$ we let $\Shs^c \subset \Shs$ be the union of the Newton strata corresponding to elements $b=(b_1, \ldots, b_g)$ such that $(b_1, \ldots, b_k)=c$. In particular for $k=0$ we obtain $\Shs$ and for $k=g$ we recover Newton strata.

As we are excluding from $\mathbb{T}$ the Hecke operators at places above $p$, the Hecke algebra acts on the cohomology of each stratum. The first (elementary) observation is that it suffices to show \eqref{toshow} for each Newton stratum.
\begin{lem}\label{stratlem}
Assume that $H^i_{c}(\Shs^b, \F_{\ell})_\m=0$ for every $i<g-\delta_p(\m)$ and every $b \in B(H_{\Q_p}, \mu)$. Then $H^i_{c}(\Shs, \F_{\ell})_\m=0$ for $i<g-\delta_p(\m)$.
\end{lem}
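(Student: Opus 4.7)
The plan is to deduce the vanishing on $\Shs$ from the vanishing on each Newton stratum by iterating the excision long exact sequence for compactly supported cohomology, using that the Newton stratification has only finitely many strata and that localisation at $\m$ is exact.

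More precisely, I would proceed by induction on the number of Newton strata. Fix a total order $b^{(1)}, \ldots, b^{(N)}$ on $B(H_{\Q_p}, \mu)$ that refines the partial order for which closures of Newton strata are unions of Newton strata (such an order exists because this partial order is finite). Then for each $k$, the subset $Z_k := \bigsqcup_{j \leq k} \Shs^{b^{(j)}}$ is closed in $\Shs$ and $U_k := \Shs \setminus Z_k$ is open, with $U_k \setminus U_{k-1} = \Shs^{b^{(k)}}$ (where $U_0 := \Shs$). The excision long exact sequence reads
\begin{equation*}
\cdots \to H^i_c(\Shs^{b^{(k)}}, \F_\ell) \to H^i_c(U_{k-1}, \F_\ell) \to H^i_c(U_k, \F_\ell) \to H^{i+1}_c(\Shs^{b^{(k)}}, \F_\ell) \to \cdots
\end{equation*}
and this is $\T$-equivariant, since $\T$ consists of Hecke operators away from $p$ and the Newton stratification is preserved by them. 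Localising at $\m$ is exact, and by hypothesis the outer terms vanish for $i < g - \delta_p(\m)$; hence $H^i_c(U_{k-1}, \F_\ell)_\m \simeq H^i_c(U_k, \F_\ell)_\m$ in this range. Iterating from $k=1$ to $k=N$ and using that $U_N = \emptyset$ gives $H^i_c(\Shs, \F_\ell)_\m = H^i_c(U_0, \F_\ell)_\m = 0$ for $i < g - \delta_p(\m)$.

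There is no real obstacle here; the only points that need to be invoked are that the Newton stratification is finite with the closure-of-strata property (standard for integral models of PEL type, and applicable to our auxiliary unitary Shimura varieties), and that the spherical Hecke algebra $\T$ acts compatibly on the cohomology of each stratum, which is automatic since $\T$ only involves operators at places coprime to $p$.
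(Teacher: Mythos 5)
Your approach is the same as the paper's: both proofs are instances of additivity of compactly supported cohomology over the finite Newton stratification, combined with exactness of localisation at $\m$. The only organisational difference is that the paper runs a descending induction on partial strata $\Shs^c$ indexed by the first $k$ places above $p$, splitting off at each step the open ($\mu$-ordinary) piece from its closed (basic) complement at the $(k+1)$-st place, whereas you linearly order all the strata compatibly with closure relations and peel them off one at a time; both are valid, and in this setting the closure relations you invoke do hold (the stratification is a product of two-element stratifications, and Newton strata coincide with Ekedahl--Oort strata here).

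One point needs correcting, though. With your conventions $\Shs^{b^{(k)}} = Z_k \cap U_{k-1}$ is \emph{closed} in $U_{k-1}$ with open complement $U_k$ (also note the typo: you mean $U_{k-1}\smallsetminus U_k = \Shs^{b^{(k)}}$), so the excision sequence runs
\begin{equation*}
\cdots \to H^{i-1}_c(\Shs^{b^{(k)}}, \F_\ell) \to H^i_c(U_k, \F_\ell) \to H^i_c(U_{k-1}, \F_\ell) \to H^i_c(\Shs^{b^{(k)}}, \F_\ell) \to \cdots,
\end{equation*}
not in the direction you wrote (your displayed sequence is the one for an \emph{open} stratum). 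This matters at the top of the range: taken literally, your sequence would require $H^{i+1}_c(\Shs^{b^{(k)}}, \F_\ell)_\m = 0$ at $i = g-\delta_p(\m)-1$, i.e.\ vanishing in degree $g-\delta_p(\m)$, which is not part of the hypothesis. With the correct orientation the two terms flanking the map $H^i_c(U_k)_\m \to H^i_c(U_{k-1})_\m$ are $H^{i-1}_c(\Shs^{b^{(k)}})_\m$ and $H^i_c(\Shs^{b^{(k)}})_\m$, both of which vanish for $i < g-\delta_p(\m)$ by hypothesis, and the induction from $U_N=\emptyset$ up to $U_0=\Shs$ goes through.
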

\begin{proof}
This follows from additivity of compactly supported cohomology. To be precise, we show by descending induction on $0 \leq k \leq g$ that
\begin{equation*}
\forall \; c \in \prod_{j \leq k}B(H_{\p_j}, \mu_{\p_j}), \forall \; i <g-\delta_p(\m), H^i_c(\Shs^c, \F_{\ell})_\m=0.
\end{equation*}
By hypothesis the statement is true for $k=g$. Now take $k<g$ and assume that the statement is true for $k+1$. Take $c \in \prod_{j \leq k}B(H_{\p_j}, \mu_{\p_j})$. Let $o$ (resp. $b$) be the non-basic (resp. basic) element in $B(H_{\p_{k+1}}, \mu_{\p_{k+1}})$ and consider $(c, o), (c, b) \in \prod_{j \leq k+1}B(H_{\p_j}, \mu_{\p_j})$. We have
\begin{equation*}
\Shs^{(c, o)} \subset \Shs^c \supset \Shs^{(c, b)};
\end{equation*}
furthermore $\Shs^{(c, o)}$ is open in $\Shs^c$ and $\Shs^{(c, b)}$ is the closed complement. By induction we know that $H^i_c(\Shs^{(c, o)}, \F_{\ell})_\m=H^i_c(\Shs^{(c, b)}, \F_{\ell})_\m=0$ for $i<g-\delta_p(\m)$. The exact sequence
\begin{equation*}
\ldots H^i_c(\Shs^{(c, o)}, \F_{\ell})_\m \rightarrow H^i_c(\Shs^c, \F_{\ell})_\m \rightarrow H^i_c(\Shs^{(c, b)}, \F_{\ell})_\m \ldots
\end{equation*}
implies that $H^i_c(\Shs^{c}, \F_{\ell})_\m=0$ for $i<g-\delta_p(\m)$.
\end{proof}

We need to show that the assumption of the lemma holds true in our situation. The first key ingredient is the following.

\begin{lem}\label{str-aff}
Let $b \in B(H_{\Q_p}, \mu)$.
\begin{enumerate}
\item If $b$ is not the $\mu$-ordinary element then the stratum $\Shs^b$ is smooth, affine and of dimension the number of non-basic coordinates of $b$.
\item If $b$ is the $\mu$-ordinary element then $\Shs^b$ is smooth of dimension $g$, and the partial minimal compactification of $\Shs^b$ (i. e. the union of $\Shs^b$ and the cusps in the minimal compactification of $\Shs$) is affine.
\end{enumerate}
\end{lem}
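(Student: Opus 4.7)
The plan combines Serre--Tate theory at the local level with partial Hasse invariants on the minimal compactification at the global level. The crucial feature of the setting is that $p$ splits completely in $F$, which makes the $\O_F \otimes \Z_p$-action on the universal $p$-divisible group split as $\bigoplus_{\p \in \Sigma_p} \mathcal G_\p$ with each factor of height $2$ and dimension $1$; all further structure then factors over the places of $F$ above $p$.

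For smoothness and the dimension formula I will use Serre--Tate theory: the formal deformation space of a point of $\Shs$ is canonically the product, indexed by $\Sigma_p$, of the formal deformation spaces of the $\mathcal G_\p$ with their induced extra structure. At each $\p$, the equi-Newton locus is a smooth formal curve when $b_\p$ is $\mu$-ordinary and a single reduced point when $b_\p$ is basic. Taking the product yields that $\Shs^b$ is smooth of dimension equal to the number of non-basic coordinates of $b$.

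For affineness, for each $\p \in \Sigma_p$ I will use a partial Hasse invariant $h_\p \in H^0(\Shs^{\min}, \omega_\p^{\otimes e_\p})$ (for a suitable $e_\p \geq 1$), where $\omega_\p$ is the $\p$-component of the Hodge bundle, whose vanishing locus in $\Shs$ is the Zariski closure of the locus where the $\p$-component is basic, and which is non-vanishing at the cusps of $\Shs^{\min}$ (since the universal semi-abelian scheme over the boundary is $\mu$-ordinary at every place). Let $S$ be the set of indices $j$ for which $b_j$ is basic. When $S \neq \emptyset$, the closed projective subscheme $Z := \bigcap_{j \in S}\{h_{\p_j} = 0\} \subset \Shs^{\min}$ avoids the cusps and lies entirely inside $\Shs$; the stratum $\Shs^b$ is then the open subscheme of $Z$ defined by $\prod_{j \notin S}h_{\p_j} \neq 0$, exhibiting it as the complement of an effective Cartier divisor in the projective $Z$. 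When $S = \emptyset$, the partial minimal compactification of $\Shs^b$ is $\Shs^{\min} \setminus \bigcup_{j}\{h_{\p_j}=0\}$, of the same shape.

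The essential obstacle is the ampleness needed to conclude affineness: one must show that $\bigotimes_{j \notin S}\omega_{\p_j}^{\otimes e_{\p_j}}$ restricts to an ample line bundle on each $Z$. For $S = \emptyset$ this is the classical ampleness of the Hodge line bundle on $\Shs^{\min}$. For non-empty $S$ it is the positivity of partial Hasse invariants in the Hilbert/split setting, due to Andreatta--Goren and systematised through the Goren--Oort stratification by Tian--Xiao~\cite{tixi16}; the translation from the Hilbert modular variety to our unitary $\Shs$ is routine because the local structure at $p$ is the same on both sides, and the relevant automorphic line bundles $\omega_\p$ are compatible under the comparison morphism of \S\ref{quatvsun}. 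Once this ampleness is granted, both affineness statements follow from the standard fact that the complement of an ample effective Cartier divisor in a projective scheme is affine.
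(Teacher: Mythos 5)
Your proposal is correct in outline but takes a genuinely different route from the paper. The paper's proof is essentially a concatenation of citations: for $p$ totally split the Newton strata coincide with Ekedahl--Oort strata, whose smoothness is \cite[Theorem 3.4.7]{shezha18}, whose dimensions are read off from \cite[Proposition 4.7]{tixi16}, and whose partial minimal compactifications are affine by \cite[Proposition 6.3.1]{goko19} (see also \cite{boxer}); the only geometric observation added is that the $\mu$-ordinary stratum is the unique one meeting the boundary. You instead prove smoothness and the dimension count locally, via the Serre--Tate product decomposition of the deformation space over $\Sigma_p$ (which is clean here precisely because $p$ splits completely, and does give formal smoothness once one takes reduced structures on the strata), and you prove affineness globally by exhibiting $\Shs^b$ as the non-vanishing locus of a section of $\bigotimes_{j\notin S}\omega_{\p_j}^{\otimes e_{\p_j}}$ on the projective closed stratum $Z=\bigcap_{j\in S}\{h_{\p_j}=0\}$. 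The one point to be explicit about is that the ampleness of this partial Hodge bundle on $Z$ is exactly the nontrivial content hiding behind the paper's citation of \cite{goko19} (whose strata Hasse invariants encode this positivity); in the present split setting the cleanest justification is via \cite{tixi16}, which identifies $Z$ with the special fibre of a \emph{projective} quaternionic Shimura variety on which $\bigotimes_{j\notin S}\omega_{\p_j}$ is a positive rational power of the canonical bundle, hence ample. With that input made precise (together with the standard facts that the $h_\p$ extend to the minimal compactification with reduced divisor and do not vanish at the cusps), your argument is a valid and more self-contained alternative; note also that your observation that the $h_\p$ are non-zero at the cusps is the same boundary input the paper uses to see that only the ordinary stratum requires the partial compactification in part (2).
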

\begin{proof}
Dimension of strata can be obtained from \cite[Proposition 4.7]{tixi16}. As $p$ splits completely in $F$ Newton strata coincide with Ekedahl--Oort strata. Each of them is smooth by \cite[Theorem 3.4.7]{shezha18}. Furthermore, recall that $\Shs$ is a finite union of quotients of connected components of integral models of Hodge-type Shimura varieties by finite groups. Each Ekedahl--Oort stratum in $\Shs$ decomposes accordingly, hence its partial minimal compactification is affine by \cite[Proposition 6.3.1]{goko19} (see also~\cite{boxer}). Finally, the $\mu$-ordinary stratum is the only one intersecting the boundary, hence the lemma follows.
\end{proof}

\begin{cor}\label{bistr-die}
Let $b \in B(H_{\Q_p}, \mu)$ be such that $\dim \Shs^b\geq g-\delta_p(\m)$. Then $H^i_c(\Shs^b, \F_{\ell})_\m=0$ for $i<g-\delta_p(\m)$.
\end{cor}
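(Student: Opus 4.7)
The plan is to split into two cases according to whether $b$ is the $\mu$-ordinary element, handling the non-ordinary strata directly via Artin vanishing and then dealing with the $\mu$-ordinary stratum via its partial minimal compactification together with the non-Eisenstein hypothesis.

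First I would consider the case where $b$ is not $\mu$-ordinary. By Lemma~\ref{str-aff}(1), $\Shs^b$ is smooth and affine of dimension $d:=\dim \Shs^b$, and the hypothesis of the corollary gives $d \geq g-\delta_p(\m)$. Artin's vanishing theorem on the affine variety $\Shs^b$ yields $H^j(\Shs^b, \F_{\ell})=0$ for $j>d$; Poincar\'e duality on the smooth variety $\Shs^b$ then converts this into $H^i_c(\Shs^b, \F_{\ell})=0$ for $i<d$, and in particular for $i<g-\delta_p(\m)$. Localisation at $\m$ preserves this vanishing, so this case requires neither the non-Eisenstein assumption nor the hypothesis on the prime $p$.

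Next I would treat the case $b=b^{\mathrm{ord}}$, in which $\dim \Shs^{b^{\mathrm{ord}}}=g$, so the dimension hypothesis is automatic. By Lemma~\ref{str-aff}(2), the partial minimal compactification, denoted $(\Shs^{b^{\mathrm{ord}}})^*$, is affine of dimension $g$; combining Artin vanishing with Poincar\'e duality as before produces $H^i_c((\Shs^{b^{\mathrm{ord}}})^*, \F_{\ell})=0$ for $i<g$. Letting $\mathcal{C}:=(\Shs^{b^{\mathrm{ord}}})^* \smallsetminus \Shs^{b^{\mathrm{ord}}}$ denote the set of cusps, the excision long exact sequence
\[
\cdots \to H^{i-1}_c(\mathcal{C}, \F_{\ell})_\m \to H^i_c(\Shs^{b^{\mathrm{ord}}}, \F_{\ell})_\m \to H^i_c((\Shs^{b^{\mathrm{ord}}})^*, \F_{\ell})_\m \to \cdots
\]
would then reduce the claim for $i<g$ to showing $H^*_c(\mathcal{C}, \F_{\ell})_\m=0$; since $g \geq g-\delta_p(\m)$ this would finish the corollary.

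The main obstacle is this last step: verifying that the cohomology of the cusps vanishes after localising at a non-Eisenstein maximal ideal in the setting of the special fibre of the unitary Shimura variety. The Hecke action on $H^*(\mathcal{C}, \F_{\ell})$ factors through characters associated to a Levi of the standard Borel of $H$, so the mod $\ell$ Galois representation attached to any maximal ideal in the support is a sum of characters, contradicting the non-Eisenstein property of $\m$. To make this precise I would either argue directly using the explicit description of the cusps in the Baily--Borel compactification of $\Shs$, or pass to the generic fibre by smooth and proper base change and apply the boundary analysis of Lemma~\ref{cohom-noneis} together with Corollary~\ref{Hilbtounitary}.
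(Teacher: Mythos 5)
Your treatment of the non-ordinary strata is exactly the paper's argument and is fine: smooth plus affine gives Artin vanishing for $H^i$ above the dimension, and Poincar\'e duality converts this into vanishing of $H^i_c$ below the dimension. The gap is in the $\mu$-ordinary case, at the step ``combining Artin vanishing with Poincar\'e duality as before produces $H^i_c((\Shs^{b^{\mathrm{ord}}})^*,\F_\ell)=0$ for $i<g$''. The partial minimal compactification is affine but \emph{not} smooth -- the cusps are singular points for $g\geq 2$ -- so there is no Poincar\'e duality for the constant sheaf on it: the Verdier dual of $\F_\ell$ on this singular variety is not $\F_\ell[2g](g)$. In fact, since the cusps are zero-dimensional, your own excision sequence shows that for $i\geq 2$ the group $H^i_c((\Shs^{b^{\mathrm{ord}}})^*,\F_\ell)$ coincides with $H^i_c(\Shs^{b^{\mathrm{ord}}},\F_\ell)$, so the claimed vanishing (asserted before any localisation) is essentially the statement you are trying to prove; for a singular affine variety there is no general vanishing of $H^i_c$ below the dimension.

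The fix, which is what the paper does, is to keep the smooth open stratum as the coefficient object: apply Artin vanishing on the affine variety $\Shs^{b,*}$ to the constructible sheaf $j^b_!\F_\ell$, giving $H^i(\Shs^{b,*},j^b_!\F_\ell)=0$ for $i>g$, and then apply Verdier duality (using that $\mathbb{D}(Rj^b_*\F_\ell)=j^b_!\F_\ell[2g](g)$ because $\Shs^b$ is smooth) to get $H^i_c(\Shs^{b,*},Rj^b_*\F_\ell)=0$ for $i<g$. The relevant triangle then has third term $H^*_c(\partial, i^{b,*}Rj^b_*\F_\ell)$ -- not $H^*_c(\mathcal{C},\F_\ell)$ -- and this complex on the cusps is identified with $i^*Rj_*\F_\ell$ for the full minimal compactification of $\Shs$; its localisation at $\m$ vanishes because $H^*_c(\Shs,\F_\ell)_\m\to H^*(\Shs,\F_\ell)_\m$ is an isomorphism, which follows from the nearby-cycles comparison, Corollary~\ref{Hilbtounitary} and Lemma~\ref{cohom-noneis}. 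So your instinct in the last paragraph (reduce to a boundary statement and kill it with the non-Eisenstein hypothesis via Lemma~\ref{cohom-noneis} and Corollary~\ref{Hilbtounitary}) is the right one, but the boundary coefficient system must be $i^{b,*}Rj^b_*\F_\ell$ rather than the constant sheaf, and the duality step must be carried out with $j^b_!$ and $Rj^b_*$ rather than on the singular compactification directly.
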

\begin{proof}
Let us first assume that $b$ is not the $\mu$-ordinary element and let us set $d_b:=\dim \Shs^b$. By Artin vanishing $H^i(\Shs^b, \F_{\ell})=0$ for $i>d_b$, and by Poincaré duality $H^i_c(\Shs^b, \F_{\ell})=0$ for $i<d_b$. In particular $H^i_c(\Shs^b, \F_{\ell})_\m=0$ for $i<g-\delta_p(\m)$.

Now let us consider the ordinary stratum $\Shs^b$ and its partial minimal compactification $\Shs^{b, *}$, with boundary $\partial$. Let $j^b: \Shs^b \rightarrow \Shs^{b, *}$ and $i^b: \partial \rightarrow \Shs^{b, *}$ be the inclusions. By Artin vanishing $H^i(\Shs^{b, *}, j^b_!\F_{\ell})=0$ for $i>g$, and by Verdier duality $H^i_c(\Shs^{b, *}, Rj^b_*\F_{\ell})=0$ for $i<g$. Finally, we have an exact sequence
\begin{equation*}
\ldots \rightarrow H^i_c(\Shs^b, \F_{\ell})\rightarrow H^i_c(\Shs^{b, *}, Rj^b_*\F_{\ell}) \rightarrow H^i_c(\partial, i^{b, *}Rj^b_*\F_{\ell})\rightarrow \ldots.
\end{equation*}
To end the proof it suffices to show that $H^*_c(\partial, i^{b, *}Rj^b_*\F_{\ell})_\m=0$. Denoting by $j: \Shs \rightarrow \Shs^{*}$ and $i: \partial \rightarrow \Shs^{*}$ the inclusions we have $i^{b, *}Rj^{b}_*\F_\ell \simeq i^{*}Rj_*\F_\ell$. It follows that it suffices to prove that, after localisation at $\m$, the maps $H^*_c(\Shs, \F_\ell)\rightarrow H^*_c(\Shs^{*}, Rj_*\F_\ell)$ are bijective. In other words we have to show that the natural maps
\begin{equation*}
H^*_c(\Shs, \F_\ell)_\m\rightarrow H^*(\Shs, \F_\ell)_\m
\end{equation*}
are isomorphisms. Using \cite[Corollary 5.20]{ls18} we see that we can replace $\Shs$ by $Sh_U(H)$; since $\m$ is non-Eisenstein, the conclusion then follows from Lemma \ref{cohom-noneis} and Corollary \ref{Hilbtounitary}.
\end{proof}

To show Theorem \ref{mainpart2-ref} it remains to prove the following result.

\begin{prop}\label{smallstr-die}
Let $b \in B(H_{\Q_p}, \mu)$ and let $\m \subset \mathbb{T}$ be a non-Eisenstein maximal ideal. If $\dim \Shs^b<g-\delta_p(\m)$ then for every $i\geq 0$ we have $H^i(\Shs^b, \F_{\ell})_\m=0$.
\end{prop}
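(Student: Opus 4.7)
The plan is to use the geometric Jacquet--Langlands correspondence on Igusa varieties (Theorem~\ref{isoigusa}) to transfer the question to a cohomological statement on a compact unitary Shimura variety, and then to exploit the genericity of $\bar{\rho}_\m$ at a well-chosen place to derive a contradiction via the automorphic Jacquet--Langlands correspondence. First, I would set $T := \{\p \in \Sigma_p : b_\p \text{ is basic}\}$. Lemma~\ref{str-aff}(1) gives $\dim \Shs^b = g-|T|$, so the hypothesis forces $|T| > \delta_p(\m)$; by the definition of $\delta_p(\m)$ there must exist a place $\p_0 \in T$ at which $\bar{\rho}_\m$ is generic. Since $\overline{\mathrm{Ig}}^b$ is a profinite \'etale cover of the Newton stratum in the special fibre of $Y_U(H)$, the quotient $\mathrm{Ig}^b$ is a profinite \'etale cover of $\Shs^b$; the Hochschild--Serre spectral sequence together with the exactness of $\m$-localisation reduces the problem to the vanishing of $H^*(\mathrm{Ig}^b, \F_\ell)_\m$. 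Theorem~\ref{isoigusa}, applied to our setup (with the ``$T$'' there taken to be $\emptyset$ and ``$T'$'' there taken to be our $T$), provides a Hecke-equivariant isomorphism $\mathrm{Ig}^b \simeq \mathrm{Ig}^{b'}_T$, where $b'\in B(H_{T,\Q_p},\mu_T)$ is the $\mu$-ordinary element. So the task becomes showing $H^*(\mathrm{Ig}^{b'}_T, \F_\ell)_\m = 0$.

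Suppose for contradiction that $H^i(\mathrm{Ig}^{b'}_T, \F_\ell)_\m \neq 0$ for some $i$. Applying Theorem~\ref{ord-par-ind} to $H_T$ and $b'$ gives the $H_T(\Q_p)\times \mathbb{T}$-equivariant isomorphism $H^i(\mathcal{S}h_{U^p}(H_T)^{b'},\F_\ell) \simeq \mathrm{Ind}_{P(\Q_p)}^{H_T(\Q_p)} H^i(\mathrm{Ig}^{b'}_T,\F_\ell)$; since parabolic induction is exact and faithful, we deduce $H^i(\mathcal{S}h_{U^p}(H_T)^{b'},\F_\ell)_\m \neq 0$. As $T\neq \emptyset$, the Shimura variety $Sh(H_T)$ is projective, so $\mathcal{S}h(H_T)$ is compact and the $\mu$-ordinary stratum is open dense; the remaining strata are smooth and affine of strictly smaller dimension by Lemma~\ref{str-aff}(1) applied to $H_T$. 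Using Artin vanishing, Poincar\'e duality and the open/closed excision exact sequences attached to the Newton stratification, by descending induction on the number of basic coordinates of a stratum I would propagate the non-vanishing to $H^j(Sh_{U^pU_p}(H_T),\F_\ell)_\m \neq 0$ at some finite level $U_p$ and some degree $j$. This propagation step is the main technical obstacle.

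Since $Sh_{U^pU_p}(H_T)$ is projective, $H^j(Sh_{U^pU_p}(H_T),\Z_\ell)_\m$ is then a non-trivial finitely generated $\Z_\ell$-module; inverting $\ell$ produces a characteristic-zero Hecke eigensystem realised by some cuspidal automorphic representation $\Pi$ of $H_T(\A)$ whose associated Galois representation lifts $\bar{\rho}_\m$. Descending $\Pi$ through the short exact sequence $0\to T_F\to G_T\times T_E\to H_T\to 0$ to an automorphic representation of $G_T(\A)$ and then applying the global Jacquet--Langlands correspondence, I obtain a cuspidal automorphic representation $\pi$ of $G(\A)=\GL_2(\A_F)$ matching $\Pi$ at all places away from the ramification locus of $G_T$. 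Since $G_T$ is ramified at $\p_0\in T$, the local component $\pi_{\p_0}$ must be a discrete series; but local-global compatibility together with the genericity of $\bar{\rho}_\m$ at $\p_0$ (cf. \S\ref{notransfer}) force $\pi_{\p_0}$ to be an unramified principal series, giving the desired contradiction.
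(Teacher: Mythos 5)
Your overall strategy matches the paper's: reduce to the Igusa variety via the profinite \'etale torsor and Hochschild--Serre, transfer to the $\mu$-ordinary Igusa variety of $H_T$ via Theorem~\ref{isoigusa}, use Theorem~\ref{ord-par-ind} to pass to the ordinary stratum at infinite level, and finish with the automorphic Jacquet--Langlands correspondence and genericity at a place in $T$. The endgame (descent to $G_T$, local-global compatibility, \S~\ref{notransfer}) is also the paper's.

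However, the step you flag as ``the main technical obstacle'' is a genuine gap, and the excision/Artin-vanishing/Poincar\'e-duality induction you sketch cannot close it: long exact sequences attached to an open/closed decomposition never propagate \emph{non}-vanishing from one stratum to the ambient space, since the connecting maps may cancel the contribution of the ordinary stratum against those of the smaller strata. The paper avoids this by not arguing with the given $b$ in isolation. Instead (Proposition~\ref{vanish-smallig}) it takes $b$ of \emph{minimal} stratum dimension among all elements of $B(H_{\Q_p},\mu)$ with $R\Gamma(\mathrm{Ig}^b,\F_\ell)_\m\neq 0$. Minimality, fed back through Theorem~\ref{isoigusa} and Proposition~\ref{fibreig}, yields Lemma~\ref{lem-ord-max}: the complex $(R\pi^T_{\HT,*}\F_\ell)_\m$ is supported \emph{only} on the $\mu_T$-ordinary stratum of the flag variety. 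This is exactly the input your excision sequences are missing: once the other strata are known to die after localisation, the excision triangle (Proposition~\ref{fund-triang}) identifies $H^*(\mathcal{S}h_{U^p}(H_T),\F_\ell)_\m$ with the cohomology of the ordinary locus. Moreover, support on a single stratum combined with perversity of $R\pi^T_{\HT,*}\F_\ell[g-\varepsilon]$ forces concentration in the single middle degree $g-\varepsilon$; this concentration is also needed for your last step, which you pass over too quickly. Non-vanishing of $H^j(Sh_{U^pU_{T,p}}(H_T),\F_\ell)_\m$ alone does not produce a characteristic-zero eigensystem, because the integral cohomology could a priori be entirely $\ell$-torsion; one needs the torsion-freeness of $H^{g-\varepsilon}(Sh_{U^pU_{T,p}}(H_T),\Z_\ell)_\m$, which the paper deduces from one-degree concentration as in Corollary~\ref{torsfree-Zl}. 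To repair your argument, replace the contradiction for a fixed $b$ by the minimal-counterexample formulation and insert Lemma~\ref{lem-ord-max} and the perversity/concentration step before descending to finite level.
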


Indeed, applying the above proposition to $\m^\vee$ and using Corollary \ref{bistr-die} we obtain that $H^i_c(\Shs^b, \F_{\ell})_\m=0$ for $i<g-\delta_p(\m)$ and for every $b \in B(H_{\Q_p}, \mu)$. By Lemma \ref{stratlem} we conclude that $H^i_{c}(\Shs, \F_{\ell})_\m=0$ for $i<g-\delta_p(\m)$.

Finally, we will deduce Proposition \ref{smallstr-die} from the following result, whose proof will be given below.

\begin{prop}\label{vanish-smallig}
If $\dim \Shs^b<g-\delta_p(\m)$ then $H^i(\mathrm{Ig}^b, \F_{\ell})_\m=0$ for every $i$.
\end{prop}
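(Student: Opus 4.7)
The strategy is to use the geometric Jacquet--Langlands isomorphism of Theorem~\ref{isoigusa} together with the parabolic-induction structure of Theorem~\ref{ord-par-ind} to transfer the problem to the cohomology of a proper quaternionic Shimura variety, where classical Jacquet--Langlands functoriality and the genericity of $\bar\rho_\m$ produce a contradiction. Let $T' \subseteq \Sigma_p$ be the set of places $\p$ at which $b_\p$ is basic; by hypothesis $|T'| = g - \dim \Shs^b > \delta_p(\m)$, so some $\p_0 \in T'$ has the property that $\bar\rho_\m$ is generic at $\p_0$. Theorem~\ref{isoigusa} provides a Hecke-equivariant identification $\mathrm{Ig}^b \simeq \mathrm{Ig}^{b'}_{T'}$ with $b'$ the $\mu$-ordinary element for $H_{T'}$, reducing matters to proving $H^i(\mathrm{Ig}^{b'}_{T'}, \F_\ell)_\m = 0$ for all $i$.

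I would then proceed by induction on $\dim \Shs^b$ (equivalently by decreasing induction on $|T'|$, starting from $|T'| = g$). In the base case $Sh(H_{\Sigma_p})$ is zero-dimensional and $\mathrm{Ig}^{b'}_{\Sigma_p}$ is a profinite set, so the degree-zero cohomology is torsion-free and a Deligne--Serre argument lifts the non-Eisenstein system $\m$ to a characteristic-zero Hecke eigensystem coming from a cuspidal automorphic representation $\pi$ of the totally definite quaternionic group $G_{\Sigma_p}$, with $\rho_\pi \bmod \ell \simeq \bar\rho_\m$. At $\p_0 \in \Sigma_p = T'$ the local factor $\pi_{\p_0}$ must be a one-dimensional character of $B_{\Sigma_p, \p_0}^\times$ (the only irreducible smooth representations of a compact-mod-centre group having fixed vectors under the maximal compact), whose Jacquet--Langlands transfer to $\GL_2(F_{\p_0})$ is an unramified twist of Steinberg; the attached Galois parameter has Frobenius eigenvalues at $\p_0$ in ratio $p^{\pm 1}$, and reducing modulo $\ell$ contradicts the genericity of $\bar\rho_\m$ at $\p_0$.

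For the inductive step with $|T'| < g$, Theorem~\ref{ord-par-ind}(2) together with the exactness of smooth parabolic induction propagates a non-zero $\m$-part from $H^i(\mathrm{Ig}^{b'}_{T'}, \F_\ell)$ to $H^i(\mathcal{S}h_{U^pU_p}(H_{T'})^{b'}, \F_\ell)$ for some small enough $U_p \subseteq H_{T'}(\Q_p)$. The non-$\mu$-ordinary Newton strata of $Sh_{U_{T'}}(H_{T'})$ correspond, via another application of Theorem~\ref{isoigusa} now performed internally to $H_{T'}$, to Igusa varieties of the form $\mathrm{Ig}^{b^{\mathrm{ord}}}_{T''}$ with $T'' \supsetneq T'$; Proposition~\ref{fibreig} identifies the cohomology of each such stratum stalk-wise with the corresponding Igusa cohomology, which vanishes at $\m$ by the inductive hypothesis (since $|T''| > |T'| > \delta_p(\m)$). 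Because $Sh_{U_{T'}}(H_{T'})$ is proper (as $T' \neq \emptyset$), the excision long exact sequence then forces $H^i(Sh_{U_{T'}}(H_{T'}), \F_\ell)_\m \neq 0$, and Corollary~\ref{Hilbtounitary} transports this to $Sh_{K_{T'}}(G_{T'})$. The same lift-plus-Jacquet--Langlands argument at $\p_0 \in T'$ as in the base case then yields the contradiction.

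The principal technical obstacle is the Deligne--Serre lift to characteristic zero in positive cohomological degree, since a priori the $\m$-part of $H^i(Sh_{K_{T'}}(G_{T'}), \Z_\ell)$ could be purely $\ell$-power torsion and hence not visible in $H^i(\,\cdot\,, \Q_\ell)$. One handles this by exploiting the non-Eisenstein hypothesis together with the finiteness of the integral Hecke algebra acting on a proper smooth Shimura variety, which ensures that non-Eisenstein mod $\ell$ eigensystems do lift after enlarging coefficients; once the lift is obtained, the classical local-global compatibility for automorphic representations of quaternionic groups at a place where the quaternion algebra is division supplies the required constraint on $\bar\rho_\m(\Frob_{\p_0})$.
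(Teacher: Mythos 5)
Your overall architecture matches the paper's: identify the set $T$ of basic places, use Theorem~\ref{isoigusa} to trade $\mathrm{Ig}^b$ for the $\mu$-ordinary Igusa variety of $H_T$, kill the lower Newton strata of $Sh_{U_T}(H_T)$ by an extremal/inductive argument, conclude that the cohomology of the proper variety $Sh_{U_T}(H_T)$ is non-zero after localisation at $\m$, descend to the quaternionic variety via Corollary~\ref{Hilbtounitary}, and derive a contradiction with genericity at some $\p_0\in T$ from classical Jacquet--Langlands. (Your observation that the hyperspecial level at $\p_0$ forces $\pi_{\p_0}$ to be an unramified character of $B_{\p_0}^\times$, hence its transfer to be an unramified twist of Steinberg, is a correct and clean way to see the final step.)

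However, there is a genuine gap at the step you yourself flag as the ``principal technical obstacle.'' The claim that the non-Eisenstein hypothesis together with the finiteness of the integral Hecke algebra ``ensures that non-Eisenstein mod $\ell$ eigensystems do lift'' to characteristic zero is false: for a smooth proper variety, $H^i(\,\cdot\,,\Z_\ell)_\m$ can perfectly well be non-zero and entirely $\ell$-power torsion in an intermediate degree $0<i<2d$, in which case $H^i(\,\cdot\,,\F_\ell)_\m\neq 0$ while $H^*(\,\cdot\,,\Q_\ell)_\m=0$, and no cuspidal automorphic representation is produced. Ruling this out is exactly the content of these vanishing theorems and cannot be assumed. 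The missing ingredient is a proof that $H^*(\mathcal{S}h_{U^p}(H_T),\F_\ell)_\m$ is concentrated in a \emph{single} degree: the paper obtains this from the perversity of $R\pi^T_{\HT,*}\F_{\ell}[g-|T|]$ combined with Lemma~\ref{lem-ord-max} (support on the single $\mu_T$-ordinary stratum), which forces $H^*(\mathrm{Ig}_T^{\mathrm{ord}},\F_\ell)_\m$ into middle degree; Theorem~\ref{ord-par-ind} and exactness of smooth induction then propagate this to the ordinary stratum and, via the excision triangle on the flag variety, to all of $\mathcal{S}h_{U^p}(H_T)$. Only once the mod-$\ell$ cohomology sits in one degree does the Bockstein argument of Corollary~\ref{torsfree-Zl} show that $H^{g-|T|}(Sh_{U^pU_{T,p}}(H_T),\Z_\ell)_\m$ is non-zero and torsion-free, so that the eigensystem genuinely lifts. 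Your proposal never invokes perversity, so this degree concentration --- and with it the characteristic-zero lift in positive cohomological degree --- is not available. (A smaller bookkeeping issue: your inductive hypothesis gives vanishing of $H^*$ of the lower strata at $\m$, whereas the excision sequence for the proper variety needs vanishing of $H^*_c$; since the lower strata are smooth and affine, this is handled by Artin vanishing and Poincar\'e duality, which exchanges $\m$ and $\m^\vee$, so the induction must be run for $\m^\vee$ as well, as in the paper's deduction of Proposition~\ref{smallstr-die} from Proposition~\ref{vanish-smallig}.)
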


\subsubsection{} Assuming Proposition~\ref{vanish-smallig}, let us prove Proposition \ref{smallstr-die}. Observe that the stratum $\Shs^b$ consists of a unique leaf; let $\mathbb{X}^b$ be the $p$-divisible group attached to a geometric point in the leaf, and $\Gamma^b$ the automorphism group of $\mathbb{X}^b$ (respecting extra structures). Let $Y_U(H)_{\bar{\F}_p}^{b, \mathrm{perf}}$ be the perfection of $Y_U(H)_{\bar{\F}_p}^b$;
the forgetful map $\overline{\mathrm{Ig}}^b\rightarrow Y_U(H)_{\bar{\F}_p}^b$ factors through a map $\overline{\mathrm{Ig}}^b\rightarrow Y_U(H)_{\bar{\F}_p}^{b, \mathrm{perf}}$, which is a $\Gamma^b$-torsor (cf. \cite[Proposition 2.2.6]{cs19}). For $u \in \mathcal{O}_{E}^{\times}\cap U^p$, the action of $N_{E/F}(u)$ on $\overline{\mathrm{Ig}}^b$ changing the polarisation agrees with the action of $u^{-1}$ on $\overline{\mathrm{Ig}}^b$ via the inclusion $\mathcal{O}_{E}^{\times} \hookrightarrow \Gamma^b$. Hence the induced map $\mathrm{Ig}^b\rightarrow \Shs^{b, \mathrm{perf}}$ is a torsor for the group $\Gamma^b/\varprojlim_k(\mathcal{O}_E^{\times}\cap U/\{u \in \mathcal{O}_E^{\times}\cap U \mid u \equiv 1 \pmod{p^k}\})$. 
Therefore there is a Hecke-equivariant Hochschild--Serre spectral sequence relating the cohomology of $\mathrm{Ig}^b$ and the cohomology of $\Shs^{b, \mathrm{perf}}$. The latter agrees with étale cohomology of $\Shs^b$ by topological invariance of the étale site; hence if $H^i(\mathrm{Ig}^b, \F_{\ell})_\m=0$ for every $i$ then the same is true for $H^i(\Shs^b, \F_{\ell})_\m$.

\subsection{The excision triangle for diamonds} In the proof of Proposition~\ref{vanish-smallig}, given in the next section, we will make use of the long exact sequence relating, under suitable assumptions, sheaves on a diamond to their restriction to an open subdiamond and its closed complement. The existence of such an exact sequence is well-known; for the reader's convenience, we will briefly explain how to obtain it.

\subsubsection{} Fix a locally spatial diamond $X$; let $X_{\et}$ be the étale site of $X$, defined in \cite[Definition 14.1]{scholze-diamonds}, and $|X|$ the underlying topological space of $X$, defined in \cite[Definition 11.14]{scholze-diamonds}. One can attach to each $x \in |X|$ a quasi-pro-étale map $\bar{x}=\Spa(C(x), C(x)^+)\rightarrow X$, with $C(x)$ an algebraically closed perfectoid field, mapping the closed point of $\Spa(C(x), C(x)^+)$ to $x$ (see \cite[Proposition 14.3]{scholze-diamonds} and \cite[Lemma 2.2.2]{cgh20}). We call $\bar{x}$ a \emph{geometric point} of $X$ (with the important caveat that as a set $\bar{x}$ may not be a singleton), and we denote by $F_{\bar{x}}$ the stalk of a sheaf $F$ at $\bar{x}$. By \cite[Proposition 14.3]{scholze-diamonds} we may check if a sequence of étale sheaves on $X$ is exact looking at stalks at each geometric point.

\subsubsection{} Fix an open subspace of $|X|$, corresponding, by \cite[Proposition 11.15]{scholze-diamonds}, to a subdiamond $j: U \rightarrow X$; assume that the closed complement $|X|\smallsetminus |U|$ is generalising, so that, by \cite[Theorem 2.42, Propoistion 2.45]{han16}, it is the underlying topological space of a canonical locally spatial subdiamond $i: Z \rightarrow X$.\footnote{Recall that, unlike for schemes, it is not always the case that a closed subset of $|X|$ underlies a subdiamond of $X$, the issue being that maps of analytic adic spaces are generalising.} Since $|Z|$ is generalising, geometric points of $X$ are a disjoint union of geometric points of $U$ and of $Z$. We have the extension by zero functor $j_!: \mathrm{Ab}(U_{\et})\rightarrow \mathrm{Ab}(X_{\et})$, defined as for schemes (cf. the discussion before \cite[Lemma 2.2.6]{cgh20}), which is exact and left-adjoint to $j^*$. The stalk of $j_!F$ at $x \in |U|$ (resp. $x \in |Z|$) is isomorphic to $F_{\bar{x}}$ (resp. is zero).

\begin{lem}
Let $x \in |X|$ and let $\bar{x}$ be the associated geometric point; let $F$ be an étale sheaf (of abelian groups) on $Z$. If $x \not \in |Z|$ then $(i_*F)_{\bar{x}}=0$; if $x \in |Z|$ then $(i_{*}F)_{\bar{x}}=F_{\bar{x}}$.
\end{lem}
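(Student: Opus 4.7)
The plan is to compute the stalk $(i_*F)_{\bar x}$ as a filtered colimit over étale neighborhoods of $\bar x$, just as in the classical topological or scheme-theoretic setting, and then apply the adjunction $(i_*F)(V) = F(V\times_X Z)$ for $V\to X$ étale. By \cite[Proposition 14.3]{scholze-diamonds} and the preceding paragraphs of the paper, stalks of étale sheaves on the locally spatial diamond $X$ along the quasi-pro-étale geometric point $\bar x$ are computed in the expected way, so this reduction is legitimate.

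In the first case, when $x\notin |Z|$, I would use that $|Z|$ is closed in $|X|$ to choose an open $|W|\subset |X|$ containing $x$ with $|W|\cap |Z|=\emptyset$. By \cite[Proposition 11.15]{scholze-diamonds} this corresponds to an open subdiamond $W\hookrightarrow X$, and then $W\times_X Z$ has empty underlying space, so is the empty diamond. Restricting the colimit to étale neighborhoods that factor through $W$ (which is cofinal since $\bar x\to X$ factors through $W$) collapses it to $F(\emptyset)=0$.

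In the second case, when $x\in |Z|$, the key intermediate step is to show that the map $\bar x\to X$ factors (uniquely) through $i\colon Z\to X$. The image of $|\bar x|=|\mathrm{Spa}(C(x),C(x)^+)|$ in $|X|$ consists of $x$ together with a chain of generalizations of $x$. Since $|Z|$ is generalising and contains $x$, the entire image lies inside $|Z|$; then by the universal property of $Z$ as the canonical locally spatial subdiamond of $X$ with underlying topological space $|Z|$ (cf.\ \cite[Theorem 2.42, Proposition 2.45]{han16}) the factorization $\bar x\to Z\to X$ exists uniquely. Once this is in hand, as $V$ ranges over étale neighborhoods of $\bar x$ in $X$ the base changes $V\times_X Z$ form a cofinal system of étale neighborhoods of $\bar x$ in $Z$, giving $(i_*F)_{\bar x}=\varinjlim_V F(V\times_X Z)=F_{\bar x}$.

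The only real obstacle I anticipate is the factorization of $\bar x$ through $Z$ in the second case: unlike for schemes, a geometric point of a locally spatial diamond has a nontrivial underlying space (a chain of generalizations of the closed point), so the factorization is not automatic and crucially relies on the hypothesis that $|Z|$ is generalising. Everything else is a formal manipulation of adjunctions and cofinality, and should not require any further input beyond the references already invoked above.
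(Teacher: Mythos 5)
Your overall structure matches the paper's: case 1 is handled by restricting to étale neighbourhoods factoring through the open complement (the paper simply uses $U$ itself, which works because open subsets are stable under generalisation, so $\bar x \to X$ factors through $U$ whenever $x \in |U|$), and in case 2 you correctly isolate the key point, namely that $\bar x \to X$ factors through $Z$ precisely because $|Z|$ is generalising. That factorisation step is exactly what the paper uses as well.

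The one place where your argument has a gap is the final step of case 2: the claim that the base changes $V \times_X Z$, as $V$ ranges over étale neighbourhoods of $\bar x$ in $X$, form a \emph{cofinal} system of étale neighbourhoods of $\bar x$ in $Z$. Even for schemes this is a nontrivial statement (it amounts to lifting étale maps along a closed immersion near a point, via the local structure theory of étale morphisms), and for the canonical subdiamond structure on a generalising closed subset there is no such lifting result available off the shelf; you assert it without justification. The paper sidesteps cofinality entirely: it writes $(i_*F)_{\bar x} = \varinjlim_V F(V \times_X Z)$, identifies this colimit with $H^0\bigl(\varprojlim_V (V \times_X Z), F\bigr)$ using \cite[Proposition 14.9]{scholze-diamonds} (compatibility of cohomology with inverse limits of spatial diamonds), and then uses $\varprojlim_V V = \bar x$ (the description of $\bar x$ recalled before \cite[Lemma 2.2.2]{cgh20}) together with the factorisation of $\bar x$ through $Z$ to get $\varprojlim_V (V \times_X Z) = \bar x \times_X Z = \bar x$, whence $H^0(\bar x, F) = F_{\bar x}$. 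So your proof becomes complete once the cofinality assertion is replaced by this limit argument; as written, that assertion is the missing ingredient.
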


\begin{proof}
If $x \not \in |Z|$ then $x \in |U|$ hence there is a cofinal systems of étale neighbourhoods $V \rightarrow X$ of $x$ factoring through $U$; therefore $i_*F(V)=0$ and $(i_*F)_{\bar{x}}=0$. Now take $x \in |Z|$; then $(i_*F)_{\bar{x}}=\varinjlim_{V} F(V \times_X Z)$, where $V$ runs over the objects $V\rightarrow X \in X_{\et}$ through which $\bar{x}$ factors. We may restrict to $V$ spatial and such that $V \rightarrow X$ factors through a fixed spatial open subdiamond of $X$. Using \cite[Proposition 14.9]{scholze-diamonds}, the description of $\bar{x}$ before \cite[Lemma 2.2.2]{cgh20}, and the fact that $\bar{x} \rightarrow X$ factors through $Z$, we have
\begin{equation*}
\varinjlim_V F(V \times_X Z)=H^0(\varprojlim_V (V \times_X Z), F)=H^0(\bar{x}\times_X Z, F)=F_{\bar{x}}.
\end{equation*}
\end{proof}

\begin{prop}\label{fund-triang}
Let $X$ be a locally spatial diamond, and $j: U \rightarrow X$ an open subdiamond with generalising closed complement underlying a subdiamond $i: Z\rightarrow X$. For every étale sheaf $F$ of abelian groups on $X$ the sequence
\begin{equation*}
0 \rightarrow j_!j^{*}F \rightarrow F \rightarrow i_{*}i^{*}F \rightarrow 0
\end{equation*}
is exact.
\end{prop}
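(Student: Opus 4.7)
The plan is to verify exactness stalkwise at every geometric point $\bar{x}$ of $X$, using the criterion recalled just before the proposition that a sequence of \'etale sheaves on $X$ is exact if and only if its induced sequence on stalks at each geometric point is exact. Since $|Z|$ is generalising, any geometric point $\bar{x}\to X$ either factors through $U$ or through $Z$ (corresponding to whether $x\in|U|$ or $x\in|Z|$), so we handle the two cases separately.

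First I would record the stalk formulas. For $j_!$: the functor $j_!$ is exact and commutes with stalks in the standard way, giving $(j_!\,j^*F)_{\bar{x}}=F_{\bar{x}}$ if $x\in|U|$ and $(j_!\,j^*F)_{\bar{x}}=0$ if $x\in|Z|$, exactly as in the scheme case (this is immediate from the description of $j_!$ before the lemma and the fact that cofinal \'etale neighbourhoods of a point of $|U|$ factor through $U$). For $i_*$: the preceding lemma, applied to $F':=i^*F$, gives $(i_*i^*F)_{\bar{x}}=0$ if $x\notin|Z|$ and $(i_*i^*F)_{\bar{x}}=(i^*F)_{\bar{x}}=F_{\bar{x}}$ if $x\in|Z|$.

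Now I would identify the induced maps on stalks. If $x\in|U|$, the stalk of the adjunction $j_!j^*F\to F$ at $\bar{x}$ agrees with the identity of $F_{\bar{x}}$: indeed, both sides compute $(j^*F)_{\bar{x}}=F_{\bar{x}}$ via the same cofinal system of \'etale neighbourhoods of $\bar{x}$ factoring through $U$, and the adjunction map is induced by the natural inclusion $j_!j^*F(V)\hookrightarrow F(V)$ for $V$ lying over $U$. Hence at such a point the sequence becomes
\[
0\longrightarrow F_{\bar{x}}\overset{\mathrm{id}}{\longrightarrow} F_{\bar{x}}\longrightarrow 0\longrightarrow 0,
\]
which is exact. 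If instead $x\in|Z|$, the adjunction $F\to i_*i^*F$ induces on stalks the identity map $F_{\bar{x}}\to (i^*F)_{\bar{x}}=F_{\bar{x}}$, again because both are computed as the colimit of $F(V)$ (respectively $F(V\times_X Z)$) over a cofinal family of spatial \'etale neighbourhoods $V$ of $\bar{x}$, and the description of $\bar{x}$ via \cite[Proposition 14.9]{scholze-diamonds} shows these colimits agree with $H^0(\bar{x},F)$. Thus at such a point the sequence becomes
\[
0\longrightarrow 0\longrightarrow F_{\bar{x}}\overset{\mathrm{id}}{\longrightarrow} F_{\bar{x}}\longrightarrow 0,
\]
which is exact.

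The only mildly non-formal point is the identification of the two adjunction maps as identities on stalks; this is where the hypothesis that $|Z|$ is generalising (so that $\bar{x}\to X$ factors through either $U$ or $Z$) is essential, and where the computation in the preceding lemma using spatial \'etale neighbourhoods and \cite[Proposition 14.9]{scholze-diamonds} is invoked. Once this is in hand, the exactness of both three-term sequences above is immediate and the proposition follows.
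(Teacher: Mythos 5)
Your proposal is correct and follows essentially the same route as the paper: check exactness on stalks at geometric points, split into the two cases $x\in|U|$ and $x\in|Z|$ using that $|Z|$ is generalising, and invoke the stalk computations for $j_!$ and $i_*$ (the latter from the preceding lemma) to reduce each case to an evident short exact sequence. Your extra care in identifying the adjunction maps as identities on stalks is a welcome elaboration of a point the paper leaves implicit, but it does not change the argument.
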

\begin{proof}
It suffices to check this on stalks at each geometric point $\bar{x} \rightarrow X$. If $\bar{x}$ factors through $U$ we get $0 \rightarrow F_{\bar{x}}\xrightarrow{\Id} F_{\bar{x}}\rightarrow 0 \rightarrow 0$. If it factors through $Z$ then $(j_!j^{*}F)_{\bar{x}}=0$. On the other hand setting $G=i^{*}F$ we have $(i_{*}G)_{\bar{x}}=G_{\bar{x}}$ by the previous lemma, and $G_{\bar{x}}=F_{\bar{x}}$. Hence we get $0 \rightarrow 0 \rightarrow F_{\bar{x}}\xrightarrow{\Id} F_{\bar{x}}\rightarrow 0$.
\end{proof}

\subsection{Proof of Proposition~\ref{vanish-smallig}} 

\subsubsection{} Let $b=(b_\p)_{\p \in \Sigma_p}\in B(H_{\Q_p}, \mu)$ be an element such that $R\Gamma(\mathrm{Ig}^b, \F_{\ell})_\m$ is non-trivial and such that the dimension $d^{\mathrm{min}}$ of $\Shs^b$ is as small as possible. Let $\varepsilon$ be the cardinality of the set $T\subset \Sigma_p$ consisting of places $\p$ such that $b_\p$ is basic. Then $d^{\mathrm{min}}=g-\varepsilon$, so we need to prove that
\begin{equation*}
\varepsilon\leq \delta_p(\m).
\end{equation*}
If $\varepsilon=0$ there is nothing to prove; hence let us assume $\varepsilon>0$, so that $T$ is non-empty. Let $H_T$ be the unitary group attached to $T$, defined in \S~\ref{unit-datum}, and $\mathrm{Ig}_T^{\mathrm{ord}}$ the associated $\mu_T$-ordinary Igusa variety. Consider the Hodge--Tate period map
\begin{equation*}
\pi_{\HT}^T: \mathcal{S}h_{U^p}(H_T)\rightarrow \mathscr{F}\ell_{H_{T}, \mu_T}.
\end{equation*}
Our assumption and Theorem \ref{isoigusa} imply that $R\Gamma(\mathrm{Ig}_T^{\mathrm{ord}}, \F_{\ell})_\m$ is non-trivial. Because of Proposition \ref{fibreig} we deduce that $(R\pi_{\HT, *}^T \F_{\ell})_\m$ has non-trivial cohomology on the $\mu_T$-ordinary stratum. Better, we have:
\begin{lem}\label{lem-ord-max}
The $\mu_T$-ordinary stratum is the only one where $(R\pi_{\HT, *}^T\F_{\ell})_\m$ has non-zero cohomology.
\end{lem}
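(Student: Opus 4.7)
The plan is to combine the fibre computation of Proposition~\ref{fibreig} for $\pi^T_{\HT}$ with the geometric Jacquet--Langlands isomorphism of Theorem~\ref{isoigusa}, and then use the minimality property that defined the original $b$ and $T$ to derive a contradiction.

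More concretely, by Proposition~\ref{fibreig} applied to $\pi^T_{\HT}$, for any geometric point $\tilde{x}$ in the Newton stratum $\mathscr{F}\ell^{b'}_{H_T,\mu_T}$ attached to $b'\in B(H_{T,\Q_p},\mu_T)$ and any $i\geq 0$, there is a Hecke-equivariant isomorphism
\[
(R^i\pi^T_{\HT,*}\F_\ell)_{\tilde x}\;\simeq\;H^i(\mathrm{Ig}_T^{b'},\F_\ell).
\]
Since localisation at $\m$ commutes with taking stalks, it is therefore enough to show that $H^*(\mathrm{Ig}_T^{b'},\F_\ell)_\m=0$ for every $b'$ that is not the $\mu_T$-ordinary element of $B(H_{T,\Q_p},\mu_T)$.

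Suppose, for contradiction, that such a $b'$ exists with $H^*(\mathrm{Ig}_T^{b'},\F_\ell)_\m\neq 0$. Let $B'\subset \Sigma_p\smallsetminus T$ be the nonempty subset of places at which $b'_\p$ is basic, set $T'':=T\sqcup B'$, and let $b'''\in B(H_{T'',\Q_p},\mu_{T''})$ be the element associated with $b'$ (which is $\mu_{T''}$-ordinary at every place outside $T''$). A first application of Theorem~\ref{isoigusa} to the pair $(T,b')$ gives a Hecke-equivariant isomorphism
\[
H^*(\mathrm{Ig}_T^{b'},\F_\ell)\;\simeq\;H^*(\mathrm{Ig}_{T''}^{b'''},\F_\ell).
\]
A second application of Theorem~\ref{isoigusa}, this time starting from $H=H_\emptyset$ with the element $b_*\in B(H_{\Q_p},\mu)$ whose basic places are exactly those of $T''$, produces another Hecke-equivariant isomorphism $H^*(\mathrm{Ig}^{b_*},\F_\ell)\simeq H^*(\mathrm{Ig}_{T''}^{b'''},\F_\ell)$. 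Chaining these two identifications and localising at $\m$ gives $H^*(\mathrm{Ig}^{b_*},\F_\ell)_\m\neq 0$.

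By Lemma~\ref{str-aff}, however, $\dim\Shs^{b_*}=g-|T''|<g-|T|=\dim\Shs^b$, contradicting the minimality of $\dim\Shs^b$ among Newton strata of $\Shs$ for which the associated Igusa cohomology is nonzero after localisation at $\m$. The only potential subtlety in the argument is keeping track of the Hecke-equivariance through both instances of Theorem~\ref{isoigusa}, but this is precisely the content of that theorem, so there is no further obstacle. Hence no such $b'$ can exist, and $(R\pi^T_{\HT,*}\F_\ell)_\m$ is supported on the $\mu_T$-ordinary stratum.
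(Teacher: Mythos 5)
Your proof is correct and follows essentially the same route as the paper: reduce via Proposition~\ref{fibreig} to the Igusa varieties, transfer a hypothetical non-ordinary contribution back to an Igusa variety for $H_\emptyset$ via Theorem~\ref{isoigusa}, and contradict the minimality of $\dim\Shs^b$. The only difference is presentational — you make explicit the two applications of Theorem~\ref{isoigusa} (through the intermediate group $H_{T''}$) that the paper's proof invokes in a single step.
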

\begin{proof}
Assume the contrary. Then using Proposition \ref{fibreig} we find that there is a non-ordinary element $b'_T \in B(H_{T, \Q_p}, \mu_T)$ such that $H^*(\mathrm{Ig}^{b'_T}_T, \F_{\ell})_\m$ is non-zero. Consider the element $b' \in B(H_{\Q_p}, \mu)$ which is basic precisely at places in $T$ and at places outside $T$ where $b'_T$ is basic. By Theorem \ref{isoigusa} the cohomology $H^*(\mathrm{Ig}^{b'}, \F_{\ell})_\m$ is not identically zero; however the dimension of $\Shs^{b'}$ is strictly smaller than the dimension of $\Shs^b$, hence we obtain a contradiction. 
\end{proof}

\subsubsection{}\label{end-vanish-proof} We now come back to the proof of Proposition~\ref{vanish-smallig}. Perversity of the complex $R\pi^T_{\HT, *}\F_{\ell}[g-\varepsilon]$ and the fact that it is concentrated on one stratum after localisation at $\m$ imply that $H^i(\mathrm{Ig}_T^{\mathrm{ord}}, \F_{\ell})_\m$ is non-zero only in middle degree $g-\varepsilon$ (cf. \cite[Corollary 6.1.4]{cs17}). Theorem~\ref{ord-par-ind} (together with the fact that smooth parabolic induction of a non-zero representation is non-zero) implies that
\begin{equation}\label{igcohom-middle}
H^i(\mathcal{S}h_{U^p}(H_T)^{b^\mathrm{ord}}, \F_{\ell})_\m\neq 0 \Leftrightarrow i=g-\varepsilon.
\end{equation}

Let $Z\subset \mathscr{F}\ell_{H_{T}, \mu_T}$ be the $\mu_T$-ordinary locus, and $V:=\mathscr{F}\ell_{H_{T}, \mu_T}\smallsetminus Z$. We have an inclusion
\begin{equation}\label{2strat}
\mathcal{S}h_{U^p}(H_T)^{b^\mathrm{ord}}\subset (\pi_{\HT}^{T})^{-1}(Z)
\end{equation}
which induces a bijection on rank one points. The ordinary locus $\mathcal{S}h_{U^p}(H_T)^{b^\mathrm{ord}}$ is open and quasicompact, as it is the preimage of a quasicompact open subspace in the special fibre via the specialisation map, which is continuous and spectral. By \cite[Lemma 4.4.2]{cs17} the map induced in cohomology by \eqref{2strat} is an isomorphism, hence the equivalence in \eqref{igcohom-middle} holds true for the cohomology of $(\pi_{\HT}^{T})^{-1}(Z)$ as well. Let $i_Z: Z \rightarrow \mathscr{F}\ell_{H_{T}, \mu_T}$ and $i_V : V \rightarrow \mathscr{F}\ell_{H_{T}, \mu_T}$ be the inclusion maps. The assumptions of Proposition \ref{fund-triang} are satisfied, hence for every $j \geq 0$ we have an exact sequence 
\begin{equation*}
0 \rightarrow i_{V, !}i_V^*(R^j\pi^T_{\HT, *}\F_{\ell})\rightarrow R^j\pi^T_{\HT, *}\F_{\ell}\rightarrow i_{Z, *}i_Z^*(R^j\pi^T_{\HT, *}\F_{\ell})\rightarrow 0.
\end{equation*}
Lemma \ref{lem-ord-max} ensures that the first term vanishes after localisation at $\m$, so that $H^i(\mathscr{F}\ell_{H_{T}, \mu_T}, R^j\pi^T_{\HT, *}\F_{\ell})_\m=H^i(Z, R^j\pi^T_{\HT, *}\F_{\ell})_\m$. The outcome of our discussion is that cohomology of $\mathcal{S}h_{U^p}(H_T)$ coincides with cohomology of its ordinary locus, after localisation at $\m$; hence $H^i(\mathcal{S}h_{U^p}(H_T), \F_{\ell})_\m \neq 0$ if and only if $i=g-\varepsilon$.

Now $\mathcal{S}h_{U^p}(H_T)$ is the inverse limit (as a diamond) of the spaces $\mathcal{S}h_{U^pU_{T, p}}(H_T)$ for $U_{T, p}\subset H_T(\Q_p)$ running over compact open subgroups. There is a cofinal system of subgroups $U_{T, p}$ which are images in $H_T(\Q_p)$ of subgroups of the form $K_{T, p}\times K_{E, p}\subset G_T(\Q_p)\times T_E(\Q_p)$, where $K_{E, p}$ is such that $K_{E, p}K_{E}^p$ is sufficiently small with respect to $K_{T, p}K^p$, in the sense of Definition~\ref{defn:suff small}. For $U_{T, p}$ of this form and small enough, we must have
\begin{equation*}
H^i(\mathcal{S}h_{U^pU_{T, p}}(H_T), \F_{\ell})_\m=0 \text{ for }i\neq g-\varepsilon, \; H^{g-\varepsilon}(\mathcal{S}h_{U^pU_{T, p}}(H_T), \F_{\ell})_\m\neq 0,
\end{equation*}
where the first equality holds because a small enough subgroup $U_{T, p}$ is a pro-$p$ group, hence the map from the cohomology of $\mathcal{S}h_{U^pU_{T, p}}(H_T)$ to the cohomology of $\mathcal{S}h_{U^p}(H_T)$ is injective. The argument in the proof of Corollary \ref{torsfree-Zl} shows that $H^{g-\varepsilon}(Sh_{U^pU_{T, p}}(H_T), \Z_{\ell})_\m$ is non-zero and torsion free, therefore we deduce that
\begin{equation*}
H^{g-\varepsilon}(Sh_{U^pU_{T, p}}(H_T), \Q_{\ell})_\m\neq 0.
\end{equation*}
Applying Corollary \ref{Hilbtounitary} once more we obtain that $H^{g-\varepsilon}(Sh_{K^pK_{T, p}}(G_T), \Q_{\ell})_\m\neq 0$. We claim that this implies that $\bar{\rho}_\m$ cannot be generic at places in $T$; this yields the desired inequality $\varepsilon \leq \delta_p(\m)$ and ends the proof of Proposition \ref{vanish-smallig}. 

To justify our claim, recall that $H^{g-\varepsilon}(Sh_{K^pK_{T, p}}(G_T), \bar{\Q}_{\ell})$ can be described in terms of automorphic representations of $G_T$ as in \cite[(5.9)-(5.11)]{nek18}. The Hecke algebra acts on the universal part of cohomology (corresponding to one-dimensional automorphic representations, and called case $(A)$ in \emph{loc. cit.}) via the degree character. Hence this part of cohomology vanishes after localisation at $\m$, since $\m$ is non-Eisenstein. Therefore, as in \cite[(5.11)(B)]{nek18}, there is an automorphic representation $\pi_T$ of $G_T$ which transfers to an automorphic representation $\pi$ of $\GL_{2, F}$ attached to a (holomorphic) cuspidal Hilbert newform $f$ such that almost all the Hecke eigenvalues of $f$ modulo $\ell$ are given by the image of the map $\mathbb{T} \rightarrow \mathbb{T}/\m$. Letting $\rho_\pi: \Gamma_F \rightarrow \GL_2(\bar{\Q}_{\ell})$ be the Galois representation attached to $\pi$, we deduce that the reduction modulo $\ell$ of (a lattice in) $\rho_\pi$ is isomorphic to $\bar{\rho}_\m$. Let $v$ be a place in $T$; by \cite[Theorem A]{car86} the Galois representation attached to $\pi_v$ lifts $\bar{\rho}_{\m|\Gamma_{F_v}}$. As $\pi_v$ is the Jacquet--Langlands transfer of a representation of the non-split quaternion algebra over $F_v$, we deduce using \S~\ref{notransfer} that $\bar{\rho}_\m$ is not generic at $v$.

\begin{rem}
In the above proof, we used the parabolically induced structure of the ordinary locus to deduce that $H^*(\mathcal{S}h_{U^p}(H_T)^{b^\mathrm{ord}}, \F_{\ell})_\m$ is non-zero and concentrated in one degree from the analogous properties of cohomology of the Igusa variety. One could also argue computing cohomology of $\mathcal{S}h_{U^p}(H_T)^{b^\mathrm{ord}}$ via the Leray spectral sequence, and using the fact that the ordinary locus in the flag variety is a profinite set (hence a sheaf on it with a non-zero stalk has non-zero global sections and vanishing higher cohomology).
\end{rem}

\subsection{Cohomology of quaternionic Shimura varieties}

\subsubsection{} Let $\ell$ be a prime number. In this section we explain how the arguments used in this paper can be extended to study the cohomology with $\F_\ell$-coefficients of quaternionic Shimura varieties attached to non-split quaternion algebras. In a nutshell, the strategy we used for Hilbert modular varieties can be applied in this generality, with the difference that we do not need to restrict to non-Eisenstein maximal ideals, as Shimura varieties attached to non-split quaternion algebras have no boundary. The construction of Galois representations is also simpler in this case, therefore some results also apply to the case $\ell = 2$. 

We fix a non-split quaternion algebra $B$ over a totally real number field $F$ of degree $g$. Let $R$ (resp. $R_\infty$) be the set of places (resp. infinite places) of $F$ where $B$ is ramified. In this section we will denote by $G$ the group $\mathrm{Res}_{F/\Q}B^\times$; we fix a neat compact open subgroup $K \subset G(\A_f)$ and we denote by $Sh_K(G)$ the corresponding Shimura variety, of dimension $d:=g-|R_\infty|$. Let $\mathbb{T}$ be the Hecke algebra generated by operators at places of residue characteristic different from $\ell$ where $B$ is unramified and $K$ is hyperspecial. Given a prime $p$ totally split in $F$, fix an isomorphism $\bar{\Q}_p \simeq \C$, inducing a bijection between the set $\Sigma_\infty$ of infinite places and the set $\Sigma_p$ of $p$-adic places of $F$. We denote by $R_p$ the set of $p$-adic places corresponding to $R_\infty$. The cocharacter of $G_{\bar{\Q}_p}$ induced by the cocharacter coming from the Shimura datum attached to $G$ is central at places in $R_p$. In the rest of the section we will discuss the proof of the following result.

\begin{thm}\label{quat-maithm}
Let $\ell$ be a prime number and $\m \subset \mathbb{T}$ a maximal ideal in the support of $H^*(Sh_K(G), \F_\ell)$.
\begin{enumerate}
\item There exists a unique Galois representation $\bar{\rho}_\m: \Gamma_F \rightarrow \GL_2(\bar{\F}_\ell)$ attached to $\m$, in the sense of Theorem \ref{constr-gal-repn}.
\item Let $p\neq \ell$ be an odd prime number which splits completely in $F$ and such that $B$ is unramified at every place above $p$ and $K=K^pK_p$ with $K_p \subset G(\Q_p)$ hyperspecial. Let $\delta_p(\m)$ be the cardinality of the set of places $v \in \Sigma_p \smallsetminus R_p$ such that $\bar{\rho}_\m$ is not generic at $v$. Then
\begin{equation*}
i \not \in [d-\delta_p(\m), d+\delta_p(\m)]\Rightarrow H^i(Sh_K(G), \F_\ell)_\m = 0.
\end{equation*}
\item If $\ell>2$ and the image of $\bar{\rho}_{\m}$ is not solvable, then 
\[
i \neq d \Rightarrow H^i(Sh_K(G), \F_\ell)_\m = 0. 
\]
\end{enumerate}
\end{thm}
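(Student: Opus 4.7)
\emph{Part (1).} Because $B$ is non-split at some infinite place, $Sh_K(G)$ is projective, so $H^i(Sh_K(G), \Z_\ell)$ is a finitely generated $\Z_\ell$-module and the image of $\mathbb{T}$ in its endomorphism ring is a finite $\Z_\ell$-algebra. Any maximal ideal $\m$ in its support lifts to a minimal prime, giving a Hecke eigensystem in $H^i(Sh_K(G), \bar{\Q}_\ell)$. Matsushima's formula together with the Jacquet--Langlands transfer identifies this eigensystem either with a cuspidal Hilbert modular eigenform (in which case Carayol--Taylor--Blasius--Rogawski supplies a 2-dimensional Galois representation whose reduction is $\bar{\rho}_\m$) or with a one-dimensional character $\chi \circ \mathrm{Nrd}$ (in which case $\bar{\rho}_\m$ is the semisimple reducible representation attached to $\chi$ via class field theory). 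Uniqueness follows from Chebotarev density and Brauer--Nesbitt.

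\emph{Part (2).} The plan is to mirror the blueprint of \S\ref{main2-ref-proof}. First, fix a CM extension $E/F$ in which every $\p \in \Sigma_p$ is inert and an auxiliary $K_E$ sufficiently small with respect to $K$; the construction of \S\ref{unit-datum} applied with $T = R_p$ produces a unitary Shimura variety $Sh_U(H)$, and an analogue of Corollary \ref{Hilbtounitary} reduces the question to vanishing for $H^i(\Shs, \F_\ell)_\m$ on the special fibre. Next, set up the Newton stratification, now indexed by $\prod_{\p \in \Sigma_p \smallsetminus R_p} B(H_\p, \mu_\p)$, and run the analogues of Lemmas \ref{stratlem}, \ref{str-aff} and Corollary \ref{bistr-die}; crucially, the properness of $Sh_U(H)$ makes the comparison $H^*_c \toisom H^*$ automatic, so no non-Eisenstein hypothesis is required at this step. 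What remains is the analogue of Proposition \ref{vanish-smallig}: for a non-ordinary stratum $\Shs^b$ with basic coordinates $T_b \subset \Sigma_p \smallsetminus R_p$ of cardinality $\varepsilon > \delta_p(\m)$, show that $H^*(\mathrm{Ig}^b, \F_\ell)_\m = 0$. Combining the Jacquet--Langlands isomorphism of Igusa varieties (Theorem \ref{isoigusa}), the parabolic-induction description of the $\mu$-ordinary stratum at infinite level (Theorem \ref{ord-par-ind}), and the perversity argument of \S\ref{end-vanish-proof}, non-vanishing would produce a non-zero eigenclass in $H^{d-\varepsilon}(Sh_{K^p K_{T_b', p}}(G_{T_b'}), \Q_\ell)_\m$ with $T_b' := R_p \cup T_b$; the associated automorphic representation of $G_{T_b'}$ transfers through Jacquet--Langlands to a cohomological representation on $\GL_{2,F}$ whose local component at each $v \in T_b$ is a discrete series (cuspidal case) or a twist of Steinberg (Eisenstein case). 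Either case forces $\bar{\rho}_\m$ to be non-generic at $v$, yielding $\varepsilon \leq \delta_p(\m)$ and hence a contradiction.

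\emph{Part (3).} Given non-solvability of the image of $\bar{\rho}_\m$, Lemma \ref{cebot} (strengthened slightly to also avoid the finite ramification set of $B$, which only excludes finitely many primes in the Chebotarev argument) supplies a prime $p \neq \ell$, split in $F$, unramified for both $B$ and the chosen level, at which $\bar{\rho}_\m$ is generic at every place above $p$. After passing to a deeper tame level and applying Hochschild--Serre to ensure $K = K^p K_p$ with $K_p$ hyperspecial, we have $\delta_p(\m) = 0$, so part (2) yields concentration of $H^*(Sh_K(G), \F_\ell)_\m$ in degree $d$.

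\emph{Main obstacle.} The most delicate step is extending Proposition \ref{vanish-smallig} beyond the non-Eisenstein setting assumed throughout \S\ref{main2-ref-proof}. The decisive observation making this tractable is that on a non-split quaternionic group the only non-cuspidal discrete automorphic contributions are the one-dimensional characters $\chi \circ \mathrm{Nrd}$; their Jacquet--Langlands transfer is a twist of Steinberg at every ramified place, producing Frobenius eigenvalues with ratio exactly $p^{\pm 1}$ and thus exactly the non-genericity required to close the inductive argument.
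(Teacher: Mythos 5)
The genuine gap is in part (1). You assert that a maximal ideal $\m$ in the support of $H^i(Sh_K(G),\F_\ell)$ ``lifts to a minimal prime, giving a Hecke eigensystem in $H^i(Sh_K(G),\bar{\Q}_\ell)$''. Compactness gives finite generation of $H^*(Sh_K(G),\Z_\ell)$ over $\Z_\ell$, but not torsion-freeness: if $H^*(Sh_K(G),\Z_\ell)_\m$ were entirely $\ell$-power torsion, every minimal prime of the localised Hecke algebra would contain $\ell$, and the eigensystem $\m$ would not occur in $H^*(Sh_K(G),\bar{\Q}_\ell)$ at all. Ruling out such ``new'' torsion eigensystems is part of the content of the theorem, not an admissible input to it. The paper instead deduces (1) from Proposition~\ref{key-quat-prop}: take the Newton stratum of \emph{minimal} dimension on which the localised cohomology of the auxiliary unitary variety is non-zero, transfer its Igusa variety by geometric Jacquet--Langlands (Theorem~\ref{isoigusa}) to the $\mu$-ordinary Igusa variety of the more ramified group $H_T$, use perversity of $R\pi_{\HT,*}\F_\ell$ to get concentration in a single degree, hence torsion-freeness, hence non-vanishing of $H^*(Sh_{K^pK_{T,p}}(G_T),\Q_\ell)_\m$ --- on $G_T$, which is in general a \emph{different} quaternion algebra from $B$. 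Only then does one read off a characteristic-zero automorphic form and its Galois representation. You actually run this exact argument inside your part (2); the repair is to extract it as the first step and derive (1) from it (this is also what makes (1) available for $\ell=2$, where Theorem~\ref{constr-gal-repn} does not apply).

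Parts (2) and (3) otherwise follow the paper's route: properness removes the boundary analysis and the $H^*_c\to H^*$ comparison that forced the non-Eisenstein hypothesis in \S~\ref{main2-ref-proof}, and your observation that the one-dimensional contributions $\chi\circ\Nrd$ have Frobenius eigenvalue ratio exactly $N(v)^{\pm1}=p^{\pm1}$, hence are non-generic at every relevant place, is precisely what lets the final step of the analogue of Proposition~\ref{vanish-smallig} close without a non-Eisenstein assumption; the paper leaves this implicit. Two smaller slips to correct: the target quaternion algebra of the transfer must be ramified at all of $R$ (including the finite places of $R$ away from $p$) together with $T_b$ and the corresponding archimedean places --- it is \emph{not} ramified at the finite places in $R_p$, so ``$T_b'=R_p\cup T_b$'' in the notation of \S~\ref{quat-datum} names the wrong algebra; and $Sh_K(G)$ is projective because $B$ is a division algebra, not because $B$ is ramified at an infinite place.
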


\subsubsection{} Take a prime $p$ as in point $(2)$ of the theorem. We choose an auxiliary $CM$ extension $E/F$ such that every finite place in $R$ is inert in $E$, and a place of $F$ above $p$ is split (resp. inert) in $E$ if it belongs (resp. does not belong) to $R_p$. We consider the auxiliary unitary group $H_\emptyset$ constructed as in \S~\ref{unit-datum}; we will denote $H_\emptyset$ by $H$ for simplicity. We can write the Kottwitz set $B(H_{\Q_p}, \mu)$ as a product $B(H_{\Q_p}, \mu)=\prod_{\Sigma_p}B(H_{\p}, \mu_{\p})$, where $B(H_{\p}, \mu_{\p})$ has one element (resp. two elements, the basic and the $\mu_\p$-ordinary element) if $\p \in R_p$ (resp. if $\p \in \Sigma_p \smallsetminus R_p$). We fix a compact open subgroup $K_E=(\mathcal{O}_E \otimes \Z_p)^\times K_E^{p}$ sufficiently small with respect to $K$, and we let $U\subset H(\A_f)$ be the image of $K_E \times K$. Given $b=(b_\p)_{\p \in \Sigma_p} \in B(H_{\Q_p}, \mu)$ we have the corresponding Newton stratum $\overline{Sh}^b_{U}(H)$ which is affine and smooth, of dimension $d-\varepsilon_b$, where $\varepsilon_b$ denotes the number of places $\p \in \Sigma_p \smallsetminus R_p$ such that $b_\p$ is basic.

Given a set $T \subset \Sigma_p \smallsetminus R_p$, let $B_T$ be the quaternion algebra over $F$ ramified at places in $R$ as well as at places in $T$ and at the corresponding infinite places. We let $G_T:=\mathrm{Res}_{F/\Q}B_T^\times$; fix an isomorphism $G(\A_f^{(p)})\simeq G_T(\A_f^{(p)})$, allowing us to see $K^p$ as a subgroup of $G_T(\A_f^{(p)})$. The arguments in \S~\ref{quatvsun} and \S~\ref{mod-unit} go through in this setting; in particular one can define moduli problems giving rise to integral models of the Shimura varieties attached to the groups $H$ and $H_T$; this makes it possible to define Igusa varieties, and Theorem \ref{isoigusa} has an analogue in this setting. Similarly, we have a description of the fibres of the relevant Hodge--Tate period maps in terms of Igusa varieties, and of the $\mu$-ordinary locus at infinite level as being parabolically induced from the corresponding Igusa variety.

Theorem~\ref{quat-maithm} will follow from the following proposition.

\begin{prop}\label{key-quat-prop}
Take $b=(b_\p)_{\p \in \Sigma_p} \in B(H_{\Q_p}, \mu)$ corresponding to a Newton stratum with smallest possible dimension such that $H^*(\overline{Sh}^b_{U}(H), \F_\ell)_\m \neq 0$. Let $T \subset \Sigma_p \smallsetminus R_p$ be the set of places $\p$ such that $b_\p$ is basic. Then there is a compact open subgroup $K_{T, p}\subset G_T(\Q_p)$ such that
\begin{equation*}
H^*(Sh_{K^{p}K_{T, p}}(G_T), \Q_\ell)_\m \neq 0.
\end{equation*}
\end{prop}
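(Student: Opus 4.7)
The plan is to transpose to the quaternionic-unitary setting the argument used at the end of \S\ref{end-vanish-proof}, where we identified the Hecke-eigenclass on a minimal-dimensional Newton stratum with a characteristic-zero automorphic representation of an appropriate quaternion algebra. Concretely, I will first apply the geometric Jacquet--Langlands identification of Igusa varieties (Theorem~\ref{isoigusa}, whose quaternionic analogue is available here since $p$ splits completely in $F$) to transfer the Hecke-eigenclass on $\mathrm{Ig}^b$ associated with $\m$ to the $\mu_T$-ordinary Igusa variety $\mathrm{Ig}^{b^{\mathrm{ord}}}_T$ attached to $H_T$. The minimality assumption on $b$ together with the same Jacquet--Langlands principle forces the $\m$-localisation of $H^*(\mathrm{Ig}^{b'}_T, \F_\ell)$ to vanish for every non-$\mu_T$-ordinary $b'$, because a non-vanishing class there would, upon re-transferring, produce cohomology on a strictly smaller $H$-stratum.

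Next, I will port Proposition~\ref{fibreig} and Lemma~\ref{lem-ord-max} to the $H_T$-setting to conclude that $(R\pi^T_{\HT,*}\F_\ell)_\m$ is supported exactly on the $\mu_T$-ordinary stratum of $\mathscr{F}\ell_{H_T,\mu_T}$. Combined with the perversity of $R\pi^T_{\HT,*}\F_\ell[d]$, this concentrates $H^*(\mathrm{Ig}^{b^{\mathrm{ord}}}_T, \F_\ell)_\m$ in the single degree $d-|T|$. Parabolic induction on the infinite-level ordinary locus (Theorem~\ref{ord-par-ind}) then propagates this to a non-zero $\m$-localised class in $H^{d-|T|}(\mathcal{S}h_{U^p}(H_T)^{b^{\mathrm{ord}}},\F_\ell)$; the excision sequence of Proposition~\ref{fund-triang} on $\mathscr{F}\ell_{H_T,\mu_T}$ finally upgrades this to $H^{d-|T|}(\mathcal{S}h_{U^p}(H_T), \F_\ell)_\m \neq 0$.

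Having reached this stage, I will descend to finite level: for a small enough pro-$p$ compact open $U_{T,p}\subset H_T(\Q_p)$ arising from a product $K_{T,p}\times K_{E,p}$ with $K_E$ sufficiently small with respect to $K$, the map from $\F_\ell$-cohomology at level $U_{T,p}$ into $\F_\ell$-cohomology at infinite level stays injective, so $H^{d-|T|}(Sh_{U^p U_{T,p}}(H_T),\F_\ell)_\m \neq 0$. The torsion-free lifting argument of Corollary~\ref{torsfree-Zl} (which is even simpler here because $H_T$-Shimura varieties with $T$ non-empty are proper over the reflex field) promotes this to $\Q_\ell$-coefficients, and then the unitary-to-quaternionic decomposition of Corollary~\ref{Hilbtounitary} transfers the non-vanishing from $Sh_{U^pU_{T,p}}(H_T)$ to $Sh_{K^p K_{T,p}}(G_T)$, yielding the conclusion.

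The main obstacle is not any single step in this chain but rather the book-keeping required to verify that the machinery built earlier for Hilbert modular varieties goes through for general quaternionic Shimura varieties, with $R_p$-many extra places where the Hodge cocharacter is central. At those places the local flag variety degenerates to a point and the local Rapoport--Zink space reduces to the constant diamond $\underline{H_\p(\Q_p)}$, so both the almost-product formula of \S\ref{almprod} and the parabolic-induction description of the $\mu$-ordinary locus remain valid; carefully tracking the effect of these extra factors on perversity estimates and on the local Langlands input (via~\cite[Theorem A]{car86}) in the last step is where one must be most careful, but no genuinely new ingredient beyond those already developed in the paper is needed.
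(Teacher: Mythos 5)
Your proposal is correct and follows essentially the same route as the paper's proof: transfer via Theorem~\ref{isoigusa} to the $\mu_T$-ordinary Igusa variety for $H_T$, use minimality of $b$ to show $(R\pi^T_{\HT,*}\F_\ell)_\m$ is supported only on the $\mu_T$-ordinary stratum, invoke perversity and the parabolic-induction/excision argument of \S\ref{end-vanish-proof} to concentrate cohomology in middle degree, descend to a pro-$p$ finite level, lift to characteristic zero by torsion-freeness, and conclude via Corollary~\ref{Hilbtounitary}. The only (immaterial) differences are the order in which you apply the $\Q_\ell$-lift and the unitary-to-quaternionic comparison, and your parenthetical about properness, where the relevant point is that $B_T\supset B$ is always non-split here, not that $T$ is non-empty.
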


\begin{proof}
As $H^*(\overline{Sh}^b_{U}(H), \F_\ell)_\m \neq 0$ we deduce that $H^*(\mathrm{Ig}^b, \F_\ell)_\m \neq 0$, hence by Theorem \ref{isoigusa} we get $H^*(\mathrm{Ig}_T^{{\mathrm{ord}}}, \F_\ell)_\m \neq 0$. Denoting by $\pi_{\HT}^T$ the Hodge--Tate period map for $\mathcal{S}h_{U^p}(H_T)$, we deduce as in Lemma \ref{lem-ord-max} that $(R\pi_{\HT, *}^T \F_{\ell})_\m$ is supported only on the $\mu_T$-ordinary locus. Perversity (up to shift) of $R\pi_{\HT, *}^T \F_{\ell}$ implies that $H^*(\mathrm{Ig}_T^{\mathrm{ord}}, \F_\ell)_\m$ is non-zero only in middle degree; the same argument as in \S~\ref{end-vanish-proof} then shows that $H^*(\mathcal{S}h_{U^p}(H_T), \F_\ell)_\m$ is concentrated in middle degree; we can finally descend to an appropriate finite level and apply Corollary \ref{Hilbtounitary} to deduce that $H^*(Sh_{K^{p}K_{T, p}}(G_T), \F_\ell)_\m$ is concentrated in middle degree. It follows that $H^*(Sh_{K^{p}K_{T, p}}(G_T), \Q_\ell)_\m\neq 0$.
\end{proof}

\subsubsection{Proof of Theorem \ref{quat-maithm}} The existence of $\bar{\rho}_\m$ follows from Proposition \ref{key-quat-prop}, the description of the cohomology of $Sh_{K^{p}K_{T, p}}(G_T)$ with characteristic zero coefficients in terms of automorphic forms and the existence of Galois representations attached to quaternionic automorphic forms. The second point is proved adapting the argument in \S~\ref{main2-ref-proof} and \S~\ref{end-vanish-proof}. The third point follows from the second and from Lemma~\ref{cebot}.  

\section{Compatibility with the $p$-adic local Langlands correspondence}\label{sec:p-adic local Langlands}

Fix a prime $p$ throughout this section, which will play the role of the prime denoted by $\ell$ above.
The aim of this section is to relate the completed homology of Hilbert modular varieties
and the $p$-adic Langlands correspondence, building on \cite{gee-newton}.

\subsection{Completed homology}

\subsubsection{} Following \cite{gee-newton} we work with locally symmetric spaces attached to $\PGL_{2, F}$ rather than $\GL_{2, F}$. Let $G:=\mathrm{Res}_{F/\Q}\GL_{2}$ and $\bar{G}:=\mathrm{Res}_{F/\Q}\PGL_{2}$. Given a compact open subgroup $\bar{K}\subset\bar{G}(\A_f)$ we have the locally symmetric space 
\[
X_{\bar{K}}(\bar{G}):=\bar{G}(\Q)\backslash (\C\smallsetminus \R)^g\times \bar{G}(\A_f)/\bar{K}.
\] 
Fix a compact open subgroup $\bar{K}^p\bar{K}_0$ which is good in the sense of \cite[Section 2.1]{gee-newton}, with $\bar{K}_0=\prod_{v \mid p}\PGL_2(\O_{F_v})$. Consider the completed homology groups
\begin{equation*}
\tilde{H}_*(\bar{K}^p, \Z_p):=\varprojlim_{\bar{K}_p}H_*(X_{\bar{K}_p\bar{K}^p}, \Z_p).
\end{equation*}
Let $\mathbb{T}(\bar{K}^p)$ be the big Hecke algebra defined in \cite[Definition 2.1.11]{gee-newton} (beware that our notation differs from the one in \emph{loc. cit.}). It maps to the Hecke algebra $\mathbb{T}(\bar{K}^p\bar{K}_p, \Z/p^s\Z)$ acting on homology with $\Z/p^s\Z$-coefficients of the locally symmetric space of level $\bar{K}^p\bar{K}_p$ for each $\bar{K}_p$ and each $s \geq 1$. Completed homology $\tilde{H}_*(\bar{K}^p, \Z_p)$ carries an action of $\mathbb{T}(\bar{K}^p)$ as well as of $\bar{G}(\Q_p)$; we fix a maximal ideal $\m\subset \mathbb{T}(\bar{K}^p)$. For $\bar{K}_p$ small enough, the ideal $\m$ is the preimage of a maximal ideal in $\mathbb{T}(\bar{K}^p\bar{K}_p, \Z/p\Z)$, which we will abusively denote by the same symbol (in fact any pro-$p$ group $\bar{K}_p$ does the job, cf. the proof of \cite[Lemma 2.1.14]{gee-newton}). Hence, setting $\bar{K}=\bar{K}^p\bar{K}_p$, we have $H^*(X_{\bar{K}}(\bar{G}), \F_p)_\m\simeq H_*(X_{\bar{K}}(\bar{G}), \F_p)_\m \neq 0$.

\subsubsection{}\label{compcohom-proj} Let us now assume that $p>3$; we take $\bar{K}$ as above, and assume in addition that it is the image of a compact open subgroup $K \subset G(\A_f)$. The natural map $Sh_K(G) \rightarrow X_{\bar{K}}(\bar{G})$ is a finite étale Galois cover, hence $H^*(Sh_K(G), \F_p)_{\m}\neq 0$. By Theorem \ref{constr-gal-repn} there is a Galois representation $\bar{\rho}_\m: \Gamma_F \rightarrow \GL_2(\bar{\F}_p)$ attached to $\m$. Let us suppose that the projective image of $\bar{\rho}_\m$ contains a conjugate of $\PSL_2(\F_p)$ or is isomorphic to $A_5$; then Theorem \ref{mainpart2} implies that, for every $K'_p\subset K_{p}$, we have 
\[
H^i(Sh_{K^pK'_p}(G), \F_p)_\m=H^i_c(Sh_{K^pK'_p}(G), \F_p)_\m=0
\]
for $i<g$; the Hochschild--Serre spectral sequence attached to the Galois cover $Sh_{K^pK'_p}(G)\rightarrow X_{\overline{K^pK'_p}}(\bar{G})$ implies that the same property holds true for the cohomology of $X_{\overline{K^pK'_p}}(\bar{G})$, hence $H^*(X_{\overline{K^pK'_p}}(\bar{G}), \F_p)_\m\simeq H_*(X_{\overline{K^pK'_p}}(\bar{G}), \F_p)_\m$ is concentrated in degree $g$. Therefore assumptions $(a), (b)$ of \cite[Proposition 4.2.1]{gee-newton} are satisfied (notice that assumption $(b)$ trivially holds since, with the notation of \emph{loc. cit.}, we have $l_0=0$ in our situation). Thanks to \cite[Proposition 4.2.1]{gee-newton} we obtain that $\tilde{H}_i(\bar{K}^p, \Z_p)_\m=0$ if $i \neq g$ and $\tilde{H}_g(\bar{K}^p, \Z_p)_\m$ is a projective, $p$-torsion-free $\Z_p[[\bar{K}_{0}]]$-module. Furthermore, there is a Galois representation 
\[
\rho_\m: \Gamma_F \rightarrow \GL_2(\mathbb{T}(\bar{K}^p)_\m)
\]
lifting $\bar{\rho}_\m$,
as in \cite[Conjecture 3.3.3(2)]{gee-newton}. It can be constructed as in \cite[\S~5]{sch18}: the argument in \emph{loc. cit.}, glueing representations valued in Hecke algebras at increasing finite level, can be applied in our situation as (co)homology with $\Z_p$-coefficients is concentrated in one degree and torsion-free after localisation at $\m$, and cohomology with $\Q_p$-coefficients localised at $\m$ is described in terms of Hilbert cusp forms.

\subsection{$p$-adic local Langlands} Assume from now on that $p$ is totally split in $F$. We want to describe the relation between completed homology of Hilbert modular varieties and the $p$-adic Langlands correspondence for $\GL_2(\Q_p)$, using the machinery of \cite{gee-newton}. As remarked in \cite[\S~5.4]{gee-newton}, assumption $(a)$ of \cite[Proposition 4.2.1]{gee-newton}, which we established above, is the key input needed to apply the results in \emph{loc. cit.}.
\subsubsection{} We first need to introduce some notation. We replace $\Z_p$ by the ring of integers $\O$ of a finite extension of $\Q_p$, with residue field $k$, so that $\bar{\rho}_\m$ takes values in $\GL_2(k)$. Furthermore, up to further extending $\mathcal{O}$, we may, and will, assume that $k$ contains all the eigenvalues of the elements in the image of $\bar{\rho}_\m$ (as in \cite[\S~3.2]{gee-newton}). We denote by $L$ the fraction field of $\O$. For a place $v \mid p$, under suitable assumptions on $\bar{\rho}_{\m_{|\Gamma_{F_v}}}$ (for example, if it is absolutely irreducible) we have the universal local deformation ring $R_v^{\mathrm{def}}$ of $\bar{\rho}_{\m \mid \Gamma_{F_v}}$. We denote by $\pi_v$ the $k$-representation of $\PGL_2(F_v)$ attached to $\bar{\rho}_{\m \mid \Gamma_{F_v}}$, and we let $P_v$ be the projective envelope of $\pi_v^\vee$ in the Pontryagin dual of the category of locally admissible $\O$-representations of $\PGL_2(F_v)$. We set $R_p^{\mathrm{loc}}:=\hat{\otimes}_{v\mid p, \O} R_v^{\mathrm{def}}$; then $P:=\hat{\otimes}_{v \mid p, \O}P_v$ has an $R_p^{\mathrm{loc}}$-module structure, cf.~\cite[Proposition 6.3, Corollary 8.7]{paskunas} (see also \cite[\S~5.1]{gee-newton}). 
Finally, the representation $\rho_\m$ gives rise to a map $R_{p}^{\mathrm{loc}}\rightarrow \mathbb{T}(\bar{K}^p)_\m$. Let $N(\bar{\rho}_\m)$ be the prime-to-$p$-conductor of $\bar{\rho}_\m$ and let $\bar{K}_1(N(\bar{\rho}_\m))$ be the image of $\{M \in \GL_2(\hat{\mathcal{O}}_F)\mid M \equiv \left(\begin{smallmatrix}* & *\\
0 & 1\end{smallmatrix}\right) \pmod{N(\bar{\rho}_\m)}\}$ (more precisely, if this group is not good, we make it smaller at places where $\bar{\rho}_\m$ admits no ramified deformations).

\begin{thm}
Let $p>3$ be a prime which splits completely in $F$. Assume that the following assertions hold true.
\begin{enumerate}
\item The projective image of the Galois representation $\bar{\rho}_\m$ attached to $\m$ contains a conjugate of $\PSL_2(\F_p)$ or is isomorphic to $A_5$.
\item If $\bar{\rho}_\m$ is ramified at some place $v$ not lying above $p$, then $v$ is not a vexing prime.
\item For each place $v \mid p$, the restriction of $\bar{\rho}_\m$ to $\Gamma_{F_v}$ is absolutely irreducible.
\end{enumerate}
Then there is an isomorphism of $\mathbb{T}(\bar{K}_1(N(\bar{\rho}_\m))^p)_\m[\bar{G}(\Q_p)]$-modules
\begin{equation*}
\tilde{H}_g(\bar{K}_1(N(\bar{\rho}_\m))^p, \O)_\m \simeq \mathbb{T}(\bar{K}_1(N(\bar{\rho}_\m))^p)_\m \hat{\otimes}_{R_p^{\mathrm{loc}}}P^{\oplus m}
\end{equation*}
for some $m \geq 1$.
\end{thm}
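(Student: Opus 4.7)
The plan is to feed our main vanishing result, Theorem~\ref{mainpart2}, into the axiomatic patching framework developed in~\cite{ceggps2, gee-newton}: concretely, the theorem should follow by running the Taylor--Wiles--Kisin style patching method for completed homology, with concentration in one degree providing the geometric input that the method needs, and Pa\v{s}k\=unas's theory supplying the identification of the patched module with the module $P$ of $p$-adic local Langlands.

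First I would verify the hypotheses of~\cite[Proposition~4.2.1]{gee-newton}. For the Hilbert datum we have $l_0(\bar{G}^{\mathrm{ad}}) = 0$, so assumption (b) there is trivial, and assumption (a) reduces to showing that, for sufficiently small $\bar{K}_p$ and for any $\F_p$-local system coming from a Serre weight, the homology $H_i(X_{\bar{K}^p\bar{K}_p}(\bar{G}), -)_{\m}$ vanishes outside degree $g$. Hypothesis~(1) of the theorem ensures that $\bar{\rho}_\m$ has non-solvable image, so Theorem~\ref{mainpart2} applies and gives the desired vanishing with constant $\F_p$-coefficients on $Sh_K(G)$. The Hochschild--Serre spectral sequence for the finite \'etale cover $Sh_K(G) \rightarrow X_{\bar{K}}(\bar{G})$ transfers this to the $\bar{G}$-locally symmetric space (the cover has degree prime to $p$ because $p > 3$), and the same Hochschild--Serre argument run at deeper level at $p$ upgrades this to arbitrary Serre-weight coefficients, as noted in Remark~\ref{dicksonrem}(3). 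The upshot is that $\tilde{H}_g(\bar{K}^p, \O)_\m$ is a $p$-torsion-free projective $\O[[\bar{K}_0]]$-module and all other $\tilde{H}_i(\bar{K}^p, \O)_\m$ vanish.

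With this input in place, I would apply the patching construction of~\cite[\S 6]{gee-newton}. Hypothesis~(2), the absence of vexing primes away from $p$, guarantees that the prime-to-$p$ deformation problem is formally smooth over $R_p^{\mathrm{loc}}$ after adjoining Taylor--Wiles variables. Hypothesis~(1) supplies enough Taylor--Wiles primes via the usual Chebotarev argument, and the existence and Galois-theoretic compatibility of $\rho_\m: \Gamma_F \to \GL_2(\mathbb{T}(\bar{K}^p)_\m)$ is exactly what is needed to produce the patched object. The output is a module $M_\infty$ over $R_\infty = R_p^{\mathrm{loc}}[[x_1, \dots, x_s]]$ carrying a compatible $\bar{G}(\Q_p)$-action, which is finitely generated and projective over $\O[[\bar{K}_0]]$, together with a surjection of $\mathbb{T}(\bar{K}_1(N(\bar{\rho}_\m))^p)_\m[\bar{G}(\Q_p)]$-modules
\begin{equation*}
M_\infty / \mathfrak{a} M_\infty \isomap \tilde{H}_g(\bar{K}_1(N(\bar{\rho}_\m))^p, \O)_\m,
\end{equation*}
where $\mathfrak{a}$ is the augmentation ideal for the patching variables. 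The fact that $p$ splits completely in $F$ lets one factor $\bar{G}(\Q_p) = \prod_{v\mid p} \PGL_2(\Q_p)$ and $R_p^{\mathrm{loc}} = \hat{\otimes}_{v\mid p} R_v^{\mathrm{def}}$, so the local structure of $M_\infty$ decomposes compatibly across the places above $p$.

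The last step is to identify $M_\infty$ with $P^{\oplus m}$ as $R_p^{\mathrm{loc}}[\bar{G}(\Q_p)]$-modules. Hypothesis~(3), the absolute irreducibility of $\bar{\rho}_\m|_{\Gamma_{F_v}}$ for each $v\mid p$, places us in the supersingular Pa\v{s}k\=unas block at each place above $p$; combined with the projectivity and compatibility of the $\bar{G}(\Q_p)$-action on $M_\infty$, Pa\v{s}k\=unas's results~\cite{paskunas} (applied factor-by-factor) force $M_\infty \simeq P^{\oplus m}$ for some $m \geq 1$, as formalised in the axiomatic setup of~\cite[\S 6]{gee-newton}. Quotienting by $\mathfrak{a}$ then yields the stated isomorphism. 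The main obstacle in this plan is the passage from constant coefficients to arbitrary Serre weights needed for~\cite[Proposition~4.2.1]{gee-newton}; this is exactly the point at which the earlier results of~\cite{dim09} would not suffice, because they constrain both the level at $p$ and the weight, and it is here that our freedom from such constraints in Theorem~\ref{mainpart2} is essential.
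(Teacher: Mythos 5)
Your overall strategy coincides with the paper's: Theorem~\ref{mainpart2}, transported to the $\bar{G}$-locally symmetric spaces by Hochschild--Serre, verifies hypothesis (a) of \cite[Proposition 4.2.1]{gee-newton}, and the theorem is then extracted from the axiomatic framework of \cite[\S 5]{gee-newton}. However, your final step conceals the essential arithmetic input. Projectivity of the patched module $M_\infty$ together with Pa\v{s}k\=unas's theory does \emph{not} by itself force $M_\infty \simeq P^{\oplus m}$: one must also show that $M_\infty$ ``captures'' the correct crystalline components, i.e.\ that for each algebraic weight $\sigma$ the action of $R_p^{\mathrm{loc}}$ on $H_*(X_{\bar{K}_1(Q)}(\bar{G}), \sigma^\circ)_{\m_{Q,1}}$ factors through the crystalline quotient $R_p^{\mathrm{loc}}(\sigma)$. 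This is \cite[Conjecture 5.1.12]{gee-newton}, and verifying it is precisely the part of the argument you omit. The paper establishes it by combining concentration in middle degree and torsion-freeness (so that the relevant Hecke algebra is reduced and its $\bar{\Q}_p$-points are detected by classical Hilbert cusp forms) with local--global compatibility at $p$ for Hilbert modular forms \cite[Theorem 4.3]{kis08}. Without this step, the identification of $M_\infty$ with $P^{\oplus m}$ has no purchase on the actual Galois representations occurring in completed homology.

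Two smaller points. First, producing Taylor--Wiles primes requires $\bar{\rho}_\m(\Gamma_{F(\zeta_p)})$ to be \emph{enormous}; hypothesis (1) guarantees this only for $p>5$, and the case $p=5$ needs a separate argument (the paper invokes \cite[Hypothesis 9.1]{sch18}, using that $5$ is unramified in $F$, so that $[F(\zeta_5):F]\neq 2$). One must also check $\bar{\rho}_\m \not\simeq \bar{\rho}_\m\otimes\bar{\epsilon}$ (again via $p$ unramified in $F$) and actually construct the lift $\rho_\m$ valued in the big Hecke algebra, which is done by Scholze's gluing argument and again uses concentration and torsion-freeness. Second, your justification of the Hochschild--Serre transfer is off: the Galois group of $Sh_K(G)\to X_{\bar{K}}(\bar{G})$ involves a ray class group of $F$ and need not have order prime to $p$; fortunately no such condition is needed, since the spectral sequence localised at $\m$ transfers vanishing below the middle degree regardless of the order of the group.
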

\begin{proof}
This is \cite[Proposition 5.1.4]{gee-newton}, which follows from \cite[Conjecture 5.1.2]{gee-newton}. Let us explain why this and other conjectures formulated in \emph{loc. cit.} hold true in our setting, and the various assumptions made in \emph{loc. cit.} are satisfied.
\begin{enumerate}
\item In \cite[\S~4.1]{gee-newton} the authors assume that the image of restriction of $\bar{\rho}_\m$ to $\Gamma_{F(\zeta_p)}$ is enormous. This is needed for the construction of 
Taylor--Wiles data, as in \cite[Lemma 3.3.1]{gee-newton}. Our large image assumption and \cite[Lemma 3.2.3]{gee-newton} imply that the group $\bar{\rho}_\m(\Gamma_{F(\zeta_p)})$ is enormous if $p>5$. If $p=5$, our assumption does not guarantee that $\bar{\rho}_\m(\Gamma_{F(\zeta_p)})$ is enormous. However, in this case, one can work under the assumption \cite[Hypothesis 9.1]{sch18}, which originates in~\cite[Theorem 3.5.5]{kisin09} and which is satisfied in our situation. Indeed, if the image of $\mathrm{pr}\circ \bar{\rho}_\m: \Gamma_F \rightarrow \PGL_2(\bar{\F}_5)$ is conjugate to $\PGL_2(\F_5)$ and its kernel fixes $F(\zeta_5)$ then $[F(\zeta_5): F]=2$, which cannot happen if $p=5$ is unramified in $F$. 

\item The assumption that $\bar{\rho}_\m\neq \bar{\rho}_\m\otimes \bar{\epsilon}$, where $\bar{\epsilon}$ is the mod $p$ cyclotomic character (see \cite[p. 18]{gee-newton}) holds. Indeed because $p$ is unramified $F$, the fields $F$ and $\Q(\zeta_p)$ are linearly disjoint over $\Q$; hence $\bar{\epsilon}: \Gamma_F \rightarrow (\Z/p\Z)^\times$ is surjective. In particular there is $\gamma \in \Gamma_F$ such that $\bar{\epsilon}(\gamma)^2 \neq 1$, hence $\det \bar{\rho}_\m(\gamma)\neq \det(\bar{\epsilon}\otimes \bar{\rho}_\m)(\gamma)$. For the same reason the first assumption in \cite[Hypothesis 4.1.3]{gee-newton} is satisfied.
\item \cite[Conjecture 3.3.7]{gee-newton} holds: the representation denoted by $\rho_{\m, Q}$ in \emph{loc. cit.} can be constructed as explained before the statement of the theorem, and the desired local-global compatibility follows from the analogous statement for Hilbert modular forms.
\item The second assumption in \cite[Hypothesis 4.1.3]{gee-newton} holds in view of the hypothesis that there are no vexing primes.
\end{enumerate}
The upshot of the above discussion is that all the assumptions made at the beginning of \cite[\S~5]{gee-newton} are verified. Therefore, by \cite[Corollary 5.3.2]{gee-newton}, Conjecture 5.1.2 follows from Conjecture 5.1.12 in \emph{loc. cit.}. 

It remains to justify why Conjecture 5.1.12 holds. Let $Q$ be a set of Taylor--Wiles primes and $\sigma=(\sigma_v)_{v \mid p}$ an irreducible $L$-representation of $\bar{K}_0$, with $\bar{K}_0$-stable $\O$ lattice $\sigma^\circ$ as in \cite[p. 34]{gee-newton}. Let $X_{\bar{K}_1(Q)}(\bar{G})$ be the space with level $\bar{K}_0$ at $p$ and $\bar{K}_1(Q)_v$ at each place $v \in Q$. We need to show that the action of $R_p^{\mathrm{loc}}$ on $H_*(X_{\bar{K}_1(Q)}(\bar{G}), \sigma^\circ)_{\m_{Q, 1}}$ factors through $R_p^{\mathrm{loc}}(\sigma)$, where the ideal $\m_{Q, 1}$ is defined in \cite[Proposition 3.3.6]{gee-newton}. Let $\mathbb{T}(\bar{K}_1(Q), \sigma)_{\m_{Q, 1}}$ be the image of the Hecke algebra in the endomorphism ring of $H_g(X_{\bar{K}_1(Q)}(\bar{G}), \sigma^\circ)_{\m_{Q, 1}}$. By assumption $(1)$ the homology $H_*(X_{\bar{K}_1(Q)}(\bar{G}), \sigma^\circ)_{\m_{Q, 1}}$ is concentrated in middle degree, and $H_g(X_{\bar{K}_1(Q)}(\bar{G}), \sigma^\circ)_{\m_{Q, 1}}$ is torsion-free; furthermore $\mathbb{T}(\bar{K}_1(Q), \sigma)_{\m_{Q, 1}}$ is reduced. As in the proof of the first part of \cite[Lemma 4.17]{ceggps1}, it suffices to show that, for any $\bar{\Q}_p$-point of $\mathbb{T}(\bar{K}_1(Q), \sigma)_{\m_{Q, 1}}$, the restriction to $\Gamma_{F_v}$ (for $v \mid p$) of the associated Galois representation with $\bar{\Q}_p$-coefficients is crystalline of type $\sigma_v$. The space $H_*(X_{\bar{K}_1(Q)}(\bar{G}), \sigma)_{\m_{Q, 1}}$ can be described in terms of Hilbert cusp forms with algebraic weight, hence the desired result follows from local-global compatibility at places above $p$ \cite[Theorem 4.3]{kis08}. This establishes the first assertion in \cite[Conjecture 5.1.12]{gee-newton}; the second one can be proved similarly, following the strategy in the second part of the proof of \cite[Lemma 4.17]{ceggps1}.
\end{proof}

\begin{remark}\label{rem:further lgc} In fact, one should be able to prove a
version of compatibility with the $p$-adic local Langlands correspondence without
any assumptions on the tame level, 
for example the analogue to this setting of~\cite[Corollary 6.3.6]{pan-LA}. That this
would follow from Theorem~\ref{thm:generic} and the machinery developed by 
Pa{\v{s}}k\={u}nas is more or less known to experts. We sketch the argument in this remark. 

One can consider the completed cohomology of a
Hilbert modular variety with $\Z_p$-coefficients.  
After localisation at a maximal ideal $\mathfrak{m}$ whose associated residual 
Galois representation is non-solvable, Theorem~\ref{thm:generic} 
implies that this is concentrated in one degree. 
As in~\cite[Corollary 6.3.6]{pan-LA}, the key point is to prove the analogue of~\cite[Theorem 6.3.5]{pan-LA} 
that compares two actions of the local pseudo-deformation ring with fixed
determinant on completed cohomology localised at $\mathfrak{m}$. 
(See~\cite[Theorem 3.5.5]{pan-FM} 
for the statement in the case when there are multiple split 
places above $p$, which relies on~\cite[Corollary 3.4.12]{pan-FM}.) 

The comparison of the two pseudo-deformation ring actions can be done using the method of~\cite[Cor. 5.6]{paskunas-ludwig}. 
The key facts that would need checking are: density of those locally 
algebraic vectors for which the smooth part is a specific principal series representation, 
and semi-simplicity of the usual Hecke action on the non-Eisenstein cohomology of Hilbert modular varieties.
The latter can be deduced from the Eichler--Shimura 
isomorphism and the existence of the Petersson inner product on Hilbert cusp forms. 
Density of locally algebraic vectors follows from~\cite[Corollary 7.8]{DPS} 
and the fact that completed cohomology localised at $\mathfrak{m}$ with fixed central character 
is a direct summand of a space of continuous functions with fixed central character 
as in the proof of~\cite[Theorem 9.24]{DPS}. Finally, the latter fact follows from projectivity of completed homology localised at $\mathfrak{m}$ (cf. \cite[Lemma 9.16(i)]{DPS}). As in \S~\ref{compcohom-proj}, to establish this last property one crucially uses Theorem~\ref{thm:generic}. 
\end{remark}

\bibliographystyle{amsalpha}
\bibliography{HMVviaGJL}
\end{document}